\documentclass[12pt]{article}

\usepackage{amsmath, amsfonts,amssymb, enumerate,  MnSymbol}
\usepackage{mathrsfs}  
\usepackage{graphicx}
\usepackage{subfig}
\usepackage[numbers, sort]{natbib}
\usepackage{hyperref}
\usepackage{stmaryrd}
\usepackage{xy}
\usepackage{mathtools}
\usepackage{xcolor}
\usepackage{tikz}
\usepackage{manfnt}
\usepackage{ntheorem}
\usepackage{accents}
\usepackage{yfonts}
\usepackage{pifont}
\usepackage{makecell}
\usepackage[table]{xcolor}

\usepackage{caption}
\usepackage{algorithm}
\usepackage[noend]{algpseudocode}
\algblock{Input}{EndInput}
\algnotext{EndInput}
\algblock{Output}{EndOutput}
\algnotext{EndOutput}

\usetikzlibrary{topaths,calc}

\input xy
\xyoption{all}

\colorlet{shade}{gray!40}

\numberwithin{equation}{section}

\newtheorem{theorem}{Theorem}[section]

\newtheorem{proposition}[theorem]{Proposition}
\newtheorem{lemma}[theorem]{Lemma}
\newtheorem{corollary}[theorem]{Corollary}
\newtheorem{rmk}[theorem]{Remark}
\newtheorem{definition}{Definition}[section]
\newtheorem{exmp}{Example}[section]

\newenvironment{proof}[1][Proof]{\begin{trivlist}
\item[\hskip \labelsep {\bfseries #1}]}{\end{trivlist}}

\newenvironment{remark}[1][Remark]{\begin{trivlist}
\item[\hskip \labelsep {\bfseries #1}]}{\end{trivlist}}

\newcommand{\qed}{\nobreak \ifvmode \relax \else
      \ifdim\lastskip<1.5em \hskip-\lastskip
      \hskip1.5em plus0em minus0.5em \fi \nobreak
      \vrule height0.75em width0.5em depth0.25em\fi}

\newcommand{\xmark}{\ding{55}}
\newcommand{\cmark}{\ding{51}}

\begin{document}

\title{Inductive construction of path homology chains and the structure of $\Omega_3(G;R)$}
\author{Matthew Burfitt and Tyrone Cutler}
\date{}
\maketitle

\begin{abstract}
    Path homology plays a central role in digraph topology and GLMY theory more generally. Unfortunately, the computation of the path homology of a digraph $G$ is a two-step process, and until now no complete description of even the underlying chain complex has appeared in the literature.
   
    In this paper we introduce an inductive method of constructing elements of the path homology chain modules $\Omega_n(G;R)$ from elements in the preceding two dimensions. This proceeds via the formation of what we call upper and lower \emph{extensions}, that are parametrised by certain labelled multigraphs which we introduce and call \emph{face multigraphs}.

    The inductive elements we construct generate $\Omega_*(G;R)$ when $R$ has characteristic $2$. With characteristic $0$ coefficients, the inductive elements at least generate $\Omega_i(G;R)$ for $i=0,1,2,3$. In low dimensions, the inductive elements coincide with the natural generators, and when the digraph contains no multisquares, the inductive elements coincide with the basis elements produced by Fu and Ivanov.

    Inductive elements provide a new concrete structure on the path chain complex that can be directly applied to understand path homology, under no restriction on the digraph $G$. 
    We employ inductive elements to construct explicit generators of $\Omega_3(G;R)$ for a ring $R$ of characteristic $0$ or $2$, answering an open question posed by Grigor'yan. Several universal coefficient statements for path homology are obtained as a byproduct.
\end{abstract}

\section{Introduction}

Path homology, one of the several proposed homology theories for directed graphs, is based around the idea of detecting cycles in equal length paths within a graphical structure. Its construction was laid out by Grigor'yan, Lin, Muranov, and Yau in the foundational paper \cite{Grigoryan2013}, which built on earlier work of Dimakis and M\"{u}ller-Hoissen \cite{Dimakis1994}, where elements of the dual cohomology theory were considered in the study of field theory on discrete space time. Path homology now sits in the broader study of the geometry, topology, and homotopy theory of digraphs and quivers which is often referred to as GLMY-theory.

Due to its good properties, path homology has emerged as one of the most important homology theories of digraphs. For instance, path homology satisfies certain Eilenberg-Steenrod axioms~\cite{Grigoryan2018} and shares variations of many of the important properties enjoyed by singular and simplicial homology, including functorality, homotopy invariance~\cite{Grigor'yan2014b}, and a K\"{u}nneth Theorem for the box product of digraphs~\cite{Grigoryan2017}. These fundamental properties are not necessarily shared by other homology theories of digraphs.

Another interesting aspect of path homology is its relationship with magnitude homology~\cite{Hepworth2017}, where as first noted by Asao~\cite{Asao2023} it appears as a graded submodule of the second page of the magnitude-path spectral sequence (introduced in~\cite[Remark 8.7]{Hepworth2017}), whose first page is the naturally bigraded magnitude homology. As it turns out, the first page of this spectral sequence contains the module of path chains as a submodule, so magnitude homology contains also more delicate information than just the path homology modules.

Now, a central problem with path homology lies in the fact that, unlike simplicial homology, there is no simple general description of a path chain basis. More precisely, path chains are provided indirectly through compatibility with the differential, and are not generated by singular mappings. This feature adds extra complexity to the computation of path homology. In contrast, the singular digraph homologies are valued for their comparative simplicity and ease of computation.

Denoting by $\Omega_n(G;R)$ the path chains in dimension $n$ with coefficients in a commutative ring $R$, canonical bases of $\Omega_0(G;R)$ and $\Omega_1(G;R)$ are generated by all vertices and all edges, respectively. The situation in dimension $2$ is made more complicated by the existence of multisquare digraphs for which $\Omega_2(G;R)$ has no canonical basis. However, for any digraph $G$, a basis of $\Omega_2(G;R)$ can always be chosen from a subset of generators corresponding to certain subdigraphs of the form of squares, triangles, and double edges.

In dimension $3$, Grigor'yan~\cite{Grigoryan2022} described a basis of $\Omega_3(G;R)$ in terms of the images of the top dimensional generator of a trapezohedron under the constraints that $G$ contains no multisquares or double edges and the coefficients lie in a field. 
More generally, relaxing the condition on double edges, but assuming still field coefficients, Fu and Ivanov~\cite{Fu2024} provided a basis of $\Omega_*(G;R)$ by making use of certain graphical constructions. 

For digraphs containing multisquares, no general description of the path chains in dimensions greater than $2$ has yet appeared in the literature. Moreover, it remains unsatisfactory that one must restrict to field coefficients to apply the results that do exist. The goal of the present work is to describe a method of constructing systems of generators for the higher dimensional path chain modules which are available for more general coefficient rings and do not place restrictions on the digraph's structure.

One import motivation for such a construction comes from computational applications, where digraphs containing multisquares commonly occur. Here there has been much interest in efficiently computing path homology, with persistent path homology being known to be stable with respect to perturbations in edge weighted filtrations of the digraph \cite{Chowdhury2018}. 
Yet, effective computation of path homology remains limited to low dimensions, with one of the most effective algorithms devised \cite{Dey2022} being only for the computation of path homology in dimension $1$ and depending primarily on the knowledge of low dimensional bases of the path chain complex.

More generally, a simple procedure for the computation of path homology is provided by Grigor'yan~\cite[\S 1.7]{Grigoryan2022} as a special case of persistent path homology~\cite[\S 5]{Chowdhury2018}. However, the practicality of these algorithms is limited by the computation of a chain level base, as it involves the computation of the null space of certain large matrices whose sizes grow rapidly with the number of digraph edges.

Path homology algorithms would be greatly simplified without the need to first indirectly compute a basis of the paths chains, as pointed out in~\cite[Problem 1.7]{Grigoryan2022}.
In particular, improved computational speed of path homology in higher dimensions would greatly enhance the practicality of a wide range of applications.

The description of path homology chains introduced in the present work makes no assumptions on the structure of the digraph $G$. Our primary construction is of structures we call face multigraphs (Definition~\ref{def:FaceMultiGraph}), which are principally considered together with their complete extensions by a vertex (Definition~\ref{def:extoffmhg}). We use face multigraphs to construct $(n+1)$-dimensional path chains from given path chains in dimensions $n$ and $n-1$.

Restricting further to strongly connected extensions, we establish the notion of inductive elements of $\Omega_*(G;R)$ (Definition~\ref{def:InductiveElement}), built inductively from the vertex basis of $\Omega_0(G;R)$ as strongly connected extensions over face multigraphs. Inductive elements provide the structure for the following central result concerning generating sets of $\Omega_*(G;R)$.
\begin{theorem}[Corollaries~\ref{cor:FieldGeneratorBasis}~and~\ref{cor:IductiveBasisIntegral}]\label{thm:gen}
The $n$-dimensional inductive elements generate $\Omega_n(G;R)$ when $R$ has characteristic $2$ and $n\geq 0$, or when $R$ has characteristic $0$ and $n\leq 3$.
\end{theorem}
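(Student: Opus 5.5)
The plan is to reduce generation of $\Omega_n(G;R)$ to a statement about the supports of path chains, decomposed by their endpoints. Every $v\in\Omega_n(G;R)$ splits as a sum of chains $v_{a,b}$ supported on allowed $n$-paths from $a$ to $b$. Since $\partial$ only changes endpoints by deleting the first or last vertex, each $v_{a,b}$ again lies in $\Omega_n(G;R)$. So it suffices to generate chains with fixed initial vertex $a$ and fixed terminal vertex $b$. For such a chain, group its paths by the final vertex: $v=\sum_{c}w_c\cdot(c\to b)$, where $w_c$ is a combination of $(n-1)$-paths from $a$ to $c$ with $c\to b$ an edge.

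The key step is to show two things. First, the collection $\{w_c\}$, together with the $(n-2)$-dimensional data recording how the inner faces of the $w_c$ cancel, assembles into a face multigraph (Definition~\ref{def:FaceMultiGraph}). Second, $v$ is exactly an extension of it by the vertex $b$ in the sense of Definition~\ref{def:extoffmhg}. I would do this by computing $\partial v$ and separating three kinds of terms:
\begin{itemize}
\item terms where $b$ is deleted, which give $\sum_c \pm w_c$;
\item terms where an interior vertex of a $w_c$ is deleted;
\item terms where $c$ itself is deleted, which must cancel across different $c$ or be allowed.
\end{itemize}
The condition $\partial v\in\mathcal A_{n-1}$ then translates into conditions on the $w_c$: each $w_c$ must be, up to non-allowed parts, a chain in $\Omega_{n-1}$ of the subdigraph of paths ending at vertices adjacent to $b$. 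The cancellations between different $c$'s are the edges of the face multigraph. Strong connectedness is then obtained by splitting the face multigraph into its strongly connected components. I would argue that each component separately yields a chain in $\Omega_n$, because no boundary cancellation crosses components.

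Induction on $n$ is then needed to express the $w_c$ themselves through inductive elements. This is where coefficients enter. The $w_c$ individually need not lie in $\Omega_{n-1}$, only their signed combination does. Rewriting $v$ as an $R$-combination of extensions of genuine inductive $(n-1)$-elements requires solving a linear system over the face multigraph. In characteristic $2$ the signs disappear, and the system reduces to a parity/cycle condition on the multigraph that is always solvable, for instance by choosing a spanning tree. This gives generation for all $n$ by induction from the vertex basis of $\Omega_0$.

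The main obstacle is characteristic $0$. There, signs around cycles of the face multigraph can produce an obstruction: an odd signed cycle forces coefficients like $1/2$, or fails outright. I expect this to be unavoidable in general, which is why the statement stops at $n\le 3$. For $n\le 2$ I would check directly that the inductive elements recover the known generators: vertices, edges, triangles, squares and double edges. For $n=3$ I would argue that the relevant face multigraphs are built from $\Omega_2$-generators and $\Omega_1$-edges. In that setting, the explicit classification of $2$-dimensional generators lets one verify case by case, via the shapes of multisquares and triangles, that every signed cycle is balanced. Hence the linear system is solvable over $\mathbb Z$, and so over any ring $R$ of characteristic $0$.
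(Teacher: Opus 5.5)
There is a genuine gap, and it sits exactly where you locate the difficulty. You assert that the pieces $w_c=\delta^h_{n,c}(v)$ need not lie in $\Omega_{n-1}(G;R)$ individually. This is false, and its negation is the key input you are missing. By Lemma~\ref{lem:NoPositionSwap}, a path chain is annihilated by each component $\partial^M_{n,n,i}$ separately: a term obtained by deleting a non-shortcut vertex in position $i$ cannot coincide with one obtained from position $j\neq i$, by a distance argument. Since deletions in positions $i\le n-2$ preserve the last two vertices, this gives $\delta^h_{n,c}(v)\in\Omega_{n-1}(G;R)$ for every $c$ (Lemma~\ref{lem:SumFace}).

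Once you have this, there is no linear system to solve. Work over $\mathbb{Z}$ or $\mathbb{Z}_2$ with bases $B_{n-1},B_{n-2}$ of inductive elements respecting the bigrading. Expand each $\delta^h_{n,c}(v)$ in $B_{n-1}$, repeating basis elements according to their integer coefficients. These are the vertex labels $x_i$, and $v$ is automatically their upper extension by $b$. For every $u\neq b$ with $(u,b)\notin E_G$, the vanishing of $\partial^M_{n,n,n-1}(v)$ gives $\sum_i\delta^h_{n-1,u}(x_i)=0$. After expanding in $B_{n-2}$, the copies of each basis element therefore occur with opposite signs in equal numbers and can simply be paired off into edges. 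This yields a $b$-complete face multigraph (Theorem~\ref{thm:BasisExtension}). Your spanning-tree parity argument and your case-by-case ``balanced signed cycle'' check stand in for this and are not justified as stated.

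Also, cutting one face multigraph into its connected components does not by itself make the pieces strongly connected, since that property quantifies over all inductive structures. One instead takes an inductive structure with the maximal number of components (Lemma~\ref{lem:ConnectedSplitting}).

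Your explanation of the characteristic-$0$ restriction is consequently misplaced: nothing goes wrong with signs around cycles. What the induction needs is that labels be expanded in a \emph{basis} of inductive elements in dimensions $n-1$ and $n-2$. With only a generating set, relations among generators would not pair off; for example, $s_{12}+s_{23}-s_{13}=0$ for the three squares $s_{ij}$ of a multisquare. This is the real source of the ``linear system'' you are worried about. Over $\mathbb{Z}_2$ every generating set contains a basis, so the induction runs in all dimensions. Over $\mathbb{Z}$ it need not, and inductive bases are only available in dimensions $0,1,2$: vertices, edges, and double edges, triangles and squares with a basis chosen inside each multisquare (Proposition~\ref{prop:Dim2Base}). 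That is precisely why the statement stops at $n=3$.

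Finally, the construction cannot be run directly over an arbitrary $R$ of characteristic $0$ or $2$. Face multigraph labels are inductive elements up to sign with integer multiplicities, whereas coordinates in $R$ need not be integer multiples of $1$. The paper proves generation over $\mathbb{Z}$ or $\mathbb{Z}_2$ and then transfers it. The transfer is not formal: it uses that $\mu_*$ is an isomorphism (Corollaries~\ref{cor:CoeffChange1} and~\ref{cor:CoeffChange2}) commuting with the face maps $\delta^h_{n,v}$ (Lemma~\ref{lem:CoeffChange}), so that strongly connected bases carry over (Lemma~\ref{lem:CoeffChangeGraded}).
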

In particular, Theorem~\ref{thm:gen} offers a complete description of $\Omega_*(G;\mathbb{Z}_2)$, and a description of $\Omega_n(G;\mathbb{Z})$ for $n\leq3$. In dimensions $0,1,2$ the inductive elements we construct coincide with the natural generators up to sign. However, the generators in dimension $3$ are new, and we have more to say about them below.

Of course, when working over a field, any generating set can be reduced to a basis. Furthermore, we show (Lemma~\ref{lem:CoeffChangeGraded})
that a basis with $\mathbb{Z}_2$ coefficients can be be transferred to the path chains with coefficients in any ring of characteristic $2$. Thus we have the following statement.
\begin{corollary}
For any $n\geq 0$ and ring $R$ of characteristic $2$, the module $\Omega_n(G;R)$ admits a basis of inductive elements. For any field $K$ of characteristic $0$, the module $\Omega_3(G;K)$ admits a basis of inductive elements.
\end{corollary}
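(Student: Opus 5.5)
The corollary follows by combining Theorem~\ref{thm:gen} with Lemma~\ref{lem:CoeffChangeGraded} and standard linear algebra. I would treat the two claims separately.

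\textbf{Characteristic $0$ fields.} Let $K$ be a field of characteristic $0$. By Theorem~\ref{thm:gen}, the $3$-dimensional inductive elements generate $\Omega_3(G;K)$.

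Since $G$ is finite, the set $I_3$ of $3$-dimensional inductive elements is finite. Indeed, inductive elements are built from finitely many face multigraphs and extensions in a finite digraph. Even without this finiteness, any spanning set of a vector space contains a basis. We can obtain one by greedily discarding elements lying in the span of the others, or by invoking Zorn's lemma for a maximal linearly independent subset of $I_3$. That subset is a basis of $\Omega_3(G;K)$ consisting of inductive elements.

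\textbf{Characteristic $2$ rings.} First take $R=\mathbb{Z}_2$, which is a field. By Theorem~\ref{thm:gen}, the inductive elements of dimension $n$ span $\Omega_n(G;\mathbb{Z}_2)$, so, as above, some subset $B_n$ of them is a basis.

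Now let $R$ be an arbitrary ring of characteristic $2$. Then $R$ is a $\mathbb{Z}_2$-algebra. I would invoke Lemma~\ref{lem:CoeffChangeGraded}, which I expect says that $\Omega_n(G;R)\cong \Omega_n(G;\mathbb{Z}_2)\otimes_{\mathbb{Z}_2} R$ compatibly with the inclusion into the allowed paths. Under this identification, the image of $B_n$ is an $R$-basis of $\Omega_n(G;R)$, because tensoring a free $\mathbb{Z}_2$-module with basis $B_n$ gives a free $R$-module with basis $B_n\otimes 1$.

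Two points need checking.
\begin{itemize}
\item The image of a $\mathbb{Z}_2$-inductive element under this map is an $R$-inductive element. This holds because the construction via face multigraphs uses only coefficients $\pm1$, and these reduce consistently. So the image of an inductive element over $\mathbb{Z}_2$ is the corresponding inductive element over $R$.
\item The identification is the natural one, sending a chain with $\mathbb{Z}_2$ coefficients to the same formal combination read in $R$. This is what Lemma~\ref{lem:CoeffChangeGraded} should provide.
\end{itemize}

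The only real content beyond Theorem~\ref{thm:gen} is the coefficient-change step. The key point there is that $\Omega_n(G;\mathbb{Z}_2)$ is the kernel of a $\mathbb{Z}_2$-linear map between finite free modules, and $R$ is flat over the field $\mathbb{Z}_2$. Tensoring with $R$ therefore preserves this kernel, which is precisely why the $\mathbb{Z}_2$ basis transfers to $R$.
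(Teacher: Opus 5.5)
Your proposal is correct and follows the paper's route: over a field, the generating sets from Theorem~\ref{thm:gen} (Corollaries~\ref{cor:FieldGeneratorBasis} and~\ref{cor:IductiveBasisIntegral}) reduce to bases, and a $\mathbb{Z}_2$-basis of inductive elements transfers to any ring of characteristic $2$ through the isomorphism $\mu_n$ of Corollary~\ref{cor:CoeffChange1}, which is what Lemma~\ref{lem:CoeffChangeGraded} packages. The one slip is your aside that $G$ is finite, which the paper does not assume, but your Zorn's lemma remark already handles the general case.
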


Section~\ref{sec:InductiveBasis} completes the central theoretical constructions presented in this work. The remainder of the paper, Sections~\ref{sec:relwithsn(G)classes},~\ref{sec:Dim3},~and~\ref{sec:ImportantExamples}, are focused on application and can be read largely independently from this point.

In Section~\ref{sec:relwithsn(G)classes}, we explore the relationship between inductive elements and thick $S_n(G)$-classes, the objects introduced by Fu and Ivanov~\cite{Fu2024} to describe bases of $\Omega_*(G;K)$ when $G$ contains no multisquares and $K$ is a field. Up to this point, the theory of thick $S_n(G)$-classes has been the most general characterisation of path homology chains to be found in the literature.

We show that if $R$ is a field and the digraph $G$ contains no multisquares, then the inductive elements defined in this paper reduce to the basis elements provided by thick $S_n(G)$-classes. Combined with our results concerning changes of coefficients in Section~\ref{sec:Coefficients}, our work extends Fu and Ivanov's result to a wider class of coefficients.
 \begin{corollary}[Theorem~\ref{thm:InductiveShortCorrespondence}, Corollary~\ref{cor:InductiveShortCorrespondenceForRings}, and \cite{Fu2024} Theorem 4.7]\label{cor:introoddprime}
If $G$ contains no multisquares, then for any commutative ring $R$ of zero or prime characteristic and any integer $n\geq0$ the module $\Omega_n(G;R)$ admits a basis of inductive elements. 
\end{corollary}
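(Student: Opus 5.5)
The statement is assembled from three ingredients, and the plan is to chain them. The first is Theorem~\ref{thm:InductiveShortCorrespondence}. Assume $K$ is a field and $G$ contains no multisquares. Then every inductive element of $\Omega_n(G;K)$ is, up to sign, the basis element attached to a thick $S_n(G)$-class, and conversely every such basis element arises as an inductive element. The second is \cite[Theorem 4.7]{Fu2024}: for a field $K$ and a digraph without multisquares, the elements attached to thick $S_n(G)$-classes form a basis of $\Omega_n(G;K)$. Together these give the statement whenever $R$ is a field. The third ingredient, which does the real work, is to pass from fields to commutative rings of zero or prime characteristic using the change-of-coefficients results of Section~\ref{sec:Coefficients}, as packaged in Corollary~\ref{cor:InductiveShortCorrespondenceForRings}.

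\textbf{Prime characteristic $p$.} Here $R$ contains $\mathbb{F}_p$. The thick-class elements are supported on paths with integer coefficients in $\{0,\pm1\}$, and over $\mathbb{F}_p$ they form a basis of $\Omega_n(G;\mathbb{F}_p)$. I would apply the analogue of Lemma~\ref{lem:CoeffChangeGraded} for characteristic $p$. The mechanism is that $\Omega_n(G;R)$ is the kernel of a matrix with entries in $\mathbb{F}_p$: it is the set of $R$-linear combinations of allowed paths whose boundary lies in the allowed paths. Taking kernels commutes with the flat base change $\mathbb{F}_p\to R$, so $\Omega_n(G;R)\cong\Omega_n(G;\mathbb{F}_p)\otimes_{\mathbb{F}_p}R$. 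Any $\mathbb{F}_p$-basis therefore remains an $R$-basis.

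\textbf{Characteristic $0$.} Here $R\supseteq\mathbb{Z}$, but $\mathbb{Z}\to R$ need not be flat, so more care is needed. The plan is to show that the thick-class elements form a $\mathbb{Z}$-basis of $\Omega_n(G;\mathbb{Z})$, and that $\Omega_n(G;\mathbb{Z})$ is a direct summand of the free module on allowed paths. Given both facts, the kernel defining $\Omega_n$ commutes with $-\otimes R$. To prove them, I would use the structure of the thick-class basis: each basis element has a distinguished path, with coefficient $\pm1$, that appears in no other basis element. This unitriangularity gives integral spanning, because any integral chain can be reduced by subtracting integer multiples of basis elements until it is zero, using the basis over $\mathbb{Q}$. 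It also gives that the cokernel of the inclusion is torsion-free, since if $m x$ lies in the span then so does $x$. Over $\mathbb{Z}$, torsion-free cokernel yields the splitting.

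\textbf{Main obstacle.} The hardest step is this integral splitting, in particular checking that the distinguished-path property really holds for thick $S_n(G)$-classes in the absence of multisquares. That property is also what lets Theorem~\ref{thm:InductiveShortCorrespondence} hold over $\mathbb{Z}$ and not only over fields. If the property fails, the fallback is to show that $\Omega_n(G;\mathbb{Z})\otimes\mathbb{F}_p\to\Omega_n(G;\mathbb{F}_p)$ is surjective for every prime $p$ by comparing dimensions of the field bases, and then conclude that the cokernel is torsion-free.
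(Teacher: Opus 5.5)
Your characteristic $p$ step is correct and is the paper's route (Corollary~\ref{cor:CoeffChange1} fed into Lemma~\ref{lem:CoeffChangeGraded}). $R$ is a free, hence flat, $\mathbb{F}_p$-module, so the $\mathbb{F}_p$-basis of thick-class elements base-changes to an $R$-basis.

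The characteristic $0$ step has a genuine gap. The inference ``$\Omega_n(G;\mathbb{Z})$ is a direct summand of $\mathcal{A}_n(G;\mathbb{Z})$, so the kernel commutes with $-\otimes R$'' is false.
\begin{itemize}
\item $\Omega_n(G;\mathbb{Z})$ is a direct summand for \emph{every} digraph, because $\mathcal{A}_n/\Omega_n$ embeds in the free module $C^M_{n-1,n}(G;\mathbb{Z})$ (this is the proof of Corollary~\ref{cor:CoeffChange3}). That only gives injectivity of $\mu_n$.
\item Surjectivity of $\mu_n$ holds iff $\mathrm{im}\,\partial^M_{n,n}\otimes R\to C^M_{n-1,n}(G;\mathbb{Z})\otimes R$ stays injective, i.e.\ iff $\mathrm{Tor}_1^{\mathbb{Z}}(C^M_{n-1,n}(G;\mathbb{Z})/\mathrm{im}\,\partial^M_{n,n},R)=0$. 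Nothing in your unitriangularity argument controls that cokernel.
\item The unitriangularity argument itself is correct but easy. Thick classes are disjoint components of $\mathcal{S}_n(G)$, so the basis elements have disjoint supports with coefficients $\pm1$, and that is what makes them a $\mathbb{Z}$-basis.
\item Your fallback also fails. Over $\mathbb{F}_2$ the basis is indexed by all thick classes, while over $\mathbb{Q}$ it is indexed only by the bipartite ones, so the dimensions need not agree.
\end{itemize}

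In fact no argument can close this gap, because the statement is false for characteristic $0$ rings with $2$-torsion.
\begin{itemize}
\item Without multisquares, Lemma~\ref{lem:NoPositionSwap} says $\sum c_pe_p\in\Omega_n(G;R)$ iff $c_p=0$ on thin paths and $c_p=-c_q$ whenever $p,q$ differ by a short move.
\item Hence $\Omega_n(G;R)$ is a direct sum with one copy of $R$ for each bipartite thick class and one copy of $\{a\in R\mid 2a=0\}$ for each non-bipartite thick class.
\item Take the digraph $\mathcal{G}$ of Example~\ref{ex:UCTfails}, which has no multisquares, and write $(i,j,k)$ for $e_{T,u_i,v_j,w_k,H}$. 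Its twelve $4$-paths form one thick class whose short-move graph contains the $9$-cycle $(1,1,2),(1,2,2),(1,2,3),(2,2,3),(2,3,3),(2,3,1),(3,3,1),(3,1,1),(3,1,2)$.
\item For $R=\mathbb{Z}\times\mathbb{F}_2$, which has characteristic $0$, this gives $\Omega_4(\mathcal{G};R)\cong 0\times\mathbb{F}_2\neq0$. This module is killed by $2\neq0$, so it is not free and has no basis of any kind.
\end{itemize}

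The paper's own proof has the same hole. Corollary~\ref{cor:CoeffChange2} treats an arbitrary characteristic $0$ ring as a torsion-free, hence flat, $\mathbb{Z}$-module, which it need not be. If you add the hypothesis that $R$ is torsion-free as an abelian group, then $\mathbb{Z}\to R$ is flat and your $\mathbb{Z}$-basis transfers; this is then exactly the paper's argument. The decomposition above shows it actually suffices that $2$ be a non-zero-divisor in $R$.
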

We stress that the bases appearing Corollary~\ref{cor:introoddprime} are essentially those constructed in \cite[Theorem 4.7]{Fu2024}. Our only claim to originality here is the increased generality which is afforded by the more powerful description of these bases in terms of inductive elements.

Turning to our results in Section~\ref{sec:Dim3}, the more precise basis descriptions of $\Omega_3(G;K)$ given by Grigor'yan are expressed in terms of induced images of trapezohedron elements (equation~\eqref{eq:TrapezohedronElement}), the distinguished generator of $\Omega_3(\mathbb{T}_m;R)$ for some member $\mathbb{T}_m$ of the sequence of trapezohedron digraphs (Definition~\ref{def:Trapezohedron}).
We use inductive elements to study the structure of $\Omega_3(G;R)$ in Section~\ref{sec:Dim3}, yielding the following description.
\begin{theorem}[Corollary~\ref{cor:Omega3Explicitly}]
    Let $R$ be a ring of characteristic $0$ or $2$. Then there is a generating set of $\Omega_3(G;R)$ consisting of elements obtained as the induced image of a trapezohedron element under a digraph map.
\end{theorem}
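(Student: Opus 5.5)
By Theorem~\ref{thm:gen}, when $R$ has characteristic $0$ or $2$ the $3$-dimensional inductive elements generate $\Omega_3(G;R)$. The plan is therefore to show that every $3$-dimensional inductive element, possibly after splitting it into a sum of simpler inductive elements, is the image $f_*(\tau_m)$ of a trapezohedron element $\tau_m\in\Omega_3(\mathbb{T}_m;R)$ under a digraph map $f\colon \mathbb{T}_m\to G$. The resulting images then form the required generating set.

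First I would unwind the definition of a $3$-dimensional inductive element. Such an element is a strongly connected extension, by an apex vertex, over a face multigraph whose edges are labelled by $2$-dimensional inductive elements. In dimensions $\le 2$ inductive elements coincide with the natural generators up to sign, so these labels are squares, triangles and double edges. Concretely, a $3$-dimensional inductive element is a lower (or upper) extension: one chooses a vertex $a$ and a cyclically arranged family of $2$-dimensional generators, each containing $a$ as initial (respectively terminal) vertex. Their opposite faces glue along the face multigraph, and the extension closes them up to a terminal vertex $b$. Strong connectivity of the extension should force this family to be one cycle around $a$, i.e.\ a ``fan'' of $m$ squares or triangles. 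That is exactly the combinatorics of the trapezohedron $\mathbb{T}_m$, whose generator $\tau_m$ (equation~\eqref{eq:TrapezohedronElement}) is a sum over $m$ rotated copies of an elementary $3$-path from the bottom apex to the top apex.

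Second, I would define the digraph map $f\colon\mathbb{T}_m\to G$ explicitly. It sends the two apexes to $a$ and $b$, and the two zigzag rings of $\mathbb{T}_m$ to the middle vertices of the successive faces in the cycle. Degenerate faces (triangles, double edges) are handled by letting $f$ collapse an edge of $\mathbb{T}_m$ to a vertex, which is allowed for digraph maps and kills the corresponding non-regular paths. I would then check term by term that $f_*(\tau_m)$ equals the inductive element up to sign, using that the coefficients appearing in an extension over a face multigraph are the $\pm1$ incidence signs matching those in $\tau_m$. Small cases, where $m=1,2$ or the extension contains a double edge, should reduce to low trapezohedra, or to cones that are themselves images of $\mathbb{T}_1$ or $\mathbb{T}_2$.

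The main obstacle is the case where $G$ contains multisquares. There the face multigraph may have vertices of higher valency, so a strongly connected extension need not be a single cycle. In characteristic $0$ it is not evident that signs can be chosen consistently along one cycle. My first approach would be an Euler-type decomposition of the strongly connected labelled multigraph into directed cycles, compatible with the edge labels. This writes the inductive element as a sum of single-cycle extensions. I would then show that each single-cycle extension lies in $\Omega_3$, by checking that its boundary's non-allowed terms cancel pairwise, so each is a trapezohedron image. In characteristic $2$ signs disappear and the decomposition is immediate. In characteristic $0$, I expect to need the explicit low-dimensional analysis behind Corollary~\ref{cor:IductiveBasisIntegral} to guarantee the orientations are coherent.
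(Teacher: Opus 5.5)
Your top-level plan matches the paper's: inductive elements generate $\Omega_3(G;R)$ in characteristic $0$ or $2$, and each one is realised as a trapezohedron image, with collapsing where necessary. The structural analysis in the middle, however, has a genuine gap.

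First, strong connectedness does \emph{not} force the face multigraph to be a single cycle. By Proposition~\ref{prop:exsumm}(3) it is always a line or a cycle, and lines occur for every number $m$ of vertices, not only in degenerate or small cases. The reason is that $h(I)$-completeness demands a matching edge for a ridge at $u$ only when $u\neq h(I)$ and $(u,h(I))\notin E_G$. So an end vertex of a line can be a directed square whose unmatched middle vertex is adjacent or equal to the apex $h(I)$. These are the elements modelled by $\mathbb{L}_t$, $\mathbb{L}^u_t$, $\mathbb{L}^l_t$ in Theorem~\ref{thm:3PathStructureConstruction}. Your plan attaches collapsing only to triangle or double-edge faces and to $m\le 2$, so it never produces, for example, a line of $m$ genuine squares.

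The missing idea is that a line is a cycle with one extra face that is collapsed away. For instance, $L^u_{t-1}$ is, up to sign, the image of $T_t$ under $u_i\mapsto u_{i+1}$ $(i<t)$, $u_t\mapsto u_1$, $v_i\mapsto v_i$ $(i<t)$, $v_t\mapsto H$; the paths through $v_t$ become irregular. Similarly, sending an extra $u$-vertex of $\mathbb{T}_t$ to $T$ turns the adjacent squares into the triangular ends. The paper first builds a $3$-path structure map and then precomposes with such collapsing maps out of $\mathbb{T}_t$. Also, there is no usable $\mathbb{T}_1$ (its element would vanish). The single $3$-path $e_{a,w,c,b}$ must come from $\mathbb{T}_2$ by collapsing.

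Second, the ``main obstacle'' you identify does not exist, and your remedy for it would not work. The \emph{vertex} labels of an inductive structure on a $3$-dimensional element are double edges, directed triangles or directed squares. The edge labels are single edges $\pm e_{a,u}$ of $G$, not $2$-dimensional elements. Each vertex label has at most two nonzero $\delta^h_{2,u}$, each equal to $\pm$ one edge, and each ridge element lies in at most one edge label. Hence every vertex has valence at most $2$, whether or not $G$ has multisquares (Example~\ref{exam:Dimension3}). Multisquares only affect whether inductive elements form a basis and the injectivity and uniqueness of the realising maps, none of which this statement needs.

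An Euler-type splitting into cycles would be unsound anyway. A vertex shared by several cycles would have its label counted once per cycle, so the pieces would not sum to $I$.

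The characteristic-$0$ sign worry is also moot. The edge condition $x_i^{u,k}=-x_j^{u,l}$ already forces coherent orientations along a line or cycle. A term-by-term comparison then gives $f_\#(T_m)=\pm I$ directly. The edges $h(x_j)\to h(I)$ that make $f$ a digraph map come from the definition of an extension over $F_2$.
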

This generalises the earlier result of Grigor'yan by removing the restrictions on $G$, addressing~\cite[Problem 2.11]{Grigoryan2022} with respect to the coefficients above. In fact, we give a much more powerful description of $\Omega_3(G;R)$ in Theorem~\ref{thm:Omega3Explicitly}, of which Corollary~\ref{cor:Omega3Explicitly} is a consequence. To obtain this we are led to consider several infinite families of digraphs, whose importance seems to have not been previously noticed in the literature. 

Finally, in Section~\ref{sec:ImportantExamples}, we use the theory of extensions to construct certain digraphs
demonstrating the necessity of the conditions required in the definition of complete and strongly connected extensions over face multigraphs. In particular, we provide a sequence of digraphs whose path homology differential with respect to an inductive basis contains entries of arbitrary multiplicity.

As an application for the theory of inductive elements we develop several universal coefficient theorems for path homology. In Section~\ref{sec:Coefficients} we produce change of coefficient maps $\bar{\mu}_*\colon H^P_*(G;\mathbb{Z})\otimes R\rightarrow H^P_*(G;R)$ for any digraph $G$ and ring $R$. These maps are quite badly behaved in general, but under certain conditions we are able to show that $\bar{\mu}_n$ is injective and compute its cokernel. 
\begin{theorem}[Propositions~\ref{Pr:coefisopid} and~\ref{prop:univcoeffs} and Theorem~\ref{th:Dim3Char0Char2invareince}]
Let $G$ be a digraph and $R$ a commutative ring and consider a three-term sequence of the form
\begin{equation}\label{eq:univcoefexactseq}
0\rightarrow H_n^P(G;\mathbb{Z})\otimes R\xrightarrow{\bar{\mu}_n} H^P_n(G;R)\rightarrow \mathrm{Tor}(H_{n-1}^P(G;\mathbb{Z}),R)\rightarrow0.
\end{equation}
The following statements hold.
\begin{enumerate}
    \item If $G$ contains no multisquares and $R$ has odd prime characteristic, then there is an exact sequence of the form~\eqref{eq:univcoefexactseq} for each $n\geq0$.
    \item  If $R$ has characteristic $2$, then there is an exact sequence of the form~\eqref{eq:univcoefexactseq} for each $n=0,1,2$. For $n\geq3$, no sequence of the form~\ref{eq:univcoefexactseq} can be exact in general.
    \item If $R$ has characteristic $0$, then $\bar{\mu}_*$ induces isomorphisms $H^P_n(G;R)\cong H_n^P(G;\mathbb{Z})\otimes R$ for all $n\geq0$.
\end{enumerate}
\end{theorem}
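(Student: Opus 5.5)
The plan is to compare the path chain complex with $R$ coefficients to the integral one tensored with $R$. The underlying fact is that $\Omega_n(G;\mathbb{Z})$ is a subgroup of the free module $\mathcal{A}_n(G;\mathbb{Z})$ of allowed paths, hence free. Tensoring the inclusion with $R$ gives a chain map
\[
\mu\colon \Omega_*(G;\mathbb{Z})\otimes R\to\Omega_*(G;R),
\]
and this map induces $\bar\mu_*$. For each of the three statements we first determine when $\mu_n$ is an isomorphism. Once it is, the algebraic universal coefficient theorem for the free complex $\Omega_*(G;\mathbb{Z})$ gives the exact sequence \eqref{eq:univcoefexactseq} in the form
\[
0\to H_n\otimes R\to H_n(\Omega_*\otimes R)\to \mathrm{Tor}(H_{n-1},R)\to 0 .
\]
So the whole task reduces to understanding $\mu_n$ in degrees $n$ and $n-1$, since $H_n$ only involves $\Omega_{n+1},\Omega_n,\Omega_{n-1}$ and the relevant degrees must be checked with care.

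\textbf{Characteristic $0$ (part 3).} Write $\Omega_n(G;R)$ as the kernel of a map defined over $\mathbb{Z}$: the paths in $\mathcal{A}_n$ whose boundary has vanishing non-allowed components. We argue that $\mu_n$ is an isomorphism for all $n$ in this case.
\begin{itemize}
\item \emph{Flat $R$.} If $R$ is flat over $\mathbb{Z}$ (for instance $\mathbb{Q}$ or any torsion-free ring), kernels commute with $\otimes R$, so $\mu_n$ is an isomorphism.
\item \emph{General characteristic $0$.} Here $R$ may have torsion. Since $\mathrm{Im}$ lies in the free module of non-allowed paths, the cokernel of $\Omega_n(G;\mathbb{Z})\to\mathcal{A}_n$ embeds in a free module and is therefore torsion-free. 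Hence $0\to\Omega_n\to\mathcal{A}_n\to\mathcal{A}_n/\Omega_n\to0$ stays exact after $\otimes R$. What remains is exactness at the kernel level, i.e.\ that the kernel commutes with base change. For this I would invoke the coefficient-change lemma (Lemma~\ref{lem:CoeffChangeGraded}), adapted to characteristic $0$.
\end{itemize}
Then $H_n(G;R)\cong H_n(\Omega\otimes R)$. The Tor term vanishes for torsion-free $R$. For general $R$ of characteristic $0$, part 3 as stated, $\bar\mu_n$ an isomorphism, must come from the paper's argument: inductive elements give integral generators whose images generate. I would prove the isomorphism by showing that $H_{n-1}^P(G;\mathbb{Z})$ has no torsion that can interact with such $R$; honestly I would expect to restrict to torsion-free $R$ if this fails.

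\textbf{Characteristic $2$ (part 2).} Use Theorem~\ref{thm:gen}. In characteristic $2$ the inductive elements generate $\Omega_n(G;R)$, and in degrees $\le 3$ they are reductions of integral inductive elements (which generate $\Omega_n(G;\mathbb{Z})$ for $n\le3$ by the characteristic $0$ statement). Hence $\mu_n$ is surjective for $n\le 3$. Injectivity follows because the cokernel $\mathcal{A}_n/\Omega_n$ is torsion-free, so $\Omega_n\otimes R\to\mathcal{A}_n\otimes R$ is injective. Computing $H_n$ for $n=0,1,2$ needs $\mu_{n+1}$ for $n+1\le3$, which is available, so the exact sequence holds for $n\le2$. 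For the failure at $n\ge3$, I would exhibit a digraph from Section~\ref{sec:ImportantExamples} where a mod-$2$ inductive element in $\Omega_4$ is not the reduction of any integral chain. The extra boundary it contributes, or its extra cycle, then breaks exactness; this amounts to computing ranks on an explicit example.

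\textbf{Odd characteristic, no multisquares (part 1).} Apply Corollary~\ref{cor:introoddprime}: there is a basis of inductive elements in every dimension, and in both $\mathbb{Z}$ and $R$ these arise from the same thick $S_n(G)$-classes (Theorem~\ref{thm:InductiveShortCorrespondence}). So $\mu_n$ sends a basis to a basis and is an isomorphism for all $n$, and the UCT then gives exactness. The main obstacle throughout is the surjectivity of $\mu_n$, i.e.\ showing that every $R$-path chain lifts to an integral one. This is exactly where the generation results for inductive elements are essential, and where the characteristic $2$ counterexample in dimension $4$ must be constructed.
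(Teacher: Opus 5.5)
Your overall architecture is the paper's. Injectivity of $\mu_n$ comes from the torsion-freeness of $\mathcal{A}_n/\Omega_n$ (Corollary~\ref{cor:CoeffChange3}), surjectivity from explicit generators, and then the algebraic universal coefficient theorem. Part~1 and the flat case of part~3 are argued as in Propositions~\ref{prop:univcoeffs} and~\ref{Pr:coefisopid}.

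The gap is the surjectivity of $\mu_3$ in characteristic $2$, which is the real content of part~2. You assert that mod-$2$ inductive elements of degree $3$ are reductions of integral ones. The only support you offer is that integral inductive elements generate $\Omega_3(G;\mathbb{Z})$, and that says nothing about whether a given mod-$2$ chain lifts. Inductive elements are defined separately over each ring, and Theorem~\ref{thm:gen} contains no lifting statement. The analogous claim is false in degree $4$: for Fu--Ivanov's digraph $\mathcal{G}$ in Example~\ref{ex:UCTfails}, $\Omega_4(\mathcal{G};\mathbb{Z})=0$ while $\Omega_4(\mathcal{G};\mathbb{Z}/2)\neq 0$.

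So a degree-specific argument is needed, and Section~\ref{sec:Dim3} provides it. By Theorem~\ref{thm:3PathStructureConstruction}, every degree-$3$ inductive element is $\pm\varphi_{\#}(\mathfrak{H})$ for a $3$-path structure map $\varphi$. Such an element is visibly the reduction of the integral chain $\varphi_{\#}(\mathfrak{H})\in\Omega_3(G;\mathbb{Z})$ (Corollary~\ref{cor:3PathStructureGeneration}), and this is how surjectivity is obtained in the proof of Theorem~\ref{th:Dim3Char0Char2invareince}.

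A direct repair is also available. By Proposition~\ref{prop:exsumm}(3), a connected inductive structure in degree $3$ is a line or cycle of squares, triangles and double edges with a common tail $t$, and every valence-two vertex is a square. Lift each vertex to its integral counterpart, orienting the $j$-th square as $e_{t,w_j,h_j}-e_{t,w_{j+1},h_j}$, where $w_{j+1}$ is the middle vertex it shares with the next square. Choose the signs of end triangles or double edges to match. Then every edge label cancels over $\mathbb{Z}$, also around a cycle. Since $h$-completeness is a combinatorial condition, Proposition~\ref{prop:GraphExtentsion} puts the lift in $\Omega_3(G;\mathbb{Z})$.

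Your plan for the failure at $n\geq 3$ also points at the wrong place. The degree-$4$ elements built in Section~\ref{sec:ImportantExamples} are integral chains, so they are not the non-liftable classes you want. The paper's witness is $\mathcal{G}$, where $B_3(\mathcal{G};\mathbb{Z}/2)\cong\mathbb{Z}/2$ even though $\Omega_4(\mathcal{G};\mathbb{Z})=0$. What excludes every sequence of the given shape at $n=3$ is that $H^P_3(\mathcal{G};\mathbb{Z}/2)=0$ while $\mathrm{Tor}(H^P_2(\mathcal{G};\mathbb{Z}),\mathbb{Z}/2)\neq 0$. The second fact is integral information ($2$-torsion in $H^P_2(\mathcal{G};\mathbb{Z})$). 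The paper extracts it from part~1 at odd primes, the $n\le 2$ sequences of part~2, and machine computations with field coefficients. Larger $n$ then follow by suspension.

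On part~3, your fallback to torsion-free $R$ is exactly what is proved: Proposition~\ref{Pr:coefisopid} requires $R$ flat over $\mathbb{Z}$, and Corollary~\ref{cor:CoeffChange2} tacitly reads characteristic $0$ as torsion-free. Invoking Lemma~\ref{lem:CoeffChangeGraded} would be circular, since its proof assumes Corollary~\ref{cor:CoeffChange2}. Also, $H^P_2(\mathcal{G};\mathbb{Z})\cong\mathbb{Z}/2$ defeats the hope that $H^P_{n-1}(G;\mathbb{Z})$ is torsion-free.

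For non-flat rings the claim genuinely fails. For $R=\mathbb{Z}\times\mathbb{Z}/2$ one has $H^P_*(G;R)\cong H^P_*(G;\mathbb{Z})\times H^P_*(G;\mathbb{Z}/2)$. Take $G$ to be the transitive digraph of the face poset of a triangulated $\mathbb{R}P^2$; there $\Omega_*=\mathcal{A}_*$ is the chain complex of the order complex. Then $H^P_2(G;R)\cong\mathbb{Z}/2$ but $H^P_2(G;\mathbb{Z})\otimes R=0$.
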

Examples appearing in the literature~\cite{Fu2024} show that part $(1)$ cannot be generalised to include rings of characteristic $2$.
In part $(2)$, the boundary case $n=3$ is particularly interesting, and in Theorem~\ref{th:Dim3Char0Char2invareince} we are able to exactly explain the failure of~\eqref{eq:univcoefexactseq} to be exact in terms of a certain obstruction group $\mathcal{B}_3(G;R)$ which is associated to $G$.
The material developed in Section~\ref{sec:Coefficients} shows that part $(2)$ has an analogue when $R$ is a ring of odd characteristic and $n=0,1$, but we leave open the question as to whether the statement extends also to $n=2$ in this case.

The basic constructions in this paper can be made to work for more general rings and fields of odd primary characteristic. The price to pay for the increased generality, however, is a steep increase in complexity. Because of this, these constructions will be dealt with in a separate paper.

Even without the extra complexity, the theory developed in the current paper is already rich enough to serve as foundation for many applications. In a paper under preparation we use the description of $\Omega_3(G;\mathbb{Z}_2)$ given in Section~\ref{sec:Dim3} to obtain a new, efficient algorithm for persistent path homology up to dimension $2$, extending the existing state of the art algorithm for dimension one.

\begin{remark}[Open questions]
Following on from the results presented in this paper, some important questions concerning the structure of path chains remain unresolved.
\begin{enumerate}
    \item 
    Do inductive elements generate $\Omega_n(G;\mathbb{Z})$ for all $n \geq 4$?
    \item 
    Do inductive elements generate $\Omega_3(G;R)$ for any commutative ring $R$?
    \item For any digraph $G$, is there an exact sequence of the form $0\rightarrow H_3^P(G;\mathbb{Z})\otimes R \rightarrow H_3^P(G;R)\rightarrow \mathrm{Tor}(H_{2}^P(G;\mathbb{Z}), R)\rightarrow 0$ when $R$ is a ring of odd prime characteristic?
\end{enumerate}
\end{remark}

\begin{remark}[Acknowledgements]
    The authors would like to thank Alexander Grigor'yan, Sergei Ivanov, and Mengmeng Zhang for comments on an earlier draft of this paper.
\end{remark}

\section{Background}

Throughout this work, $\mathbb{Z}$ is the ring of integers, $\mathbb{Z}_p$ the finite field with $p$ elements for some prime $p$, $\mathbb{R}$ the field of real numbers, and $R$ denotes an arbitrary commutative ring with a unit, unless otherwise stated.

In the sequel we will frequently make use of certain modules being free. Recall that a \emph{basis} of a free $R$-module $F$ is a family $\{f_i\mid i\in I\}$ of distinct elements $f_i \in F$, where $i$ runs over the members of some indexing set $I$, such that every element $f \in F$ can be uniquely expressed as a \emph{linear combination}
\[
    f = \sum_{i\in I} \alpha_i f_i
\]
where each \emph{coefficient} $\alpha_i \in R$ and at most finitely many $\alpha_i \neq 0$. Every free $R$-module has a basis.

\subsection{Directed graphs and labelled multigraphs}

A \emph{digraph} $G$ is a pair $(V_G,E_G)$, whose \emph{vertices} are elements of the set $V_G$ and whose \emph{edges} are element of the set
\[
    E_G \subseteq \{ (u,v) \in V_G \times V_G \: | \: u \neq v \}.
\]
We often denote edges $(u,v) \in E_G$ by $u \to v$.
A digraph $G$ is called \emph{finite} if the set $V_G$ is finite. However, unless otherwise stated, we place no finiteness assumptions on the digraphs in this work. 

A \emph{subdigraph} of a digraph $G$ is a digraph $H$ such that $V_H\subseteq V_G$ and $E_H\subseteq E_G$.
A \emph{map} of digraphs $f \colon G \to G'$ is a function $f\colon V_G \to V_{G'}$ such that for all $(u,v) \in E_G$, either $f(u) = f(v)$ or $(f(u),f(v)) \in E_{G'}$. If $(f(u),f(v)) \in E_{G'}$ for all $(u,v)\in E_G$, then $f$ is said to be a \emph{strong map}.

A \emph{multigraph} $M$ consists of a pair $(V_M,E_M)$, whose \emph{vertices} are elements of the set $V_M$ and whose \emph{edges} are elements of the multiset $E_M$, the members of which are sets of pairs of distinct elements of $V_M$. A \emph{submultigraph} of a multigraph $M$ is a multigraph $N$ such that
\[
    V_N \subseteq V_M 
    \;\;\; \text{and} \;\;\;
    E_N \subseteq E_M.
\]
A multigraph $M$ is called a \emph{graph} if each member of $E_M$ has multiplicity $1$.

A \emph{vertex labelled multigraph} with label set $L_V$ is a multigraph $M$ together with a function $l_V \colon V_M \to L_V$. Similarly, an \emph{edge labelled multigraph} with label set $L_E$ is a multigraph $M$ together with a function $l_E\colon E_M \to L_E$. A multigraph that is both vertex labelled and edge labelled is called a \emph{labelled multigraph}.

A multigraph is said to be \emph{connected} if for any distinct vertices $u,w \in V_M$, there exists a positive integer $t$ and a sequence of vertices $u=v_1,\dots,v_t=v$ such that $(v_i,v_{i+1})\in E_M$ for each $i=1,\dots,t-1$. The same terminology applies to labelled multigraphs, so that one such is connected in case the underlying multigraph is.

\subsection{Path homology}

We detail here the construction of path homology as laid out in the foundational paper \cite{Grigoryan2013}. In the next subsection we provide an alternative characterisation of the path chain modules in terms of the diagonal magnitude homology groups. Throughout this section, let $G$ be a digraph and $R$ a commutative ring with a unit.

An \emph{elementary $n$-path} ($n\geq0$) in a set $V$ is a sequence $v_0,\dots,v_n \in V$, which we denote $e_{v_0,\dots,v_n}$. Define $\Lambda_n(V;R)$ to be the free $R$-module generated by all elementary $n$-paths in $V$.
In addition, set $\Lambda_{n}(V;R) = 0$ for $n<0$.
Then every $x\in \Lambda_n(V;R)$ has a unique expression
\begin{equation}\label{eq:UniqueChainFormInitial}
    x = \sum_{v_0,\dots,v_n \in V} \alpha_{v_0,\dots,v_n} e_{v_0,\dots,v_n}
\end{equation}
where each $\alpha_{v_0,\dots,v_n} \in R$ and at most finitely many $\alpha_{v_0,\dots,v_n}\neq 0$.

Define maps $\partial_{n,i}^P\colon \Lambda_n(V;R) \to \Lambda_{n-1}(V;R)$ for each $i=0,\dots,n$ by linearly extending
\begin{equation*}
    \partial_{n,i}^P(e_{v_0,\dots,v_n}) = e_{v_0,\dots,\hat{v}_i,\dots,v_n}
\end{equation*}
where $v_0,\dots,\hat{v}_i,\dots,v_n$ denotes the sequence $v_0,\dots,v_n$ with the element $v_i$ removed.
Then the graded module $\Lambda_*(V;R)$ becomes a chain complex $(\Lambda_*(V;R),\partial^P_*)$ with differential
\[
    \partial^P_n = \sum_{i=0}^n (-1)^i\partial^P_{n,i}.
\]
We call $\partial^P_n$ the \emph{path differential}.
Clearly, the chain complex $(\Lambda_*(V;R),\partial^P_*)$ has trivial homology in all but degree $0$.

An elementary $n$-path $e_{v_0,\dots,v_n}\in \Lambda_n(V;R)$ is called \emph{regular} if $v_{i-1} \neq v_i$ for every $i=1,\dots,n$, and \emph{irregular} otherwise. Denote by $\mathcal{I}_n(V;R)$ the free $R$-module generated by the set of elementary irregular $n$-paths and define a graded module $\mathcal{R}_*(V;R)$ by
\[
    \mathcal{R}_n(V;R) = \Lambda_n(V;R) / \mathcal{I}_n(V;R).
\]
The path differential $\partial^P_*$ descends to the quotient and $(\mathcal{R}_*(V;R), \partial^P_*)$ becomes a chain complex. Still, $(\mathcal{R}_*(V;R), \partial^P_*)$ like $(\Lambda_*(V;R), \partial^P_*)$ has trivial homology in all but degree $0$.

An \emph{(allowed) $n$-path} in a digraph $G=(V_G,E_G)$ is an elementary $n$-path $e_{v_0,\dots,v_n}$ in $V_G$ such that
\[
    (v_{i-1},v_i) \in E_G
\]
for each $i = 1, \dots, n$. 
The allowed paths span a submodule of $\Lambda_*(V_G;R)$ which is mapped injectively into $\mathcal{R}_*(V_G;R)$ by the quotient projection. Let $\mathcal{A}_*(G;R)\subseteq \mathcal{R}_*(V_G;R)$ be the image of this submodule. Following a standard abuse, we call also the members of $\mathcal{A}_*(G;R)$ \emph{allowed paths}, and will denote the cosets in $\mathcal{A}_*(G;R)$ by their unique allowed representatives. A fact that we use repeatedly in the sequel is that $\mathcal{A}_*(G;R)$ is a free $R$-module.

On the other hand, the graded module $\mathcal{A}_*(G;R)$ is not generally a subcomplex of $(\mathcal{R}_*(V_G;R),\partial^P_*)$, as it need not be the case that $\partial^P_n(A_n(G;R))\subseteq A_{n-1}(G;R)$. 
Therefore, we pass to the submodule $\Omega_*(G;R)$ of $\mathcal{A}_*(G;R)$ defined to be
\[
    \Omega_n(G;R) = \{ x \in \mathcal{A}_n(G;R) \: | \: \partial^P_{n}(x) \in \mathcal{A}_{n-1}(G;R) \}.
\]
Note that, similar to the expression in equation~\eqref{eq:UniqueChainFormInitial}, it remains the case that each $x\in \Omega_n(G;R)$ can be written uniquely in the form
\begin{equation}\label{eq:UniqueChainForm}
    x = \sum_{e_{v_0,\dots,v_n} \in P^G_n} \alpha_{v_0,\dots,v_n} e_{v_0,\dots,v_n}
\end{equation}
where $P^G_n$ is the set of all $n$-paths of $G$ and $\alpha_{v_0,\dots,v_n} \in R$ with at most finitely may $\alpha_{v_0,\dots,v_n}\neq 0$.

By construction, $\Omega_*(G;R)$ is the smallest submodule of $\mathcal{A}_{n}(G;R)$ on which $\partial^P_n$ is a differential. Following the usual convention, we assume that $\Omega_n(G;R)=0$ for $n < 0$.
\begin{definition}
    We call the chain complex $(\Omega_*(G;R), \partial^P_*)$ the \emph{path chain complex} of $G$ with coefficients in $R$.
    An element of $\Omega_n(G;R)$ is called a \emph{path chain} of dimension $n$.
    The homology $H^P_*(G;R)$ of $(\Omega_*(G;R), \partial^P_*)$ is called the \emph{path homology} of $G$ with coefficients in $R$.
\end{definition}

Path homology is functorial with respect to digraph maps. Any function $f\colon V\rightarrow V'$ induces a linear map $f_{n}\colon \Lambda_n(V;R) \to \Lambda_n(V';R)$ which acts on basis elements as
\[
    f_n(e_{v_0,\dots,v_n}) = e_{f(v_0),\dots,f(v_n).}
\]
This preserves irregular paths and hence descends to $f_n\colon\mathcal{R}_n(V;R)\rightarrow\mathcal{R}_n(V';R)$. In case $f \colon G \to G'$ is a map of digraphs, $f_n$ preserves allowed paths and restricts to $f_n\colon\mathcal{A}_n(G;R)\rightarrow\mathcal{A}_n(G';R)$. It's clear that all these maps commute with the path differentials and hence give rise to a chain map $f_{\#}\colon \Omega_*(G;R) \to \Omega_*(G';R)$. In turn, there is an induced graded linear map $f_*\colon H_*^P(G;R) \to H_*^P(G';R)$ (for a proof see \cite[Proposition 1.6]{Grigoryan2022}).

\subsection{Magnitude homology and path homology}\label{sec:MagnitudeHomology}

Magnitude homology was introduced by Hepworth and Willerton~\cite{Hepworth2017} as a homology theory for graphs and later generalised to quasi-metric spaces, as presented here. The relationship between the magnitude and path homologies of a digraph was first discussed by Asao~\cite{Asao2023}, making use of a spectral sequence identified by Hepworth and Willerton. In this spectral sequence, the first page coincides with the magnitude homology and the second page contains the path homology along a row. In this section we make explicit the relationship between the path chains and magnitude chains. 

A  \emph {(extended) quasi-metric space} $(X,d)$ is a set $X$ together with a function
\[
    d \colon X \times X \to [0,\infty]
\]
such that
\begin{enumerate}[(1)]
\item $d(x_1,x_1)=0$,
\item $d(x_1,x_3)\leq d(x_1,x_2)+d(x_2,x_3)$ and
\item $d(x_1,x_2)=d(x_2,x_1)=0$ implies that $x_1=x_2$
\end{enumerate}
for all $x_1,x_2,x_3 \in X$.

Thus let $(X,d)$ be a quasi-metric space. We denote by $\langle x_0,\dots,x_n\rangle$ an $(n+1)$-tuple $(x_0,\dots,x_n) \in X^{n+1}$ for which $x_{i-1}\neq x_i$ for all $i=1,\dots,n$.
We write
\begin{equation}\label{eq:length}
    \ell\langle x_0,\dots,x_n\rangle=\sum^{n}_{i=1}d(x_{i-1},x_i),
\end{equation}
and call this quantity the \emph{length} of $\langle x_0,\dots,x_n\rangle$.
For fixed $l\in\mathbb{R}$ define free $R$-modules $\text{C}^M_{n,l}(X;R)$ by
\begin{equation*}
    C^M_{n,l}(X;R) =
    R
    \left[
    \left\{
    \langle x_0, \dots, x_n \rangle
    \mid
    \ell\langle x_0,\dots,x_n\rangle = l
    \right\}
    \right]
\end{equation*}
for $n\geq0$, and $C^M_{n,l}(X;R) = 0$ for $n<0$.
For $i=1,\dots,n-1$ define $\partial_{i,n}^M\colon C^M_{n,l}(X;R) \to C^M_{n-1,l}(X;R)$ by linearly extending
\begin{equation*}
    \partial_{n,l,i}^M\langle x_0,\dots,x_n\rangle =
    \begin{cases} 
        \langle x_0,\dots,x_{i-1},x_{i+1},\dots, x_n \rangle
        &
        \text{if}\;\; d(x_{i-1},x_i) + d (x_i,x_{i+1}) = d(x_{i-1}, x_{i+1}), \\
        0
        &
        \text{otherwise}
    \end{cases}
\end{equation*}
and setting
\begin{equation*}
    \partial_{n,l}^M = \sum_{i=1}^{n-1} (-1)^i \partial_{n,l,i}^M.
\end{equation*}
Then $(C^M_{*,l}(X;R),\partial^M_{*,l})$ is a chain complex for each $l \in \mathbb{R}$.

\begin{definition}
    For $l\in \mathbb{R}$ define the \emph{magnitude homology} $H^M_{*,l}(X;R)$ to be the homology of the chain complex $(C^M_{*,l}(X;R), \partial_{*,l}^M)$.
\end{definition}

A digraph $G$ comes furnished with a natural quasi-metric $d_G$ given by
\begin{equation}\label{eq:GraphMetric}
d_G(u,v)=\min\{n\geq0\mid \exists \: e_{u=v_0,\dots,v_n=v}\in P^G_n\},
\end{equation}
where we understand that $d_G(u,v)=\infty$ if there is no allowed path in $G$ from $u$ to $v$. We will generally suppress this quasi-metric from notation, writing $C^M_{n,l}(G;R)$ and $H^M_{n,l}(G;R)$ for the magnitude chains and magnitude homology of the quasi-metric space $(G,d_G)$. 
Note that in this case we need only consider integer values of $l$ to obtain all information about the magnitude homology.

The magnitude homology $H^M_{n,n}(G;R)$ for $n\geq 0$ is called the \emph{diagonal magnitude homology} of the digraph $G$. As $C^M_{n,l}(G;R)=0$ for $l<n$, it holds that
\begin{equation}\label{eq:DiagonalKernal}
    H^M_{n,n}(G;R) =\ker(\partial^M_{n,n}).
\end{equation}
A digraph map $f\colon G\rightarrow G'$ induces for each $n>0$ a linear map $f_n\colon C^M_{n,n}(G;R)\rightarrow C^M_{n,n}(G';R)$, which sends a basis element $\langle v_0,\dots,v_n\rangle$ to $\langle f(v_0),\dots,f(v_n)\rangle$ if $f(v_i)\neq f(v_{i+1})$ for all $i$, and to $0$ otherwise. Since these maps commute with the magnitude differential, there are induced maps $f_*\colon H^M_{n,n}(G;R)\rightarrow H^M_{n,n}(G';R)$.
\begin{lemma}[\cite{Asao2023} Lemma 6.8]\label{lem:PathChianKernel}
    The map $\phi_n \colon \mathcal{A}_n(G;R)\to C^M_{n,n}(G;R)$ given by linearly extending
    \[
        \phi_n(e_{v_0,\dots,v_n}) = \langle v_0,\dots,v_n \rangle
    \]
    is a natural isomorphism of $R$-modules and induces a natural isomorphism
    \[
        \phi_n \colon \Omega_n(G;R)\to H^M_{n,n}(G;R).
    \]
\end{lemma}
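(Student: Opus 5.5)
The plan is to verify three things directly: that $\phi_n$ is an isomorphism of free modules, that it intertwines the path differential with the diagonal magnitude differential modulo non-allowed paths, and that it is natural. Everything then reduces to the identification \eqref{eq:DiagonalKernal}.

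\textbf{Step 1: $\phi_n$ is an isomorphism.} For an allowed path $e_{v_0,\dots,v_n}$ in $G$, each step satisfies $d_G(v_{i-1},v_i)=1$, since $v_{i-1}\neq v_i$ and there is an edge between them. So $\langle v_0,\dots,v_n\rangle$ has length exactly $n$. Conversely, suppose $\langle x_0,\dots,x_n\rangle$ has length $n$. Each $d_G(x_{i-1},x_i)\geq 1$ because consecutive entries are distinct, so equality forces every distance to be $1$, and hence every consecutive pair is an edge. Thus $\phi_n$ is a bijection between the basis of allowed $n$-paths of $\mathcal{A}_n(G;R)$ and the basis of $C^M_{n,n}(G;R)$, and so it is an isomorphism of free $R$-modules.

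\textbf{Step 2: comparing the differentials.} Take $x=\sum\alpha_{v_0,\dots,v_n}e_{v_0,\dots,v_n}\in\mathcal{A}_n(G;R)$ and expand $\partial^P_n x$ in $\mathcal{R}_{n-1}(V_G;R)$. The faces $i=0$ and $i=n$ are always allowed. For $0<i<n$, the face $e_{\dots,v_{i-1},v_{i+1},\dots}$ falls into one of three cases:
\begin{itemize}
\item it is irregular (when $v_{i-1}=v_{i+1}$), and so vanishes in $\mathcal{R}$;
\item it is allowed (when $v_{i-1}\to v_{i+1}$ is an edge);
\item otherwise it is a regular non-allowed path.
\end{itemize}
The third case occurs exactly when $d_G(v_{i-1},v_{i+1})=2=d(v_{i-1},v_i)+d(v_i,v_{i+1})$, which is exactly the condition under which $\partial^M_{n,n,i}$ is nonzero. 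Let $\pi$ denote the projection of $\mathcal{R}_{n-1}(V_G;R)$ onto the span of regular non-allowed basis paths, along the span of allowed paths. Then the computation above gives
\[
\pi\big(\partial^P_n x\big)=\psi\big(\partial^M_{n,n}\phi_n x\big),
\]
where $\psi$ sends $\langle y_0,\dots,y_{n-1}\rangle$ to $e_{y_0,\dots,y_{n-1}}$. The map $\psi$ is injective, since distinct tuples give distinct basis elements and the signs $(-1)^i$ agree on both sides. Hence $x\in\Omega_n(G;R)$, i.e.\ $\pi(\partial^P_n x)=0$, if and only if $\phi_n x\in\ker\partial^M_{n,n}=H^M_{n,n}(G;R)$ by \eqref{eq:DiagonalKernal}. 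So $\phi_n$ restricts to an isomorphism $\Omega_n(G;R)\cong H^M_{n,n}(G;R)$.

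\textbf{Step 3: naturality.} Let $f\colon G\to G'$ be a digraph map. On $\mathcal{A}_n$, $f_n$ sends an allowed path either to an allowed path or to an irregular path, the latter being zero in $\mathcal{R}$. This matches the definition of $f_n$ on $C^M_{n,n}$, which is zero exactly when some $f(v_i)=f(v_{i+1})$. So $\phi_n f_n=f_n\phi_n$ on basis elements, and hence on the restricted isomorphisms as well.

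\textbf{Main obstacle.} The only delicate point is the bookkeeping in Step 2. One must check that non-allowed regular faces coming from different paths and different indices $i$ combine in exactly the same way on both sides. This holds because the faces are identified via the same tuples with the same signs. One must also ensure that the irregular faces, which are zero in $\mathcal{R}$, correspond to faces that are absent in $C^M$. They do: $v_{i-1}=v_{i+1}$ gives distance $0\neq 2$, so $\partial^M_{n,n,i}$ vanishes on them.
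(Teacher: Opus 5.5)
Your proof is correct and follows essentially the same route as the paper. Both arguments first identify $\mathcal{A}_n(G;R)$ and $C^M_{n,n}(G;R)$ as free on the same set of tuples, since length $n$ forces every step to be an edge. Both then expand $\partial^P_n$ on an allowed path and observe that the regular non-allowed middle faces are exactly the nonzero terms of $\partial^M_{n,n}$, with matching signs.

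Your version is somewhat more careful than the paper's in three places:
\begin{enumerate}
\item The projection $\pi$ together with the injectivity of $\psi$ gives both directions of the equivalence explicitly, where the paper only states ``only when''.
\item Irregular faces are treated separately.
\item Naturality is actually checked rather than declared apparent.
\end{enumerate}
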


\begin{proof}
    When $n=l$, condition~\eqref{eq:length} together with the structure of $G$ as a quasi-metric space implies that for any $\langle v_0,\dots,v_n \rangle \in C^M_{n,n}(G;R)$ we have $d(v_{i-1},v_i)=1$, and hence $(v_{i-1},v_{i}) \in E_G$, for each $i=1,\dots,n$.
    As $\mathcal{A}_n(G;R)$ and $ C^M_{n,n}(G;R)$ are free $R$-modules on generators indexed by precisely the same sequences of vertices, $\phi_n \colon \mathcal{A}_n(G;R)\to C^M_{n,n}(G;R)$
    is a well defined isomorphism. Its naturality is apparent.
    
    Now, for any allowed path $e_{v_0,\dots,v_n}$ we have that
    \begin{align*}
        & \partial^P_{n}(e_{v_0,\dots,v_n})
        =
        e_{v_1,\dots,v_n} + (-1)^n e_{v_0,\dots,v_{n-1}} +
        \sum_{i=1}^{n-1} (-1)^ie_{v_0,\dots,\hat{v}_i,\dots,v_n}
        \\ &=
        e_{v_1,\dots,v_n} + (-1)^n e_{v_0,\dots,v_{n-1}} +
        \sum_{\substack{i=1,\dots,n-1 \\ (v_{i-1},v_{i+1}) \in E_G}}
        (-1)^i e_{v_0,\dots,\hat{v}_i,\dots,v_n}
        +
        \sum_{\substack{i=1,\dots,n-1 \\ (v_{i-1},v_{i+1}) \notin E_G}}
        (-1)^i e_{v_0,\dots,\hat{v}_i,\dots,v_n}
    \end{align*}
    where the last summation contains all non-allowable elementary paths which appear in the expression.
    Moreover, these are the only elementary $(n-1)$-paths whose images under $\phi_{n-1}$ can correspond to non-trivial summands $\langle v_0,\dots,\hat{v}_i,\dots,v_n \rangle$ in the image of $\partial_{n,n,j}^M(\phi_n(e_{v_1,\dots,v_n}))$ for some $j=1,\dots,n-1$. 
    By linearly extending the above calculation, we obtain that any $x\in \mathcal{A}_n(G;R)$ satisfies $\partial_n^P(x) \in \Omega_{n-1}(G;R)$ only when $\partial_{n,n}^M( \phi_n (x)) = 0$.
    \qed
\end{proof}

In the remainder of this work, making use of the isomorphism~\eqref{eq:DiagonalKernal}, we treat the module of path chains $\Omega_n(G;R)$ as the kernel of $\partial^M_{n,n}$ under the identification provided by the map $\phi_n$ from Lemma~\ref{lem:PathChianKernel}.

\subsection{Basis constructions for \texorpdfstring{$\Omega_n(G;R)$}{path chains}}\label{sec:LowDimBasis}

Most of the material covered in this section is contained in \cite{Grigoryan2022} and \cite{Fu2024}.
However, some of the content is from, or was originally presented in, other works that we cite at the corresponding parts of the section.
Throughout this section, let $G$ be a digraph and $R$ a commutative ring with a unit.

For any vertex $v\in V_G$, $e_v$ is an allowed path and $\partial^P_0(e_v) = 0$.
Hence,
\begin{equation}\label{eq:Dim0Basis}
    \{ e_v \: | \: v \in V_G \} \; \text{is a basis of} \:\: \Omega_0(G;R).
\end{equation}
Similarly, for any $(u,v)\in E_G$, the element $e_{u,v}$ is an allowed path and $\partial^P_1(e_{u,v}) = e_u - e_v \in \mathcal{A}_0(G;R)$.
Therefore, 
\begin{equation}\label{eq:Dim1Basis}
    \{ e_{u,v} \: | \: (u,v)\in E_G  \} \; \text{is a basis of} \:\: \Omega_1(G;R).
\end{equation}

The first non-straightforward case occurs when $n=2$.
Let $v_0,v_1,v'_1,v_2 \in V_G$.
When $(v_0,v_1) \in E_G$ and $(v_1,v_0) \in E_G$ we call $e_{v_0,v_1,v_0}$ a \emph{double edge}.
We call $e_{v_0,v_1,v_2}$ a \emph{directed triangle} if $(v_0,v_1),(v_1,v_2),(v_0,v_2) \in E_G$.
Finally, we call
$e_{v_0,v_1,v_2}-e_{v_0,v'_1,v_2}$ a \emph{directed square} if $(v_0,v_1),(v_0,v'_1),(v_1,v_2),(v'_1,v_2) \in E_G$, $v_0 \neq v_2$, $v_1 \neq v'_1$, and $(v_0,v_2) \notin E_G$.
It is straightforward to check that double edges, directed squares, and directed triangles are elements of $\Omega_2(G;R)$.
\begin{center}
    \tikz {
        \node (b) at (0,0) {$\:$};
        \node (u) at (0,1) {$v_0$};
        \node (v) at (1.5,1) {$v_1$};
        \draw[->] (u) to [out=40,in=140] (v);
        \draw[->] (v) to [out=220,in=320] (u);
        }
        \;\;\;\;\;\;\;\;\;\;\;\;\;\;\;
        \tikz {
        \node (b) at (0,0) {$\:$};
        \node (v0) at (0,0.25) {$v_0$};
        \node (v1) at (1.5,0.25) {$v_1$};
        \node (v2) at (1.5,1.75) {$v_2$};
        \draw[->] (v0) -- (v1);
        \draw[->] (v1) -- (v2);
        \draw[->] (v0) -- (v2);
        }
        \;\;\;\;\;\;\;\;\;\;\;\;\;\;\;
        \tikz {
        \node (v0) at (0,0) {$v_0$};
        \node (v1) at (-1,1) {$v_1$};
        \node (v-1) at (1,1) {$v'_1$};
        \node (v2) at (0,2) {$v_2$};
        \draw[->] (v0) -- (v1);
        \draw[->] (v0) -- (v-1);
        \draw[->] (v1) -- (v2);
        \draw[->] (v-1) -- (v2);
        }
\end{center}

Analogously, a digraph $G$ is said to contain a \emph{double edge}, a \emph{directed triangle}, or a \emph{directed square} if it contains one of the above respective digraphs as a subdigraph. These are not required to be full subdigraphs, but in the case of a directed square it is important that $(v_0,v_2)\not\in E_G$. For instance, of the two digraphs below, 
\begin{center}
        \tikz {
        \node (v0) at (0,0) {$v_0$};
        \node (v1) at (-1,1) {$v_1$};
        \node (v-1) at (1,1) {$v'_1$};
        \node (v2) at (0,2) {$v_2$};
        \draw[->] (v0) to [out=110,in=-20] (v1);
        \draw[->] (v1) to [out=-70,in=160] (v0);
        \draw[->] (v1) -- (v-1);
        \draw[->] (v0) -- (v-1);
        \draw[->] (v1) -- (v2);
        \draw[->] (v-1) -- (v2);
        }
        \;\;\;\;\;\;\;\;\;\;\;\;\;\;\;
        \tikz {
        \node (v0) at (0,0) {$v_0$};
        \node (v1) at (-1,1) {$v_1$};
        \node (v-1) at (1,1) {$v'_1$};
        \node (v2) at (0,2) {$v_2$};
        \draw[->] (v0) -- (v1);
        \draw[->] (v0) -- (v-1);
        \draw[->] (v0) -- (v2);
        \draw[->] (v1) -- (v2);
        \draw[->] (v-1) -- (v2);
        }
\end{center}
the left-hand digraph contains one double edge, three directed triangles, and one directed square, while the right-hand digraph contains two directed triangles, no double edges, and no directed squares.

\begin{definition}
    Let $v_0,v_2\in V$ such that $v_0 \neq v_2$, $(v_0,v_2)\notin G$ and,
    \[
        S_{v_0,v_2} = \{ v_1 \in V \: | \: (v_0,v_1), (v_1,v_2) \in E_G \}.
    \]
    When $|S_{v_0,v_2}| \geq 3$, we say that $G$ contains a \emph{multisquare} between $v_0$ and $v_2$. If $G$ contains a multisquare between $v_0$ and $v_2$, then any directed square of the form $e_{v_0,v_1,v_2}-e_{v_0,v'_1,v_2}$ is said to lie \emph{within} the multisquare. If $G$ does not contain any multisquares between any pair of vertices, then we say $G$ \emph{contains no multisquares}.
\end{definition}

Variations of the following proposition have been proved for both field and integral coefficients in \cite[Proposition 4.2]{Grigoryan2013}, \cite[Proposition 2.9]{Grigor'yan2014b}, and \cite[Theorem 1.8]{Grigoryan2022}. We include a proof at the end of Section~\ref{sec:BigradingAndFaceMaps}.

\begin{proposition}[\cite{Grigor'yan2014b, Grigoryan2013, Grigoryan2022}]\label{prop:Dim2Base}
    The double edges, directed triangles, and directed squares generate $\Omega_2(G;R)$.
    Once a basis of directed squares within each multisquare is chosen, these directed squares, double edges, directed triangles, and a choice up to sign of directed squares not contained within a multisquare form a basis of $\Omega_2(G;R)$.
\end{proposition}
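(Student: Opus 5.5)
Using Lemma~\ref{lem:PathChianKernel}, identify $\Omega_2(G;R)$ with $\ker(\partial^M_{2,2})$. The first step is to see that this kernel splits along the endpoints of paths. For $x\in\mathcal{A}_2(G;R)$ write
\[
x=\sum_{(v_0,v_1,v_2)}\alpha_{v_0,v_1,v_2}e_{v_0,v_1,v_2}.
\]
The magnitude differential sends $\langle v_0,v_1,v_2\rangle$ to $-\langle v_0,v_2\rangle$ precisely when $d(v_0,v_2)=2$, and to $0$ otherwise. Since it preserves the endpoints $(v_0,v_2)$, we get a direct sum decomposition
\[
\Omega_2(G;R)=\bigoplus_{(a,b)}\Omega_2^{a,b},
\]
where $\Omega_2^{a,b}$ is spanned by chains whose paths all start at $a$ and end at $b$. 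It is therefore enough to find generators and a basis of each summand $\Omega_2^{a,b}$ separately.

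We then split into three cases according to the pair $(a,b)$.
\begin{enumerate}
\item If $a=b$, every allowed path $e_{a,v,a}$ is a double edge, and $d(a,a)=0\neq 2$. So every combination is a cycle, and the double edges form a basis of $\Omega_2^{a,a}$.
\item If $(a,b)\in E_G$, then $d(a,b)=1$, so the differential vanishes on this summand. Every path $e_{a,v,b}$ is then a directed triangle, and these form a basis.
\item If $a\neq b$ and $(a,b)\notin E_G$, then $d(a,b)=2$ whenever an allowed path exists. Here the cycle condition reads $\sum_{v\in S_{a,b}}\alpha_{a,v,b}=0$. Thus $\Omega_2^{a,b}$ is the kernel of the augmentation map $R^{S_{a,b}}\to R$ sending every generator to $1$.
\end{enumerate}

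The kernel in case (3) is free, and I would exhibit its bases directly. Fix $v^\ast\in S_{a,b}$. Every element $\sum\alpha_v e_{a,v,b}$ of the kernel can be rewritten as $\sum_{v\neq v^\ast}\alpha_v(e_{a,v,b}-e_{a,v^\ast,b})$. Each term is a directed square, so the squares generate. The family $\{e_{a,v,b}-e_{a,v^\ast,b}\}_{v\neq v^\ast}$ is linearly independent, because the coefficient of $e_{a,v,b}$ for $v\ne v^\ast$ recovers $\alpha_v$. Hence this family is a basis.

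To finish, I would treat the two sizes of $S_{a,b}$ separately.
\begin{enumerate}
\item If $|S_{a,b}|=2$, the kernel is free of rank one. The square is its unique basis element up to sign, which gives the statement's choice up to sign.
\item If $|S_{a,b}|\geq3$, we are in a multisquare, and the proposition lets us take any chosen basis of squares within it. Any such choice gives a basis of $\Omega_2^{a,b}$, and the construction above shows that such bases exist.
\end{enumerate}
Assembling the summands gives both claims of Proposition~\ref{prop:Dim2Base}.

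The main obstacle I expect is making the endpoint decomposition rigorous in the magnitude language. In particular, one must check that a path with $(a,b)\in E_G$ contributes nothing to the differential, since $d(a,b)=1$ and not $2$. This is where the condition $(v_0,v_2)\notin E_G$ in the definition of a square enters, and it is exactly what separates the triangle case from the square case. The rest is linear algebra over $R$, and it needs no field assumption because the augmentation kernel is visibly free.
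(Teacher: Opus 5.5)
Your proof is correct and is essentially the paper's argument: decompose $\Omega_2(G;R)$ along the endpoint bigrading~\eqref{eq:Bigrading} and treat the summands with $a=b$, with $(a,b)\in E_G$, and with $d_G(a,b)=2$ separately, all other summands being zero. In the last case you describe the summand as the kernel of the augmentation $R^{S_{a,b}}\to R$ with basis $\{e_{a,v,b}-e_{a,v^\ast,b}\}_{v\neq v^\ast}$; this fills in the step the paper only says follows similarly, and the same construction with an empty basis also handles $|S_{a,b}|\le 1$, which your final case split leaves out.
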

 
The existence of multisquares implies that no canonical basis for $\Omega_2(G;R)$ exists in general. The following example demonstrates that this is also the case for $\Omega_n(G;R)$ when $n\geq 2$.
\begin{exmp}\label{ex:ExtendedMultisquare}
    For $t\geq 3$ consider the digraph $G$ with $V_G= \{ v_0,v_1^1,v_1^2,v_1^3,v_2,v_3,\dots,v_{t-1},v_t \}$ and edges
    \begin{align*}
        E_G = & \{  (v_0,v^1_1), (v^1_1,v_2), (v^1_1,v_3), (v_0,v^2_1), (v^2_1,v_2), (v^2_1,v_3), (v_0,v^3_1), (v^3_1,v_2), (v^3_1,v_3) \}
        \\
        & \cup
        \{ (v_{i},v_{i+1}) \: | \: i=2,\dots,t-1 \}
        \cup
        \{ (v_{i},v_{i+2}) \: | \: i=2,\dots,t-2 \}.
    \end{align*}
    \begin{center}
        \tikz {
            \node (v0) at (0,0) {$v_0$};
            \node (v1) at (1,1) {$v^1_1$};
            \node (v12) at (1,-1) {$v^2_1$};
            \node (v13) at (1,2) {$v^3_1$};
            \node (v2) at (2,0) {$v_2$};
            \node (v3) at (3.414,0) {$v_3$};
            \node (v4) at (4.414,1) {$v_4$};
            \node (v5) at (5.414,0) {$v_5$};
            \node (v6) at (6.414,1) {$v_6$};
            \node (vt-2) at (9.414,0) {$v_{t-2}$};
            \node (vt-1) at (10.414,1) {$v_{t-1}$};
            \node (vt) at (11.414,0) {$v_t$};
            \node (d) at (8.414,0.9) {$\cdots$};
            \node (d) at (7.414,0.1) {$\cdots$};
            \node (B6) at (7.414,1) {};
            \node (B5) at (6.414,0) {};
            \node (B-6) at (7.014,0.4) {};
            \node (Bt-1) at (9.414,1) {};
            \node (Bt-2) at (8.414,0) {};
            \node (B-t-2) at (8.814,0.6) {};
            \draw[->] (v0) -- (v1);
            \draw[->] (v0) -- (v12);
            \draw[->] (v0) -- (v13);
            \draw[->] (v1) -- (v2);
            \draw[->] (v12) -- (v2);
            \draw[->] (v13) -- (v2);
            \draw[->] (v2) -- (v3);
            \draw[->] (v3) -- (v4);
            \draw[->] (v4) -- (v5);
            \draw[->] (v5) -- (v6);
            \draw[->] (vt-2) -- (vt-1);
            \draw[->] (vt-1) -- (vt);
            \draw[->,gray] (v1) -- (v3);
            \draw[->,gray] (v12) -- (v3);
            \draw[->,gray] (v13) -- (v3);
            \draw[->,gray] (v2) -- (v4);
            \draw[->,gray] (v3) -- (v5);
            \draw[->,gray] (v4) -- (v6);
            \draw[->,gray] (vt-2) -- (vt);
            \draw[-,gray] (v6) -- (B6);
            \draw[-] (v6) -- (B-6);
            \draw[-,gray] (v5) -- (B5);
            \draw[->,gray] (Bt-1) -- (vt-1);
            \draw[->] (B-t-2) -- (vt-2);
            \draw[->,gray] (Bt-2) -- (vt-2);
        }
    \end{center}
    The digraph $G$ contains a multisquare between $v_0$ and $v_2$. The $t$-chains
    \[
        e_{v_0,v_1^j,v_2,v_3\dots,v_{t-1},v_t}
        -
        e_{v_0,v_1^{j'},v_2,v_3\dots,v_{t-1},v_t}
    \]
    for $j,j'=1,2,3$, and $j<j'$, generate $\Omega_t(G;R)$, and any two of them form a basis of $\Omega_t(G;R)$.
\end{exmp}

To end the section, we summarise the existing partial results for bases of $\Omega_n(G;R)$ when $n\geq 3$.

\begin{definition}\label{def:Trapezohedron}
    For $t\geq2$ let $\mathbb{T}_{t}$ be the digraph with vertices
    \[
        V_{\mathbb{T}_{t}} =
        \{ T,u_1,\dots,u_t,v_1,\dots,v_t,H \}
    \]
    and edges
    \[
        T \to u_i,\;\;\;
        u_i \to v_i, \;\;\;
        u_i \to v_{i+1},
        \;\;\; \text{and} \;\;\;
        v_i \to H
    \]
    for $i=1,\dots,t$, where all index values are assumed to be modulo $t$.
    \begin{center}
        \tikz {
            \node (T) at (2.25,0) {$T$};
            \node (u1) at (-0.75,1) {$u_1$};
            \node (u2) at (0.75,1) {$u_2$};
            \node (v1) at (-0.75,2.75) {$v_1$};
            \node (v2) at (0.75,2.75) {$v_2$};
            \node (du) at (2.25,1) {$\cdots$};
            \node (dv) at (2.25,2.75) {$\cdots$};
            \node (ut-1) at (3.75,1) {$u_{t-1}$};
            \node (vt-1) at (3.75,2.75) {$v_{t-1}$};
            \node (ut) at (5.25,1) {$u_t$};
            \node (vt) at (5.25,2.75) {$v_t$};
            \node (H) at (2.25,3.75) {$H$};
            \node (u2-) at (1.5,1.825) {};
            \node (vt-1-) at (3,1.925) {};
            \draw[->] (T) -- (u1);
            \draw[->] (T) -- (u2);
            \draw[->] (T) -- (ut-1);
            \draw[->] (T) -- (ut);
            \draw[->] (u1) -- (v1);
            \draw[->] (u1) -- (v2);
            \draw[->] (u2) -- (v2);
            \draw[->] (ut-1) -- (vt-1);
            \draw[->] (ut-1) -- (vt);
            \draw[->] (ut) -- (vt);
            \draw[->] (ut) -- (v1);
            \draw[->] (v1) -- (H);
            \draw[->] (v2) -- (H);
            \draw[->] (vt-1) -- (H);
            \draw[->] (vt) -- (H);
            \draw[-] (u2) -- (u2-);
            \draw[->] (vt-1-) -- (vt-1);
        }
    \end{center}
    The digraph $\mathbb{T}_{t}$ is called a \emph{trapezohedron} of order $t$.
\end{definition}

\begin{proposition}[\cite{Grigoryan2022} Proposition 2.1]\label{prop:Trapezohedron}
    The $R$-module $\Omega_3(\mathbb{T}_t;R)$ is free with rank $1$ and $H_n^P(\mathbb{T}_t;R)=0$ for $n\geq 1$.
\end{proposition}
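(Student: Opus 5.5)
The approach is to compute the whole path chain complex $\Omega_*(\mathbb{T}_t;\mathbb{Z})$ by hand and observe that it is a complex of free modules given by explicit generators. I will show it is acyclic in positive degrees over $\mathbb{Z}$, so that it splits. I will also check that $\Omega_n(\mathbb{T}_t;R)\cong\Omega_n(\mathbb{T}_t;\mathbb{Z})\otimes R$, because the generators and the defining linear conditions turn out to be identical over every ring. Both claims then follow for arbitrary $R$.

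\textbf{Allowed paths and $\Omega_n$ for $n\ge 2$.} Every edge goes from level $T$ to level $u$ to level $v$ to level $H$, so allowed paths have length at most $3$, and $\Omega_n=0$ for $n\geq 4$. The allowed $2$-paths are the $4t$ paths $e_{T,u_i,v_i}$, $e_{T,u_i,v_{i+1}}$, $e_{u_i,v_i,H}$, $e_{u_i,v_{i+1},H}$. The allowed $3$-paths are the $2t$ paths $e_{T,u_i,v_i,H}$ and $e_{T,u_i,v_{i+1},H}$; these remain distinct even when $t=2$.

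Since $(T,v_j),(u_i,H)\notin E$ and there are no double edges or triangles, the sets $S_{T,v_i}=\{u_{i-1},u_i\}$ and $S_{u_i,H}=\{v_i,v_{i+1}\}$ both have size $2$. Hence there are no multisquares, and by Proposition~\ref{prop:Dim2Base} $\Omega_2$ is free with basis the $2t$ squares
\[
a_i=e_{T,u_{i-1},v_i}-e_{T,u_i,v_i},\qquad b_i=e_{u_i,v_i,H}-e_{u_i,v_{i+1},H}.
\]

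For $\Omega_3$, write $x=\sum\alpha_{i,j}e_{T,u_i,v_j,H}$. The only non-allowed faces are $e_{T,v_j,H}$ and $e_{T,u_i,H}$, and their vanishing gives the conditions
\[
\alpha_{j-1,j}+\alpha_{j,j}=0,\qquad \alpha_{i,i}+\alpha_{i,i+1}=0 .
\]
Going around the cycle $\alpha_{1,1},\alpha_{1,2},\alpha_{2,2},\dots$, whose length $2t$ is even, forces $\alpha_{i,i}=c$ and $\alpha_{i,i+1}=-c$ for a single $c\in R$. These conditions are consistent and hold over any $R$. Hence $\Omega_3$ is free of rank $1$, generated by $\tau=\sum_i(e_{T,u_i,v_i,H}-e_{T,u_i,v_{i+1},H})$.

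\textbf{Acyclicity over $\mathbb{Z}$.} Three things need checking.

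First, $\partial\tau$ is a signed sum of all the $a_i$ and all the $b_i$, with coefficients $\pm1$. So $\partial_3$ is injective with torsion-free cokernel, giving $H_3=0$.

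Second, for $H_2$ I compute $\partial a_i$ and $\partial b_i$ in the edge basis. I then show that a combination $\sum\beta_ia_i+\sum\gamma_ib_i$ is a cycle only if all the $\beta_i$ are equal and all the $\gamma_i$ are equal, with these two values tied by a sign. The point is that the edge $u_i\to v_i$ appears only in $a_i$ and $b_i$, and the edge $T\to u_i$ appears only in $a_i$ and $a_{i+1}$. Therefore $\ker\partial_2=\mathbb{Z}\,\partial\tau$.

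Third, for $H_1$: the underlying graph is connected, so $\ker\partial_1$ is free of rank $4t-(2t+2)+1=2t-1$, and $\operatorname{im}\partial_2$ has rank $2t-1$. I expect this step to be the main obstacle, because agreement of ranks is not enough over $\mathbb{Z}$: I need $\operatorname{im}\partial_2=\ker\partial_1$ exactly, not just up to finite index. To get this I will use an elementary reduction. Using $\partial b_i$, any $1$-cycle can be modified so that it avoids the edges $v_{i+1}\to H$ for all but one $i$. Using $\partial a_i$, it can then be modified to avoid all but one edge $T\to u_i$. What remains is a cycle supported on a tree-like subgraph, since the $u$–$v$ edges together with one edge into $T$ and one edge out to $H$ form a tree, and such a cycle must be $0$.

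\textbf{Passing to arbitrary $R$.} The resulting complex $0\to\Omega_3\to\Omega_2\to\Omega_1\to\Omega_0$ is a complex of free $\mathbb{Z}$-modules that is exact in positive degrees and whose final cokernel $H_0\cong\mathbb{Z}$ is free. Hence it is split exact, and tensoring with $R$ preserves exactness. The identification $\Omega_*(\mathbb{T}_t;R)=\Omega_*(\mathbb{T}_t;\mathbb{Z})\otimes R$ holds by the explicit descriptions above: the same bases work and the same linear constraints have integer unit coefficients. It follows that $H^P_n(\mathbb{T}_t;R)=0$ for all $n\geq1$.
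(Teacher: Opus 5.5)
Most of your argument is sound. The descriptions of $\Omega_2$ and $\Omega_3$ are right, as are $\partial\tau=\sum_i a_i+\sum_i b_i$ and $\ker\partial_2=\mathbb{Z}\,\partial\tau$. The passage to arbitrary $R$ also works: split exactness over $\mathbb{Z}$, together with the degreewise isomorphism $\mu_*$ given by matching bases, does what you need.

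\textbf{The gap is in the $H_1$ step you flagged.} Your claim that the $u$--$v$ edges, plus one edge at $T$ and one at $H$, form a tree is false. Each $u_i$ is joined to $v_i$ and $v_{i+1}$, and each $v_i$ to $u_{i-1}$ and $u_i$. So these $2t$ edges form a single undirected cycle $u_1,v_2,u_2,v_3,\dots,u_t,v_1,u_1$ of length $2t$. Adding the two outer edges gives a connected graph with $2t+2$ vertices and $2t+2$ edges, which has cycle rank $1$.

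A count shows the same thing. You use only $\partial b_1,\dots,\partial b_{t-1}$ and $\partial a_2,\dots,\partial a_t$, i.e.\ $2t-2$ boundaries, while $\ker\partial_1$ has rank $2t-1$. After your reduction, the vertex conditions at $T$ and $H$ do kill the two surviving outer edges. What remains is an integer multiple of
\[
z_0=\sum_{i=1}^{t}\bigl(e_{u_i,v_{i+1}}-e_{u_i,v_i}\bigr),
\]
which is a nonzero $1$-cycle. So the assertion that the remaining cycle ``must be $0$'' fails.

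\textbf{The repair is short.} With your conventions,
\[
\partial a_i=e_{T,u_{i-1}}-e_{T,u_i}+e_{u_{i-1},v_i}-e_{u_i,v_i}.
\]
Summing over $i$, the $T$-terms telescope around the cycle, so $\partial\bigl(\sum_{i=1}^t a_i\bigr)=z_0$. Equivalently $z_0=-\partial\bigl(\sum_i b_i\bigr)$, consistent with $\partial^2\tau=0$. Hence the leftover cycle $c\,z_0$ is a boundary. This gives $\ker\partial_1=\mathrm{im}\,\partial_2$ exactly over $\mathbb{Z}$, not just up to finite index, which is what your split-exactness argument needs. With this correction the proof is complete.

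For comparison, the paper gives no proof of this statement; it cites Grigor'yan. Your direct computation is in the same spirit as the paper's proof of Proposition~\ref{prop:OtherSpecialDim3Generators}. Routing through $\mathbb{Z}$ and split exactness handles general $R$ more carefully than a rank count alone would.
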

The unique up to sign generator of $\Omega_3(\mathbb{T}_t;R)$ is called a \emph{trapezohedron element} and can be explicitly realised as
\begin{equation}\label{eq:TrapezohedronElement}
    T_t = \sum_{i=1}^t e_{T,u_i,v_i,H} - e_{T,u_i,v_{i+1},H}
\end{equation}
where $v_{t+1}=v_1$.

\begin{theorem}[\cite{Grigoryan2022} Theorem 2.10]\label{thm:Dim3BasisNoDoubleNoMulti}
    Let $K$ be a field and $G$ a digraph containing no double edges and no multisquares. Then there is a basis of $\Omega_3(G;K)$, each of whose members is the image of a trapezohedron element under the homomorphism induced by a digraph map $\mathbb{T}_t \to G$ for some integer $t \geq 2$.
\end{theorem}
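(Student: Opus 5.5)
Let $x=\sum\alpha_{v_0,v_1,v_2,v_3}e_{v_0,v_1,v_2,v_3}\in\Omega_3(G;K)$. The plan is to decompose $x$ into pieces supported on trapezohedron images, one pair of endpoints at a time. I would work with the identification of Lemma~\ref{lem:PathChianKernel}, so that $\Omega_3(G;K)=\ker\partial^M_{3,3}$.

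\textbf{Reduction to fixed endpoints.} The magnitude differential only deletes interior vertices. Hence $\partial^M_{3,3}$ preserves the endpoint pair $(v_0,v_3)$, and $x$ splits as a sum of path chains $x_{a,b}$, one for each pair $(a,b)$. It therefore suffices to treat $x$ supported on paths $a\to v_1\to v_2\to b$.

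\textbf{Local structure for fixed endpoints.} First, any path with $v_1=b$ or $v_2=a$ would give a double edge, so these are excluded. Next I split by distance. If $d(a,b)\le 2$, the kernel condition forces $\partial_1$ and $\partial_2$ to behave like simplicial faces, and I would handle this case by hand, showing that such chains come from degenerate trapezohedron maps. For instance, $e_{a,v_1,v_2,b}$ with $a\to v_2$ and $v_1\to b$ should be the image of $\mathbb{T}_2$ under a map collapsing vertices.

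The main case is $d(a,b)=3$, where both faces $e_{a,v_2,b}$ and $e_{a,v_1,b}$ are non-allowed. The condition $\partial^M x=0$ then says two things. For each $v_2$, the terms $\alpha_{a,v_1,v_2,b}$ summed over $v_1$ vanish whenever $a\not\to v_2$. For each $v_1$, the terms summed over $v_2$ vanish.

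\textbf{Reformulation as a bipartite graph.} Consider the bipartite graph $B$ with parts $U=\{v_1\}$ and $W=\{v_2\}$, joined by edges $v_1\to v_2$ lying on paths from $a$ to $b$. The coefficients $\alpha$ then form a $K$-valued function on the edges of $B$ with zero sum at every vertex. This is exactly the cycle space of $B$, viewed as a flow space of a bipartite graph.

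The cycle space is spanned by even cycles $u_1,w_1,u_2,w_2,\dots,u_t,w_t$ with alternating signs, and each such cycle is precisely the image of $T_t$ under the map $T\mapsto a$, $H\mapsto b$, $u_i\mapsto u_i$, $v_i\mapsto w_i$ (after reindexing). Over a field, the cycle space has a basis of cycles taken from a spanning forest, and these fundamental cycles are simple, so $t\ge2$. This gives a basis for this piece.

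\textbf{Main obstacle.} The hard part is the interaction with distance $\le 2$ and with the no-multisquare hypothesis. When $d(a,b)=2$ there are extra allowed faces, and the zero-sum conditions hold only at some vertices. In that case I expect the no-multisquare assumption to force the unconstrained vertices to have very restricted configurations. I would then show directly that the resulting relaxed flow space is still spanned by cycles and by paths through vertices adjacent to $a$ or $b$, and realise each as a degenerate trapezohedron image.
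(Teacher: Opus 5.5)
\textbf{Gap: the cases with free vertices are not done.} Your treatment of $d(a,b)=3$ is correct. So is $a=b$: with no double edges there are again no free vertices. But the remaining cases, which are where degenerate trapezohedron images actually arise, are only announced, and the hypothesis you hope to use there is not available.

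First, nothing changes with distance. By Lemma~\ref{lem:NoPositionSwap}, for every pair $(a,b)$, the chain $x=\sum\alpha_{v_1,v_2}e_{a,v_1,v_2,b}$ lies in $\Omega_3$ if and only if:
\begin{itemize}
\item $\sum_{v_1}\alpha_{v_1,v_2}=0$ for each $v_2$ with $v_2\neq a$ and $(a,v_2)\notin E_G$;
\item $\sum_{v_2}\alpha_{v_1,v_2}=0$ for each $v_1$ with $v_1\neq b$ and $(v_1,b)\notin E_G$.
\end{itemize}
So $\Omega_3^{a,b}$ is always the space of edge functions on your graph $B$ with zero sum at every non-free vertex. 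The free vertices are the $v_2$ with $a\to v_2$ and the $v_1$ with $v_1\to b$. When $d(a,b)\le 2$ you must also split each vertex into a $v_1$-copy and a $v_2$-copy, since a $w$ with $a\to w\to b$ can occur in both positions. When $(a,b)\in E_G$, the free vertices are apexes of triangles $a\to w\to b$, and there can be arbitrarily many of them. The no-multisquare hypothesis says nothing here, because it only concerns pairs with $(a,b)\notin E_G$. So an argument based on ``very restricted configurations'' cannot cover $d(a,b)=1$.

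\textbf{What you need to supply.} Two facts complete the argument.
\begin{enumerate}
\item[(i)] The relaxed flow space has a basis of alternating simple cycles and alternating simple paths joining free vertices. To see this, orient edges from the $v_1$-side to the $v_2$-side, so that zero sums become zero divergences. Then identify all free vertices to one new vertex $z$; edges joining two free vertices become loops. The divergence condition at $z$ holds automatically, so relaxed flows become ordinary flows. Fundamental cycles of a spanning forest then lift to cycles or to free-to-free paths.
\item[(ii)] Each such path is a trapezohedron image. You only checked a single edge. An alternating path from a free $v_2$ to a free $v_1$, between two free $v_1$'s, or between two free $v_2$'s is the image of $L_t$, $L^u_t$ or $L^l_t$ respectively. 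The map comes from $\mathbb{L}_t$, $\mathbb{L}^u_t$ or $\mathbb{L}^l_t$, and the extra edges at the ends land on edges at the free vertices. You still need collapsing digraph maps $\mathbb{T}_{t+1}\to\mathbb{L}_t,\mathbb{L}^u_t,\mathbb{L}^l_t$ sending $T_{t+1}$ to these elements. These are exactly the maps $m_{as},m^u_s,m^l_s$ in the proof of Corollary~\ref{cor:Omega3Explicitly}.
\end{enumerate}

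\textbf{Comparison with the paper.} With (i) and (ii) in place, your route is complete and genuinely different from the paper's. The paper only cites this theorem. Its own generalisation goes through the inductive-element basis (Theorem~\ref{thm:InductiveShortCorrespondence} with Fu--Ivanov), the $3$-path structure maps of Theorem~\ref{thm:Omega3Explicitly}, and the same collapsing maps. Your direct flow argument is more elementary. It also uses neither hypothesis on $G$: when double edges occur, free vertices may equal $a$ or $b$, and the digraph maps simply collapse the corresponding edges.
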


In an alternative, more general approach, under only the assumption that $G$ contains no multisquares, Fu and Ivanov~\cite[Theorem 4.7]{Fu2024} give an explicit description of a basis of $\Omega_n(G;K)$ when $K$ is a field. To explain this more precisely, we introduce the following notation from \cite{Fu2024}.

The basic definitions make sense for an arbitrary coefficient ring $R$ and we state them at this level of generality. A path $e_{v_0,\dots,v_n}\in \mathcal{A}(G;R)$ is called \emph{thin} if there is $i\in\{1,\dots,n-1\}$ such that $(v_{i-1},v_{i+1}) \notin E_G$ and $e_{v_{i-1},v_i,v_{i+1}}$ is the only allowed $2$-path beginning at $v_{i-1}$ and ending at $v_{i+1}$. A path which is not thin is called \emph{thick}. Two paths $e_{u_0,\dots,u_n},e_{v_0,\dots,v_n}\in \mathcal{A}(G;R)$ are said to differ by a \emph{short move} at $i\in\{1,\dots,n-1\}$ if
\begin{equation}\label{eq:ShortMove}
    u_i \neq v_i, \;\;\; u_j = v_j \;\;\; \text{for} \;\;\; i \neq j = 0,\dots,n, \;\;\; \text{and} \;\;\; (u_{i-1},u_{i+1}) \notin E_G.
\end{equation}
The \emph{graph of short moves} $\mathcal{S}_n(G)$ is the graph whose vertices are the allowed $n$-paths in $G$ and in which there is an edge between two vertices exactly when there is a short move between the corresponding paths. The edges of $\mathcal{S}_n(G)$ will be labelled by the integer $i\in\{1,\dots,n-1\}$ which is determined by~\eqref{eq:ShortMove} and the vertices by their corresponding allowed paths. The connected components of $\mathcal{S}_n(G)$ are called \emph{$\mathcal{S}_n(G)$-classes}. An $\mathcal{S}_n(G)$-class is \emph{thick} if all its vertices are thick paths.

We say that an $\mathcal{S}_n(G)$-class has a \emph{partition} if its vertices can be separated into two non-empty disjoint subsets such that no pair of elements in either subset is joined by an edge in $\mathcal{S}_n(G)$. By construction, the vertices in an $\mathcal{S}_n(G)$-class are obtained from a connected subgraph of $\mathcal{S}_n(G)$. Thus if an $\mathcal{S}_n(G)$-class has a partition, it must be unique up to the order of the chosen subsets.

\begin{theorem}[\cite{Fu2024} Theorem 4.7]\label{thm:NoMultisquareBasis}
    Let $G$ be a digraph without multisquares and let $K$ be a field. Then we have the following basis descriptions of $\Omega_n(G;K)$.
    \begin{enumerate}
        \item 
            If $K$ has characteristic $2$, then there is a basis of $\Omega_n(G;K)$ where each element is 
            the sum of the paths belonging to a thick $\mathcal{S}_n(G)$-class.
        \item 
            If the characteristic of $K$ is not $2$, then there is a basis of $\Omega_n(G;K)$ 
            where each element uniquely corresponds to a
            thick $\mathcal{S}_n(G)$-class that has a partition. More precisely, up to sign, each element of the basis is the sum of the paths in the $\mathcal{S}_n(G)$-class from the first subset of a partition minus the sum of paths in the $\mathcal{S}_n(G)$-class from the second subset of the partition.
    \end{enumerate}
\end{theorem}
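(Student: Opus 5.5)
The approach is to work with the identification of Lemma~\ref{lem:PathChianKernel}, under which $\Omega_n(G;K)$ is the kernel of $\partial^M_{n,n}$ on $\mathcal{A}_n(G;K)$. This turns the computation of $\Omega_n(G;K)$ into solving a linear system on the coefficients $\alpha_{v_0,\dots,v_n}$ of a chain $x$ written in the form~\eqref{eq:UniqueChainForm}.

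Take a basis element $\langle w_0,\dots,w_{n-1}\rangle$ of $C^M_{n-1,n}(G;K)$. Its coefficient in $\partial^M_{n,n}(x)$ gets contributions only from allowed paths obtained by inserting a vertex $v$ at some position $i$, with $w_{i-1}\to v\to w_i$ and $d(w_{i-1},w_i)=2$. The condition $d(w_{i-1},w_i)=2$ means $w_{i-1}\neq w_i$ and $(w_{i-1},w_i)\notin E_G$. The position $i$ is determined by the target tuple, since it is the unique index where consecutive entries are at distance $2$. Hence every contribution carries the same sign $(-1)^i$. So $x\in\Omega_n(G;K)$ if and only if, for every such tuple, the sum of $\alpha$ over $v\in S_{w_{i-1},w_i}$ vanishes.

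Because $G$ has no multisquares, $|S_{w_{i-1},w_i}|\le 2$. If $|S|=1$, the unique path through it is thin at $i$, and the equation says its coefficient is $0$. If $|S|=2$, the two paths differ by a short move at $i$, and the equation reads $\alpha_u+\alpha_{u'}=0$. Conversely, every thin position and every short move (here I read~\eqref{eq:ShortMove} with $u_{i-1}\neq u_{i+1}$, as in \cite{Fu2024}) arises from exactly one such tuple. The defining system is therefore:
\begin{itemize}
\item $\alpha_p=0$ for every thin path $p$;
\item $\alpha_p=-\alpha_q$ for every edge $\{p,q\}$ of $\mathcal{S}_n(G)$.
\end{itemize}

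The equations never relate paths in different $\mathcal{S}_n(G)$-classes, so $\Omega_n(G;K)$ is the direct sum over classes of the solution spaces supported on each class. Within a class, the edge relations propagate values: all $|\alpha|$ agree up to sign. So one thin vertex forces $\alpha\equiv 0$ on the whole class.

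For a thick class in characteristic $2$ the relations say $\alpha$ is constant on the class, so the solution space is spanned by the sum of its paths. In characteristic $\neq 2$, a nonzero solution must alternate in sign along edges. This is consistent exactly when the class is bipartite, that is, has a partition, and the solution is then the signed sum in the statement. An odd cycle forces $\alpha=-\alpha$, so $2\alpha=0$ and hence $\alpha=0$ since $2$ is invertible in $K$.

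Two points remain. First, elements of $\Omega_n$ are finitely supported while classes could in principle be infinite. A nonzero solution has constant nonzero absolute value on its class, so only finite classes contribute; I would record that the basis is indexed by the finite thick classes (with a partition in characteristic $\neq 2$). Second, the resulting elements have pairwise disjoint supports, so they are linearly independent, and by the decomposition above they span.

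The main obstacle is the bookkeeping in the first step: making sure each constraint involves only one insertion position, so the signs agree, and treating degenerate cases such as $w_{i-1}=w_i$ (double edges), which produce no constraint. This is precisely what makes the relation $\alpha_u+\alpha_{u'}=0$ rather than something with mixed signs.
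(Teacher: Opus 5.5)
The paper does not prove this statement. It quotes it from \cite{Fu2024} and uses it only as an input to Theorem~\ref{thm:InductiveShortCorrespondence} and Corollary~\ref{cor:InductiveShortCorrespondenceForRings}. Your argument is correct and serves as a self-contained substitute for that citation, written in the paper's own framework.

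Your key steps are sound. A tuple in $C^M_{n-1,n}(G;K)$ has exactly one step of distance $2$; this is the mechanism behind Lemma~\ref{lem:NoPositionSwap}. Combined with Lemma~\ref{lem:PathChianKernel}, it reduces membership in $\Omega_n(G;K)$ to the equations $\alpha_p=0$ for thin $p$ and $\alpha_p+\alpha_q=0$ along short moves. The no-multisquare hypothesis is used only to cap each equation at two unknowns.

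The benefit of the direct route is that it exposes conventions the transcribed statement leaves implicit. You should make all of them explicit.
\begin{enumerate}
\item[(i)] The condition $v_{i-1}\neq v_{i+1}$, i.e.\ $d_G(v_{i-1},v_{i+1})=2$, must be required in the definition of a thin path as well as of a short move. Otherwise a lone double edge $e_{u,v,u}\in\Omega_2(G;K)$ counts as thin, and your converse ``every thin position arises from exactly one tuple'' fails.
\item[(ii)] For infinite $G$ only finite thick classes contribute, as you note.
\item[(iii)] In characteristic $\neq 2$ the correct condition is bipartiteness \emph{including} singleton classes. The paper's notion of partition demands two non-empty parts, so it literally excludes, for instance, the singleton classes $\{e_{u,v}\}$ that span $\Omega_1(G;K)$. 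Your identification of ``bipartite'' with ``has a partition'' therefore needs this caveat.
\end{enumerate}

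Finally, your proof runs verbatim over any commutative ring in which $2$ is either $0$ or a non-zero-divisor. This gives the ring version directly, rather than through inductive elements and Lemma~\ref{lem:CoeffChangeGraded}. It also shows that this condition, not merely ``characteristic $0$'', is the right hypothesis. Over $\mathbb{Z}\times\mathbb{Z}_2$, any finite thick class containing an odd cycle supports the nonzero solution $(0,1)\sum_p p$. An example is the single class formed by the twelve allowed $4$-paths of the digraph $\mathcal{G}$ in Example~\ref{ex:UCTfails}, which contains a $9$-cycle.
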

Other than the choice of sign, the bases above are canonically defined. However, the assumption in Theorems~\ref{thm:Dim3BasisNoDoubleNoMulti} and~\ref{thm:NoMultisquareBasis} that $G$ contain no multisquares is very restrictive. Part of the motivation for the present work was to develop techniques for studying digraphs containing multisquares. We do this in the next few sections, revisiting Theorem~\ref{thm:NoMultisquareBasis} in Section~\ref{sec:relwithsn(G)classes}
and offering a new perspective on its content.

\section{Structure morphisms on the path chain complex}\label{sec:StructureMaps}

Throughout this section, let $G$ be a digraph equipped with the quasi-metric $d_G$ given in equation~\eqref{eq:GraphMetric} and $R$ a commutative ring with a unit, unless stated otherwise. In the section we define several morphisms on $\Omega_n(G;R)$ that are fundamental to the constructions presented in the rest of the paper.

\subsection{A further characterisation of \texorpdfstring{$\Omega_n(G;R)$}{path chains}}

We first provide the following lemma, which gives a characterisation of the elements of $\Omega_n(G;R)$, further refining that given in Lemma~\ref{lem:PathChianKernel}. We are not aware of the lemma having been proved previously in precisely the form presented. However, an essentially equivalent result is provided in \cite[Proposition 1.12]{Di2024} and a related statement is made in \cite[Lemma 4.1]{Grigoryan2013}. For completeness, we give a full proof directly, relying only on Lemma~\ref{lem:PathChianKernel}.

\begin{lemma}\label{lem:NoPositionSwap}
    Let $x \in \mathcal{A}_n(G;R)$.
    Then $x\in \Omega_n(G;R)$ if and only if
    \[
        \partial^M_{n,n,i}(x) = 0 
    \]
    for each $i=1,\dots,n-1$, where $\partial^M_{n,n,i}$ is the $i^\text{th}$ component of the magnitude differential.
\end{lemma}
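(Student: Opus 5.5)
By Lemma~\ref{lem:PathChianKernel}, after identifying $x$ with $\phi_n(x)$, we have $x\in\Omega_n(G;R)$ if and only if $\partial^M_{n,n}(x)=\sum_{i=1}^{n-1}(-1)^i\partial^M_{n,n,i}(x)=0$. The ``if'' direction is therefore immediate: if every component vanishes, so does their signed sum. The content of the lemma is the converse. We must show that no cancellation can occur between the images of different face maps $\partial^M_{n,n,i}$ and $\partial^M_{n,n,j}$ for $i\neq j$.

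The plan is to show that the images of the components land in pairwise disjoint sets of basis elements of $C^M_{n-1,n}(G;R)$. A basis element of $C^M_{n-1,n}(G;R)$ is a tuple $\langle w_0,\dots,w_{n-1}\rangle$ of total length $n$ with $n-1$ steps, each step of length at least $1$. Hence exactly one index $k$ has $d(w_{k-1},w_k)=2$, and all other steps have length $1$. Now take an allowed path $\langle v_0,\dots,v_n\rangle$ with $\partial^M_{n,n,i}\langle v_0,\dots,v_n\rangle\neq0$. Its image is $\langle v_0,\dots,\hat v_i,\dots,v_n\rangle$, where $d(v_{i-1},v_{i+1})=2$, and every other consecutive pair is an edge at distance $1$. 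So the unique length-$2$ step of the image sits at position $i$: it joins the $(i-1)$-th and $i$-th entries of the shorter tuple. Define $D_i$ to be the set of basis elements of $C^M_{n-1,n}(G;R)$ whose long step is at position $i$. Then $\partial^M_{n,n,i}$ maps $C^M_{n,n}(G;R)$ into the span of $D_i$, and the sets $D_1,\dots,D_{n-1}$ are pairwise disjoint.

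It follows that $C^M_{n-1,n}(G;R)$ is the direct sum over $i$ of the free modules $R[D_i]$, together with the span of any remaining basis elements. The component of $\partial^M_{n,n}(x)$ in $R[D_i]$ is exactly $(-1)^i\partial^M_{n,n,i}(x)$. So if $\partial^M_{n,n}(x)=0$, each projection vanishes, and each $\partial^M_{n,n,i}(x)=0$ follows because the sign is a unit.

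The only step needing care is justifying that the long step in the image is located exactly at position $i$. This uses the facts that the other steps of an allowed path have distance $1$ (since $G$ has no loops) and that a nonzero face requires $d(v_{i-1},v_{i+1})=2$. Both follow directly from the definitions, so I expect no real obstacle.
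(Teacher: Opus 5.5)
Your proof is correct and rests on the same fact as the paper's. A face taken at position $i$ of an allowed path has its unique distance-$2$ step exactly at position $i$, so faces taken at different positions can never coincide; the paper's contradiction $d_G(v^j_{i-1},v^j_{i+1})=d_G(v^k_{i-1},v^k_i)=1$ is this observation applied locally. You package it as a direct-sum decomposition of $C^M_{n-1,n}(G;R)$ indexed by the position of the long step. That is cleaner than the paper's iterative cancellation argument with minimal subsets $K$, and it makes immediate that the kernel of $\partial^M_{n,n}$ is the intersection of the kernels of its components.
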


\begin{proof}
    Sufficiency is clear from Lemma~\ref{lem:PathChianKernel}, as by definition $\partial_{n,n}^M = \sum_{i=1}^{n-1}(-1)^{i}\partial_{n,n,i}^M$.
    To show necessity, we adopt the notation for magnitude homology from Section~\ref{sec:MagnitudeHomology}, identifying $\mathcal{A}_n(G;R)$ with $C^M_{n,n}(G;R)$ and $\Omega_n(G;R)$ with $H^M_{n,n}(G;R)$ using the map $\phi_n$ from Lemma~\ref{lem:PathChianKernel}. Thus if $x\in \Omega_n(G;R)$, then, similarly to equation~\eqref{eq:UniqueChainForm}, $x$ can be written uniquely as
    \begin{equation*}
        x = \sum_{j \in J} \alpha_j \langle v_0^j,\dots,v_n^j \rangle
    \end{equation*}
    where $J$ is a finite set, $e_{v_0^j,\dots,v_n^j}$ is an allowed $n$-path, and $\alpha_j \in R$ is nonzero for each $j \in J$. 
    
    We claim that it is enough to show that for any $i=1,\dots,n-1$ and $j \in J$ satisfying
    \begin{equation}\label{eq:MagnitudeCondition}
        (v_{i-1}^j,v_{i+1}^j)\notin E_G
        \;\;\; \text{and} \;\;\;
        v_{i-1}^j \neq v_{i+1}^j
    \end{equation}
    there is a $K^j_i \subseteq J \setminus \{j\}$ with $(v^k_{i-1},v^k_{i+1}) \notin E_G$ and $v_{i-1}^k \neq v_{i+1}^k$ for each $k\in K^j_i$
    such that
    \[
        \sum_{k\in K^j_i \cup \{ j\}} \alpha_k
        \langle x_0^k,\dots,\hat{x}_{i}^k,\dots,x_n^k \rangle
        = 0.
    \]
    Indeed, if this holds, then it can be applied a second time to the set $J\setminus (K^i_j \cup \{j\})$. Successive applications will eventually yield
    $\bar{J}\subseteq J$ such that $\partial_{n,n,i}^M(\langle x_0^j,\dots,x_n^j \rangle) = 0$ for all $j\in \bar{J}$.
    
    To show the claim, note that $x \in \ker \partial^M_{n,n}$ by Lemma~\ref{lem:PathChianKernel}, so for any $i=1,\dots,n-1$ and $j \in J$, there is a minimally sized $K \subseteq J \setminus \{j\}$
    and an $S_k\subseteq\{1,\dots,n-1\}$ for each $k\in K$
    such that
    \begin{equation}\label{eq:GeneralBoundarySum}
        (-1)^i\alpha_j
        \langle v_0^j,\dots,\hat{v}_i^j,\dots,v_n^j \rangle
        + \sum_{k\in K} 
        \sum_{s_k\in S_{k}}
        (-1)^{s_k} \alpha_k
        \langle v_0^k,\dots,\hat{v}_{s_k}^k,\dots,v_n^k \rangle
        = 0.
    \end{equation}
    Suppose that for some $k \in K$ and $s_{k}\in S_{k}$ it is the case that $s_{k} \neq i$.
    In particular, by the minimal size of $K$ we can assume
    \[
        \langle v_0^j,\dots,\hat{v}_i^j,\dots,v_n^j \rangle
        =
        \langle v_0^k,\dots,\hat{v}_{s_k}^k,\dots,v_n^k \rangle.
    \]
    Without loss of generality assume $i<s_{k}$. Then
    \begin{align}\label{ChainEqualityConditions}
         v_0^j,\dots,v_{i-1}^j = & \: v_0^{k},\dots,v_{i-1}^{k}
         \nonumber
        \\ \text{while} \: 
         v_{i+1}^j,\dots,x_{s_{k}}^j = & \: v_i^{k},\dots,v_{s_{k}-1}^{k}
        \\ \text{and} \:
         v_{s_{k}+1}^j,\dots,v_{n}^j = & \: v_{s_{k}+1}^{k},\dots,v_{n}^{k}.
         \nonumber
    \end{align}
    As $\ell(\langle v_0^j,\dots,v_n^j \rangle) = n$ and
    the distance on the vertices of $G$ comes from the digraph quasi-metric, we must have that $d_G(v_t^j,v_{t+1}^j) = 1$ and $d_G(v_t^{k},v_{t+1}^{k}) = 1$ for $t=0,\dots,n-1$.
    However, the construction of the digraph quasi-metric also implies that
    \begin{equation}\label{eq:xMetricCondition}
        d_G(v_{i-1}^j,v_{i+1}^j) = d_G(v_{i-1}^{k},v_i^{k}) = 1
    \end{equation}
    by using the last position of the first line of equation~\eqref{ChainEqualityConditions} and the first position of the second line.
    Meanwhile, by the assumption in equation~\eqref{eq:MagnitudeCondition}, we also have that
    \[
        d_G(v_{i-1}^j,v_{i+1}^j) = d_G(v_{i-1}^j,v_i^j)+d_G(v_i^j,v_{i+1}^j) = 2
    \]
    which contradicts equation~\eqref{eq:xMetricCondition}.
    Therefore, $s_k = i$ for all $s_k \in S_K$ and all $k\in K$. Together with equation \eqref{eq:GeneralBoundarySum}, the proof is completed by setting $K_i^j = K$.
     \qed
\end{proof}

\subsection{Change of coefficients}\label{sec:Coefficients}

We now begin to examine when the structure of $\Omega_n(G;R)$ is preserved under changes of coefficients.
Throughout this section, let $\phi\colon R\rightarrow R'$ be a homomorphism of commutative, unital rings. This equips $R'$ with the structure of an $R$-module and usually we shall make reference only to this structure. Fix also a digraph $G$.

For $n, l\geq0$ define maps
\begin{equation}\label{eq:AllowedPathCoefficientChange}
\mu_n\colon\mathcal{A}_n(G;R)\otimes_RR'\rightarrow \mathcal{A}_n(G;R'),\qquad \mu_n\colon C^M_{n,l}(G;R)\otimes_RR'\rightarrow C^M_{n,l}(G;R')
\end{equation}
by linearly extending the assignments $\mu_n(e_{u_0,\dots,u_n}\otimes r)=r\cdot e_{u_0,\dots,u_n}$ and $\mu_n(\langle u_0,\dots,u_n \rangle \otimes r)=r \cdot \langle u_0,\dots,u_n \rangle$ for an allowed path $e_{u_0,\dots,u_n}$ or $(n+1)$-tuple $\langle u_0,\dots,u_n \rangle$, respectively.

\begin{lemma}\label{muisoonac}
    For each $n, l\geq0$ the two maps $\mu_n$ from equation~\eqref{eq:AllowedPathCoefficientChange}
    are isomorphisms of $R'$-modules.
\end{lemma}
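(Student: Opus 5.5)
The plan is to reduce everything to the standard fact that base change commutes with direct sums. By definition, $\mathcal{A}_n(G;R)$ is the free $R$-module on the set $P^G_n$ of allowed $n$-paths. Likewise, $C^M_{n,l}(G;R)$ is the free $R$-module on the set $T_{n,l}$ of tuples $\langle u_0,\dots,u_n\rangle$ with $u_{i-1}\neq u_i$ and $\ell\langle u_0,\dots,u_n\rangle=l$. Neither indexing set depends on the ring: both are defined purely from the digraph $G$ and its quasi-metric $d_G$. So both maps $\mu_n$ have the form
\[
\mu\colon R[S]\otimes_R R'\longrightarrow R'[S],\qquad s\otimes r\mapsto r\cdot s,
\]
for a fixed set $S$ (either $P^G_n$ or $T_{n,l}$). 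It therefore suffices to prove that this map is an isomorphism of $R'$-modules for an arbitrary set $S$.

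First I check that $\mu$ is well defined and $R'$-linear. The assignment $(x,r)\mapsto \sum_s \alpha_s\phi(\cdot)$ is handled as follows. Writing $x=\sum_s\alpha_s s$, the map $(x,r)\mapsto\sum_s \phi(\alpha_s)r\, s$ is $R$-bilinear, where $R$ acts on $R'$ via $\phi$. It therefore factors through $\otimes_R$. It is $R'$-linear for the $R'$-action on the right tensor factor.

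Next I construct the inverse $\nu\colon R'[S]\to R[S]\otimes_R R'$ by $\nu(r\cdot s)=s\otimes r$, extended linearly. This is well defined because $R'[S]$ is free on $S$, so one may define an $R'$-linear map by specifying it on basis elements. Both composites are the identity on generators. Indeed, $\mu\nu(s)=s$, and $\nu\mu(s\otimes r)=s\otimes r$. Since elements of the form $s\otimes r$ generate $R[S]\otimes_RR'$ (using $(\alpha s)\otimes r=s\otimes\phi(\alpha)r$), both composites are the identity. Alternatively, one can cite $R[S]\cong\bigoplus_S R$ together with the fact that $-\otimes_RR'$ commutes with direct sums and $R\otimes_RR'\cong R'$.

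There is no real obstacle here. The only point needing care is to note explicitly that the generating sets $P^G_n$ and $T_{n,l}$ are independent of the coefficient ring. For $C^M_{n,l}$ this holds because the length condition involves only $d_G$. The argument also requires no finiteness assumption on $G$, since it works for arbitrary free modules.
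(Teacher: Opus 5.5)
Your proposal is correct and follows essentially the same route as the paper, whose proof simply observes that both sides are free $R'$-modules and that $\mu_n$ carries the canonical basis $\{e_{u_0,\dots,u_n}\otimes 1\}$ (resp.\ $\{\langle u_0,\dots,u_n\rangle\otimes 1\}$) bijectively onto the canonical basis of the target. Your explicit inverse $\nu$, or equivalently your appeal to $-\otimes_R R'$ commuting with direct sums, spells out the standard fact the paper leaves implicit, namely that these elements form a basis of the tensor product. Your remark that the indexing sets do not depend on the coefficient ring is exactly what makes the two bases correspond.
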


\begin{proof}
    In each case, both the target and domain of $\mu_n$ are free $R'$-modules, and $\mu_n$ establishes a bijection between their canonical bases. \qed
\end{proof}

Making use of the natural isomorphism from Lemma~\ref{lem:PathChianKernel}, for each $n\geq0$ the following diagram commutes
\[
\xymatrix{\mathcal{A}_n(G;R)\otimes_RR'\ar[r]^-{\mu_n}\ar[d]^-{\partial^M_{n,n}\otimes1}&\mathcal{A}_n(G;R')\ar[d]^-{\partial^M_{n,n}}\\
C^M_{n-1,n}(X;R)\otimes_RR'\ar[r]^-{\mu_{n-1}}&C^M_{n-1,n}(G;R').}
\]
By Lemma~\ref{lem:PathChianKernel}, the kernel of $\partial^M_{n,n}$ is the module of path chains, so from the diagram we obtain a well defined map
\begin{equation}\label{eq:FullCoefChangeMu}
\mu_n\colon\Omega_n(G;R)\otimes_RR'\rightarrow\Omega_n(G;R').
\end{equation}
Recall that an $R$-module $M$ is \emph{flat} if the functor $(-)\otimes_RM$ preserves kernels, and that a submodule $N\subseteq M$ of an $R$-module $M$ is \emph{pure} if the map $N\otimes_R P\rightarrow M\otimes_R P$ is monic for each $R$-module $P$.

\begin{proposition}\label{prop:coefprop1}
    The following statements hold.
    \begin{enumerate}
        \item Suppose that $R'$ is a flat $R$-module. Then $\mu_*\colon\Omega_*(G;R)\otimes_RR'\rightarrow\Omega_*(G;R')$ is an isomorphism.
        \item Suppose that $\Omega_n(G;R)$ is a pure submodule of $\mathcal{A}_n(G;R)$. Then $\mu_*\colon\Omega_n(G;R)\otimes_RR'\rightarrow\Omega_n(G;R')$ is injective.
    \end{enumerate}
\end{proposition}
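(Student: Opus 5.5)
The plan is to view $\Omega_n(G;R)$ as a kernel, namely $\ker\bigl(\partial^M_{n,n}\colon \mathcal{A}_n(G;R)\to C^M_{n-1,n}(G;R)\bigr)$ via Lemma~\ref{lem:PathChianKernel}. Then both statements become formal consequences of the commutative square preceding equation~\eqref{eq:FullCoefChangeMu}, combined with Lemma~\ref{muisoonac}, which says the horizontal maps $\mu_n$ and $\mu_{n-1}$ on $\mathcal{A}$ and $C^M$ are isomorphisms. The key bookkeeping fact is that the map $\mu_n$ on $\Omega_n$ is the restriction of $\mu_n$ on $\mathcal{A}_n(G;R)\otimes_R R'$, precomposed with $\iota\otimes 1\colon \Omega_n(G;R)\otimes_R R'\to \mathcal{A}_n(G;R)\otimes_R R'$, where $\iota$ is the inclusion.

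For part (1), take the exact sequence
\[
0\to \Omega_n(G;R)\xrightarrow{\iota}\mathcal{A}_n(G;R)\xrightarrow{\partial^M_{n,n}} C^M_{n-1,n}(G;R).
\]
Tensoring with the flat module $R'$ keeps it exact, so $\Omega_n(G;R)\otimes_R R'$ is identified, via $\iota\otimes 1$, with $\ker(\partial^M_{n,n}\otimes 1)$. Since the square commutes and its horizontal arrows are isomorphisms, $\mu_n$ carries $\ker(\partial^M_{n,n}\otimes1)$ isomorphically onto $\ker(\partial^M_{n,n})$ in $\mathcal{A}_n(G;R')$. By Lemma~\ref{lem:PathChianKernel} with coefficients in $R'$, that kernel is $\Omega_n(G;R')$. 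Composing these identifications gives exactly the map $\mu_n$ of equation~\eqref{eq:FullCoefChangeMu}, so it is an isomorphism in each degree.

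For part (2), purity says $\iota\otimes 1$ is injective, taking $P=R'$. Composing with the isomorphism $\mu_n$ on $\mathcal{A}_n(G;R)\otimes_R R'$ from Lemma~\ref{muisoonac} gives an injective map, and this composite equals $\mu_n$ restricted to $\Omega_n(G;R)\otimes_RR'$, followed by the inclusion into $\mathcal{A}_n(G;R')$. Hence $\mu_n$ is injective.

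The only step needing care is this compatibility: checking that the map induced in \eqref{eq:FullCoefChangeMu} really is the one obtained by restricting along $\iota\otimes1$. This follows directly from the definitions on basis elements $e_{u_0,\dots,u_n}\otimes r$. There is no substantial obstacle, though one should note that the diagram is one of $R'$-modules, so "kernel" and the identifications above are taken as $R'$-modules.
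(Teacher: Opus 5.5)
Your proposal is correct and is essentially the paper's proof. The paper likewise uses Lemma~\ref{muisoonac} (via the Five Lemma) to identify $\ker(\partial^M_{n,n}\otimes 1)$ with $\Omega_n(G;R')$ under $\mu_n$, and then notes that the natural map $\Omega_n(G;R)\otimes_RR'\to\ker(\partial^M_{n,n}\otimes1)$ is an isomorphism under flatness and injective under purity; your part (2), which composes $\iota\otimes1$ directly with $\mu_n$ on $\mathcal{A}_n(G;R)\otimes_RR'$, is just an equivalent rephrasing of this last step.
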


\begin{proof}
Consider the commutative diagram
\[
\xymatrix{0\ar[r]&
\ker(\partial^M_{n,n}\otimes1)
\ar[r]\ar[d]^-{(1)}&\mathcal{A}_n(G;R)\otimes_RR'\ar[rr]^-{\partial^M_{n,n}\otimes1}\ar[d]^-{(2)}&&{C^M_{n-1,n}(G;R)\otimes_RR'}\ar[d]^-{(3)}\\
0\ar[r]&\Omega_n(G;R')\ar[r]&\mathcal{A}_n(G;R')\ar[rr]^-{\partial^M_{n,n}}&&C^M_{n-1,n}(G;R')}
\]
in which the vertical maps are induced by $\mu_n$. The bottom row of the diagram is exact by Lemma~\ref{lem:PathChianKernel} and equation~\eqref{eq:DiagonalKernal}, and the top row is exact by construction. Since $(2),(3)$ are isomorphisms by Lemma~\ref{muisoonac}, the Five Lemma shows that $(1)$ is an isomorphism.

Under the assumption that $R'$ is $R$-flat, the natural map $\alpha \colon \Omega_n(G;R)\otimes_RR'\rightarrow \ker(\partial^M_{n,n}\otimes1)$ is an isomorphism and we obtain the first statement. On the other hand, under the assumption that $\Omega_n(G;R)\subseteq\mathcal{A}_n(G;R)$ is pure, $\alpha$ is injective and we obtain the second statement.
\qed
\end{proof}

A ring is von Neumann regular if and only if all its modules are flat. If $R$ is such a ring, then by Proposition~\ref{prop:coefprop1} $\mu_*\colon\Omega_*(G;R)\otimes_RR'\rightarrow\Omega_*(G;R')$ will be an isomorphism with no assumptions on $R'$ save that it be an $R$-module. We will use only the following special case of this observation.

\begin{corollary}\label{cor:CoeffChange1}
Suppose that $R$ is a field. Then $\mu_*\colon\Omega_*(G;R)\otimes_RR'\rightarrow\Omega_*(G;R')$ is an isomorphism. \qed
\end{corollary}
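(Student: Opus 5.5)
The plan is to derive Corollary~\ref{cor:CoeffChange1} from part~(1) of Proposition~\ref{prop:coefprop1}. For this it suffices to show that when $R$ is a field, every $R$-module, and in particular $R'$, is flat over $R$.

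\textbf{Step 1: $R'$ is free over $R$.} Since $R$ is a field, $R'$ regarded as an $R$-module through $\phi$ is a vector space. Every vector space has a basis, so $R'\cong\bigoplus_{i\in I}R$ for some index set $I$.

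\textbf{Step 2: free modules are flat.} Tensoring with $\bigoplus_{i\in I}R$ is naturally isomorphic to taking the direct sum of $I$ copies of the identity functor. Direct sums of exact sequences of modules are exact, so $(-)\otimes_R R'$ is exact and in particular preserves kernels. This is the flatness hypothesis as it is used in the proof of Proposition~\ref{prop:coefprop1}, namely that the natural map $\Omega_n(G;R)\otimes_R R'\to\ker(\partial^M_{n,n}\otimes 1)$ is an isomorphism.

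\textbf{Step 3: conclude.} Part~(1) of Proposition~\ref{prop:coefprop1} now applies in each degree $n$ and gives that $\mu_n$ is an isomorphism. Hence $\mu_*$ is an isomorphism.

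There is no real obstacle here. The only point to check is that flatness is used in exactly the form required, kernel preservation applied to $\partial^M_{n,n}\colon \mathcal{A}_n(G;R)\to C^M_{n-1,n}(G;R)$, and that form is immediate for free modules. Alternatively, one can note that a field is von Neumann regular and invoke the remark preceding the corollary.
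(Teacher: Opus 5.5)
Your proposal is correct and matches the paper's argument. The paper deduces the corollary from part~(1) of Proposition~\ref{prop:coefprop1} by noting that a field is von Neumann regular, so every $R$-module, in particular $R'$, is flat. Your Steps~1--2 establish that same flatness directly, via freeness of vector spaces.
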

We also extract a statement covering change from integral to rational or real coefficients.
\begin{corollary}\label{cor:CoeffChange2}
Suppose that $R=\mathbb{Z}$ and $R'$ has characteristic $0$. Then $\mu_*:\Omega_*(G;R)\otimes_RR'\rightarrow\Omega_*(G;R')$ is an isomorphism.
\end{corollary}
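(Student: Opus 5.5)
The plan is to reduce Corollary~\ref{cor:CoeffChange2} to part (1) of Proposition~\ref{prop:coefprop1}. That part says $\mu_*$ is an isomorphism whenever $R'$ is a flat $R$-module, so it suffices to show that a ring $R'$ of characteristic $0$ is flat as a $\mathbb{Z}$-module. One can do slightly better by avoiding flatness of $R'$ altogether and using instead that $\Omega_n(G;\mathbb{Z})$ is a pure submodule of $\mathcal{A}_n(G;\mathbb{Z})$ together with a surjectivity argument. However, the cleanest route is the following, which uses a structural property of the kernel rather than of $R'$.

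First, recall from the proof of Proposition~\ref{prop:coefprop1} that the vertical map $(1)\colon \ker(\partial^M_{n,n}\otimes 1)\to \Omega_n(G;R')$ is always an isomorphism. The task is therefore to show that the natural map
\[
\alpha\colon \Omega_n(G;\mathbb{Z})\otimes R'\to \ker(\partial^M_{n,n}\otimes 1)
\]
is an isomorphism. Over $\mathbb{Z}$ the map $\partial^M_{n,n}\colon \mathcal{A}_n(G;\mathbb{Z})\to C^M_{n-1,n}(G;\mathbb{Z})$ is a map of free abelian groups. Its image $B$ is a subgroup of a free abelian group, hence free. The short exact sequence
\[
0\to\Omega_n(G;\mathbb{Z})\to\mathcal{A}_n(G;\mathbb{Z})\to B\to 0
\]
therefore splits, and so tensoring with $R'$ keeps it exact. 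Consequently $\Omega_n(G;\mathbb{Z})\otimes R'$ is identified with the kernel of $\mathcal{A}_n(G;\mathbb{Z})\otimes R'\to B\otimes R'$. It remains to compare this with the kernel of the composite into $C^M_{n-1,n}(G;\mathbb{Z})\otimes R'$. That requires showing that $B\otimes R'\to C^M_{n-1,n}(G;\mathbb{Z})\otimes R'$ is injective, i.e. that $\mathrm{Tor}(C/B,R')=0$.

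The main obstacle is exactly this Tor vanishing, since $C/B$ may have torsion. Here I would use the local structure of $\partial^M$ supplied by Lemma~\ref{lem:NoPositionSwap}. Each component $\partial^M_{n,n,i}$ sends a basis tuple either to $0$ or to a basis tuple. Moreover, the tuples of $C^M_{n-1,n}$ partition according to the position $i$ at which a deleted vertex is recoverable, as in the distance argument in that proof. The kernel condition thus splits into conditions "sum of coefficients over a fibre equals zero." This suggests that $\Omega_n(G;\mathbb{Z})$ is cut out by equations with coefficients $0,\pm1$ over basis fibres, which would make the relevant cokernels torsion-free. If this combinatorial control proves too weak, the fallback is the hypothesis on $R'$ itself. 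Characteristic $0$ means $\mathbb{Z}\to R'$ is injective, but that alone does not give flatness, so the fallback would need $R'$ to be torsion-free over $\mathbb{Z}$, in which case flatness is immediate ($\mathbb{Z}$ is a PID) and Proposition~\ref{prop:coefprop1}(1) finishes. I would first attempt the torsion-freeness of $C/B$ via the fibre decomposition, checking that every fibre equation has unit coefficients, and invoke the flat case as the final step.
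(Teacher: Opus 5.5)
Your reduction is correct: since $B=\operatorname{im}\partial^M_{n,n}$ is free, $\mu_n$ is always injective (purity), and its cokernel is $\mathrm{Tor}(C/B,R')$ with $C=C^M_{n-1,n}(G;\mathbb{Z})$. The gap is the route you plan to try first. $C/B$ is not torsion-free in general, and fibre equations with unit coefficients do not rule out torsion. When $G$ has no multisquares, every nonzero fibre of $\partial^M_{n,n,i}$ contains at most two tuples, so your fibre equations read $\alpha_x+\alpha_y=0$ along the edges of $\mathcal{S}_n(G)$. On a thick class with no partition, i.e.\ one containing an odd cycle, going around the cycle gives $2\alpha_x=0$. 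This kills $\alpha_x$ over $\mathbb{Z}$ but not over $\mathbb{Z}/2$: the same-sign incidence matrix of a non-bipartite graph has $\mathbb{Z}/2$ in its cokernel. This happens for the Fu--Ivanov digraph $\mathcal{G}$ of Example~\ref{ex:UCTfails} in degree $4$. There $\Omega_4(\mathcal{G};\mathbb{Z})\subseteq\Omega_4(\mathcal{G};\mathbb{Q})=0$, while $B_3(\mathcal{G};\mathbb{Z}/2)\cong\mathbb{Z}/2$ forces $\Omega_4(\mathcal{G};\mathbb{Z}/2)\neq 0$. Hence $\mathrm{Tor}(C/B,\mathbb{Z}/2)\neq 0$ for $n=4$.

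Consequently no argument can prove the corollary in the stated generality. Take $R'=\mathbb{Z}\times\mathbb{Z}/2$, which has characteristic $0$. The path chain construction splits over the two factors, so $\Omega_4(\mathcal{G};R')\cong\Omega_4(\mathcal{G};\mathbb{Z})\times\Omega_4(\mathcal{G};\mathbb{Z}/2)\neq 0$, whereas $\Omega_4(\mathcal{G};\mathbb{Z})\otimes R'=0$. So $\mu_4$ is not surjective.

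Your fallback is the correct proof, and it is exactly the paper's one-line argument: ``since $\mathbb{Z}$ is a PID, any torsion-free $\mathbb{Z}$-module is flat,'' followed by Proposition~\ref{prop:coefprop1}(1). That argument tacitly reads ``characteristic $0$'' as ``torsion-free as an abelian group,'' which holds for $\mathbb{Q}$, $\mathbb{R}$, or any domain of characteristic $0$. Your observation that characteristic $0$ alone does not give flatness is right. The remedy is to add torsion-freeness of $R'$ as a hypothesis, not to look for torsion-freeness of $C/B$.
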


\begin{proof}
Since $\mathbb{Z}$ is a PID, any torsion-free $\mathbb{Z}$-module is flat. \qed
\end{proof}
Finally, we give an application for the second part of the proposition.
\begin{corollary}\label{cor:CoeffChange3}
Suppose that $R$ is a PID. Then $\mu_*\colon\Omega_*(G;R)\otimes_RR'\rightarrow\Omega_*(G;R')$ is injective.
\end{corollary}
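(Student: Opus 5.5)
The plan is to use the second part of Proposition~\ref{prop:coefprop1}. It then suffices to show that, when $R$ is a PID, $\Omega_n(G;R)$ is a pure submodule of $\mathcal{A}_n(G;R)$ for every $n\geq 0$. The idea is that over a PID the cokernel of $\Omega_n(G;R)\hookrightarrow\mathcal{A}_n(G;R)$ is torsion-free, hence flat. The short exact sequence $0\to\Omega_n\to\mathcal{A}_n\to\mathcal{A}_n/\Omega_n\to 0$ then stays exact after applying $-\otimes_R P$ for any $R$-module $P$. This is because $\mathrm{Tor}_1^R(\mathcal{A}_n/\Omega_n,P)=0$ when $\mathcal{A}_n/\Omega_n$ is flat, and that vanishing is exactly purity.

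The first step is to identify the cokernel. By Lemma~\ref{lem:PathChianKernel} and equation~\eqref{eq:DiagonalKernal}, $\Omega_n(G;R)$ is the kernel of $\partial^M_{n,n}\colon\mathcal{A}_n(G;R)\cong C^M_{n,n}(G;R)\to C^M_{n-1,n}(G;R)$. Hence $\mathcal{A}_n/\Omega_n$ is isomorphic to the image of $\partial^M_{n,n}$. This image is a submodule of the free $R$-module $C^M_{n-1,n}(G;R)$.

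The second step is to observe that a submodule of a free module over an integral domain is torsion-free. A torsion-free module over a PID is flat; this is the same fact invoked in Corollary~\ref{cor:CoeffChange2}. It holds with no finite generation assumption, since flatness is detected on finitely generated submodules, and those are free over a PID. So $\mathcal{A}_n/\Omega_n$ is flat.

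The third step is to run the long exact Tor sequence for $0\to\Omega_n\to\mathcal{A}_n\to\mathrm{im}\,\partial^M_{n,n}\to 0$ tensored with $P$. This gives injectivity of $\Omega_n\otimes_R P\to\mathcal{A}_n\otimes_R P$, which is purity, and Proposition~\ref{prop:coefprop1}(2) then yields the claim.

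I do not expect a real obstacle. The only point needing care is the infinite-rank case, since $G$ need not be finite. I would handle it by citing the standard fact that torsion-free implies flat over a PID in full generality, rather than going through the structure theorem.
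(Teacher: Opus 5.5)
Your argument is correct and is essentially the paper's proof. You identify $\mathcal{A}_n(G;R)/\Omega_n(G;R)$ with the image of $\partial^M_{n,n}$ inside the free module $C^M_{n-1,n}(G;R)$, deduce that this quotient is flat, conclude purity, and apply part (2) of Proposition~\ref{prop:coefprop1}; the only difference is that the paper gets flatness by noting that a submodule of a free module over a PID is free, whereas you argue via torsion-freeness, and both hold without any finiteness assumption.
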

\begin{proof}
The map $\partial^M_{n,n}\colon\mathcal{A}_n(G;R)\rightarrow C^M_{n-1,n}(G;R)$ induces an isomorphism of the quotient module $\mathcal{A}_n(G;R)/\Omega_n(G;R)$ onto a submodule of $C^M_{n-1,n}(G;R)$. Since $C^M_{n-1,n}(G;R)$ is free and $R$ is a PID, it follows that $\mathcal{A}_n(G;R)/\Omega_n(G;R)$ is free and in particular flat. This is sufficient criteria for $\Omega_n(G;R)$ to be a pure submodule of $\mathcal{A}_n(G;R)$. \qed
\end{proof}
One can check that the isomorphisms in Lemma~\ref{muisoonac} and Proposition~\ref{prop:coefprop1}, as well as in Corollaries~\ref{cor:CoeffChange1} and~\ref{cor:CoeffChange2}, are natural in $G$. Similarly, Corollary~\ref{cor:CoeffChange3} supplies a natural injection.

Having produced several change of coefficient statements for path chains, it is natural to look for applications in homology. As $\mu_n$ commutes with the path differentials, there is a natural induced map
\[
    \bar{\mu}_n\colon H^P_n(G;R)\otimes_RR'\rightarrow H^P_n(G;R').
\]
\begin{proposition}\label{Pr:coefisopid}
Suppose that $R$ is a PID and $R'$ is a flat $R$-module. Then there is a natural isomorphism
\begin{equation*}
    H_*^P(G;R')\cong H^P_*(G;R)\otimes_RR'.
\end{equation*}
\end{proposition}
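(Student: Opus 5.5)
The plan is to combine Proposition~\ref{prop:coefprop1}(1) with the flatness of $R'$ to commute tensoring with homology. Since $R'$ is a flat $R$-module, Proposition~\ref{prop:coefprop1} shows that $\mu_*\colon\Omega_*(G;R)\otimes_RR'\rightarrow\Omega_*(G;R')$ is an isomorphism in every degree. This isomorphism is natural in $G$, and since $\mu_*$ commutes with the path differentials, it is an isomorphism of chain complexes
\[
(\Omega_*(G;R)\otimes_RR',\partial^P_*\otimes 1)\cong(\Omega_*(G;R'),\partial^P_*).
\]
Hence $H^P_*(G;R')$ is naturally isomorphic to the homology of $\Omega_*(G;R)\otimes_RR'$, and under this identification the comparison map becomes $\bar{\mu}_*$.

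It then remains to show that $H_*(\Omega_*(G;R)\otimes_RR')\cong H_*(\Omega_*(G;R))\otimes_RR'$. For this I would use the standard fact that tensoring with a flat module is an exact functor. Write $Z_n$ and $B_n$ for the cycles and boundaries of $\Omega_*(G;R)$. Applying $-\otimes_RR'$ to the exact sequences
\[
0\to Z_n\to\Omega_n\xrightarrow{\partial}B_{n-1}\to0
\qquad\text{and}\qquad
0\to B_n\to Z_n\to H^P_n(G;R)\to 0
\]
preserves exactness. The first sequence identifies $Z_n\otimes R'$ with the kernel of $\partial\otimes1$ and $B_{n-1}\otimes R'$ with its image. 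Here injectivity of $B_{n-1}\otimes R'\to\Omega_{n-1}\otimes R'$ also uses flatness. The second sequence then gives $H_n(\Omega_*\otimes R')\cong H_n^P(G;R)\otimes R'$, and this identification is natural. Alternatively, one can invoke the Künneth/universal coefficient theorem over a PID. The Tor term vanishes because $R'$ is flat. That route also needs the $\Omega_n(G;R)$ to be flat, and they are, being submodules of free modules over a PID.

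Naturality in $G$ follows because both steps are natural with respect to the chain maps $f_\#$ induced by digraph maps. I expect no real obstacle. The only point requiring care is confirming that the composite isomorphism is indeed $\bar{\mu}_*$. This holds because $\bar{\mu}_*$ is, by definition, induced by $\mu_*$ through the canonical map $H_*(C)\otimes R'\to H_*(C\otimes R')$, and flatness makes this canonical map an isomorphism.
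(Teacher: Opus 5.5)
Your proof is correct, and it follows the same outline as the paper's. Both first use Proposition~\ref{prop:coefprop1}(1) to identify $\Omega_*(G;R')$ with $\Omega_*(G;R)\otimes_RR'$ as chain complexes. The difference is in how you then commute $-\otimes_RR'$ with homology.

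The paper observes that, $R$ being a PID, $\Omega_*(G;R)$ is a complex of free $R$-modules. It then cites the universal coefficient theorem \cite[Theorem 7.55]{Rotman2009}, and the Tor term vanishes because $R'$ is flat. This is the alternative you mention at the end.

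Your main argument instead uses only the exactness of $-\otimes_RR'$ on the sequences $0\to Z_n\to\Omega_n\to B_{n-1}\to 0$ and $0\to B_n\to Z_n\to H^P_n\to 0$. Neither this step nor Proposition~\ref{prop:coefprop1}(1) uses the PID hypothesis. So your route shows the isomorphism holds for any commutative ring $R$ and any flat $R$-algebra $R'$.

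The paper's route is shorter, relying on the citation. It also matches the later universal coefficient statements (Proposition~\ref{prop:univcoeffs} and Theorem~\ref{th:Dim3Char0Char2invareince}), where $R'$ is not flat and the Tor term really appears. Your route is more elementary and shows the PID assumption is superfluous for this particular statement.
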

\begin{proof}
The assumption implies that $(\Omega_*^P(G;R),\partial^P)$ is a complex of free $R$-modules. Thus using Proposition~\ref{prop:coefprop1}, the statement of the proposition is a consequence of the universal coefficient theorem given in \cite[Theorem 7.55, pg. 448]{Rotman2009}. \qed
\end{proof}
Note that if $R$ is a field, then Proposition~\ref{Pr:coefisopid} holds with no conditions on $R'$. In any case, we obtain isomorphisms 
\[
H^P_*(G;\mathbb{Q})\cong H^P_*(G;\mathbb{Z})\otimes\mathbb{Q},\qquad H^P_*(G;\mathbb{R})\cong H^P_*(G;\mathbb{Z})\otimes\mathbb{R}
\]
for any digraph $G$.

We end the section with a few further remarks. There is a natural map
\begin{equation}\label{eq:TensorInclusion}
    i_{R,R'} \colon \Omega_n(G;R) \to \Omega_n(G;R) \otimes_{R} R'
\end{equation}
given by $i_{R,R'}(x) = x \otimes 1$. When $\Omega_*(G;R)$ is free (such as when $R$ is a PID) and  $R$, $R'$ have the same characteristic,
$i_{R,R'}$ restricts to a bijection on the canonical bases. In particular, when $R=\mathbb{Z}$ or $R=\mathbb{Z}_p$ for some prime $p$, Corollary~\ref{cor:CoeffChange2}~or~\ref{cor:CoeffChange1} demonstrates that $\Omega_n(G;R')$ is a free module for any $R'$ of characteristic $0$ or $p$, respectively.
Furthermore, by Corollary~\ref{cor:CoeffChange3}, $\mu_*\colon\Omega_*(G;\mathbb{Z})\otimes_\mathbb{Z}\mathbb{Z}_p\rightarrow\Omega_*(G;\mathbb{Z}_p)$ is injective for any prime $p$.

\subsection{Bigrading of path chains and face maps at a vertex}\label{sec:BigradingAndFaceMaps}

The material presented in this section, which is related to connectedness and bigradings of $\Omega_n(G;R)$, constitutes a generalisation of parts of~\cite[Section 2.2]{Grigoryan2022} to the case of non-field coefficients.
In the reminder of the section, given $x \in \Omega_n(G;R)$ or $x \in \mathcal{A}_n(G;R)$, we assume that $x$ has the form
\begin{equation}\label{genformx}
    x = \sum_{e_{v_0,\dots,v_n}\in P^G_n} \alpha_{v_0,\dots,v_n} e_{v_0,\dots,v_n}
\end{equation}
where each $\alpha_{v_0,\dots,v_n} \in R$ and only finitely many $\alpha_{v_0,\dots,v_n} \neq 0$. 

\begin{definition}\label{def:UpperLowerConnected}
    Let $x \in \Omega_n(G;R)$. Define the \emph{head set} $\bar{h}(x)$ and the \emph{tail set} $\bar{t}(x)$ to be
    \begin{align*}
        \bar{h}(x) &=
        \{ v_n \: | \: e_{v_0,\dots,v_n} \in P^G_n \; \text{and} \; \alpha_{v_0,\dots,v_n} \neq 0 \}
        \\ \;\;\; \text{and} \;\;\;
        \bar{t}(x) &=
        \{ v_0 \: | \: e_{v_0,\dots,v_n} \in P^G_n \; \text{and} \; \alpha_{v_0,\dots,v_n} \neq 0 \}
        .
    \end{align*}
    If $|\bar{h}(x)| = 1$, then $x$
    is called \emph{upper connected} and the unique element $h(x) \in \bar{h}(x)$ is called the \emph{head} of $x$.
    Similarly, if $|\bar{t}(x)| = 1$, then $x$
    is called \emph{lower connected} and the unique element $t(x) \in \bar{t}(x)$ is called the \emph{tail} of $x$.
    The element $x$ is called \emph{connected} if it is both upper connected and lower connected.
\end{definition}

Recall from Lemma~\ref{lem:PathChianKernel} that there are isomorphisms $\phi_n\colon \mathcal{A}(G;R)\to C_{n,n}^M(G;R)$ and $\phi_n\colon \Omega_n(G;R)\to H_{n,n}^M(G;R)$. From equation~\eqref{eq:DiagonalKernal} it follows that 
\[    
\ker(\partial^M_{n,n})= H^M_{n,n}(G;R) \cong \Omega_n(G;R)
\]
where $\partial^M_{n,n}$ is the magnitude differential.
Since $\partial^M_{n,n}$ preserves the endpoints of the $(n+1)$-tuples in the magnitude chain complex, $\ker(\partial^M_{n,n})$ is generated by elements which are sums of tuples with the same endpoints. These elements correspond under the isomorphism above to elements of $\Omega(G;R)$ which are sums of paths having the same endpoints. Therefore, $\Omega_n(G;R)$ is generated by connected elements (compare \cite[Lemma 2.2]{Grigoryan2022}, where a similar statement was made for field coefficients).

It now follows that for any vertices $v_h,v_t\in V_G$, the connected elements with head $v_h$ and tail $v_t$ span a submodule $\Omega_n^{v_t,v_h}(G;R)$ of $\Omega_n(G;R)$. Since the connected elements with different endpoints are linearly independent, there is a decomposition
\begin{equation}\label{eq:Bigrading}
    \Omega_n(G;R) = \bigoplus_{v_t,v_h \in V_G} \Omega_n^{v_t,v_h}(G;R)
\end{equation}
which we use to define a bigrading on $\Omega_n(G;R)$.

We say that a subset of $\Omega_n(G;R)$ \emph{respects the bigrading} $\Omega_*^{*,*}(G;R)$ if each of its elements is contained in some $\Omega_n^{v_t,v_h}(G;R)$. From the discussion above, it follows that $\Omega_n(G;R)$ always has a generating set which respects the bigrading, and in particular has a basis which respects the bigrading whenever it is a free $R$-module.

\begin{definition}
    For $v\in V_G$ define homomorphisms 
        $\delta^h_{n,v},\: \delta^t_{n,v} \colon 
        \mathcal{A}_n(G;R) \to \mathcal{A}_{n-1}(G;R)$
    by
    \begin{equation*}
        \delta_{n,v}^h(x)  =
        \sum_{\substack{e_{v_0,\dots,v_n}\in P^G_n \\ v_{n-1}=v}}
        \alpha_{v_0,\dots,v_n} e_{v_0,\dots,v_{n-1}} 
    \;\;\; \text{and} \;\;\;
        \delta_{n,v}^t(x)  =
        \sum_{\substack{e_{v_0,\dots,v_n}\in P^G_n \\ v_1=v}}
        \alpha_{v_0,\dots,v_n} e_{v_1,\dots,v_n}
    \end{equation*}
    where we assume that $x \in \mathcal{A}_n(G;R)$ has the form \eqref{genformx} and we follow the usual convention that an empty sum is zero.
\end{definition}

\begin{lemma}\label{lem:SumFace}
    For each $v\in V_G$ the functions $\delta_{n,v}^h,\:\delta_{n,v}^t\colon \mathcal{A}_n(G;R) \to \mathcal{A}_{n-1}(G;R)$ restrict to well defined $R$-module homomorphisms
    \begin{equation}\label{eq:OmegaBimodule}
        \delta_{n,v}^h,\:\delta_{n,v}^t \colon
        \Omega_n(G;R)\to \Omega_{n-1}(G;R).
    \end{equation}
\end{lemma}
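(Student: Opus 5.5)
The plan is to use the characterisation of Lemma~\ref{lem:NoPositionSwap}. An element $y\in\mathcal{A}_{n-1}(G;R)$ lies in $\Omega_{n-1}(G;R)$ exactly when $\partial^M_{n-1,n-1,i}(y)=0$ for each $i=1,\dots,n-2$. We treat $\delta^h_{n,v}$ in detail; $\delta^t_{n,v}$ follows by the symmetric argument, reversing the roles of the two ends of the paths. Linearity is immediate from the definition. Each summand $e_{v_0,\dots,v_{n-1}}$ of $\delta^h_{n,v}(x)$ is an allowed path, so $\delta^h_{n,v}$ certainly maps into $\mathcal{A}_{n-1}(G;R)$. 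The only issue is therefore to show that $\delta^h_{n,v}(x)$ satisfies the vanishing conditions whenever $x\in\Omega_n(G;R)$.

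The key step is a commutation relation between $\delta^h$ and the components of the magnitude differential. On $C^M_{n,n}$ we introduce the operator $\tilde\delta^h_v$, which sends $\langle v_0,\dots,v_n\rangle$ to $\langle v_0,\dots,v_{n-1}\rangle$ if $v_{n-1}=v$ and to $0$ otherwise. We extend it in the same way to $C^M_{n-1,n}$, with the convention that the $(n-1)$-st entry is checked.

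Now fix $i\in\{1,\dots,n-2\}$. Deleting position $i$ does not touch positions $n-1$ and $n$, so it preserves the penultimate vertex of the tuple. The condition $d(v_{i-1},v_i)+d(v_i,v_{i+1})=d(v_{i-1},v_{i+1})$ also involves only positions $i-1,i,i+1\le n-1$. Truncating the last vertex changes neither of these, and the deletion condition is identical before and after truncation. Hence on basis elements
\[
\partial^M_{n-1,n-1,i}\bigl(\tilde\delta^h_v\langle v_0,\dots,v_n\rangle\bigr)
=\tilde\delta^h_v\bigl(\partial^M_{n,n,i}\langle v_0,\dots,v_n\rangle\bigr).
\]
Here the right-hand side is read in $C^M_{n-1,n}$. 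The left-hand side lies in $C^M_{n-2,n-1}$ and is obtained by dropping the final entry; this map $C^M_{n-1,n}\to C^M_{n-2,n-1}$ is well defined on the relevant tuples because their last step has length $1$.

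To conclude, take $x\in\Omega_n(G;R)$. By Lemma~\ref{lem:NoPositionSwap}, $\partial^M_{n,n,i}(x)=0$ for every $i$. The commutation relation then gives $\partial^M_{n-1,n-1,i}(\delta^h_{n,v}(x))=0$ for $i=1,\dots,n-2$. By Lemma~\ref{lem:NoPositionSwap} again, $\delta^h_{n,v}(x)\in\Omega_{n-1}(G;R)$.

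The main point needing care is exactly this use of the componentwise vanishing. The full differential $\partial^M_{n,n}$ includes the component $i=n-1$, which does move the penultimate vertex. So one cannot simply argue with $\partial^M_{n,n}(x)=0$; Lemma~\ref{lem:NoPositionSwap} is what lets us discard that component. Some bookkeeping is also needed in the low cases: for $n\le 2$ the set of indices $i$ is empty, and the statement is trivial because $\Omega_0=\mathcal{A}_0$ and $\Omega_1=\mathcal{A}_1$.
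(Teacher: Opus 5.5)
Your proposal is correct and follows the paper's proof. Both apply Lemma~\ref{lem:NoPositionSwap} to get $\partial^M_{n,n,i}(x)=0$ for each $i$, use the fact that dropping the last vertex commutes with deleting position $i\le n-2$, and then apply the lemma again in dimension $n-1$. Your explicit commutation relation, and your remark that the $i=n-1$ component must be discarded, spell out what the paper compresses into ``by the definition of $\delta^h_{n,v}$''.
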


\begin{proof}
    We prove only the part of the statement regarding $\delta_{n,v}^h$.
    Thus suppose that $x\in \Omega_n(G;R) \subseteq \mathcal{A}_n(G;R)$.
    Applying Lemma~\ref{lem:NoPositionSwap} gives $\partial^M_{n,n,i}(x)=0$ for each $i=1,\dots,n-1$, which, by the definition of $\delta_{n,v}^h$, implies that $\partial^M_{n,n,i}(\delta_{n,v}^h(x))=0$ for each $i=1,\dots,n-2$.
    This shows that $\delta_{n,v}^h(x) \in \Omega_{n-1}(G;R)$, again by applying Lemma~\ref{lem:NoPositionSwap}.
    \qed
\end{proof}

Observe that the homomorphisms $\delta_{n,v}^h$ and $\delta_{n,v}^t$ respect the $\Omega_*^{*,*}(G;R)$ bigrading.
That is, $\delta_{n,v}^h,\delta_{n,v}^t$ restrict to $R$-module homomorphisms
\begin{equation*}
    \delta_{n,v}^h \colon
    \Omega_n^{u,w}(G;R)\to \Omega_{n-1}^{u,v}(G;R)
    \;\;\; \text{and} \;\;\;
    \delta_{n,v}^t \colon
    \Omega_n^{u,w}(G;R)\to \Omega_{n-1}^{v,w}(G;R)
\end{equation*}
for any $u,v,w\in V_G$. Moreover, for any $x\in \Omega_n(G;R)$ and $v \in V_G$, the element $\delta_{n,v}^h(x)$ is upper connected if nonzero, and $\delta_{n,v}^t(x)$ is lower connected if nonzero. We obtain homomorphisms
\[
    \delta^h_n,\: \delta^t_n\; \colon 
    \Omega_n(G;R) \to \Omega_{n-1}(G;R)
\;\;\; \text{given by} \;\;\;
    \delta^h_n(x) = \sum_{v\in V_G} \delta_{n,v}^h(x)
    \;\;\; \text{and} \;\;\;
    \delta^t_n(x) = \sum_{v\in V_G} \delta_{n,v}^t(x)
\]
that coincide with the maps $(-1)^n\partial_{n,n}^M$ and $\partial_{n,0}^M$, respectively.

The next lemma, concerning the interactions between the maps $\delta_*^h$, $\delta_*^t$ and $i_{R,R'}$, $\mu_*$ from Section~\ref{sec:Coefficients}, follows directly from the definitions of $\mu_*$, $i_{R,R'}$, $\delta_*^h$ and $\delta_*^t$.

\begin{lemma}\label{lem:CoeffChange}
   The following diagram commutes for each $v \in V_G$.
    \begin{align*}
        &\xymatrix{
            \Omega_n(G;R)
            \ar[r]^-{i_{R,R'}}
            \ar[d]_-{\delta^h_{n,v}}
            &
            \Omega_n(G;R) \otimes_{R} R'
            \ar[r]^-{\mu_n}
            \ar[d]_-{\delta^h_{n,v}\otimes 1_{R'}}
            &
            \Omega_n(G;{R'})
            \ar[d]^-{\delta^h_{n,v}}
            \\
            \Omega_{n-1}(G;R)
            \ar[r]^-{i_{R,R'}}
            &
            \Omega_{n-1}(G;R) \otimes_{R} R'
            \ar[r]^-{\mu_{n-1}}
            &
            \Omega_{n-1}(G;R')
        }
    \end{align*}
    Similarly, the diagram obtained by replacing $\delta^h_{n,v}$ everywhere with $\delta^t_{n,v}$ commutes.
\end{lemma}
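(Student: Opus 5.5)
The plan is to verify commutativity square by square on elements, treating the two squares of the diagram separately and using only the definitions of $i_{R,R'}$, $\mu_*$ and $\delta^h_{n,v}$. The map $\delta^h_{n,v}\otimes 1_{R'}$ is well defined because, by Lemma~\ref{lem:SumFace}, $\delta^h_{n,v}$ is an $R$-linear map $\Omega_n(G;R)\to\Omega_{n-1}(G;R)$. The right-hand vertical map is $\delta^h_{n,v}$ over $R'$, which Lemma~\ref{lem:SumFace} also makes well defined, applied with coefficients in $R'$.

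For the left square, take $x\in\Omega_n(G;R)$. Then
\[
(\delta^h_{n,v}\otimes 1_{R'})(i_{R,R'}(x))=(\delta^h_{n,v}\otimes 1_{R'})(x\otimes 1)=\delta^h_{n,v}(x)\otimes 1=i_{R,R'}(\delta^h_{n,v}(x)),
\]
so this square commutes directly from the definitions.

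For the right square, both composites are $R'$-linear, so it suffices to check them on elementary tensors $x\otimes r$ with $x\in\Omega_n(G;R)$ written in the form \eqref{genformx} and $r\in R'$.
\begin{itemize}
\item By the definition of $\mu_n$ in \eqref{eq:FullCoefChangeMu}, which is the restriction of \eqref{eq:AllowedPathCoefficientChange}, we have $\mu_n(x\otimes r)=\sum r\,\phi(\alpha_{v_0,\dots,v_n})\,e_{v_0,\dots,v_n}$. Applying $\delta^h_{n,v}$ then keeps exactly the paths with $v_{n-1}=v$ and deletes their last vertex.
\item Going the other way, $\mu_{n-1}(\delta^h_{n,v}(x)\otimes r)$ gives $\sum_{v_{n-1}=v} r\,\phi(\alpha_{v_0,\dots,v_n})\,e_{v_0,\dots,v_{n-1}}$.
\end{itemize}
These two expressions agree term by term.

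The only subtlety is that distinct $n$-paths may truncate to the same $(n-1)$-path, since paths differing only in $v_n$ collapse. Coefficients are then summed in both composites, and the sums agree because $\phi$ is additive and $\mu_{n-1}$ is linear. I therefore expect no real obstacle.

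The argument for $\delta^t_{n,v}$ is identical, with the condition $v_1=v$ and deletion of the first vertex in place of $v_{n-1}=v$ and deletion of the last. \qed
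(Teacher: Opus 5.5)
Your proof is correct and follows the paper's approach: the paper states that the lemma follows directly from the definitions of $i_{R,R'}$, $\mu_*$, $\delta^h_{n,v}$ and $\delta^t_{n,v}$, and your square-by-square check on elements is exactly that verification written out. Checking the right-hand square on elementary tensors $x\otimes r$, and using the additivity of $\phi$ where distinct paths truncate to the same $(n-1)$-path, is all that is needed.
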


To complete this section we show how the path bigrading can be used to prove Proposition~\ref{prop:Dim2Base} from the previous section.

\begin{proof}[Proof of Proposition~\ref{prop:Dim2Base}]
    Using the direct sum decomposition in equation~\eqref{eq:Bigrading}, the result follows by obtaining generating sets and bases for each $\Omega_2^{v_t,v_h}(G;R)$.
    If $d_G(v_t,v_h) = 0$, then $v_t = v_h$, either $\Omega_2^{v_t,v_h}(G;R) = 0$ or it is generated by double edges and these also form a basis.
    If $d_G(v_t,v_h) = 1$, then either $\Omega_2^{v_t,v_h}(G;R) = 0$ or it is generated by a set of directed triangles which also form a basis.
    If $d_G(v_t,v_h)>2$, then $\Omega_2^{v_t,v_h}(G;R) = 0$.
    Hence, assume that $d_G(v_t,v_h) = 2$ and $\Omega_2^{v_t,v_h}(G;R) \neq 0$.
    Then either there is a multisquare between $v_t$ and $v_h$, or $\Omega_2^{v_t,v_h}(G;R)$ is generated by a directed square and the result follows similarly to the previous cases.
    \qed
\end{proof}

\section{Upper and lower extensions over face multigraphs}\label{sec:Extensions}

In this section we introduce the fundamental constructions on the path chain complex which are needed in the rest of the paper.
Throughout the section, let $G$ be a digraph, $n$ a non-negative integer, and $R$ a commutative ring with unit, unless stated otherwise.

\subsection{Extensions and complete face multigraphs}

We introduce here the processes of forming upper and lower extensions, which are methods of constructing upper and lower connected elements of $\mathcal{A}_{n+1}(G;R)$ from elements of $\Omega_n(G;R)$.
After this, we define face multigraphs. These objects contain the data needed to ensure that the extensions formed belong to $\Omega_{n+1}(G;R)$. 

The theory developed in this section will eventually be applied to construct generating sets for $\Omega_n(G;R)$ when $R$ has characteristic $0$ or $2$. To cope with arbitrary coefficient rings, face multigraphs must be replaced by more complex objects, and these constructions will be dealt with in a future paper. Still, the definitions can be formulated independently of $R$, and applications in this context can be found in the results of Sections~\ref{sec:relwithsn(G)classes}~and~\ref{sec:Dim3}, and in the examples of Section~\ref{sec:ImportantExamples}.

To end the section, we provide a notion of completeness of a face multigraph with respect to a vertex.
The main result of Section~\ref{sec:compltext} below states that the existence of a complete face multigraph is sufficient to obtain an extension within $\Omega_{n+1}(G;R)$.

\begin{definition}
    Let $x \in \Omega_{n}(G;R)$ have the form
\begin{equation*}
    x = \sum_{e_{v_0,\dots,v_n}\in P^G_n} \alpha_{v_0,\dots,v_n} e_{v_0,\dots,v_n}
\end{equation*}
where each $\alpha_{v_0,\dots,v_n} \in R$.
    Define the \emph{upper extension} $[x]^v$ of $x$ by $v\in V_G$ to be
    \[
        [x]^v =
        \sum_{\substack{e_{v_0,\dots,v_n,v}\in P^G_{n+1} 
        }}
        \alpha_{v_0,\dots,v_n}
        e_{v_{0},\dots,v_{n}, v}
        \in \mathcal{A}_{n+1}(G;R).
    \]
    An \emph{upper extension} of $x$ is any upper extension by some $v \in V_G$.
    Similarly, 
    define the \emph{lower extension} $[x]_u$ of $x$ by $u \in V_G$ to be
    \[
        [x]_u =
        \sum_{\substack{e_{u,v_0,\dots,v_n}\in P^G_{n+1} 
        }}
        \alpha_{v_0,\dots,v_n}
        e_{u, v_{0},\dots,v_{n}}
        \in \mathcal{A}_{n+1}(G;R).
    \]
    A \emph{lower extension} of $x$ is any lower extension by some $u \in V_G$.
    Any upper extension or lower extension of $x$ will be referred to as an \emph{extension} of $x$.
\end{definition}
Note that the only nonzero summands of the upper extension $[x]^v$ defined above are formed from paths $e_{v_0,\dots,v_n}$ when $\alpha_{v_0,\dots,v_n} \neq 0$ and $(v_n,v)\in E_G$, where in particular $v_n\in \bar{h}(x)$. A similar statement can also be made for lower extensions and tail sets. 

In the sequel, we develop in parallel two separate notions of upper and lower face multigraphs corresponding to the use of upper and lower extensions as defined above, and making use of the maps $\delta^h_{n,v}$ and $\delta^t_{n,v}$ for $v\in V_G$, respectively.
We focus primarily on the case of upper extensions, explaining any differences for the lower case in brackets immediately after.

\begin{definition}\label{def:FaceMultiGraph}
    Let $n\geq 0$ and $x_1,\dots,x_m\in\Omega_n(G;R)$ be such that $x_i \neq - x_j$ for any $i,j = 1,\dots,m$ and $i\neq j$.
    We define an \emph{upper (lower) face multigraph} on $x_1,\dots,x_m$ to be a labelled multigraph $F_n(x_1,\dots,x_m)$ together with a choice, for each $u\in V_G$ and $i = 1,\dots,m$ such that $\delta_{n,u}^h(x_i)\neq 0$ ($\delta_{n,u}^t(x_i)\neq 0$), of $x_i^{u,1},\dots,x_i^{u,m_i^u} \in \Omega_{n-1}(G;R)$ such that
    \begin{equation}\label{eq:n-1Faces}
        \delta^h_{n,u}(x_i) =
        x_i^{u,1} + \cdots + x_i^{u,m_i^u}
        \;\;\;
        (\delta^t_{n,u}(x_i) =
        x_i^{u,1} + \cdots + x_i^{u,m_i^u})
    \end{equation}
    where no sub-sequence of $x_i^{u,1},\dots,x_i^{u,m_i^u}$ sums to zero. This data is to be subject to the following conditions.
    \begin{enumerate}
        \item 
        The multigraph $F_n(x_1,\dots,x_m)$ has $m$ vertices which are labelled with a bijection to the multiset
        \[
            \{x_1,\dots,x_m\}.
        \]
        \item
        There can be an edge between distinct vertices $x_i,x_j$ only if $\delta_{n,u}^h(x_i)\neq 0$ and $\delta_{n,u}^h(x_j)\neq 0$ ($\delta_{n,u}^t(x_i)\neq 0$ and $\delta_{n,u}^t(x_j)\neq 0$) for some $u \in V_G$ and $x_i^{u,k} = -x_j^{u,l}$ for some $k=1,\dots,m_i^u$ and $l=1,\dots,m_j^u$. If an edge between these vertices exists, then it is labelled with a pair
        \[
            (\{x_i^{u,k},x_j^{u,l}\},u)
            \;\;\; \text{such that} \;\;\;
            x_i^{u,k} = -x_j^{u,l}
        \]
        where $k\in \{1,\dots,m_i^u$\} and $l \in \{1,\dots,m_j^u\}$.
        \item 
        For $u \in V_G$ and $i=1,\dots,m$ such that $\delta_{n,u}^h(x_i)\neq 0$ and $k=1,\dots,m_i^u$, each $x_i^{u,k}$ appears in no more than one edge label.
    \end{enumerate}
    We refer to the decomposition in equation~\eqref{eq:n-1Faces} as the \emph{ridge decomposition} of $x_i$ at $u$ in $F_n(x_1,\dots,x_m)$.
\end{definition}
Note that there will usually be a large number of distinct face multigraphs on any given set of vertices $x_1,\dots,x_m \in \Omega_n(G;R)$. Note also that in general the vertices $x_1,\dots,x_m$ need not be distinct. However, in later sections of this paper we will work with rings $R$ of characteristic $2$, and in this case the multiset in part (1) above reduces to a set, as the condition $x_i \neq - x_j$ for any $i,j = 1,\dots,m$ ensures that there are no elements of multiplicity greater than $1$. 

When we draw face multigraphs, for simplicity, an edge labelled $(\{x_i^{u,k},x_j^{u,l}\},u)$ is often written only with the label $x_i^{u,k}$ or $x_i^{u,l}$ if no ambiguity can arise. Similarly, we also often refer to the elements $x_i^{u,k}$ for $i = 1,\dots, m$, and $k=1,\dots,m_i^u$ which appear in the ridge decompositions 
as the \emph{edge labels} of face multigraph $F_n(x_1,\dots,x_m)$. Examples of face multigraphs are constructed and drawn in the next subsection (see also Example~\ref{exam:2Trapezohedron}).
\begin{rmk}\label{rmk:NecessityOfConditions}
    It is not immediately clear why it is necessary to allow for ridge decompositions with multiple nonzero or non-unital terms. However, later we will want to consider the case when the $x_i^{u,k}$ are elements of a basis, and when $n=3$ and there is a multisquare between vertices $w$ and $u$, then a nonzero $\delta^h_{3,u}(x_i)$ lies in a $\Omega_n^{w,u}(G;R)$ for which a basis consists of at least two elements.

     More concretely, in Example~\ref{exam:SingleFaceDecomposition} we construct a digraph $G$ for which there is an element $I_4 \in \Omega_4(G;\mathbb{Z})$ and a vertex $u \in V_G$ such that $\delta^h_{n,u}(I_4)$ must decompose as a sum of two distinct elements of the given $\Omega_3(G;\mathbb{Z})$ basis. Similarly, in Example~\ref{exam:ArbitraryMultiplicities}, for $t=2,3,\dots$ we construct digraphs $\mathbb{M}_t$ and elements $I_4^t \in \Omega_4(\mathbb{M}_t;\mathbb{Z})$ such that $\delta^h_{n,u}(I_4^t)=te+\dots$ for some element $e$ of the given $\Omega_3(\mathbb{M}_t;\mathbb{Z})$ basis for some $u \in V_G$.
\end{rmk}

Face multigraphs will appear in this work only when considering an extension of some form over the sum of their vertex labels, as is made precise in Definition~\ref{def:extoffmhg} below. From now on we usually drop the words upper and lower when it is clear which type of face multigraph is under consideration.

In the following definition we set out notions of what we call properness and completeness with respect to an extension for a face multigraph. The complete extensions will be the central notion used in the reminder of the paper and it is these extensions which we later use to construct higher-dimensional path chains. 
Although it would be possible to package face multigraphs, proper extensions, and complete extension together as one concept, we choose to introduce these objects separately, as we believe these notions might be used independently in the construction of path homology algorithms.
\begin{definition}\label{def:extoffmhg}
    Using the notation of Definition~\ref{def:FaceMultiGraph}, an upper (lower) face multigraph $F_n(x_1,\dots,x_m)$ is said to be $v$\emph{-proper} over $v\in V_G$ if all edges $(\{x_{i}^{u,k},x_{j}^{u,l}\},u)$ corresponding to $u\in V_G$ are such that 
    \[
        (u,v) \notin E_G \;\;\; ((v,u) \notin E_G) \;\;\; \text{and} \;\;\; u \neq v.
    \]
    The upper (lower) face multigraph $F_n(x_1,\dots,x_m)$ is further called $v$-\emph{complete} if the converse also holds. That is, if for any
    $i=1,\dots,m$ and $u \in \bar{h}(\delta^h_n(x_i)) \setminus\{v\}$ ($u \in \bar{t}(\delta^t_n(x_i)) \setminus\{v\}$)
    with $(u,v)\not\in E_G$ ($(v,u)\not\in E_G)$), $F_n(x_1,\dots,x_n)$ has
    for each $k=1,\dots,m_i^u$ an edge labelled $(\{x_{i}^{u,k},x_{j}^{u,l}\},u)$ for some $l=1,\dots,m^u_{j}$ and $j=1,\dots,t$.
\end{definition}
A $v$-proper face multigraph $F_n(x_1,\dots,x_m)$ contains no edges of the form $(\{x_i^{u,k},x_j^{u,l}\},u)$ when  either $(u,v) \in E_G$, or $u=v$. In this case, the ridge decompositions for $\delta^h_{n,u}(x_i)$ and $\delta^h_{n,u}(x_j)$ carry essentially no information, as all possible choices yield otherwise identical face multigraphs.
\begin{definition}
Given an upper (lower) face multigraph $F_n(x_1,\dots,x_m)$ with vertices $x_1,\dots,x_m \in \Omega_n(G;R)$ and given $v\in V_G$, the upper extension $[x_1+\cdots+x_m]^v$ (lower extension $[x_1+\cdots+x_m]_v$) is called an \emph{upper (lower) extension} by $v$ over $F_n(x_1,\dots,x_m)$ when $(v_i,v)\in E_G$ ($(v,v_i)\in E_G$) for each $v_i \in \bar{h}(x_i)$ ($v_i \in \bar{t}(x_i)$) and $i=1,\dots,m$. The extension is called $v$-\emph{proper} or $v$-\emph{complete} if $F_n(x_1,\dots,x_m)$ is $v$-proper or $v$-complete, respectively.
\end{definition}

\subsection{Initial examples}

In this section we give a number of simple, explicit, and more general examples of extensions over face multigraphs. 
\begin{exmp}\label{exam:VerticesAndLines}\normalfont
    For any $v\in V_G$, an element $x \in \Omega_1(G;R)$ which is obtained as an upper extension over a connected $v$-complete face multigraph must be extended over a multigraph which consists of a single vertex,
    since
    \[
        \delta_{0,u}^h\delta^h_{1,v}(x) = 0
    \]
    for any $u\in V_G$.
    As discussed in Section~\ref{sec:LowDimBasis}, the set $\{e_u\mid u\in V_G\}$ is a basis of $\Omega_0(G;R)$.
    The extension described above over the basis elements $e_u \in \Omega_0(G;R)$ is a face multigraph consisting of a single vertex labelled $e_u$ such that $(u,v)\in E_G$.
    That is, such extensions correspond to edges in $G$ that begin at $u$ and end at some $v\in V_G$.
    In particular, recall again from Section~\ref{sec:LowDimBasis}  that the set of $e_{u,v}$ such that $(u,v)\in E_G$ forms a basis of $\Omega_1(G;R)$.
\end{exmp}

\begin{exmp}\label{exam:DirectedSquare}\normalfont
    Suppose that $G$ is a digraph containing a subdigraph
    \begin{center}
        \tikz {
            \node (a) at (0,0) {$u$};
            \node (b1) at (-0.75,0.75) {$v_1$};
            \node (c) at (0,1.5) {$w$};
            \node (b2) at (0.75,0.75) {$v_2$};
            \draw[->] (a) -- (b1);
            \draw[->] (b1) -- (c);
            \draw[->] (b2) -- (c);
            \draw[->] (a) -- (b2);
        }
    \end{center}
    such that $(u,w) \notin G$.
    We have $e_{u,v_1}, e_{u, v_2} \in \Omega_1(G;R)$ and $e_u \in \Omega_0(G;R)$.
    The directed square
    \[
        e_{u,v_1,w} - e_{u,v_2,w} \in \Omega_2(G;R)
    \]
    can be obtained as the upper extension $[e_{u,v_1}-e_{u, v_2}]^w$, which is an upper extension over the face multigraph
    \begin{center}
        \tikz {
            \node (b1) at (0,0) {$e_{u,v_1}$};
            \node (b2) at (3,0) {$- e_{u, v_2}$};
            \draw[-] (b1) -- (b2) node[midway,above] {$e_u$};
        }
    \end{center}
    where we apply the conventions discussed above Remark~\ref{rmk:NecessityOfConditions}
    to the edge label, whose full label would be $(\{e_u, -e_u \},u)$.
    In fact, this extension is $w$-complete over the face multigraph, as $(u,w) \notin G$, $u \neq w$,
    \[
        \delta_{1,u}^h(e_{u,v_1}) =
        \delta_{1,u}^h(e_{u,v_2}) =
        e_u,
        \;\;\; \text{and} \;\;\;
        \delta_{1,v}^h(e_{u,v_1}) =
        \delta_{1,v}^h(e_{u,v_2}) =
        0
    \]
    for any $v\in V_G$ such that $v \neq u$.
    Therefore, the face multigraph above is the unique connected, $w$-complete face multigraph containing vertices $e_{u,v_1}$ and $-e_{u,v_2}$.
    
    The example explains how every directed square arises as an extension over a complete face multigraph. Similarly, it can be seen that any directed triangle or double edge is an extension over a face multigraph consisting of a single vertex.
\end{exmp}

An \emph{inclusion} of face multigraphs is an inclusion of multigraphs that preserves vertex and edge labels.
A face multigraph $F_n$ is a \emph{sub-face multigraph} of a face multigraph $F'_n$ if there is an inclusion of face multigraphs from $F_n$ to $F'_n$.

\begin{exmp}\normalfont
    For each $v \in V_G$, by definition, any connected $v$-complete face multigraph is maximal among all connected $v$-proper face multigraph when these objects are partially ordered by inclusion of sub-face multigraphs.
\end{exmp}

A pair $(x,v)$ for $x\in \Omega_n(G;R)$ and $v\in V_G$ is called \emph{upper isolated} if for any $u\in \bar{h}(\delta_{n-1}^h\delta_{n,v}^h(x))$ and $w_h\in \bar{h}(x)$, either $u=w_h$ or there is an edge $(u,w_h)\in E_G$.
Similarly, a pair $(x,v)$ for $x\in \Omega_n(G;R)$ and $v\in V_G$ is called \emph{lower isolated} if for any $u\in \bar{t}(\delta^t_{n-1}\delta^t_{n,v}(x))$ and $w_t\in \bar{t}(x)$, either $u=w_t$ or there is an edge $(w_t,u)\in E_G$.
A digraph $G$ is called \emph{transitive} when for any $u,v,w \in V$, if $(u,v),(v,w) \in E_G$ then $(u,w)\in E_G$. 

\begin{exmp}\normalfont
    A connected $v$-proper upper (lower) face multigraph containing a vertex $x$ such that $(x,v)$ is upper (lower) isolated consists of only that vertex. 
    In particular, if $G$ is a transitive digraph, then all connected $v$-proper upper (lower) face multigraphs are of this form.
\end{exmp}

\begin{exmp}\label{exam:Dimension3}\normalfont
    Recall from Proposition~\ref{prop:Dim2Base} that $\Omega_2(G;R)$ is generated by double edges, directed triangles, and directed squares. A typical directed triangle or double edge takes the form
    \[
        T_{u,v,w}= e_{u,v,w} \in \Omega_2(G;R)
        \;\;\;
        \text{and}
        \;\;\;
        E_{u,v} = e_{u,v,u} \in \Omega_2(G;R)
    \]
    for some
    $u,v,w \in V_G$
    such that $(u,w) \in G$. For any $v'\in V_G$, the only nonzero $\delta_{n,v'}^h(T_{u,v,w})$ and $\delta_{n,v'}^h(E_{u,v})$ occur when $v'=v$, in which case
    \[
        \delta_{n,v}^h(T_{u,v,w}) = e_{u,v}
        \;\;\; \text{and} \;\;\;
        \delta_{n,v}^h(E_{u,v}) = e_{u,v}.
    \]
    Hence, if $T_{u,v,w}$ or $E_{u,v}$ appears as a vertex in an upper face multigraph, then it has valence at most $1$. Similarly, if a directed square appears as a vertex in an upper face multigraph, then it has valence at most $2$. Therefore, any upper face multigraph whose vertices are double edges, directed triangles, or directed squares from $\Omega_2(G;R)$ consists of a disjoint union of face multigraphs that are either lines or cycles as unlabelled multigraphs.
\end{exmp}

For future reference, we summarise the content of Examples~\ref{exam:VerticesAndLines},~\ref{exam:DirectedSquare},~and~\ref{exam:Dimension3} in the following proposition.
\begin{proposition}\label{prop:exsumm}
The following statements hold.
\begin{enumerate}
    \item
    An edge $e_{u,v} \in \Omega_1(G;R)$ is the unique upper (lower) extension over a $v$-complete ($u$-complete) connected face multigraph with a vertices labelled $e_u \in \Omega_0(G;R)$ ($e_v \in \Omega_0(G;R)$).
    \item
    An element of $\Omega_2(G;R)$ is an extension over a complete connected face multigraph with vertex labels of the form $e_{u,v} \in \Omega_1(G;R)$ if and only if it is a directed square, directed triangle, or double edge up to sign.
    \item 
    Any connected face multigraph with vertices, double edges, directed triangles, or directed squares from $\Omega_2(G;R)$ is as an unlabelled multigraph a line or cycle. \qed
\end{enumerate}
\end{proposition}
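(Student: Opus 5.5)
The proposition summarises Examples~\ref{exam:VerticesAndLines}, \ref{exam:DirectedSquare} and~\ref{exam:Dimension3}. I would prove its three parts in order, working directly from Definitions~\ref{def:FaceMultiGraph} and~\ref{def:extoffmhg} and the bases~\eqref{eq:Dim0Basis}, \eqref{eq:Dim1Basis} and Proposition~\ref{prop:Dim2Base}. I treat only the upper case; the lower case is symmetric, with $\delta^t$, tails and reversed edges in place of $\delta^h$, heads and forward edges.

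\emph{Part (1).} Since $\delta^h_{0,u}=0$ on $\Omega_0(G;R)$, a face multigraph on vertices $e_u$ has no possible edges. A connected one is therefore a single vertex, and it is vacuously $v$-complete. The extension $[e_u]^v$ is defined precisely when $(u,v)\in E_G$, and then it equals $e_{u,v}$. Conversely, the edge $e_{u,v}$ arises in this way from $e_u$, so it is the unique such extension.

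\emph{Part (2), forward direction.} Let $F_1(x_1,\dots,x_m)$ be connected with each $x_i=\pm e_{a_i,b_i}$, and let $v$ be the extending vertex. Then $\delta^h_{1,a_i}(x_i)=\pm e_{a_i}$ is the only nonzero face of $x_i$. By condition (3) of Definition~\ref{def:FaceMultiGraph}, its ridge decomposition has a single term, because $e_{a_i}$ cannot be split into summands no sub-sequence of which sums to zero except trivially. Hence every vertex has valence at most one, and a connected $F_1$ is either a single vertex or a single edge.
\begin{itemize}
\item[(a)] Single vertex $e_{a,b}$ with $(b,v)\in E_G$. Completeness forces $a=v$ or $(a,v)\in E_G$. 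This gives a double edge or a directed triangle.
\item[(b)] Single edge between $e_{a,b}$ and $-e_{a,b'}$. The labels must be opposite, so the signs differ, the tails agree, and $b\neq b'$ because $x_i\neq -x_j$. Properness gives $(a,v)\notin E_G$ and $a\neq v$, so $[e_{a,b}-e_{a,b'}]^v$ is a directed square.
\end{itemize}

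\emph{Part (2), converse.} I would reverse this, as Example~\ref{exam:DirectedSquare} does. A double edge $e_{u,v,u}$ is $[e_{u,v}]^u$, which is $u$-complete since the only face head is $u=v$-extension vertex. A directed triangle is $[e_{u,v}]^w$ with $(u,w)\in E_G$. A square is as in Example~\ref{exam:DirectedSquare}. Signs are handled by negating all vertex labels, and I also need to check $x_i\neq -x_j$.

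\emph{Part (3).} I would compute $\delta^h_{2,\cdot}$ on the generators, as in Example~\ref{exam:Dimension3}. Double edges and triangles have exactly one nonzero face $e_{u,v}$, which is a basis element, so its ridge decomposition is a single term. A square $e_{u,v_1,w}-e_{u,v_2,w}$ has exactly two faces, $e_{u,v_1}$ and $-e_{u,v_2}$, again single-term. By condition (3) each face term lies on at most one edge, so valences are at most $1$ and $2$ respectively. A connected multigraph with maximum valence $\le 2$ is a path or a cycle, where double edges count as $2$-cycles. The main subtlety is justifying the single-term ridge decompositions of $\pm e_{u,v}$; I would argue via the bigrading~\eqref{eq:Bigrading} and the basis~\eqref{eq:Dim1Basis}. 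Over characteristic not $2$ a decomposition such as $e=2e+(-e)$ is conceivable, but in any case each summand lies in the rank-one module $\Omega_1^{u,v}$, and the valence bound should be argued from that.
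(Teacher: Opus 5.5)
You follow the paper's route. The proposition merely summarises Examples~\ref{exam:VerticesAndLines}, \ref{exam:DirectedSquare} and~\ref{exam:Dimension3}, and your part (1) and your case split in part (2) agree with them.

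The gap is the step on which (2) and (3) both rest: that the ridge decomposition of a nonzero face $\pm e_a$ (resp.\ $\pm e_{u,v}$) has a single term, so that a vertex's valence is at most its number of nonzero faces. Your justification for this does not work.
\begin{itemize}
\item The no-zero-sub-sum requirement sits in the body of Definition~\ref{def:FaceMultiGraph}, not in condition (3). Condition (3) only says each ridge term is used in at most one edge label. It turns a bound on the number of terms into a bound on valence, but does not bound the number of terms.
\item As the definition is written, ridge terms are arbitrary elements of $\Omega_{n-1}(G;R)$. So $e_a=e_c+(e_a-e_c)$ with $c\neq a$, or $e_a=2e_a+(-e_a)$ over $\mathbb{Z}$, are admissible.
\item Your rank-one fallback fails too. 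The terms need not lie in $\Omega_1^{u,v}(G;R)$, and even inside it $e_{u,v}=3e_{u,v}-e_{u,v}-e_{u,v}$ has no zero sub-sum and lets a directed triangle have valence $3$.
\end{itemize}
Under this literal reading the statement is in fact false. Take $a\to b_i\to v$ for $i=1,\dots,4$ with $(a,v)\notin E_G$. Give $e_{a,b_1}$ and $-e_{a,b_2}$ the ridge decompositions $2e_a+(-e_a)$ and $-2e_a+e_a$. Then $e_{a,b_3},\,e_{a,b_1},\,-e_{a,b_2},\,-e_{a,b_4}$ form a connected $v$-complete line. Its extension $e_{a,b_1,v}-e_{a,b_2,v}+e_{a,b_3,v}-e_{a,b_4,v}$ is not $\pm$ a directed square.

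The missing ingredient is a restriction the paper's examples use tacitly. Ridge terms must be $\pm$ elements of the canonical bases \eqref{eq:Dim0Basis}, \eqref{eq:Dim1Basis}, as they are in the inductive structures of Definition~\ref{def:connectedeness} to which the proposition is applied. In addition, $R$ must have characteristic $0$ or $2$.

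Under these hypotheses the single-term claim has a short proof. A list of $\pm$ basis elements with no zero sub-sum cannot contain both $y$ and $-y$. So each basis element $b$ occurs $k_b\geq 0$ times, always with the same sign. Let $b_0=e_a$ (resp.\ $e_{u,v}$) be the element being decomposed. Comparing coefficients gives $k_{b_0}\cdot 1_R=\pm 1_R$ and $k_b\cdot 1_R=0$ for $b\neq b_0$. In characteristic $0$ this forces $k_{b_0}=1$ and $k_b=0$ otherwise. In characteristic $2$ the same follows, because two equal terms already sum to zero, so every $k_b\leq 1$.

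The characteristic hypothesis cannot be dropped. Over $\mathbb{Z}_3$, $e_a=(-e_a)+(-e_a)$ has no zero sub-sum. Joining $e_{a,b_1}$, decomposed this way, to $e_{a,b_2}$ and $e_{a,b_3}$ gives a $v$-complete line with extension $e_{a,b_1,v}+e_{a,b_2,v}+e_{a,b_3,v}$, which again is not $\pm$ a square.

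Once the single-term fact is established, the rest of your argument goes through as written: the valence bounds $1$ and $2$, the case analysis in (2), and the path-or-cycle conclusion in (3).
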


\subsection{Complete extensions over face multigraphs are path chains}\label{sec:compltext} 

The next proposition demonstrates the reason for considering vertex complete extensions over face multigraphs.

\begin{proposition}\label{prop:GraphExtentsion}
    Let $F_n(x_1,\dots,x_m)$ be a face multigraph, where $x_1,\dots,x_m\in \Omega_n(G;R)$, and let $v \in V_G$. If the upper (lower) extension of $x_1+\cdots+x_m$ by $v$ is $v$-complete over $F_n(x_1,\dots,x_m)$, then
    \[
        [x_1 + \cdots + x_m]^v \in \Omega_{n+1}(G;R)
        \;\;\;
        ([x_1 + \cdots + x_m]_v \in \Omega_{n+1}(G;R)).
    \]
\end{proposition}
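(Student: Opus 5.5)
We treat only the upper case; the lower case is symmetric, with $\delta^t$, tail sets and lower extensions replacing $\delta^h$, head sets and upper extensions. Put $y=[x_1+\cdots+x_m]^v$. The definition of an extension over $F_n(x_1,\dots,x_m)$ requires $(v_i,v)\in E_G$ for every head $v_i$ of every $x_i$. Hence every path of every $x_i$ extends by $v$, and $y=\sum_i\sum\alpha^i_{v_0,\dots,v_n}e_{v_0,\dots,v_n,v}$ with nothing discarded. By Lemma~\ref{lem:NoPositionSwap} it suffices to show $\partial^M_{n+1,n+1,j}(y)=0$ for each $j=1,\dots,n$.

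\textbf{Positions $j\le n-1$.} Here $\partial^M_{n+1,n+1,j}$ only looks at the triple $v_{j-1},v_j,v_{j+1}$, which lies inside the original path. So it commutes with appending $v$:
\[
\partial^M_{n+1,n+1,j}(y)=\Big[\sum_i\partial^M_{n,n,j}(x_i)\Big]^v ,
\]
where the bracket formally appends $v$. The right-hand side vanishes because each $x_i\in\Omega_n(G;R)$, again by Lemma~\ref{lem:NoPositionSwap}.

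\textbf{Position $j=n$.} This is the step where completeness is needed. The component $\partial^M_{n+1,n+1,n}$ deletes $v_n$ from $e_{v_0,\dots,v_{n-1},v_n,v}$ exactly when $(v_{n-1},v)\notin E_G$ and $v_{n-1}\neq v$. Grouping by $u=v_{n-1}$ gives
\[
\partial^M_{n+1,n+1,n}(y)=\sum_{\substack{u\neq v\\ (u,v)\notin E_G}}\Big[\sum_{i}\delta^h_{n,u}(x_i)\Big]^{v},
\]
with $[\,\cdot\,]^v$ again meaning formal appending of $v$; this is injective on $\mathcal{A}_{n-1}$ with last vertex $u$. It therefore suffices to show $\sum_i\delta^h_{n,u}(x_i)=0$ for every $u\neq v$ with $(u,v)\notin E_G$.

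Fix such a $u$ and expand each nonzero $\delta^h_{n,u}(x_i)$ by its ridge decomposition $x_i^{u,1}+\cdots+x_i^{u,m_i^u}$. Any $i$ with $\delta^h_{n,u}(x_i)\neq0$ has $u\in\bar h(\delta^h_n(x_i))$. By $v$-completeness, every term $x_i^{u,k}$ then lies in an edge label $(\{x_i^{u,k},x_j^{u,l}\},u)$ with $x_i^{u,k}=-x_j^{u,l}$. Condition (3) of Definition~\ref{def:FaceMultiGraph} says each $x_i^{u,k}$ occurs in at most one edge label. So the edges at $u$ give a perfect matching on the multiset of all terms $\{x_i^{u,k}\}_{i,k}$, and matched terms cancel pairwise. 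Hence $\sum_i\delta^h_{n,u}(x_i)=\sum_{i,k}x_i^{u,k}=0$.

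The main obstacle is bookkeeping in this last step. One must check that the matching partitions the terms exactly: no term is used twice, and every term is covered. Covering uses completeness; single use uses condition (3). One must also check that edges with $i=j$ do not occur, which holds because the definition requires distinct vertices. It also has to be clear that indexing over the labelled vertices of the multigraph, rather than over elements, correctly handles repeated $x_i$. Everything else is a direct application of Lemma~\ref{lem:NoPositionSwap}.
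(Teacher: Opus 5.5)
Your proof is correct and follows the paper's argument: positions $1,\dots,n-1$ are handled by Lemma~\ref{lem:NoPositionSwap}, and position $n$ by $v$-completeness together with condition (3) of Definition~\ref{def:FaceMultiGraph}, which pair off the ridge-decomposition terms at each $u$ with $u\neq v$ and $(u,v)\notin E_G$. Grouping the position-$n$ terms by $u$ through $\sum_i\delta^h_{n,u}(x_i)$ is a slightly cleaner way to organise the same cancellation than the paper's path-by-path bookkeeping.
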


\begin{proof}
    We prove the case of upper extensions, the proof for lower extensions being similar.
    As $x_1+\cdots+x_m \in \Omega_n(G)$, by Lemma~\ref{lem:NoPositionSwap} we have that $\partial^M_{n,n,i}(x_1+\cdots+x_m) = 0$ and so $\partial^M_{n+1,n+1,i}([x_1+\cdots+x_m]^v) = 0$ for each $i=1,\dots,n-1$.
    Therefore, it remains to check that 
    \[
        \partial^M_{n+1,n+1,n}([x_1+\cdots+x_m]^v) = 0.
    \]

    To begin, for each $i=1,\dots,m$ let $\{x_i^{u,j}\}_{j=1,\dots,m_i^u}$ be the data of the ridge decomposition of $F_n(x_1,\dots,x_m)$ given in equation~\eqref{eq:n-1Faces} of Definition~\ref{def:FaceMultiGraph}. Now consider any 
    path $e_{u_0,\dots,u_{n-1}}$ occurring as a nonzero summand $\alpha e_{u_0,\dots,u_{n-1}}$ in some $x_i^{u_{n-1},k}$ for some $i = 1 ,\dots, m$, $k=1,\dots,m_i^{u_{n-1}}$.
    If 
    $(u_{n-1},v) \in E_G$ or $u_{n-1} = v$,
    then by definition
    \begin{equation}\label{eq:DegenerateCase}
         \partial^M_{n+1,n+1,n}([[e_{u_0,\dots,u_{n-1}}]^{u_n}]^v )
         = 0
    \end{equation}
    for any $u_{n}\in V_G$. Thus we reduce to the case that  $(u_{n-1},v) \notin E_G$ and $u_{n-1} \neq v$.
    
    When $(u_{n-1},v) \notin E_G$, $u_{n-1} \neq v$ and $x_i^{u_{n-1},k}$ appears uniquely in an edge of $F_n(x_1,\dots,x_m)$,
    by the $V$-completeness of $F_n(x_1,\dots,x_m)$, the unique edge associated to $x_i^{u_{n-1},k}$ provides a unique $j\in \{1,\dots,m\}$ and $l\in \{1,\dots,m_j^{u_{n-1}}\}$ such that $j\neq i$ and
    \[
        x_i^{u_{n-1},k}=-x_j^{u_{n-1},l}.
    \]
    In particular, $-\alpha e_{u_0,\dots,u_{n-1}}$ occurs as a summand in $x_j^{u_{n-1},l}$ uniquely corresponding to a summand $\alpha e_{u_0,\dots,u_{n-1}}$ in $x_{i}^{u_{n-1},k}$.
    Moreover,
    \begin{equation}\label{eq:Non-DegenerateCase}
        \partial^M_{n+1,n+1,n}(
        [[e_{u_0,\dots,u_{n-1}}]^{u_n}]^v
        +
        [[-e_{u_0,\dots,u_{n-1}}]^{u'_n}]^v
        )
        = 0
    \end{equation}
    for any $u_{n}, u'_n\in V_G$ such that $(u_{n-1},u_n),(u_{n-1},u'_n),(u_n,v),(u'_n,v)\in E_G$.
    
    It is possible that the path $e_{u_0,\dots,u_{n-1}}$ does not upper extend by some $u_n \in V_G$ and correspond to a path in a nonzero summand of $x_i$.
    However, all such paths cancel within the decomposition $x_i^{u_{n-1},1}+\cdots+x_i^{u_{n-1},m_i} = \delta_{n,u_{n-1}}^h(x_i)$ prior to upper extension.
    Otherwise, equations~\eqref{eq:Non-DegenerateCase} together with the unique determination of $j$ and
    the case corresponding to equation~\eqref{eq:DegenerateCase}, imply that $\partial^M_{n+1,n+1,n}([x^{n-1}_1+\cdots+x^{n-1}_m]^v) = 0$ as required.
    \qed
\end{proof}

\subsection{Strong connectedness over face multigraphs}\label{sec:Connectedness}

We now give a notion of connectedness for path chains $x\in \Omega_n(G;R)$ arising from their expressions as extensions over a designated class of face  multigraphs. This notion will be stronger than the notion of connectedness from Definition~\ref{def:UpperLowerConnected} and stricter than $x$ being obtained as an extension over a face multigraph with a connected underlying multigraph. The incentive for introducing this new concept will become apparent in the next section while trying to reduce the size of the generating sets for $\Omega_n(G;R)$.

Recall that a \emph{sub-face multigraph} of a face multigraph $F_n$ is a submultigraph of $F_n$ retaining the edge and vertex labels. A face multigraph is \emph{connected} if its underlying multigraph is connected. A \emph{connected component} of a face multigraph is a maximally connected sub-face multigraph.

As motivation, suppose that $x_1,\dots,x_n \in \Omega_n(G;R)$, $x=[x_1+\cdots +x_n]^v$ is an upper extension over a face multigraph $F=F_n(x_1,\dots,x_m)$, and that $F$ is the disjoint union of two sub-face multigraphs $F'=F_n(x_1,\dots,x_k)$ and $F''=F_n(x_{k+1},\dots,x_m)$. Then $x'=[x_1,\cdots,x_k]^v$ is an upper extension over $F'$, $x''=[x_{k+1},\cdots,x_n]^v$ is an upper extension over $F''$, and $x=x'+x''$. This suggests that while looking to construct a manageable generating set for $\Omega_n(G;R)$ consisting of extensions one should restrict to considering only connected face multigraphs. However, the next example shows that in general this is not enough, and that it will be profitable to formulate a stronger notion of connectedness.

\begin{exmp}\label{exam:2Trapezohedron}
    Consider the trapezohedron $\mathbb{T}_2$ of order $2$ from Definition~\ref{def:Trapezohedron}.
    \begin{center}
        \tikz {
            \node (T) at (0,0) {$T$};
            \node (a1) at (1,-0.75) {$u_1$};
            \node (a2) at (1,0.75) {$u_2$};
            \node (b1) at (2.5,-0.75) {$v_1$};
            \node (b2) at (2.5,0.75) {$v_2$};
            \node (H) at (3.5,0) {$H$};
            \draw[->] (T) -- (a1);
            \draw[->] (T) -- (a2);
            \draw[->] (a1) -- (b1);
            \draw[->] (a1) -- (b2);
            \draw[->] (a2) -- (b1);
            \draw[->] (a2) -- (b2);
            \draw[->] (b1) -- (H);
            \draw[->] (b2) -- (H);
        }
    \end{center}
    Up to sign there are two directed squares 
    \begin{align*}
        S_1
        =
        e_{T,u_1,v_1}
        -
        e_{T,u_2,v_1}
        \in \Omega_2(\mathbb{T}_2;R)
        ,\;\;\;
        S_2
        =
        e_{T,u_1,v_2}
        -
        e_{T,u_2,v_2}
        \in \Omega_2(\mathbb{T}_2;R)
    \end{align*}
    whose tail vertices are $T$.
    The upper extension $[S_1-S_2]^H$ is an upper extension over the $H$-complete face multigraph
    \begin{center}
        \tikz {
            \node (S1) at (0,0) {$S_1$};
            \node (S2) at (2,0) {$-S_2.$};
            \draw[-] (S1) to [out=45,in=135] node[pos=0.5,above] {$e_{T,u_1}$} (S2);
            \draw[-] (S1) to [out=315,in=225] node[pos=0.5,below] {$e_{T,u_2}$} (S2);
        }
    \end{center}
    Hence, $[S_1-S_2]^H \in \Omega_3(G;R)$ by Proposition~\ref{prop:GraphExtentsion} and $[S_1-S_2]^H$ coincides with the trapezohedron element generating $\Omega_3(\mathbb{T}_2;R)$.
    
    On the other hand, consider the following two $H$-complete face multigraphs.
    \begin{center}
        \tikz {
            \node (S1) at (0,0) {$S_1$};
            \node (S2) at (1.5,1.5) {$-S_2$};
            \node (S-1) at (0,3) {$S_1$};
            \node (S-2) at (-1.5,1.5) {$-S_2$};
            \draw[-] (S1) -- (S2)  node[pos=0.3,right] {$\:e_{T,u_1}$};
            \draw[-] (S1) -- (S-2)  node[pos=0.25,left] {$e_{T,u_2}$};
            \draw[-] (S2) -- (S-1)  node[pos=0.75,right] {$e_{T,u_2}$};
            \draw[-] (S-2) -- (S-1) node[pos=0.75,left] {$e_{T,u_1}\:$};
        }
        \;\;\;\;\;\;\;\;\;\;\;\;
        \tikz {
            \node (S1) at (0,0) {$S_1$};
            \node (S2) at (1.5,1.5) {$-S_2$};
            \node (S-1) at (0,3) {$S_1$};
            \node (S-2) at (-1.5,1.5) {$-S_2$};
            \draw[-] (S1) -- (S2)  node[pos=0.3,right] {$\:e_{T,u_1}$};
            \draw[-] (S2) to [out=185,in=85] node[pos=0.5,left] {$e_{T,u_2}\:$} (S1);
            \draw[-] (S-1) to [out=280,in=5] node[pos=0.5,right] {$e_{T,u_2}$} (S-2);
            \draw[-] (S-2) -- (S-1)  node[pos=0.75,left] {$e_{T,u_1}\:$};
        }
    \end{center}
    The element $[S_1+S_1-S_2-S_2]^H$ upper extends over both face multigraphs, and disconnectedness of the right-hand face multigraph gives
    \[
        [S_1+S_1-S_2-S_2]^H = 2 [S_1-S_2]^H.
    \]
    
    Hence, for example, $[S_1+S_1-S_2-S_2]^H$ cannot be an element of any basis of $\Omega_3(\mathbb{T}_2;\mathbb{Z})$.
    More generally, a generator of $\Omega_3(\mathbb{T}_t;\mathbb{Z})$ for $t\geq 3$ (c.f. Definition \ref{def:Trapezohedron}) can be similarly obtained as an extension over a face multigraph whose underlying multigraph is a cycle on $t$ vertices.
\end{exmp}

Motivated by the previous example and discussion above, we introduced the following terminology and notion of connectedness on elements of $\Omega_n(G;R)$.

\begin{definition}\label{def:connectedeness}
    Let $E_{n-1} \subseteq \Omega_{n-1}(G;R)$, $E_{n-2} \subseteq \Omega_{n-2}(G;R)$ be subsets and $x\in \Omega_n(G;R)$ an upper (lower) connected element.
    Then an $h(x)$-complete upper ($t(x)$-complete lower) face multigraph $F_n(x_1,\dots,x_m)$ is called  an \emph{upper (lower) $(E_{n-1},E_{n-2})$-inductive structure} on $x$ if the following three conditions hold.
    \begin{enumerate}
        \item 
        Its vertex labels $x_1,\dots,x_2$ are elements of $E_{n-1}$ up to sign,
        \item
        Elements that appear in the ridge decompositions of $F_n(x_1,\dots,x_m)$ are elements of $E_{n-2}$ up to sign,
        \item
        $x = [x_1,\dots,x_2]^{h(x)}$ $(x = [x_1,\dots,x_2]_{t(x)})$.
    \end{enumerate}
    An upper (lower) connected element $x\in \Omega_n(G;R)$ is called \emph{$(E_{n-1},E_{n-2})$-strongly upper (lower) connected} if an upper (lower) $(E_{n-1},E_{n-2})$-inductive structure on $x$ exists and any such $(E_{n-1},E_{n-2})$-inductive structure is connected. We say that a subset $E\subseteq\Omega_n(G;R)$ is \emph{$(E_{n-1},E_{n-2})$-strongly upper (lower) connected} if each of its elements is $(E_{n-1},E_{n-2})$-strongly upper (lower) connected.
\end{definition}
Eventually the sets $E_{n-1},E_{n-2}$ will be taken to be certain bases of $\Omega_{n-1}(G;R),\Omega_{n-1}(G;R)$ and we will look to construct an $(E_{n-1},E_{n-2})$-strongly connected basis for $\Omega_n(G;R)$. For now, the extra generality allowed by the definition will be useful.

We end the section by demonstrating that the definitions of strong connectedness and inductive structures have the following convenient property made use of later in this work.
\begin{lemma}\label{lem:ConnectedSplitting}
    Let $E_{n-1} \subseteq \Omega_{n-1}(G;R)$, $E_{n-2} \subseteq \Omega_{n-2}(G;R)$, and  suppose that $x\in \Omega_n(G;R)$ has an upper (lower) $(E_{n-1},E_{n-2})$-inductive structure.
    Then there exist $(E_{n-1},E_{n-2})$-strongly upper (lower) connected elements $x'_1,\dots,x'_k \in \Omega_n(G;R)$ such that $x=x'_1+\cdots +x'_k$.
\end{lemma}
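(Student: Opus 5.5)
We argue by induction on the number $m$ of vertices of an inductive structure, and we treat only the upper case; the lower case is symmetric. Fix $x$ together with an upper $(E_{n-1},E_{n-2})$-inductive structure $F=F_n(x_1,\dots,x_m)$ on $x$, and choose such a structure with $m$ as small as possible. The induction hypothesis is the following: every element $y\in\Omega_n(G;R)$ that admits an upper $(E_{n-1},E_{n-2})$-inductive structure with fewer than $m$ vertices is a finite sum of $(E_{n-1},E_{n-2})$-strongly upper connected elements.

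\textbf{Case 1: every upper inductive structure on $x$ is connected.} By Definition~\ref{def:connectedeness}, $x$ is then itself strongly upper connected, and we take $k=1$ and $x'_1=x$.

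\textbf{Case 2: some inductive structure $F'$ on $x$ is disconnected.} Split $F'=F'_1\sqcup F'_2$, where $F'_1$ is a union of connected components, and write $F'_1=F_n(y_1,\dots,y_a)$ and $F'_2=F_n(z_1,\dots,z_b)$. As in the motivating discussion before Example~\ref{exam:2Trapezohedron}, set
\[
x'=[y_1+\cdots+y_a]^{h(x)} \quad\text{and}\quad x''=[z_1+\cdots+z_b]^{h(x)},
\]
so that $x=x'+x''$.

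We then check that $F'_1$ and $F'_2$ are inductive structures on $x'$ and $x''$:
\begin{itemize}
\item The label conditions (1) and (2) are inherited from $F'$.
\item Each $F'_i$ is $h(x)$-complete. If an edge label $y_i^{u,k}$ must be paired because $(u,h(x))\notin E_G$ and $u\ne h(x)$, then completeness of $F'$ supplies an edge at that label. That edge lies in the same connected component as $y_i$, hence inside $F'_1$.
\item The extension hypotheses $(v_i,h(x))\in E_G$ for $v_i\in\bar h(y_i)$ are inherited.
\item By Proposition~\ref{prop:GraphExtentsion}, $x',x''\in\Omega_n(G;R)$.
\end{itemize}
We also need $x'$ and $x''$ to be upper connected with head $h(x)$. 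If one of them, say $x'$, is zero, then $F'_2$ is an inductive structure on $x=x''$ with fewer vertices than $F'$. Otherwise $h(x')=h(x'')=h(x)$, since by construction every path in $x'$ and $x''$ ends at $h(x)$.

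\textbf{The main obstacle.} The induction needs both pieces to have structures with fewer than $m$ vertices, but $F'$ is only known to be disconnected; its size is not controlled by $m$. So we induct on a different quantity: the minimum number of vertices over all inductive structures on the element. Both $F'_1$ and $F'_2$ are strictly smaller than $F'$, but not necessarily smaller than the minimal $m$.

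To repair this, we use a well-founded descent on the pair $(x,F)$ with $F$ an arbitrary structure. Define a splitting tree: repeatedly split any piece whose element has a disconnected structure, always replacing the current structure by the disconnected one and splitting it. Each split strictly decreases the vertex count of every resulting piece relative to the structure being split. The tree is built from these decreasing vertex counts, so it is finitely branching. Every branch has bounded length, because each piece carries a structure with at least one vertex, and counts are positive integers.

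The delicate point is making this argument well founded, since one might worry about switching to a larger structure $F'$ after each split. I would handle this by doing the descent on the pair (element, chosen structure) and measuring by the vertex count of the chosen structure. Each split replaces a pair by pairs with strictly smaller counts, because the pieces $F'_1$ and $F'_2$ are proper subgraphs of the structure they came from. The process therefore terminates, and every leaf is strongly upper connected by Case 1. Summing the leaves gives $x=x'_1+\cdots+x'_k$, after discarding zero terms.
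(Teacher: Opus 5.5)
Your local splitting step is fine: a union of components of an $h(x)$-complete structure is again $h(x)$-complete, its extension lies in $\Omega_n(G;R)$ by Proposition~\ref{prop:GraphExtentsion}, and nonzero pieces have head $h(x)$. The gap is termination. When you process a pair $(x,F)$, you split a \emph{different} structure $F'$, whose size is unrelated to $|F|$. The pieces are smaller than $F'$, not smaller than $F$, so ``vertex count of the chosen structure'' need not decrease. Bounded branch length also does not follow from the counts being positive.

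This cannot be patched in the stated generality, because the statement fails once a complete component can have zero extension, which happens for linearly dependent $E_{n-1}$. Take $G$ with edges $v_0\to v_i\to w\to h$ ($i=1,2,3$), $v_0\to a\to b\to h$, $v_0\to b$ and $a\to h$. Put $T=e_{v_0,a,b}$, $S_{ij}=e_{v_0,v_i,w}-e_{v_0,v_j,w}$, $E_2=\{T,S_{12},S_{23},S_{13}\}$ and $E_1=\{e_{v_0,v_1},e_{v_0,v_2},e_{v_0,v_3},e_{v_0,a}\}$. Then $\Omega_3(G;\mathbb{Z})=\mathbb{Z}x$ with $x=e_{v_0,a,b,h}$; the only other $3$-paths are the $e_{v_0,v_i,w,h}$, and the non-allowed faces $e_{v_0,v_i,h}$ force their coefficients to vanish. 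The single vertex $\{T\}$ is an inductive structure on $x$. However, $\{T\}\sqcup\{S_{12},S_{23},S_{31}\}$, with the three squares joined in a triangle by edges at $v_1,v_2,v_3$, is also an $h$-complete inductive structure on $x$, because $S_{12}+S_{23}+S_{31}=0$. For $|c|\geq 2$, the $|c|$ isolated copies of $\pm T$ form a disconnected structure on $cx$. Hence no nonzero element of $\Omega_3(G;\mathbb{Z})$ is strongly upper connected, and $x$ is not a sum of such elements; the same happens over $\mathbb{Z}_2$. Your procedure simply cycles: $(x,\{T\})$ splits into $(x,\{T\})$ and a zero piece.

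The missing ingredient is a size attached to the element rather than to a chosen structure. It exists where the lemma is actually used, namely Theorem~\ref{thm:BasisExtension}, where $E_{n-1}=B_{n-1}$ is a basis and $R=\mathbb{Z}$ or $\mathbb{Z}_2$. For any inductive structure on $y$, every head of every label $x_i$ is an in-neighbour of $h(y)$, so $x_1+\cdots+x_m$ is recovered from $y$ by deleting the last vertex. No label occurs together with its negative, so the label multiset can be read off from the basis coordinates $c_b$ of this sum. Thus $m(y)=\sum_b|c_b|$ over $\mathbb{Z}$, and $m(y)$ is the number of nonzero $c_b$ over $\mathbb{Z}_2$; in either case it is independent of the structure. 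Moreover, a nonempty such multiset never sums to zero, so a disconnected structure splits $y$ into nonzero $y',y''$ with $m(y')+m(y'')=m(y)$. Strong induction on $m(y)$, combined with your Case~1/Case~2 dichotomy, then proves the lemma in that setting. The paper's one-line proof simply asserts that some inductive structure on $x$ has all of its components strongly connected. It therefore skips exactly the point where your descent breaks and does not supply this ingredient either.
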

\begin{proof}
    We prove the case of upper extensions, the proof for lower extensions being similar. 
    Among all
    $(E_{n-1},E_{n-2})$-inductive structures,
    there is an
    $(E_{n-1},E_{n-2})$-inductive structure 
    $F_n$ with all connected components $(E_{n-1},E_{n-2})$-strongly upper connected. Therefore, we may take $x'_1,\dots,x'_k$ to be the upper extensions of the connected components of $F_n$. \qed
\end{proof}

\section{Inductive elements, \texorpdfstring{$\Omega_n(G;R)$}{path chains} generating sets, and bases}\label{sec:InductiveBasis}

We are now ready to define our main objects of study and prove the central results demonstrating their use.
Throughout this section, let $G$ be a digraph
and $R$ a commutative ring with unit, unless stated otherwise. When considering a non-finite $G$ in this section, we note that the axiom of choice is required in the construction of bases as a subset of infinite spanning sets of a vector space.

\subsection{Inductively extending bases}

In this subsection we consider the properties of strongly connected elements of $\Omega_n(G;R)$ which are obtained inductively from elements of arbitrary bases in the previous two dimensions. This preparatory material is used in the next subsection, where inductive elements are defined and the proof of Theorem~\ref{thm:gen} is completed.

We begin by explaining that, in many cases, the structure of strongly connected elements is preserved by a change of coefficients within the same characteristic. Let $B_{n-1}$ be an $R$-basis of $\Omega_{n-1}(G;R)$ and $B_{n-2}$ an $R$-basis of $\Omega_{n-2}(G;R)$, assumed to respect the bigradings within $\Omega_{n-1}^{*,*}(G;R)$ and $\Omega_{n-2}^{*,*}(G;R)$, respectively. Given rings $R,R'$, where $R'$ is an $R$-module, recalling the notation of Section~\ref{sec:Coefficients}, for an integer $k$ let $\widetilde\mu_k$ be the composite
\[
    \widetilde\mu_k:\Omega_k(G;R)\xrightarrow{i_{R,R'}}\Omega_k(G;R)\otimes_RR'\xrightarrow{\mu_k}\Omega_k(G;R').
\]

\begin{lemma}\label{lem:CoeffChangeGraded}
    Let $R=\mathbb{Z}$ or $R=\mathbb{Z}_p$ for some prime $p$ and let $R'$ be a ring of characteristic $0$ or $p$, respectively. If $B$ is a $(B_{n-1},B_{n-2})$-strongly connected basis of $\Omega_n(G;R)$ satisfying the conditions above, then $\widetilde\mu_n(B)$ is a $(\widetilde\mu_{n-1}(B_{n-1}),\widetilde\mu_{n-2}(B_{n-2}))$-strongly connected basis of $\Omega_n(G;R')$.
\end{lemma}
\begin{proof}
    As explained below equation~\eqref{eq:TensorInclusion}, the assumptions imply that $i_{R,R'}$ will map any $R$-basis of $\Omega_k(G;R)$ to an $R'$-basis of $\Omega_k(G;R)\otimes_RR'$. Moreover, Corollaries~\ref{cor:CoeffChange1}~and~\ref{cor:CoeffChange2} imply that $\mu_{n-2}$, $\mu_{n-1}$, and $\mu_n$ are isomorphisms of $R'$-modules. Therefore, $\tilde{\mu}_{n-2}$, $\tilde{\mu}_{n-1}$, and $\tilde{\mu}_{n}$ map $R$-bases in $\Omega_*(G;R)$ to $R'$-bases in $\Omega_*(G;R')$.

    Now, from the commutativity of the diagrams in Lemma~\ref{lem:CoeffChange} it follows that the images of the homomorphisms $\delta^h_{n,v}$, $\delta^t_{n-1,v}$ and $\delta^t_{n,v}$, $\delta^h_{n-1,v}$ are also preserved under $\tilde{\mu}_*$. Therefore, the image of the basis $B$ under $\tilde{\mu}_n$ is a $(\widetilde\mu_{n-1}(B_{n-1}),\widetilde\mu_{n-2}(B_{n-2}))$-strongly connected basis. \qed
\end{proof}

The following result, which will be used extensively in the sequel, states that there are always enough strongly connected elements for certain coefficient rings.

\begin{theorem}\label{thm:BasisExtension}
    Let $n\geq 1$ and let $R=\mathbb{Z}$ or $R=\mathbb{Z}_2$.
    In addition, let $B_{n-1}$ be an $R$-basis of $\Omega_{n-1}(G;R)$ and $B_{n-2}$ an $R$-basis of $\Omega_{n-2}(G;R)$, both of which respect bigradings. 
    Then the upper (lower) $(B_{n-1},B_{n-2})$-strongly connected elements generate $\Omega_n(G;R)$. Moreover, when $R = \mathbb{Z}_2$, there is a basis of $\Omega_{n}(G;\mathbb{Z}_2)$ consisting of $(B_{n-1},B_{n-2})$-strongly connected elements.
\end{theorem}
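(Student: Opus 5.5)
The plan is to reduce to a single bigraded piece and then show that every element there admits an inductive structure. Lemma~\ref{lem:ConnectedSplitting} then finishes the argument. We treat upper extensions; the lower case is symmetric.

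\textbf{Step 1: reduce to $n$-paths ending at a fixed vertex.} By the decomposition~\eqref{eq:Bigrading} it suffices to generate each $\Omega_n^{w,v}(G;R)$. Fix a nonzero $x\in\Omega_n^{w,v}(G;R)$, so $x$ is upper connected with $h(x)=v$. Define
\[
y=\sum_{u}\delta^h_{n,u}(x)\in\Omega_{n-1}(G;R).
\]
This lies in $\Omega_{n-1}(G;R)$ by Lemma~\ref{lem:SumFace}, and $[y]^v=x$, since every path of $x$ ends in an edge into $v$. Each summand $\delta^h_{n,u}(x)$ lies in $\Omega_{n-1}^{w,u}(G;R)$. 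Since $B_{n-1}$ is a basis respecting the bigrading, we can write $\delta^h_{n,u}(x)=\sum_b \beta_b b$ with $b\in B_{n-1}\cap\Omega^{w,u}_{n-1}$.

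\textbf{Step 2: build the face multigraph.} Because $R$ is $\mathbb{Z}$ or $\mathbb{Z}_2$, each coefficient $\beta_b$ is an integer multiple of $1$. We take vertex labels $\pm b$, each repeated $|\beta_b|$ times, so their sum is $y$ and condition (1) of Definition~\ref{def:connectedeness} holds. Each vertex label $\pm b$ has $\delta^h_{n-1,u'}(b)\in\Omega_{n-2}^{w,u'}$. Expanding it in $B_{n-2}$ with multiplicities gives a ridge decomposition into signed elements of $B_{n-2}$. We must make sure no subsequence sums to zero; we do this by cancelling opposite pairs, which only shrinks the set of labels. It remains to pair up the labels so that completeness holds.

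Consider a vertex $u'$ with $u'\neq v$ and $(u',v)\notin E_G$. We need to show that the multiset of $B_{n-2}$-labels attached at $u'$ cancels in $\pm$ pairs, so that these pairs can be joined by edges. By Lemma~\ref{lem:NoPositionSwap} applied to $x$ at position $n-1$, the composite $\sum_{b}\pm\delta^h_{n-1,u'}(b)$, restricted to those $b$ with $h(b)$ adjacent to $v$, has vanishing $\partial^M_{n,n,n-1}$-component. Indeed, this is exactly the coefficient of the paths $e_{\dots,u',v}$ obtained by deleting the penultimate vertex, because $d(u',v)=2$. Hence the total signed sum of ridge labels at $u'$ is zero in $\Omega_{n-2}$. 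Since $B_{n-2}$ is a basis, the signed multiplicities of each basis element then sum to zero. Over $\mathbb{Z}$ this gives a perfect matching of $+e$ against $-e$; over $\mathbb{Z}_2$ it gives an even count, and again a matching. These matchings define the edges of the face multigraph. The edges are $v$-proper by construction and $v$-complete by the cancellation just established, so the face multigraph is an upper $(B_{n-1},B_{n-2})$-inductive structure on $x$.

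\textbf{Step 3: conclude.} Lemma~\ref{lem:ConnectedSplitting} writes $x$ as a sum of strongly upper connected elements, which proves generation. For $\mathbb{Z}_2$, which is a field, we extract a basis from this generating set using choice; the bigrading lets us do this componentwise.

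\textbf{Main obstacle.} The delicate point is Step 2. The cancellation holds at the level of full sums of paths, and we need it in terms of basis coefficients at $u'$ while keeping track of which summands come from which vertex $b$. There is also the requirement that no subsequence of a single ridge decomposition sums to zero. I would handle this by first reducing each $\delta^h_{n-1,u'}(b)$ to its reduced basis expansion, which is automatically free of such cancelling subsequences over $\mathbb{Z}_2$; over $\mathbb{Z}$ it is free of them once written with a consistent sign for repeated copies. The matching argument is then carried out on these reduced expansions, using the fact that all labels at $u'$ lie in the free module $\Omega_{n-2}^{w,u'}$.
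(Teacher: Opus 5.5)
Your proposal is correct and follows essentially the same route as the paper: expand $\delta^h_{n,u}(x)$ and $\delta^h_{n-1,u'}(x_i)$ with multiplicity in $\pm B_{n-1}$ and $\pm B_{n-2}$, use Lemma~\ref{lem:NoPositionSwap} at position $n-1$ to get the cancellation that gives the $h(x)$-complete pairing, and conclude with Lemma~\ref{lem:ConnectedSplitting}. The only difference is cosmetic: the paper applies the construction to the members of a bigrading-respecting basis $B_n$ (chosen using that $R$ is a PID), whereas you treat an arbitrary bigraded-homogeneous element directly, which is equally valid and does not need freeness of $\Omega_n(G;R)$.
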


\begin{proof}
    We prove the statement for upper strongly connected elements, leaving the statement for lower strongly connected elements to the reader. 
    
    As $R$ is a principle ideal domain we may choose a basis $B_n$ of $\Omega_n(G;R)$ that respects the bigrading $\Omega_n^{*,*}(G;R)$. As $B_n$ respects the bigrading $\Omega_n^{*,*}(G;R)$, each $x\in B_n$ is upper connected, with a well defined head vertex $h(x)\in V_G$. Our aim is to construct for any $x\in B_n$ a $(B_{n-1},B_{n-2})$-inductive structure $F_{n-1}(x_1,\dots,x_m)$ in accordance with Definition~\ref{def:connectedeness}. Then either $x$ is $(B_{n-1},B_{n-2})$-strongly upper connected, or if not, we may apply Lemma~\ref{lem:ConnectedSplitting} to obtain $x$ as a linear combination of $(B_{n-1},B_{n-2})$-strongly upper connected elements. Thus it will follow that $(B_{n-1},B_{n-2})$-inductive elements generate $\Omega_n(G;R)$, as required.
    
    We first explain how to choose the $x_i$'s. To begin, notice that for any $v\in V$, we have $\delta_{n,v}^h(x) \in \Omega_{n-1}(G;R)$ by Lemma~\ref{lem:SumFace}. Assuming $\delta_{n,v}^h(x)\neq 0$, then as $B_{n-1}$ is an $R$-basis, there is a smallest integer $t_v >0$ with a unique up to reordering sum decomposition
    \begin{equation}\label{eq:n-1MinimalDecompostion}
        \delta_{n,v}^h(x) = x_1^v+\cdots+x_{t_v}^v
    \end{equation}
    where either $x_k^v \in B_{n-1}$ or $-x_k^v \in B_{n-1}$ for $k=1,\dots,t_v$. Here it is important to realise that in the $R=\mathbb{Z}$ case we allow for repetition of the $x_i^v$'s and expand
    \[
    k\cdot y=\begin{cases}y+y+\cdots+y\; (\text{$k$ times})&k\geq0\\
    -y-y-\cdots-y\; (\text{$-k$ times})&k<0\end{cases}
    \]
    for an integer $k$.
    
    In any case, we may take the multiset $x_1,\dots,x_m$ to be an enumeration of the elements in the finite multiset
    \begin{align*}
        \{ x_k^v \: | \: &
        v\in V_G, \:
        \delta_{n,v}^h(x)\neq 0, \:
        \text{and} \:
        \text{$x_k^v$ appears in decomposition~\eqref{eq:n-1MinimalDecompostion} for} \: k=1,\dots,t_v  \}.
    \end{align*}
    Note that we cannot have $x_i = -x_j$ for any $i,j=1,\dots,m$, as this would imply that paths in $[x_i]^{h(x)}$ and $[x_j]^{h(x)}$ cancel, which is impossible as $x \in B_n \subseteq \Omega_n(G;R)$.
    
    Now, for any $v\in V$ and $i=1,\dots,m$, we have that $\delta_{n,v}^h(x_i) \in \Omega_{n-2}(G;R)$ by Lemma~\ref{lem:SumFace}. In accordance with the conditions for $h(x)$-completeness from Definition~\ref{def:extoffmhg}, suppose that $i= 1,\dots,m$, $\delta_{n,v}^h(x_i) \neq 0$, $(v,h(x_i)) \notin E_G$ and $v \neq h(x)$.
    Then, similarly to equation~\eqref{eq:n-1MinimalDecompostion},
    there is a smallest integer $t_{v}^i >0$ with a unique up to reordering sum decomposition
    \begin{equation}\label{eq:n-2MinimalDecompostion}
        \delta^h_{n,v}(x_i) = x_1^{v,i}+\cdots+x_{t_v^i}^{v,i}
    \end{equation}
    where either $x_k^{v,i} \in B_{n-2}$ or $-x_k^{v,i} \in B_{n-2}$ for $k=1,\dots,t_v^i$.
    Again, as previously
    \begin{equation}\label{eq:n-2Idependence}
        x_k^{v,i} \neq -x_{k'}^{v,i}
    \end{equation}
    for any $k,k'=1,\dots,t_v^i$.
    Moreover, no sub-sequence of $x_1^{v,i},\dots,x_{t_v^i}^{v,i}$ can sum to zero as
    equation~\eqref{eq:n-2Idependence} is obtained as a refinement of a unique linear combination in the members of the $R$-basis $B_{n-2}$.
    
    Let
    \[
        V_x =
        \{ v \in V_G \: | \:
        \delta_{n,v}^h(x_i) \neq 0, \:
        (v,h(x))\notin E_G \:
        \text{and} \:
        v \neq h(x) \:
        \text{for some} \: i = 1,\dots,m \}.
    \]
    As $x \in B_n \subseteq \Omega_n(G;R)$, by Lemma~\ref{lem:NoPositionSwap}, $\partial^M_{n,n,j}(x)=0$ for each $j=1,\dots,n-1$ and hence $\partial^M_{n-1,n-1,j'}\delta^h_{n,v}(x) = 0$ for each $v\in V_G$, and $j'=1,\dots,n-2$.
    Furthermore,
    \[
        0
        =
        \partial_{n-1,n-1}^M\delta^h_{n,v}(x) 
        =
        \sum_{v \in V_x} \sum_{k=1}^{t_v^i} x_k^{v,i}.
    \]
    Hence, using the fact that each $x_k^{v,i}$ is up to sign an element of $B_{n-2}$ and that any additive torsion in $R$ is of order $2$, any $x_k^{v,i}$ can be paired with another $x_{k'}^{v,i'}$ such that
    \begin{equation}\label{eq:OpositSignPairing}
        x_k^{v,i} = -x_{k'}^{v,i'}
    \end{equation}
    for some $i'=1,\dots,m$ and $k'=1,\dots,t_v^{i'}$.
    In addition, as a consequence of equation~\eqref{eq:n-2Idependence}, it must be that $i\neq i'$.
    
    We may now construct a face multigraph $F_{n-1}(x_1,\dots,x_m)$ with vertices $x_1,\dots,x_m$ and all edges described by the labels
    \[
        (\{ x_k^{v,i}, x_{k'}^{v,i'} \}, v)
    \]
    using the pairings and matchings obtained above.
    The face multigraph $F_{n-1}(x_1,\dots,x_m)$ is $h(x)$-complete as we have constructed all required edges for vertices in $V_x$. Therefore, $F_{n-1}(x_1,\dots,x_m)$ is an inductive structure on $x$. \qed
\end{proof}

To complete the proof of Theorem~\ref{thm:gen}, in Corollaries~\ref{cor:FieldGeneratorBasis}~and~\ref{cor:IductiveBasisIntegral} below we take advantage of the fact that by applying Lemma~\ref{lem:CoeffChangeGraded} the $(B_{n-1},B_{n-2})$-strongly connected generating sets constructed in Theorem~\ref{thm:BasisExtension} can be used to obtain explicit strongly connected generating sets and bases in $\Omega_n(G;R)$ for any $R$ of characteristic $0$ or $2$ by an inductive construction beginning with the natural low dimensional bases described in equations~\eqref{eq:Dim0Basis}~and~\eqref{eq:Dim1Basis}.

\subsection{Inductive elements}\label{sec:InductiveElments}

In this subsection we describe an inductive method for constructing generating sets of the path chain modules $\Omega_n(G;R)$. The heavy lifting here is done by Theorem~\ref{thm:BasisExtension}. The main theorems included in the introduction are stated with greater generality in this subsection and proved in full.

\begin{definition}\label{def:InductiveElement}
    Define sets $\bar{E}_{n}^h,\bar{E}_{n}^t \subseteq \Omega_n(G;R)$ for $n\geq-1$ by means of the following inductive construction.
    \begin{enumerate}
        \item 
        Let $\bar{E}^h_{-1} = \bar{E}^t_{-1} = \emptyset$
        and $\bar{E}^h_0 = \bar{E}^t_0 = \{\pm e_v \: | \: v \in V_G \}$.
        \item 
        For $n\geq 1$, the set $\bar{E}^h_n$ ($\bar{E}^t_n$) consists of all $(\bar{E}^h_{n-1},\bar{E}^h_{n-2})$-strongly upper connected elements ($(\bar{E}^t_{n-1},\bar{E}^t_{n-2})$-strongly lower connected elements).
    \end{enumerate}
    An element belonging to $\bar{E}_n^h$ ($\bar{E}^t_n$) for some $n\geq 0$ is called an
    \emph{upper (lower) inductive element}.
    The members of $\bar{E}^h_n$ ($\bar{E}^t_n$) for a fixed $n$ are called \emph{$n$-dimensional upper (lower)
    inductive elements}.
\end{definition}
By construction, all upper and lower inductive elements are connected.
In particular, $h(x)$ and $t(x)$ are well defined for any upper or lower inductive element $x$. Applying parts (1) and (2) of Proposition~\ref{prop:exsumm}, we immediately obtain the following descriptions of inductive elements in low dimensions.

\begin{proposition}\label{prop:LowDimInductiveElements}
    The following properties of upper or lower inductive elements hold.
    \begin{enumerate}
        \item 
        In dimension $1$, inductive elements are given by $\bar{E}^h_1 = \bar{E}^t_1 = \{ \pm e_{u,v} \in \Omega_1(G;R) \}$.
        \item
        In dimension $2$, inductive elements are directed squares, directed triangles or double edges up to sign. \qed
    \end{enumerate}
\end{proposition}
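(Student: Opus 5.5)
The plan is to unwind Definition~\ref{def:InductiveElement} in dimensions $1$ and $2$ and compare it with Proposition~\ref{prop:exsumm}. I treat upper inductive elements; the lower case is the same argument with heads and tails exchanged and $\delta^h$ replaced by $\delta^t$. Since $\bar{E}^h_{-1}=\emptyset$ and $\bar{E}^h_0=\{\pm e_v\}$, condition (2) of Definition~\ref{def:connectedeness} forces every ridge decomposition in an inductive structure on a $1$-dimensional element to be empty. This is automatic, because $\delta^h_{0,u}=0$. So an $(\bar{E}^h_0,\bar{E}^h_{-1})$-inductive structure on an upper connected $x\in\Omega_1(G;R)$ is an $h(x)$-complete face multigraph with no edges whose vertices are signed vertex paths $\pm e_{u}$, and we need $x=[\sum \pm e_{u_i}]^{h(x)}$.

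For dimension $1$, I would show that such a structure is connected exactly when it has one vertex. It has no edges, so connectedness forces $m=1$, and then $x=[\pm e_u]^{v}=\pm e_{u,v}$ with $(u,v)\in E_G$. Conversely, take $x=\pm e_{u,v}$. The condition $x_i\neq -x_j$ in Definition~\ref{def:FaceMultiGraph}, together with $x$ having a single path, means any inductive structure on $x$ must be the single vertex $\pm e_u$. To see this, the vertex labels $\pm e_{u_i}$ have to sum to $\pm e_u$ with no pairs cancelling. Over $\mathbb{Z}_2$ this rules out repetitions, since then $e_u=-e_u$. Over $\mathbb{Z}$, repetition would give a coefficient $\neq\pm1$. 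Hence that unique structure is connected, and this is part (1) of Proposition~\ref{prop:exsumm}. The result is $\bar{E}^h_1=\{\pm e_{u,v}\}$.

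For dimension $2$, an element of $\bar{E}^h_2$ is by definition an extension over a complete connected face multigraph whose vertex labels are $\pm e_{u,v}$ and whose ridge labels are $\pm e_w$. Part (2) of Proposition~\ref{prop:exsumm} then says it is a directed square, directed triangle or double edge up to sign, which settles the required direction. The one point needing care is that ``strongly connected'' demands that every inductive structure be connected, whereas Proposition~\ref{prop:exsumm} only needs one. But the containment stated here uses only the existence of a connected complete structure, and Definition~\ref{def:connectedeness} guarantees that one exists.

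I expect the main subtlety to be the dimension-$1$ converse, namely checking that no disconnected inductive structure on $e_{u,v}$ exists when $R=\mathbb{Z}$ with repeated vertices. As argued above, this comes down to the coefficient of $e_{u,v}$ being $\pm1$.
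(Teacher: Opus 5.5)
Your dimension-$1$ argument is sound for $R$ of characteristic $0$ or $2$. It is more careful than the paper, whose proof simply cites parts (1) and (2) of Proposition~\ref{prop:exsumm}.

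\textbf{The gap is in dimension $2$.} You prove only that every element of $\bar{E}^h_2$ is a directed square, triangle or double edge up to sign. You set aside the strong-connectedness requirement by reading part (2) as a one-way containment. Part (2) is meant as an equality, like part (1): the paper derives it from the ``if and only if'' in Proposition~\ref{prop:exsumm}(2). The reverse inclusion is also what gets used later:
\begin{itemize}
\item in Theorem~\ref{thm:SructureMapsAreInductive}, where the squares, triangles and double edges $x_j$ are taken to be inductive elements;
\item in Proposition~\ref{prop:OtherSpecialDim3Generators};
\item in Corollary~\ref{cor:IductiveBasisIntegral}(1), which says inductive elements contain a basis of $\Omega_2$.
\end{itemize}
That reverse direction is exactly where you must show that \emph{every} inductive structure is connected. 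Proposition~\ref{prop:exsumm} only supplies one connected structure, so this does not follow from what you wrote.

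\textbf{How to close it.} Run your dimension-$1$ uniqueness argument one level up.
\begin{itemize}
\item Take a directed square $x=e_{u,v_1,w}-e_{u,v_2,w}$. The vertex labels of any upper inductive structure are elements $\pm e_{a,b}$ with $(b,w)\in E_G$, no two opposite, whose sum extends to $x$. As in dimension $1$, this forces exactly the two labels $e_{u,v_1}$ and $-e_{u,v_2}$.
\item Their ridge decompositions at $u$ must be $e_u$ and $-e_u$. In characteristic $0$ or $2$, a decomposition of $\pm e_u$ into signed vertex paths with no zero-summing subsequence is trivial.
\item Since $u\neq w$ and $(u,w)\notin E_G$, $w$-completeness forces the single edge $(\{e_u,-e_u\},u)$. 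So every inductive structure is the connected one of Example~\ref{exam:DirectedSquare}.
\item For a directed triangle or double edge, the vertex set is forced to be a single $\pm e_{u,v}$, hence connected.
\end{itemize}

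\textbf{State the characteristic hypothesis explicitly.} Do not just treat $\mathbb{Z}$ and $\mathbb{Z}_2$; the statement as written for arbitrary $R$ fails otherwise. Over $\mathbb{Z}_3$, the edgeless face multigraph on the two vertices $-e_u,-e_u$ is admissible, since $-e_u\neq e_u$. It gives $[-e_u-e_u]^v=e_{u,v}$, and similarly $[e_u+e_u]^v=-e_{u,v}$. So neither $\pm e_{u,v}$ is strongly connected, and $\bar{E}^h_1=\emptyset$ there.
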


An $(\bar{E}^h_{n-1},\bar{E}^h_{n-2})$-inductive structure ($(\bar{E}^t_{n-1},\bar{E}^t_{n-2})$-inductive structure) on an upper (lower) connected $x\in \Omega_n(G;R)$ is called an \emph{upper (lower) inductive structure on $x$}. 
For simplicity, below we will use the word \emph{inductive} to describe both upper and lower inductive elements and structures.

Theorem~\ref{thm:BasisExtension} now has the following immediate consequence when the coefficients are of characteristic $2$.

\begin{corollary}\label{cor:FieldGeneratorBasis}
    Let $G$ be any digraph, $R$ a ring of characteristic $2$, and $n$ a non-negative integer. Then the $n$-dimensional inductive elements generate $\Omega_n(G;R)$.  Moreover, the $n$-dimensional inductive elements contain a subset that is a basis of $\Omega_n(G;R)$.
\end{corollary}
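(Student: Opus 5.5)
The plan is to argue by induction on $n$, first for $R=\mathbb{Z}_2$, and then pass to an arbitrary ring $R$ of characteristic $2$ using the coefficient change results of Section~\ref{sec:Coefficients}. The statement is proved separately for upper and for lower inductive elements; the two arguments are symmetric, so only the upper case is described.

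\textbf{Step 1 ($R=\mathbb{Z}_2$).} The inductive hypothesis will be that for every $k<n$ the set $\bar{E}^h_k\subseteq\Omega_k(G;\mathbb{Z}_2)$ contains a basis $B_k$ respecting the bigrading.

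For $k=0$ the vertices $\{e_v\}$ form such a basis, by~\eqref{eq:Dim0Basis}. For $k=-1$ we take $B_{-1}=\emptyset=\bar{E}^h_{-1}$, which is a basis of the zero module.

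For the inductive step, Theorem~\ref{thm:BasisExtension} applied with $B_{n-1},B_{n-2}$ produces a basis $B_n$ of $\Omega_n(G;\mathbb{Z}_2)$ consisting of $(B_{n-1},B_{n-2})$-strongly upper connected elements. We then need to know that each such element is $(\bar{E}^h_{n-1},\bar{E}^h_{n-2})$-strongly upper connected. This needs comparing inductive structures over the basis with inductive structures over the larger sets $\bar E^h_{n-1},\bar E^h_{n-2}$, and is the step I expect to be the main obstacle.

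\textbf{Step 2 (the comparison).} Existence of an $(\bar{E}^h_{n-1},\bar{E}^h_{n-2})$-inductive structure is immediate, since $B_{k}\subseteq \bar{E}^h_{k}$. Connectedness of every such structure is not automatic, because a structure labelled by arbitrary inductive elements could be disconnected. I would handle this as follows.

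I will first re-run the proof of Theorem~\ref{thm:BasisExtension} directly with the spanning sets $\bar{E}^h_{n-1},\bar{E}^h_{n-2}$ in place of the bases. Over $\mathbb{Z}_2$ every element is its own negative. So any expression of $\delta^h_{n,v}(x)$ as a sum of elements of the spanning set, once repeated terms are cancelled in pairs, still gives a ridge decomposition in which no subsequence sums to zero. The pairing argument in~\eqref{eq:OpositSignPairing} is also unaffected.

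With this in hand, I will show that every $x\in\Omega_n(G;\mathbb{Z}_2)$ which is upper connected, with head $h(x)$, admits some $(\bar{E}^h_{n-1},\bar{E}^h_{n-2})$-inductive structure. Lemma~\ref{lem:ConnectedSplitting} then writes $x$ as a sum of $(\bar{E}^h_{n-1},\bar{E}^h_{n-2})$-strongly upper connected elements, which are precisely the elements of $\bar{E}^h_n$.

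Since $\Omega_n(G;\mathbb{Z}_2)$ is generated by connected elements (the bigrading~\eqref{eq:Bigrading}), this shows that $\bar{E}^h_n$ generates $\Omega_n(G;\mathbb{Z}_2)$. Because $\mathbb{Z}_2$ is a field, a basis can be extracted from $\bar{E}^h_n$ (using choice if $G$ is infinite). Choosing this basis inside each summand $\Omega_n^{v_t,v_h}$ makes it respect the bigrading, which closes the induction. With this route we never need the exact identification of $B_n$ with strongly connected elements relative to $\bar E$; we only need existence of inductive structures and Lemma~\ref{lem:ConnectedSplitting}.

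\textbf{Step 3 (general $R$ of characteristic $2$).} By Corollary~\ref{cor:CoeffChange1}, $\mu_*\colon\Omega_*(G;\mathbb{Z}_2)\otimes R\to\Omega_*(G;R)$ is an isomorphism. So the images $\widetilde\mu_n$ of the $\mathbb{Z}_2$-bases are $R$-bases, and Lemma~\ref{lem:CoeffChangeGraded} shows that these images are strongly connected with respect to the transported lower-dimensional bases.

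It remains to see that the transported elements lie in $\bar{E}^h_n$ computed over $R$. The natural approach is again to redo Step 2 over $R$. Since $2=0$ in $R$, the additive torsion is of order $2$, as the proof of Theorem~\ref{thm:BasisExtension} requires. The ridge decompositions can be taken over the transported bases, which lie in $\bar E^h_{n-1}(R)$ and $\bar E^h_{n-2}(R)$ by induction. The argument above then again gives generation by $\bar E^h_n(R)$, and a basis is supplied by $\widetilde\mu_n$ of the $\mathbb{Z}_2$-basis, whose elements are shown inductively to be inductive elements.
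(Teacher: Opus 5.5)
Your skeleton matches the paper's: induction over $\mathbb{Z}_2$ via Theorem~\ref{thm:BasisExtension}, then transfer by Lemma~\ref{lem:CoeffChangeGraded}. You also put your finger on exactly the point the paper's one-line proof passes over. Theorem~\ref{thm:BasisExtension} produces $(B_{n-1},B_{n-2})$-strongly connected elements, whereas membership in $\bar{E}^h_n$ requires that \emph{every} structure labelled by arbitrary lower-dimensional inductive elements be connected. Enlarging the label sets only adds structures.

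Your repair does not work, however. The re-run of Theorem~\ref{thm:BasisExtension} with spanning sets is false as stated. Over $\mathbb{Z}_2$, the three directed squares $s_{ab}=e_{w,a,u}+e_{w,b,u}$ of a multisquare with middle vertices $a,b,c$ satisfy $s_{ab}+s_{bc}+s_{ac}=0$. So cancelling repeated terms does not prevent a subsequence from summing to zero, and the pairing~\eqref{eq:OpositSignPairing} relied on uniqueness of coefficients in the basis $B_{n-2}$. That detour is dispensable, since existence already follows from $B_k\subseteq\bar{E}^h_k$, as you note. The step everything rests on is not dispensable: Lemma~\ref{lem:ConnectedSplitting} with $(E_{n-1},E_{n-2})=(\bar{E}^h_{n-1},\bar{E}^h_{n-2})$. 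Its proof tacitly assumes that repeatedly splitting along disconnected structures ends in strongly connected pieces. That works when the labels form a basis: in characteristic $2$ the vertex set of a structure on $x$ is then forced to be the support of $\sum_v\delta^h_{n,v}(x)$, so splitting only refines a fixed finite set. With the redundant set $\bar{E}^h_{n-1}$, one can keep trading a square for two others in its multisquare, and the process need not end.

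In fact the statement, read literally with Definition~\ref{def:InductiveElement}, fails. Let $G$ have vertices $T,u_1,u_2,u_3,v_1,v_2,H$ and all edges $T\to u_i$, $u_i\to v_j$, $v_j\to H$. Then $\Omega_3(G;\mathbb{Z}_2)\cong\mathbb{Z}_2^2$, and its nonzero elements are $y_{ab}=[s^1_{ab}+s^2_{ab}]^H$ for $\{a,b\}\subseteq\{1,2,3\}$, where $s^j_{ab}=e_{T,u_a,v_j}+e_{T,u_b,v_j}$. Every directed square lies in $\bar{E}^h_2$. Let $c$ be the third index, so $s^j_{ab}=s^j_{ac}+s^j_{cb}$. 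Take the face multigraph on $s^1_{ac},s^1_{cb},s^2_{ac},s^2_{cb}$ with two edges between $s^1_{ac}$ and $s^2_{ac}$ (labelled at $u_a$ and $u_c$) and two between $s^1_{cb}$ and $s^2_{cb}$ (labelled at $u_b$ and $u_c$). This is a disconnected $H$-complete $(\bar{E}^h_2,\bar{E}^h_1)$-inductive structure on $y_{ab}$.

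Hence $\bar{E}^h_3=\emptyset$ while $\Omega_3(G;\mathbb{Z}_2)\neq 0$. Lemma~\ref{lem:ConnectedSplitting} fails for $x=y_{12}$, which does carry the connected structure on $s^1_{12},s^2_{12}$. With three vertices $v_1,v_2,v_3$ the digraph is self-dual and the lower version fails too. Since $y_{12}$ is the image of $T_2$ under an injective $3$-path structure map, the example also contradicts the uniqueness claim of Theorem~\ref{thm:SructureMapsAreInductive}.

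What Theorem~\ref{thm:BasisExtension} really gives is generation by $(B_{n-1},B_{n-2})$-strongly connected elements for inductively chosen bases $B_k$. Here $y_{12}$ and $y_{23}$ are $(B_2,B_1)$-strongly connected for any basis $B_2$ containing the $s^j_{12},s^j_{23}$. A provable version of the corollary has to define inductive elements relative to such chosen bases. Neither your argument nor the paper's establishes the statement as written.
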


\begin{proof}
    Using the determination of low-dimensional bases in Section~\ref{sec:LowDimBasis}, the statement of the corollary holds when $n=0$ by construction of inductive elements and when $n=1$ as a consequence of part (1) of Proposition~\ref{prop:LowDimInductiveElements}. The full statement follows for $R=\mathbb{Z}_2$ by induction on $n$ using Theorem~\ref{thm:BasisExtension}, and the general case follows from this using Lemma~\ref{lem:CoeffChangeGraded}.
    \qed
\end{proof}

Theorem~\ref{thm:BasisExtension} also provides the next corollary, which gives the statement corresponding to Corollary~\ref{cor:FieldGeneratorBasis} for the integral and characteristic $0$ cases.

\begin{corollary}\label{cor:IductiveBasisIntegral}
    Let $G$ be any digraph and $R$ a ring of characteristic $0$.
    Then the following statements hold.
    \begin{enumerate}
        \item 
        Inductive elements contain a basis of $\Omega_i(G;R)$ for $i=0,1,2$.
        \item
        The $3$-dimensional inductive elements generate $\Omega_3(G;R)$.
        \item
        When $R$ is a field, a subset of the inductive elements forms a basis of $\Omega_3(G;R)$.
    \end{enumerate}
\end{corollary}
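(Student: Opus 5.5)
The plan is to reduce everything to $R=\mathbb{Z}$ and then transfer the result along $\widetilde\mu_*\colon\Omega_*(G;\mathbb{Z})\to\Omega_*(G;R)$. Corollary~\ref{cor:CoeffChange2} makes this map an isomorphism after tensoring, and Lemma~\ref{lem:CoeffChange} says it commutes with the face maps $\delta^h_{n,v},\delta^t_{n,v}$.

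For part (1), the cases $i=0,1$ hold by construction and by Proposition~\ref{prop:LowDimInductiveElements}(1), using the bases \eqref{eq:Dim0Basis} and \eqref{eq:Dim1Basis}. For $i=2$, Proposition~\ref{prop:LowDimInductiveElements}(2) identifies the $2$-dimensional inductive elements with directed squares, triangles and double edges up to sign. Conversely, every directed square, triangle and double edge is an inductive element. This is part (2) of Proposition~\ref{prop:exsumm}, together with the observation that their face multigraphs over the edge basis are forced: a single vertex, or the single-edge graph of Example~\ref{exam:DirectedSquare}. Hence any inductive structure on them is connected. Proposition~\ref{prop:Dim2Base} then gives a basis chosen among these, valid over any $R$.

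For part (2), first take $R=\mathbb{Z}$. Let $B_1=\{e_{u,v}\}$, and let $B_2$ be a basis of $\Omega_2(G;\mathbb{Z})$ consisting of inductive elements, as in part (1); both respect the bigrading. Theorem~\ref{thm:BasisExtension} with $n=3$ shows that the $(B_2,B_1)$-strongly upper connected elements generate $\Omega_3(G;\mathbb{Z})$. What remains is to show that these are $(\bar E^h_2,\bar E^h_1)$-strongly connected, that is, genuine inductive elements.

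\textbf{Main obstacle.} This comparison between $B_2$-strong connectedness and $\bar E^h_2$-strong connectedness is the step I expect to be hardest. The difficulty is that $\bar E^h_2$ contains every directed square inside a multisquare, not only those in $B_2$. So an inductive structure with vertices in $\bar E^h_2$ may be disconnected even when every $B_2$-structure is connected.

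My first attempt would bypass the comparison altogether. Apply the splitting argument of Lemma~\ref{lem:ConnectedSplitting} directly to the $(\bar E^h_2,\bar E^h_1)$-inductive structure that the proof of Theorem~\ref{thm:BasisExtension} produces, after first checking that a $(B_2,B_1)$-structure is also an $(\bar E^h_2,\bar E^h_1)$-structure. This holds because $B_2\subseteq\bar E^h_2$ and $B_1\subseteq\bar E^h_1$.

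The splitting then needs a well-founded choice: among all $(\bar E^h_2,\bar E^h_1)$-inductive structures on $x$, pick one with the maximal number of connected components. This number is bounded because each component has at least one vertex and the number of vertices is controlled. Each vertex is a square, triangle or double edge contributing paths with coefficient $\pm1$, and the structure satisfies $[x_1+\cdots+x_m]^{h(x)}=x$, which should bound $m$ by the total absolute coefficient mass of $x$. With that bound, each component of a maximal structure is strongly connected; otherwise it could be split further. Its extension is therefore an inductive element, and summing over components recovers $x$.

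Over a general $R$ of characteristic $0$, apply $\widetilde\mu_3$. It carries inductive structures over $\mathbb{Z}$ to inductive structures over $R$, and its image generates $\Omega_3(G;R)$ by Corollary~\ref{cor:CoeffChange2}. One must still check that strong connectedness survives, since over $R$ new structures might appear. Here the key point is that the structures involved have vertex labels which are $\pm$ squares, triangles and double edges, with coefficients $\pm1$, so they are defined over $\mathbb{Z}$ and the set of possible structures does not change with $R$.

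Part (3) follows from part (2), because over a field any generating set contains a basis (using the axiom of choice when $G$ is infinite).
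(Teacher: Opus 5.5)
\textbf{Where the argument breaks.} The step that fails is your bound on the number of vertices of an $(\bar E^h_2,\bar E^h_1)$-inductive structure. Definition~\ref{def:FaceMultiGraph} only forbids $x_i=-x_j$. Once $G$ has a multisquare, the set $\pm\bar E^h_2$ is not linearly independent, so nothing ties $m$ to the coefficients of $x$.

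Concretely, let $a\to b_1,b_2,b_3\to c$ be a multisquare and consider the directed squares (all in $\bar E^h_2$)
$Q_1=e_{a,b_1,c}-e_{a,b_2,c}$, $Q_2=e_{a,b_2,c}-e_{a,b_3,c}$, $Q_3=e_{a,b_3,c}-e_{a,b_1,c}$.
They satisfy $Q_1+Q_2+Q_3=0$, no two are negatives of each other, and their ridges at $b_1,b_2,b_3$ pair off into a $3$-cycle. Suppose $(c,h(x))\in E_G$ and $(b_i,h(x))\notin E_G$. Then this $3$-cycle is a self-contained $h(x)$-complete component whose extension is $0$, and arbitrarily many copies of it can be adjoined to any inductive structure on $x$.

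This has two consequences. First, there is no structure with a maximal number of components. Second, and worse, $x$ then has a disconnected structure, so neither $x$ nor any summand produced by splitting can be $(\bar E^h_2,\bar E^h_1)$-strongly connected. Even your ``split further'' step is not automatic: a new structure on one component may contain the negative of a vertex in another component.

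For a concrete instance, let $G$ be $\mathbb{T}_2$ with such a multisquare attached by an edge $c\to H$, together with the dual decoration $T\to a'\to b'_1,b'_2,b'_3\to c'$. Then $\Omega_3(G;\mathbb{Z})=\mathbb{Z}\,T_2$. Yet $T_2$ has the disconnected upper structure formed by the $2$-cycle on $S_1,-S_2$ of Example~\ref{exam:2Trapezohedron} together with the $3$-cycle above, and the dual triple gives a disconnected lower structure in the same way. The same holds for every $kT_2$. Hence $\bar E^h_3=\bar E^t_3=\emptyset$ while $\Omega_3\neq 0$.

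\textbf{What this means for the proof.} The obstacle you flagged is real, and no cleverer splitting gets around it: with the definitions as written, part (2) fails for this $G$. The paper's proof does not address the point either. It cites Theorem~\ref{thm:BasisExtension} over $\mathbb{Z}$ and then Lemma~\ref{lem:CoeffChangeGraded}, tacitly treating $(B_2,B_1)$-strongly connected elements as inductive elements.

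What is actually established, by the paper and by your argument alike, is this: $\Omega_3(G;R)$ is generated by $(B_2,B_1)$-strongly connected elements for a chosen basis $B_2$ of squares, triangles and double edges. In that setting the vertex multiset of any structure is determined by $x$. Upper extension by $h(x)$ is injective on the relevant chains, and a sum of $\pm$ basis elements with no cancelling pair determines its terms. So your component count is bounded, and the splitting of Lemma~\ref{lem:ConnectedSplitting} goes through. A statement about $\bar E^h_3$ itself would require changing Definition~\ref{def:connectedeness}, not just the proof.

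Your handling of parts (1) and (3), and of the passage from $\mathbb{Z}$ to a general $R$ of characteristic $0$, is fine and matches the paper.
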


\begin{proof}
     The proof is again by induction. The description of the bases in Section~\ref{sec:LowDimBasis} shows that the statement holds when $n=0$. It holds when $n=1,2$ as a consequence of parts (1) and (2) of Proposition~\ref{prop:LowDimInductiveElements} and Proposition~\ref{prop:Dim2Base}.
    The case when $n=3$ now follows from Theorem~\ref{thm:BasisExtension} when $R=\mathbb{Z}$ and then from Lemma~\ref{lem:CoeffChangeGraded} in general.
    \qed
\end{proof}

In the case of integral coefficients, it is not generally true that a generating set of a finite-dimensional free $\mathbb{Z}$-module may be reduced to a basis.
This fact is the only obstruction to further extending Corollary~\ref{cor:IductiveBasisIntegral} to higher dimensions.
However, we are not currently aware of any examples where a subset of inductive elements do not form a basis.
In general, a basis can still be determined from a generating set of $\Omega_n(G;\mathbb{Z})$ by the computation of a Hermite normal form with respect to the intersection with the usual basis of $\mathcal{A}_n(G;\mathbb{Z})$.
In particular, when the number of paths in the generating set $\Omega_n(G;\mathbb{Z})$ is significantly smaller than the rank of $\mathcal{A}_n(G;\mathbb{Z})$, this approach would be more computationally efficient than computing $\Omega_n(G;\mathbb{Z})$ directly as a submodule of $\mathcal{A}_n(G;\mathbb{Z})$.

\section{Thick \texorpdfstring{$\mathcal{S}_n(G)$}{Sn(G)}-classes as inductive elements}\label{sec:relwithsn(G)classes}

In this section we explain the relationship between inductive elements and the bases of $\Omega_n(G;K)$ constructed by Fu and Ivanov~\cite[Theorem 4.7]{Fu2024} under the restrictions that $G$ has no multisquares and $K$ is a field. These bases were discussed at the end of Section~\ref{sec:LowDimBasis}, and Theorem~\ref{thm:NoMultisquareBasis} summarises them in a manner where basis elements correspond to certain thick $\mathcal{S}_n(G)$-classes. To begin the discussion we first require some further background on the structure of short move graphs. 

Throughout this section, assume that $K$ is a field, unless stated otherwise. All results related to inductive elements will be stated in terms of upper inductive elements, structures, and extensions. The dual results for lower inductive elements also hold and have similar proofs.

\begin{lemma}\label{lem:ShortGraphEdgeRestriction}
    Let $G$ be a digraph without multisquares. Then any vertex of $\mathcal{S}_n(G)$ is incident to at most one edge with any given label.
\end{lemma}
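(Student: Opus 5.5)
The plan is to argue directly from the definition of the graph of short moves $\mathcal{S}_n(G)$ and to rule out two distinct edges with the same label at a single vertex.

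Fix a vertex of $\mathcal{S}_n(G)$, that is, an allowed path $e_{u_0,\dots,u_n}$, and a label $i\in\{1,\dots,n-1\}$. Suppose, for contradiction, that this vertex meets two distinct edges labelled $i$. Their other endpoints are allowed paths $e_{v_0,\dots,v_n}$ and $e_{w_0,\dots,w_n}$, each differing from $e_{u_0,\dots,u_n}$ by a short move at $i$. By~\eqref{eq:ShortMove} this gives three facts. First, $v_j=w_j=u_j$ for all $j\neq i$. Second, $v_i\neq u_i$ and $w_i\neq u_i$. Third, $(u_{i-1},u_{i+1})\notin E_G$.

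Since $\mathcal{S}_n(G)$ is a graph, not a multigraph, there is at most one edge between any two vertices. So two distinct edges force $e_{v_0,\dots,v_n}\neq e_{w_0,\dots,w_n}$, and hence $v_i\neq w_i$. The three vertices $u_i,v_i,w_i$ are therefore pairwise distinct.

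All three paths are allowed, so each of $u_i,v_i,w_i$ receives an edge from $u_{i-1}$ and sends an edge to $u_{i+1}$. Thus $\{u_i,v_i,w_i\}\subseteq S_{u_{i-1},u_{i+1}}$, giving $|S_{u_{i-1},u_{i+1}}|\geq 3$. The definition of a multisquare also needs $u_{i-1}\neq u_{i+1}$ and $(u_{i-1},u_{i+1})\notin E_G$. The second condition is part of the short move condition.

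The only step needing care is showing $u_{i-1}\neq u_{i+1}$, since~\eqref{eq:ShortMove} does not state it explicitly. If $u_{i-1}=u_{i+1}$, the paths involve double edges through $u_{i-1}$, and the question is whether the paper counts this as a multisquare. I would handle it in one of two ways. One option is to note that $(u_{i-1},u_{i+1})\notin E_G$ holds trivially when $u_{i-1}=u_{i+1}$, because $E_G$ has no loops, and then check whether the paper's convention excludes this case. The other is to read the short move condition as requiring $d_G(u_{i-1},u_{i+1})=2$, which is the form used in the proof of Lemma~\ref{lem:NoPositionSwap}, and which gives distinctness automatically.

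If the degenerate case $u_{i-1}=u_{i+1}$ really is allowed by~\eqref{eq:ShortMove}, then the lemma as stated needs the convention that short moves require $u_{i-1}\neq u_{i+1}$, matching condition~\eqref{eq:MagnitudeCondition}. I expect the intended reading is this one, and the proof would invoke it explicitly.

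Granting distinctness, $G$ contains a multisquare between $u_{i-1}$ and $u_{i+1}$, which contradicts the hypothesis. Hence each vertex of $\mathcal{S}_n(G)$ is incident to at most one edge with any given label.
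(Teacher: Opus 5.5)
Your argument is the same as the paper's: two distinct label-$i$ neighbours of $e_{u_0,\dots,u_n}$ give three pairwise distinct vertices $u_i,v_i,w_i\in S_{u_{i-1},u_{i+1}}$ with $(u_{i-1},u_{i+1})\notin E_G$, and hence a multisquare. Your worry about $u_{i-1}\neq u_{i+1}$ is justified: the paper's proof uses this silently, and under the literal reading of \eqref{eq:ShortMove} the lemma fails (take three double edges $a\leftrightarrow b$, $a\leftrightarrow c$, $a\leftrightarrow d$; this $G$ has no multisquare, yet $e_{a,b,a}$ meets two edges labelled $1$ in $\mathcal{S}_2(G)$), so imposing $u_{i-1}\neq u_{i+1}$, as in \eqref{eq:MagnitudeCondition}, is exactly the right convention.
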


\begin{proof}
    Suppose there is a vertex $u=e_{u_0,\dots,u_n}$ of $\mathcal{S}_n(G)$ that has two incident edges whose labels are both $i$ from some $i=1,\dots,n-1$. As $\mathcal{S}_n(G)$ is a graph, the endpoint vertices $v$, $w$ of the two edges labelled $i$ that are not $u$, are distinct.
    In particular,
    $v$ and $w$ are related to $e_{u_0,\dots,u_n}$ by short moves in position $i$. Therefore,
    $v=e_{u_0,\dots,u_{i-1},v_i,u_{i+1},\dots,u_n}$ and $w=e_{u_0,\dots,u_{i-1},w_i,u_{i+1},\dots,u_n}$, where $u_i$, $v_i$ and $w_i$ are distinct vertices of $\mathcal{S}_n(G)$. Moreover, using the definition of allowed paths we have distinct edges
    \[
        (u_{i-1},u_i),(u_{i-1},v_i),(u_{i-1},w_i),(u_i,u_{i+1}),(v_i,u_{i+1}),(w_i,u_{i+1})\in E_G
    \]
    and by construction of the short moves $(u_{i-1},u_{i+1})\notin E_G$. This implies that $G$ contains a multisquare, which is a contradiction. \qed
\end{proof}

Let $S_n$ be a thick $\mathcal{S}_n(G)$-class with a vertex partition when considering a field $K$ of characteristic other than $2$. For each $i=1,\dots,n-1$ define an equivalence relation $\sim_i$ on the vertices of $S_n$ as that generated by the relation
\begin{equation}\label{eq:ClassVertexRelations}
    u \sim_i v \iff \{ u, v \} \in E_{S_n} \; \text{with label less than} \; i.
\end{equation}
We denote by $B_i(S_n)$ the set of $\sim_i$ equivalence classes. Each $c \in B_i(S_n)$ uniquely determines a thick $\mathcal{S}_{i}(G)$-class $\bar{c}$ by applying the following steps:
\begin{enumerate}[(i)]
    \item
    Take $c$ as a vertex set and add an edge between two of its elements if and only if there is an edge between these elements in $S_n$ that has a label less than $i$.
    \item 
    Replace each elementary $n$-path of $c$ with the elementary $i$-path obtained by removing its last $n-i$ elements.
    \item
    When $K$ has characteristic other than $2$, $\bar{c}$ inherits a partition from $S_n$.
\end{enumerate}
The $\mathcal{S}_{i}(G)$-class $\bar{c}$ is well defined as its graph is connected by the definition of $\sim_i$, and maximally connected in $\mathcal{S}_{i}(G)$ as any edge incident to a vertex of $\bar{c}$ with a label less than $i$ is an edge in $S_n$. Moreover, the thickness of $\bar{c}$ follows from the thickness of $S_n$.
By Theorem~\ref{thm:NoMultisquareBasis}, when $G$ has no multisquares, each $\mathcal{S}_{i}(G)$-class constructed above is contained in a basis of $\Omega_{i}(G;K)$.

\begin{lemma}\label{lem:ShortGraphEquivlenceRestriction}
    Let $G$ be a digraph without multisquares, $S_n$ a thick $\mathcal{S}_n(G)$-class, and $c_1, c_2 \in B_{n-2}(S_n)$ contained in distinct elements of $B_{n-1}(S_n)$. Then, if any element of $c_1$ is joined to an element of $c_2$ in $S_n$ by an edge labelled $n-1$, any other edges of $S_n$ labelled $n-1$ incident to an element of $c_1$ must also be incident to an element of $c_2$.
\end{lemma}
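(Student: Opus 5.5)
The plan is to use the fact that short moves of low label leave the tail of a path fixed. We then transport the given edge labelled $n-1$ along the equivalence class $c_1$, and use the absence of multisquares (via Lemma~\ref{lem:ShortGraphEdgeRestriction}) to pin down where each transported edge must land. Throughout, $n\geq 3$, since otherwise there are no edges labelled $n-1\geq 1$ for $B_{n-2}(S_n)$ to see.

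\emph{Step 1: elements of one $\sim_{n-2}$-class share their last three vertices.} By \eqref{eq:ClassVertexRelations}, $\sim_{n-2}$ is generated by edges of $S_n$ with labels at most $n-3$. A short move at position $i\leq n-3$ changes only the $i$-th vertex. Hence all paths in $c_1$ have the same vertices in positions $n-2$, $n-1$ and $n$; call them $u_{n-2}$, $u_{n-1}$, $u_n$. The same holds for $c_2$.

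\emph{Step 2: fix the given edge.} Let $a=e_{p_0,\dots,p_{n-3},u_{n-2},u_{n-1},u_n}\in c_1$ be joined to $b\in c_2$ by an edge labelled $n-1$. By \eqref{eq:ShortMove}, $b=e_{p_0,\dots,p_{n-3},u_{n-2},w,u_n}$ with $w\neq u_{n-1}$, and $(u_{n-2},u_n)\notin E_G$. Both $u_{n-1}$ and $w$ then lie in $S_{u_{n-2},u_n}$, and $u_{n-2}\neq u_n$ (otherwise $a$ would be a double-edge configuration, which is incompatible with a short move; I will check this quickly). Since $G$ has no multisquares, $S_{u_{n-2},u_n}=\{u_{n-1},w\}$.

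\emph{Step 3: any other edge labelled $n-1$ at $c_1$ lands at the same $w$.} Take $a'=e_{p'_0,\dots,p'_{n-3},u_{n-2},u_{n-1},u_n}\in c_1$ and an edge labelled $n-1$ from $a'$ to some $b'$. Then $b'$ differs from $a'$ only in position $n-1$, by a vertex $w'\neq u_{n-1}$ in $S_{u_{n-2},u_n}$. By Step 2 this forces $w'=w$, so
\[
b'=e_{p'_0,\dots,p'_{n-3},u_{n-2},w,u_n}.
\]
By Lemma~\ref{lem:ShortGraphEdgeRestriction}, this is the only edge labelled $n-1$ at $a'$.

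\emph{Step 4 (the main point): $b'\in c_2$.} Choose a chain of short moves of labels at most $n-3$ from $a$ to $a'$ inside $c_1$. Apply the same sequence of moves to $b$, which is the same path as $a$ except with $w$ in position $n-1$.

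Each move at position $i\leq n-3$ is still a legal short move. Its conditions only involve positions $i-1$, $i$ and $i+1\leq n-2$, together with the non-edge $(p_{i-1},p_{i+1})$, and none of these are affected by position $n-1$. Allowedness is also preserved: the edges $(u_{n-2},w)$ and $(w,u_n)$ are unchanged throughout, and the earlier edges are identical to those used along the chain from $a$.

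This produces a chain of short moves with labels at most $n-3$ from $b$ to $b'$. Every path in the chain lies in the same connected component of $\mathcal{S}_n(G)$ as $b$, hence in $S_n$. Therefore $b\sim_{n-2}b'$, so $b'\in c_2$, which proves the lemma.

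I expect the only delicate point to be verifying in Step 4 that the transported moves remain genuine short moves, i.e.\ that condition \eqref{eq:ShortMove} at position $i$ never involves position $n-1$. This holds because $i+1\leq n-2$. The hypothesis that $c_1$ and $c_2$ lie in distinct elements of $B_{n-1}(S_n)$ is not needed beyond guaranteeing $c_1\neq c_2$.
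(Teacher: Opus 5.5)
Your proof is correct and is essentially the paper's argument, run directly rather than by contradiction: the paper supposes a second edge labelled $n-1$ from $c_1$ lands in some $c_3\neq c_2$ and uses exactly your transport of a low-label chain from $a$ to $a'$ to show the new middle vertex differs from the one used by $c_2$, which gives three middle vertices between $u_{n-2}$ and $u_n$ and hence a multisquare, whereas you apply the two-middle-vertex bound first and the transport second. One small caution: both arguments need $u_{n-2}\neq u_n$, which holds for Fu and Ivanov's short moves (where $d_G(u_{i-1},u_{i+1})=2$) but is not literally forced by \eqref{eq:ShortMove} as displayed, since $(u_{i-1},u_{i+1})\notin E_G$ holds trivially when $u_{i-1}=u_{i+1}$; your parenthetical check should therefore appeal to that convention rather than to the displayed definition.
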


\begin{proof}
    Suppose that $v_1 = e_{u_0^1,\dots,u_n^1}\in c_1$ and $v_2 = e_{u_0^2,\dots,u_n^2}\in c_2$, with $v_1$ and $v_2$ joined in $S_n$ by an edge labelled $n-1$. In addition, suppose also that there is a $c_3\in B_{n-2}(S_n)$, which is distinct from $c_1$ and $c_2$, and contains an element $v_3 = e_{u_0^3,\dots,u_n^3}$ which is joined in $S_n$ to $v'_1\in c_1$ by an edge labelled $n-1$.

    As $v_1$ and $v_2$ are joined
    by an edge labelled $n-1$, we have $u_{n-2}^1 = u_{n-2}^2$, $u_{n}^1 = u_{n}^2$, and $u_{n-1}^1 \neq u_{n-1}^2$. Similarly, as $v'_1$ and $v_3$ are joined
    by an edge labelled $n-1$ and $v_1 \sim_{n-2} v'_1$, we have $u_{n-2}^1 = u_{n-2}^3$, $u_{n}^1 = u_{n}^3$, and $u_{n-1}^1 \neq u_{n-1}^3$. Moreover, $v_1$ and $v'_1$ are joined by a sequence of edges
    with labels less than $n-2$, so if $u_{n-1}^2 = u_{n-1}^3$, then the same sequence of short moves connects $v_2$ and $v_3$ through edges in $S_n$.
    This implies that $v_2 \sim_{n-1} v_3$, which contradicts the assumption that $c_2$ and $c_3$ are distinct equivalence classes of $B_i(S_n)$. Therefore, we must have $u_{n-1}^2 \neq u_{n-1}^3$.
    
    Finally, using the definition of allowed paths we have distinct edges
    \[
        (u^1_{n-2},u^1_{n-1}),(u^1_{n-2},u^2_{n-1}),(u^1_{n-2},u^3_{n-1}),(u^1_{n-1},u^1_{n}),(u^2_{n-1},u^1_{n}),(u^3_{n-1},u^1_{n})\in E_G,
    \]
    and by existence of the paths labelled $n-1$, also $(u^1_{n-2},u^1_{n})\notin E_G$. However, this implies that $G$ contains a multisquare, contradicting the standing assumption on $G$. \qed
\end{proof}

The following theorem relates the inductive elements presented in Section~\ref{sec:InductiveElments} with the bases of Fu and Ivanov from Theorem~\ref{thm:NoMultisquareBasis}.

\begin{theorem}\label{thm:InductiveShortCorrespondence}
    Let $G$ be a digraph without multisquares and $K$ a field. Then the following statements hold.
    \begin{enumerate}
        \item
            When $K$ has characteristic $2$, inductive elements coincide with the basis described in part (1) of Theorem~\ref{thm:NoMultisquareBasis}.
        \item
            When $K$ has characteristic other than $2$, then inductive elements in $\Omega_n(G;K)$ coincide with the basis described in part (2) of Theorem~\ref{thm:NoMultisquareBasis} up to sign.
    \end{enumerate}
\end{theorem}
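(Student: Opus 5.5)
We argue by induction on $n$, treating upper inductive elements; the lower case is dual. For $n=0,1$ the claim is immediate. Every allowed $0$- or $1$-path is thick and forms its own $\mathcal{S}_n(G)$-class with no edges. Part (1) of Proposition~\ref{prop:LowDimInductiveElements} then identifies $\bar E^h_0,\bar E^h_1$ with these classes up to sign. For $n=2$, Proposition~\ref{prop:LowDimInductiveElements}(2) shows that inductive elements are directed squares, triangles and double edges. Since $G$ has no multisquares, these are exactly the signed sums over thick $\mathcal{S}_2(G)$-classes. A square $e_{v_0,v_1,v_2}-e_{v_0,v_1',v_2}$ is a class with one edge labelled $1$ and its obvious partition. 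A triangle or double edge is a singleton class, thick because $(v_0,v_2)\in E_G$ or $v_0=v_2$.

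\textbf{Inductive step.} Assume the statement holds in dimensions $n-1$ and $n-2$, and take a thick class $S_n$ with associated chain $x_{S_n}$ (with signs given by the partition if $\mathrm{char}\,K\neq 2$). We use the classes $B_{n-1}(S_n)$ and $B_{n-2}(S_n)$ built above. Each $c\in B_{n-1}(S_n)$ determines a thick $\mathcal{S}_{n-1}(G)$-class $\bar c$, hence by induction an inductive element $x_c\in\bar E^h_{n-1}$, up to sign. The sign is fixed by the inherited partition. Write $h=h(x_{S_n})$, the common last vertex of all paths in $S_n$; note that $\sim_{n-1}$ preserves the last vertex. We then check:
\begin{enumerate}[(a)]
\item $x_{S_n}=[\sum_c x_c]^{h}$;
\item for each vertex $u$, $\delta^h_{n-1,u}(x_c)$ is, up to sign, a sum of the inductive elements $x_{d}$ for $d\in B_{n-2}(S_n)$ with $d\subseteq c$ and penultimate vertex $u$;
\item joining $x_c$ and $x_{c'}$ whenever some $d\subseteq c$ and $d'\subseteq c'$ are connected by an edge labelled $n-1$ gives an $h$-complete face multigraph.
\end{enumerate}
For (c), Lemma~\ref{lem:ShortGraphEdgeRestriction} and Lemma~\ref{lem:ShortGraphEquivlenceRestriction} ensure that each ridge label $x_d$ is matched to exactly one opposite label $-x_{d'}$. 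This gives condition (3) of Definition~\ref{def:FaceMultiGraph}, and the signs match because the partition alternates across edges. Completeness holds because thickness forces every position-$(n-1)$ face $u$ with $(u,h)\notin E_G$, $u\ne h$ to carry a short move, so the corresponding edge exists. Finally, the face multigraph is connected because $S_n$ is a connected graph. This yields an inductive structure, and Proposition~\ref{prop:GraphExtentsion} confirms $x_{S_n}\in\Omega_n$.

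\textbf{Strong connectedness and the converse.} Two things remain. First, every inductive structure on $x_{S_n}$ is connected. Second, every strongly connected element arises in this way. For both we use the uniqueness coming from Lemma~\ref{lem:ShortGraphEdgeRestriction}. In an arbitrary inductive structure $F$ on an element $y$, the vertex labels are, by induction, class-elements $x_c$, and these have disjoint supports. So the vertices of $F$ are determined by the support of $y$, and the edge pairings at each $u$ are forced, because each ridge $x_d$ has a unique partner. Hence $F$ is the face multigraph built above for the union of classes meeting $\mathrm{supp}(y)$. Completeness of $F$ forces that union to be closed under short moves at position $n-1$. Closure under short moves at positions $<n-1$ is already built into the classes $c$. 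Therefore $F$ is connected precisely when $\mathrm{supp}(y)$ is a single $\mathcal{S}_n(G)$-class, which is thick because $y\in\Omega_n$ and every path of an inductive extension is thick. This proves both directions: each thick class gives a strongly connected element, and each strongly connected element is of the form $x_{S_n}$ up to sign.

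\textbf{Main obstacle.} We expect the main difficulty to be the sign bookkeeping when $\mathrm{char}\,K\neq 2$. We must show that the signs induced on the $x_c$ by the partition of $S_n$ agree with the signs fixed inductively on $\bar c$, and that opposite ridge labels really do carry opposite signs, so that $x_c\neq -x_{c'}$ and edge labels pair as $x_d=-x_{d'}$. The plan is to use parity: every short move flips the partition side. So the sign of a path in $x_{S_n}$ is the sign of $x_c$ times the sign of the path within $\bar c$, and edges labelled $n-1$ flip the sign between $c$ and $c'$. We would check this parity consistency via the bipartiteness of $S_n$.
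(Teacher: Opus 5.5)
Your forward construction (a)--(c) is essentially the paper's. For the converse you take a different route. The paper expands an inductive element as $\bar x=\sum_j\alpha_j x_{S^j}$ in the Fu--Ivanov basis and exhibits a disconnected structure. You instead argue that any inductive structure $F$ on $y$ is \emph{forced}, so that $F$ is connected exactly when $\mathrm{supp}(y)$ is a single class. That forcing argument is sound only when the vertex labels of $F$ are pairwise distinct. This is automatic in characteristic $2$, where $x_i\neq -x_j=x_j$. However, Definition~\ref{def:FaceMultiGraph} explicitly allows repeated labels, and in characteristic $0$ two of your claims fail: that ``the vertices of $F$ are determined by the support of $y$'', and that ``the edge pairings are forced''. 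Concretely, take $\mathbb{T}_2$, which has no multisquares, and $y=2T_2=[S_1+S_1-S_2-S_2]^H$. This element is supported on a single thick $\mathcal{S}_3(\mathbb{T}_2)$-class and carries the connected $4$-cycle inductive structure of Example~\ref{exam:2Trapezohedron}, yet $y\neq\pm x_S$. The disconnected structure on the right of that example shows that the choice of pairing genuinely matters. So your argument never rules out $k\,x_{S_n}$ with $|k|\geq 2$ being strongly connected, and the converse is not proved.

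The missing step is precisely the paper's multiplicity argument. The inductive hypothesis gives linear independence of the $x_c$, and pairs $x_c,-x_c$ are forbidden, so the multiset of vertex labels is determined by $y$. A label of multiplicity $k\geq 2$ then means $y=\sum_j\beta_j x_{S^j}$ with some $|\beta_j|=k$. The disjoint union, over all $j$, of $|\beta_j|$ copies of your $F(S^j)$ is then a disconnected inductive structure on $y$, so $y$ is not strongly connected. With this inserted, your route works in characteristic $0$. It also has the merit of actually verifying that $x_{S_n}$ is strongly connected, which the paper's forward direction takes for granted. Repeated labels are impossible for $y=x_{S_n}$ there, since they would produce coefficients $\pm k$. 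Consequently the real characteristic-$\neq 2$ difficulty is multiplicity, not the sign bookkeeping you single out; your bipartiteness argument handles signs routinely. Note too that in odd characteristic $p$ the same phenomenon hits $x_{S_n}$ itself. Taking $p+1$ disjoint copies of $F(S_n)$ gives an inductive structure on $(p+1)x_{S_n}=x_{S_n}$, so your claim that every inductive structure on $x_{S_n}$ is connected fails there. The paper's proof does not address this case either.
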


\begin{proof}
    By Theorem~\ref{thm:NoMultisquareBasis}, for any field $K$ there is a basis of $\Omega_n(G;K)$ with each element determined by a unique thick $\mathcal{S}_n(G)$-class $\{ S_n^j \}_{j\in J}$. We proceed by induction on $n$. 
    
    When $n=0$, the short move graph $\mathcal{S}_0(G)$ has vertices $\{e_v\mid v\in V_G\}$ and no edges. Hence by construction, both inductive elements and thick $\mathcal{S}_0(G)$-classes correspond to the vertices of $G$ up to sign. Similarly, when $n=1$, the short move graph $\mathcal{S}_1(G)$ has vertices $\{e_{u,v}\mid (u,v)\in E_G\}$ and no edges. By part (1) of Proposition~\ref{prop:LowDimInductiveElements}, inductive elements in $\Omega_1(G;K)$ are also identified with edges of $G$ up to sign. 
    
    Now suppose that $n \geq 2$. We will show that when $K$ has characteristic $2$, there is a bijective correspondence between thick $\mathcal{S}_n(G)$-classes and inductive elements. Concurrently, when $K$ does not have characteristic $2$, we will show that there is a bijection between thick $\mathcal{S}_n(G)$-classes with a choice of partition and inductive elements up to sign.

    For any $j\in J$ and $i=1,\dots,n-1$, each $c\in B_i(S_n^j)$ determines a thick $\mathcal{S}_i(G)$-class $\bar{c}$, as explained in the bullet points after equation~\eqref{eq:ClassVertexRelations}. In turn, by induction, $\bar{c}$ determines an inductive element up to sign which we denote by $x_{c} \in \Omega_{i}(G;K)$. More precisely, 
    \begin{equation}\label{eq:InductiveElementFormClass}
        x_c = \sum_{x\in V_{\bar{c}}}
        (-1)^{p_x}x
    \end{equation}
    where $p_x=0$ in the characteristic $2$ cases, and $p_x = 0$ or $p_x = 1$ depending on the partition of $S^j_n$ in the non-characteristic $2$ cases. We intend to take the inductive elements $x_c \in \Omega_{n-1}(G;K)$ provided by each $c\in B_{n-1}(S_n^j)$ as the vertex labels of a face multigraph $F_{n-1}(B_{n-1}(S_n^j))$, with the edges and their labels obtained under an identification with elements of $B_{n-2}(S_n^j)$ as follows.

    Given $c_{n-1} \in B_{n-1}(S_n^j)$, each $e_{u_0,\dots,u_n}\in c_{n-1}$ has the same last two vertices $u_{n-1}, u_{n} \in V_G$. Consider the set $\{c_{n-2} \in B_{n-2}(S_n^j) \: | \: c_{n-2} \subseteq c_{n-1}\}$ and index it $\{c_{n-2}^i\}_{i\in I}$.
    Then for any $i\in I$, each $e_{u_0,\dots,u_n} \in V_{c_{n-2}^i}$ has the same last three vertices $u_{n-2}, u_{n-1}, u_{n} \in V_G$. For $u\in V_G$, let $I_u \subseteq I$ be the largest subset such that for each $i\in I_u$, every $e_{v_0,\dots,v_n}\in  c_{n-2}^i$ satisfies $v_{n-2}=u$.
    Then
    \begin{equation*}
        \sum_{i\in I_u}\;\sum_{x\in V_{\bar{c}^i_{n-2}}}(-1)^{p_x}x=\delta^h_u(x_{c_{n-1}}).
    \end{equation*}
    Therefore, using equation~\eqref{eq:InductiveElementFormClass},
    for each $u\in V_G$ the sum
    \[
        \delta^h_u(x_{c_{n-1}})=\sum_{i\in I_u}x_{c_{n-2}^i}
    \]
    provides a decomposition into inductive elements of the form required in the ridge decomposition of the face multigraph.
    
    In accordance with the conditions for $u_n$-completeness from Definition~\ref{def:extoffmhg}, suppose that there is $i\in I$ for which $u_{n-2} \neq u_n$ and $(u_{n-2},u_n)\notin E_G$ for each $e_{u_0,\dots,u_n} \in V_{c_{n-2}^{i}}$. As $S_n^j$ is thick, every element of $c_{n-2}^{i}$ is incident to an edge labelled $n-1$. Therefore, by Lemma~\ref{lem:ShortGraphEquivlenceRestriction},
    there is a unique $c'_{n-2}\in B_{n-2}(S_n^j)$ such that for each $e_{u_0,\dots,u_n} \in V_{c_{n-2}^{i}}$, the edge labelled $n-1$ incident to $e_{u_0,\dots,u_n}$ is incident also to some $e_{u_0,\dots,u_{n-2},u'_{n-1},u_n} \in V_{c'_{n-2}}$.
    As elements in the two classes are joined by edges labelled $n-1$, it must be the case that $c^{i}_{n-2} \neq c'_{n-2}$ and $c'_{n-2} \nsubseteq c_{n-1}$.
    Hence, $c'_{n-2} \subseteq c'_{n-1} \in B_{n-1}(S^j_n)$ where $c'_{n-1} \neq c_{n-1}$.
    
    Using the existence of the $S_n^j$ edges labelled $n-1$ above, for each $e_{u_0,\dots,u_n}\in V_{c^{i}_{n-2}}$ and $e_{v_0,\dots,v_n}\in V_{c'_{n-2}}$, we have $v_{n-2}=u_{n-2}$, and $v_{n} = u_n$, and by Lemma~\ref{lem:ShortGraphEdgeRestriction} $c'_{n-2}$ is uniquely determined by $c_{n-2}^{i}$.
    Moreover, for each $x=e_{u_0,\dots,u_{n-2}}\in V_{\bar{c}^{i}_{n-2}}$ there is a unique $e_{u_0,\dots,u_{n}} \in V_{c^{i}_{n-2}}$ attached by a unique $S_n^j$ edge labelled $n-1$ to some $e_{u_0,\dots,u_{n-2},u'_{n-1},u_n}\in V_{c^{i}_{n-2}}$.
    This implies that $y = e_{u_0,\dots,u_{n-2}} \in  V_{\bar{c}'_{n-2}}$ with $(-1)^{p_y} y= (-1)^{p_x} x$ in the characteristic $2$ case and $(-1)^{p_y} y= -(-1)^{p_x} x$ otherwise.
    Therefore, letting $\bar{V}'_i = V_{\bar{c}_{n-2}^{i}} \cup V_{\bar{c}'_{n-2}}$ we obtain that
    \[
    \sum_{x\in \bar{V}'_i} (-1)^{p_x}x = 0
    \]
    and the edge in the face multigraph $F_{n-1}(S_n^j)$ labelled $(\{ x_{c_{n-1}}, x_{c'_{n-1}} \}, u_{n-2})$ is well defined. Finally, varying the choices of $c_{n-1} \in B_{n-1}(S^j_n)$ and the value of $i\in I$, we obtain all edges and labels of $F_{n-1}(S_n^j)$ required for a $u_n$-complete face multigraph in the same fashion.

    Conversely, take an inductive element $\bar{x} \in \Omega_n(G;K)$ and fix an inductive structure $\bar{F}_{n-1}$ on $\bar{x}$. Recall that we denote the set of thick $\mathcal{S}_n(G)$-classes by $\{ S_n^j \}_{j\in J}$, and for any $j\in J$ and $i=1,\dots,n-1$, each $c\in B_i(S_n^j)$ determines a thick $\mathcal{S}_i(G)$-class $\bar{c}$, which by induction determines an inductive element $x_{c} \in \Omega_{i}(G;K)$ up to sign.
    
    Let $\bar{x}_1,\dots,\bar{x}_m \in \Omega_{n-1}(G;k)$ be the inductive elements appearing as vertex labels in $\bar{F}_{n-1}$.
    By definition, $\bar{x}_i\neq -\bar{x}_{i'}$ for any $i,i'=1,\dots,m$ and
    \begin{equation}\label{eq:Extesion1}
        \bar{x} = [\bar{x}_1+\cdots+\bar{x}_m]^{h(\bar{x})}.
    \end{equation}
    Meanwhile, by Theorem~\ref{thm:NoMultisquareBasis}, the thick $\mathcal{S}_n(G)$-classes $\{ S_n^j \}_{j\in J}$ provide a basis $\{x_j\}_{j\in J}$ of $\Omega_n(G;K)$. Hence $\bar{x}=\sum_{j\in J}\alpha_j x_j$ for some uniquely determined $\alpha_j \in K$, at most finitely many of which are $0$.
    Since thick $\mathcal{S}_n(G)$-classes are connected components of the short move graph, for any $j\in J$, each $c \in B_{n-1}(S^j_n)$ contains paths of the form $e_{u_0,\dots,u_n} \in \mathcal{A}_{n-1}(G;R)$
    with $u_n = h(\bar{x})$.
    This implies that
    \begin{equation}\label{eq:Extesion2}
        \bar{x}=\left[\sum_{j \in J} \sum_{c\in B_{n-1}(S_n^j)} \alpha_j x_c\right]^{h(\bar{x})}.
    \end{equation}
    Consequently, combining equations~\eqref{eq:Extesion1}~and~\eqref{eq:Extesion2} we obtain
    \begin{equation}\label{eq:EquivlenceOfSums}
        \bar{x}_1 + \cdots + \bar{x}_m
        =
        \sum_{j \in J} \sum_{c\in B_{n-1}(S_n^j)} \alpha_j x_c.
    \end{equation}
    
    The $x_c$ above are linearly independent over $K$. Therefore, since by induction an inductive basis is unique up to sign, the uniqueness of linear expressions in a basis and the integer multiplicity of the $\bar{x}_i$ on the right hand side, each $\alpha_j$ must be an integer.
    
    Recall that we denote the set of thick $\mathcal{S}_n(G)$-classes by $\{ S_n^j \}_{j\in J}$ and let $T= \{ j \in J \: | \: \alpha_j \neq 0 \}$. If $|T| = 1$ and $|\alpha_t| \leq 1$ for each $t\in T$, the proof is complete. Otherwise, $|T| \geq 2$ or $|\alpha_t| \geq 2$ for some $t\in T$. By assumption, each $x_t$ for $t \in T$ is an inductive element with inductive structure $F_{n-1}(S_n^t)$ constructed in the previous part of the proof. Hence, using equation~\ref{eq:Extesion2} and the discussion above it,
    the disjoint union of the face multigraphs $F_{n-1}(S_n^t)$ for $t\in T$ is a disconnected inductive structure for $\bar{x}$.
    However, this contradicts the strong connectedness of $\bar{F}_{n-1}$ that originates from the assumption that $\bar{x}$ is an inductive element.
    \qed
\end{proof}

By Theorem~\ref{cor:FieldGeneratorBasis}, inductive elements generate $\Omega_n(G;\mathbb{Z}_2)$. However, although inductive elements do not necessary generate $\Omega_n(G;K)$ for any field $K$ and $n\geq 0$, Theorem~\ref{thm:InductiveShortCorrespondence} implies that they do when $G$ contains no multisquares and moreover form a basis. Furthermore, the correspondence between inductive elements and thick $\mathcal{S}_n(G)$-classes yields the following generalisation of Theorem~\ref{thm:NoMultisquareBasis}.

\begin{corollary}\label{cor:InductiveShortCorrespondenceForRings}
    Theorem~\ref{thm:NoMultisquareBasis} holds as stated also when $K$ is any commutative ring of characteristic $0$ or $p$ for some prime $p$.
\end{corollary}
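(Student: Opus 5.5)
The plan is to move Theorem~\ref{thm:NoMultisquareBasis} from the prime fields $\mathbb{Z}_p$ (resp.\ $\mathbb{Q}$) and from $\mathbb{Z}$ to an arbitrary ring $R$ of the same characteristic. The transfer will go through the coefficient change machinery of Section~\ref{sec:Coefficients}, and specifically through Lemma~\ref{lem:CoeffChangeGraded}.

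\textbf{Characteristic $p$.} Let $R$ have characteristic $p$ and let $G$ contain no multisquares.
\begin{enumerate}
    \item Apply Theorem~\ref{thm:NoMultisquareBasis} with $K=\mathbb{Z}_p$. This gives a basis $B_n$ of $\Omega_n(G;\mathbb{Z}_p)$ whose elements are the signed sums $x_S=\sum_{x\in V_S}(-1)^{p_x}x$ over thick $\mathcal{S}_n(G)$-classes $S$ (with a partition, when $p\neq 2$). Each $x_S$ has integer coefficients $\pm1$, and each is connected, so $B_n$ respects the bigrading.
    \item By Theorem~\ref{thm:InductiveShortCorrespondence}, $B_n$ coincides up to sign with the inductive elements. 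In particular, $B_n$ is $(B_{n-1},B_{n-2})$-strongly connected.
    \item Lemma~\ref{lem:CoeffChangeGraded} then shows that $\widetilde\mu_n(B_n)$ is a basis of $\Omega_n(G;R)$.
    \item Since $\widetilde\mu_n$ sends an allowed path $e$ to $1\cdot e$, the element $\widetilde\mu_n(x_S)$ is literally the same signed sum of paths of the class $S$, now read in $R$. So the basis of $\Omega_n(G;R)$ has exactly the form stated in Theorem~\ref{thm:NoMultisquareBasis}.
\end{enumerate}
One must also check that the dichotomy is governed by the characteristic. For $p=2$ the signs disappear in $R$, which is part (1). For odd $p$, part (2) applies.

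\textbf{Characteristic $0$.} Here the same transfer is run from $\mathbb{Z}$, using Corollary~\ref{cor:CoeffChange2} and Lemma~\ref{lem:CoeffChangeGraded} with base ring $\mathbb{Z}$. The missing input, and the main obstacle, is that Theorem~\ref{thm:NoMultisquareBasis} has not been established over $\mathbb{Z}$. I would prove it in three steps.
\begin{enumerate}
    \item Let $L\subseteq\Omega_n(G;\mathbb{Z})$ be the span of the integral sums $x_S$. Each $x_S$ lies in $\Omega_n(G;\mathbb{Z})$, since it lies in $\Omega_n(G;\mathbb{Q})$ and has integer coefficients; membership is tested via Lemma~\ref{lem:NoPositionSwap}, which is a coefficient-free condition.
    \item The $x_S$ are linearly independent, since they are supported on disjoint sets of paths.
    \item They span $\Omega_n(G;\mathbb{Z})$. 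Take $y\in\Omega_n(G;\mathbb{Z})$. Over $\mathbb{Q}$ we can write $y=\sum_S q_Sx_S$. Because the supports are disjoint and each $x_S$ has coefficients $\pm1$, each $q_S$ equals a coefficient of $y$, hence is an integer.
\end{enumerate}
This gives an integral basis of the required form, bigraded and strongly connected by Theorem~\ref{thm:InductiveShortCorrespondence}, to which Lemma~\ref{lem:CoeffChangeGraded} applies.

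The same disjoint-support argument also gives the characteristic $p$ case directly, with no need for field theory. I expect the only delicate point to be verifying that the strong connectedness hypotheses of Lemma~\ref{lem:CoeffChangeGraded} hold for these bases, which is supplied by Theorem~\ref{thm:InductiveShortCorrespondence}.
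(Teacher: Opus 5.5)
Your route is the paper's: its proof of this corollary is Theorem~\ref{thm:InductiveShortCorrespondence} followed by Lemma~\ref{lem:CoeffChangeGraded}, applied from $\mathbb{Z}_p$ or from $\mathbb{Z}$.

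Your integrality step improves on the paper. The paper simply takes $K=\mathbb{Z}$, although Theorems~\ref{thm:NoMultisquareBasis} and~\ref{thm:InductiveShortCorrespondence} are stated only for fields. Your coefficient comparison (disjoint supports, coefficients $\pm1$, expansion in the $\mathbb{Q}$-basis) correctly produces the Fu--Ivanov basis of $\Omega_n(G;\mathbb{Z})$.

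Two minor points:
\begin{itemize}
\item Strong connectedness plays no role, since the statement only asks for a basis of a given form. For that you need only the first paragraph of the proof of Lemma~\ref{lem:CoeffChangeGraded}: $i_{R,R'}$ sends bases to bases and $\mu_n$ is an isomorphism. Over $\mathbb{Z}$ it would not follow literally from Theorem~\ref{thm:InductiveShortCorrespondence} anyway.
\item Your closing aside does not work as phrased. The integrality argument needs an ambient field in which the basis is already known, and a ring of characteristic $p$ such as $\mathbb{Z}_p[x]/(x^2)$ embeds in no field. Use Corollary~\ref{cor:CoeffChange1} there instead.
\end{itemize}

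The genuine gap, inherited from the paper, is the characteristic $0$ transfer. Corollary~\ref{cor:CoeffChange2}, and hence Lemma~\ref{lem:CoeffChangeGraded} with base ring $\mathbb{Z}$, is justified by ``torsion-free $\mathbb{Z}$-modules are flat''. But characteristic $0$ does not imply torsion-free, and the statement is false without that hypothesis.

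Take $R=\mathbb{Z}\times\mathbb{Z}_2$, which has characteristic $0$. Then $\Omega_n(G;R)=\Omega_n(G;\mathbb{Z})\times\Omega_n(G;\mathbb{Z}_2)$. For the multisquare-free digraph $\mathcal{G}$ of Example~\ref{ex:UCTfails}, $\Omega_4(\mathcal{G};\mathbb{Z})=0$, while $\Omega_4(\mathcal{G};\mathbb{Z}_2)\neq 0$ because $B_3(\mathcal{G};\mathbb{Z}_2)\neq0$. So $\Omega_4(\mathcal{G};R)$ is nonzero and not even free. Part (2) of Theorem~\ref{thm:NoMultisquareBasis} predicts $0$, because there are no thick $\mathcal{S}_4(\mathcal{G})$-classes with a partition, as $\Omega_4(\mathcal{G};\mathbb{Q})=0$. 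Your characteristic $0$ argument is therefore valid only for $\mathbb{Z}$-torsion-free $R$.

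You can get the sharp statement, with no change of coefficients at all, by running your coefficient comparison directly over $R$. Lemma~\ref{lem:NoPositionSwap} and the absence of multisquares force coefficients to vanish on thin paths and to be negatives of each other along every short move. Hence part (2) holds whenever $2$ is a non-zero-divisor in $R$. In general, each thick class without a partition contributes a copy of $\{r\in R\mid 2r=0\}$.
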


\begin{proof}
    By Theorem~\ref{thm:InductiveShortCorrespondence}, the thick $\mathcal{S}_n(G)$-classes determine a basis of $\Omega_n(G;K)$ and are identified uniquely with inductive elements. By Lemma~\ref{lem:CoeffChangeGraded}, inductive elements with coefficients in characteristic $0$ or $p$ can be determined by setting $K=\mathbb{Z}$ or $K=\mathbb{Z}_p$, respectively. \qed
\end{proof}

In Section~\ref{sec:Coefficients} we discussed change of coefficients maps $\mu_n\colon\Omega_n(G;R)\otimes_RR'\rightarrow\Omega_n(G;R')$ and introduced also maps $\bar{\mu}_n\colon H^P_n(G;R)\otimes_RR'\rightarrow H^P_n(G;R')$. Having good descriptions of the path chains in case $G$ contains no multisquares allows for more to be said about these maps than could be previously.

\begin{proposition}\label{prop:univcoeffs}
Suppose that $G$ contains no multisquares and $R$ is a ring of odd prime characteristic. Then $\mu_*\colon\Omega_*(G;\mathbb{Z})\otimes_{\mathbb{Z}} R \rightarrow\Omega_*(G;R)$ is a chain isomorphism and for each $n\geq0$ there is an exact sequence
\[
0\rightarrow H_n^P(G;\mathbb{Z})\otimes R \xrightarrow{\bar{\mu}_n} H_n^P(G;R)\rightarrow \mathrm{Tor}(H_{n-1}^P(G;\mathbb{Z}), R)\rightarrow 0.
\]
This sequence is natural in maps between digraphs without multisquares. The sequence splits, but this splitting is not natural.
\end{proposition}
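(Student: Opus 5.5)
The plan is to show that $\Omega_*(G;\mathbb{Z})$ is a complex of free abelian groups and that $\mu_*$ is an isomorphism onto $\Omega_*(G;R)$. Once both hold, the statement is the classical universal coefficient theorem for a free chain complex, e.g.\ \cite[Theorem 7.55]{Rotman2009}. That theorem gives the natural short exact sequence $0\to H_n(C)\otimes R\to H_n(C\otimes R)\to \mathrm{Tor}(H_{n-1}(C),R)\to 0$, which splits unnaturally. Composing with the isomorphism $H_n(\Omega_*(G;\mathbb{Z})\otimes R)\cong H^P_n(G;R)$ induced by $\mu_*$ identifies the first map with $\bar{\mu}_n$.

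Freeness is immediate. $\Omega_n(G;\mathbb{Z})$ is a submodule of the free module $\mathcal{A}_n(G;\mathbb{Z})$, and subgroups of free abelian groups are free. Alternatively, Corollary~\ref{cor:InductiveShortCorrespondenceForRings} supplies an explicit basis. Since $\mu_*$ commutes with the path differentials, it is a chain map. By Corollary~\ref{cor:CoeffChange3} it is injective, because $\mathbb{Z}$ is a PID. So the main work is surjectivity of $\mu_n\colon\Omega_n(G;\mathbb{Z})\otimes R\to\Omega_n(G;R)$.

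For surjectivity, set $p=\mathrm{char} R$, an odd prime. I would use the Fu--Ivanov description transported by Corollary~\ref{cor:InductiveShortCorrespondenceForRings}.
\begin{enumerate}
\item Over $\mathbb{Z}$ (characteristic $0$), $\Omega_n(G;\mathbb{Z})$ has a basis consisting of the signed sums $\sum_{x}(-1)^{p_x}x$ over thick $\mathcal{S}_n(G)$-classes with a partition.
\item Over $\mathbb{Z}_p$, and hence over $R$ by Lemma~\ref{lem:CoeffChangeGraded} or Corollary~\ref{cor:CoeffChange1}, $\Omega_n(G;R)$ has a basis given by the same signed sums, again indexed by thick classes with a partition. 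This is because $p\neq 2$, so part (2) of Theorem~\ref{thm:NoMultisquareBasis} applies, not part (1).
\end{enumerate}
The two index sets coincide, since they are defined purely combinatorially from $G$. Moreover, $\mu_n$ sends each integral basis element $x_S\otimes 1$ to the corresponding $R$-basis element, because both are the same signed sum of paths. So $\mu_n$ carries a basis of $\Omega_n(G;\mathbb{Z})\otimes R$ bijectively onto a basis of $\Omega_n(G;R)$, and is therefore an isomorphism. This is exactly where odd characteristic is essential: in characteristic $2$, thick classes without a partition contribute extra basis elements, and surjectivity fails.

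For naturality, let $f\colon G\to G'$ be a map between digraphs without multisquares. The maps $\mu_*$ are natural in $G$, as noted after Corollary~\ref{cor:CoeffChange3}. The universal coefficient sequence is natural in chain maps of free complexes. Hence the composite sequence is natural.

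The main obstacle is the bookkeeping in the surjectivity step. One must check that the thick classes with a partition used over $\mathbb{Z}$ really are the same classes that index the basis over $R$. One must also check that the sign conventions agree, so that $\mu_n$ matches basis elements up to sign. If the index-set comparison becomes delicate, I would fall back on a rank argument. The extension from $\mathbb{Z}_p$ to general $R$ is then handled by Corollary~\ref{cor:CoeffChange1}. Over $\mathbb{Z}_p$, with $G$ finite or after restricting to bigraded pieces, which are finite, I would compare the rank of $\Omega_n(G;\mathbb{Z})\otimes\mathbb{Z}_p$ with $\dim\Omega_n(G;\mathbb{Z}_p)$. Both count thick partitioned classes, so injectivity of $\mu_n$ forces bijectivity.
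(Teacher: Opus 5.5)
Your proposal is correct and follows essentially the paper's proof. By Theorem~\ref{thm:NoMultisquareBasis} and Corollary~\ref{cor:InductiveShortCorrespondenceForRings}, $\Omega_n(G;\mathbb{Z})$ and $\Omega_n(G;R)$ have bases given by the same signed sums over the same thick $\mathcal{S}_n(G)$-classes, so $\mu_*$ carries basis to basis and is a chain isomorphism, after which the universal coefficient theorem for free complexes gives the sequence; your basis-matching already settles surjectivity, so the fallback rank comparison is unnecessary, and it would not work for infinite $G$ anyway, since bigraded pieces can have infinite rank and an injection between spaces of equal infinite dimension need not be onto.
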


\begin{proof}
By Theorem~\ref{thm:NoMultisquareBasis} and Corollary~\ref{cor:InductiveShortCorrespondenceForRings}, both $\Omega^P_n(G;\mathbb{Z})$ and $\Omega^P_n(G;R)$ are generated by the same set of inductive elements corresponding to the thick $\mathcal{S}_n(G)$-classes. It follows from this that the map $\mu_{*}\colon\Omega_*(G;\mathbb{Z})\otimes R \rightarrow\Omega_*(G;R)$ is bijective and hence an isomorphism of chain complexes of $R$-modules since it commutes with the differentials.

Now, using $\mu_{*}$ to identify the two chain complexes we have $H^P_*(G;R)\cong H_*(\Omega_*(G;\mathbb{Z})\otimes R;\partial^P\otimes1)$. Thus, up to an identification of the middle module, the exact sequence is just the universal coefficient theorem given in \cite[Corollary 7.56, pg. 450]{Rotman2009}. \qed
\end{proof}

Proposition~\ref{prop:univcoeffs} cannot be extended to produce a universal coefficient sequences for digraphs containing multisquares. Examples given by the authors~\cite[Example 6.2]{BurfittCutler2024} show explicitly that the universal coefficient theorem fails in dimension $3$, and by taking suspensions of these digraphs (see \cite[Definition 2.12 and Theorem 4.13]{Grigoryan2018})
we obtain digraphs for which the universal coefficient theorem fails in any given dimension $\geq3$. Neither can Proposition~\ref{prop:univcoeffs} be generalised to include rings of characteristic $2$. A counterexample to this is found in~\cite[\S 5.4]{Fu2024} and discussed in some detail in Example~\ref{ex:UCTfails}. Theorem~\ref{th:Dim3Char0Char2invareince}, which is stated and proved in the next section, gives the best result one can generally expect for rings of characteristic $2$.

\section{Explicit description of \texorpdfstring{$\Omega_3(G;R)$}{dimension 3 path chains}}\label{sec:Dim3}

We now use the theory of inductive elements to give a set of generators of $\Omega_3(G;R)$ for an arbitrary digraph $G$ and ring $R$ of characteristic $0$ or $2$ (e.g. $R=\mathbb{Z}$ or $R=\mathbb{Z}_2$). This explicit description of $\Omega_3(G;R)$ generalises Grigor'yan's work~\cite[Theorem 2.10]{Grigoryan2022}, where a basis for $\Omega_3(G;K)$ was given when $G$ contains no multisquares or double edges and $K$ is a field (see Theorem~\ref{thm:Dim3BasisNoDoubleNoMulti}). Thus the outcome of this section is a complete solution to~\cite[Problem 2.11]{Grigoryan2022} for the considered coefficients.

We start in Definitions~\ref{def:AsymmetricLinearSequences},~\ref{def:UpperLinearElements},~\ref{def:LowerLinearElements} by introducing three infinite families of digraphs. These digraphs play a similar role to that played by the trapezohedron digraphs discussed earlier, and come along with distinguished $3$-dimensional path chains which are singled out in Definition~\ref{def:LinearEelmetns}. Together with the trapezohedra, we call these new digraphs \emph{$3$-path sequences}.

The $3$-path sequences are the domains of the \emph{$3$-path structure maps}, which are certain structure-preserving digraph maps introduced in 
Definition~\ref{def:3PathStructureMap}.
These maps become the central objects of study, and we use them to express the main results of the section, Theorem~\ref{thm:Omega3Explicitly} and Corollaries~\ref{cor:3PathStructureGeneration}~and~\ref{cor:Omega3Explicitly}, which are arrived at using the prior technical results in Theorem~\ref{thm:SructureMapsAreInductive}, Theorem~\ref{thm:3PathStructureConstruction}, and Lemma~\ref{lem:3PathStructHitsDirTriAndDblEdg}.
We end the section with Theorem~\ref{th:Dim3Char0Char2invareince}, which is a low-dimensional universal coefficient theorem for arbitrary digraphs and characteristic $2$ rings.

Throughout the section, we state and prove all results related to inductive elements in terms of upper inductive elements, structures, and extensions. The dual results for lower inductive elements can be easily formulated and proved in the same way.

\begin{definition}\label{def:AsymmetricLinearSequences}
    For each $t=1,2,\dots$ let $\mathbb{L}_{t}$ be the digraph with vertices
    \[
        V_{\mathbb{L}_{t}} =
        \{ T,u_1,\dots,u_t,v_1,\dots,v_t,H \}
    \]
    and edges
    \[
        T \to u_i,\;\;\;
        T \to v_1,\;\;\;
        u_i \to v_i, \;\;\;
        u_i \to v_{i+1}
        \; \text{when} \;
        i+1 \leq t, \;\;\;
        v_i \to H,
        \;\;\; \text{and} \;\;\;
        u_t \to H
    \]
    for $i=1,\dots,t$. Define the \emph{core} subdigraph $\overline{\mathbb{L}}_{t}$ of $\mathbb{L}_{t}$ to be that given by
    \[
        V_{\overline{\mathbb{L}}_{t}} = V_{\mathbb{L}_{t}}
        \;\;\; \text{and} \;\;\;
        E_{\overline{\mathbb{L}}_{t}} = E_{\mathbb{L}_{t}} \setminus \{ (T,v_1), (u_{t}, H) \}.
    \]
\end{definition}
For example, the digraphs $\mathbb{L}_1$, $\mathbb{L}_2$, $\mathbb{L}_3$, and $\mathbb{L}_4$ are as follows.
\begin{center}
    \tikz {
        \node (T) at (0,0) {$T$};
        \node (u1) at (0.5,1) {$u_1$};
        \node (v1) at (-0.5,2) {$v_1$};
        \node (H) at (0,3) {$H$};
        \draw[->] (T) -- (u1);
        \draw[->] (T) -- (v1);
        \draw[->] (u1) -- (v1);
        \draw[->] (v1) -- (H);
        \draw[->] (u1) -- (H);
    }
    \;\;\;
    \tikz {
        \node (T) at (2,0) {$T$};
        \node (u1) at (1,1) {$u_1$};
        \node (u2) at (3,1) {$u_2$};
        \node (v1) at (0,2) {$v_1$};
        \node (v2) at (2,2) {$v_2$};
        \node (H) at (1,3) {$H$};
        \draw[->] (T) -- (u1);
        \draw[->] (T) -- (u2);
        \draw[->] (T) to [out=170,in=280] (v1);
        \draw[->] (u1) -- (v1);
        \draw[->] (u1) -- (v2);
        \draw[->] (u2) -- (v2);
        \draw[->] (v1) -- (H);
        \draw[->] (v2) -- (H);
        \draw[->] (u2) to [out=100,in=350] (H);
    }
    \;\;\;
    \tikz {
        \node (T) at (2.25,0) {$T$};
        \node (u1) at (0.75,1) {$u_1$};
        \node (u2) at (2.25,1) {$u_2$};
        \node (u3) at (3.75,1) {$u_3$};
        \node (v1) at (0,2) {$v_1$};
        \node (v2) at (1.5,2) {$v_2$};
        \node (v3) at (3,2) {$v_3$};
        \node (H) at (1.5,3) {$H$};
        \draw[->] (T) -- (u1);
        \draw[->] (T) -- (u2);
        \draw[->] (T) -- (u3);
        \draw[->] (T) to [out=180,in=270] (v1);
        \draw[->] (u1) -- (v1);
        \draw[->] (u1) -- (v2);
        \draw[->] (u2) -- (v2);
        \draw[->] (u2) -- (v3);
        \draw[->] (u3) -- (v3);
        \draw[->] (v1) -- (H);
        \draw[->] (v2) -- (H);
        \draw[->] (v3) -- (H);
        \draw[->] (u3) to [out=90,in=0] (H);
    }
    \;\;\;
    \tikz {
        \node (T) at (2,0) {$T$};
        \node (u1) at (0.5,1) {$u_1$};
        \node (u2) at (1.5,1) {$u_2$};
        \node (u3) at (2.5,1) {$u_3$};
        \node (u4) at (3.5,1) {$u_4$};
        \node (v1) at (0,2) {$v_1$};
        \node (v2) at (1,2) {$v_2$};
        \node (v3) at (2,2) {$v_3$};
        \node (v4) at (3,2) {$v_4$};
        \node (H) at (1.5,3) {$H$};
        \draw[->] (T) -- (u1);
        \draw[->] (T) -- (u2);
        \draw[->] (T) -- (u3);
        \draw[->] (T) -- (u4);
        \draw[->] (T) to [out=180,in=255] (v1);
        \draw[->] (u1) -- (v1);
        \draw[->] (u1) -- (v2);
        \draw[->] (u2) -- (v2);
        \draw[->] (u2) -- (v3);
        \draw[->] (u3) -- (v3);
        \draw[->] (u3) -- (v4);
        \draw[->] (u4) -- (v4);
        \draw[->] (v1) -- (H);
        \draw[->] (v2) -- (H);
        \draw[->] (v3) -- (H);
        \draw[->] (v4) -- (H);
        \draw[->] (u4) to [out=75,in=0] (H);
    }
\end{center}
The core subdigraphs are obtained by deleting the two edges $T\rightarrow v_1$ and $u_t\rightarrow H$.

\begin{definition}\label{def:UpperLinearElements}
    For each $t=1,2,\dots$ let $\mathbb{L}^u_{t}$ be the digraph with vertices
    \[
        V_{\mathbb{L}_{t}^u} =
        \{ T,u_1,\dots,u_{t+1},v_1,\dots,v_t,H \}
    \]
    and edges
    \[
        T \to u_i,\;\;\;
        u_j \to v_j, \;\;\;
        u_{j+1} \to v_j, \;\;\;
        v_j \to H, \;\;\;
        u_1 \to H
        \;\;\; \text{and} \;\;\;
        u_{t+1} \to H
    \]
    for $i=1,\dots,t+1$ and $j=1,\dots,t$. Define the \emph{core} subdigraph $\overline{\mathbb{L}}{\vphantom{\mathbb{L}}}_{t}^u$ of $\mathbb{L}^u_{t}$ to be that given by
    \[
        V_{\overline{\mathbb{L}}{\vphantom{\mathbb{L}}}_{t}^u} = V_{\mathbb{L}^u_{t}}
        \;\;\; \text{and} \;\;\;
        E_{\overline{\mathbb{L}}{\vphantom{\mathbb{L}}}_{t}^u} = E_{\mathbb{L}^u_{t}} \setminus \{ (u_1, H), (u_{t+1}, H) \}.
    \]
    \end{definition}

The digraphs $\mathbb{L}^u_1$, $\mathbb{L}^u_2$ $\mathbb{L}^u_3$, and $\mathbb{L}^u_4$ are as follows.
\begin{center}
    \tikz {
        \node (T) at (0,0) {$T$};
        \node (u1) at (-0.75,1) {$u_1$};
        \node (u2) at (0.75,1) {$u_2$};
        \node (v1) at (0,2) {$v_1$};
        \node (H) at (0,3) {$H$};
        \draw[->] (T) -- (u1);
        \draw[->] (T) -- (u2);
        \draw[->] (u1) -- (v1);
        \draw[->] (u2) -- (v1);
        \draw[->] (v1) -- (H);
        \draw[->] (u1) to [out=90,in=230] (H);
        \draw[->] (u2) to [out=90,in=310] (H);
    }
    \;\;\;
    \tikz {
        \node (T) at (2.25,0) {$T$};
        \node (u1) at (0.75,1) {$u_1$};
        \node (u2) at (2.25,1) {$u_2$};
        \node (u3) at (3.75,1) {$u_3$};
        \node (v1) at (1.5,2) {$v_1$};
        \node (v2) at (3,2) {$v_2$};
        \node (H) at (2.25,3) {$H$};
        \draw[->] (T) -- (u1);
        \draw[->] (T) -- (u2);
        \draw[->] (T) -- (u3);
        \draw[->] (u1) -- (v1);
        \draw[->] (u2) -- (v1);
        \draw[->] (u2) -- (v2);
        \draw[->] (u3) -- (v2);
        \draw[->] (v1) -- (H);
        \draw[->] (v2) -- (H);
        \draw[->] (u1) to [out=90,in=195] (H);
        \draw[->] (u3) to [out=90,in=345] (H);
    }
    \;\;\;
    \tikz {
        \node (T) at (2,0) {$T$};
        \node (u1) at (0.5,1) {$u_1$};
        \node (u2) at (1.5,1) {$u_2$};
        \node (u3) at (2.5,1) {$u_3$};
        \node (u4) at (3.5,1) {$u_4$};
        \node (v1) at (1,2) {$v_1$};
        \node (v2) at (2,2) {$v_2$};
        \node (v3) at (3,2) {$v_3$};
        \node (H) at (2,3) {$H$};
        \draw[->] (T) -- (u1);
        \draw[->] (T) -- (u2);
        \draw[->] (T) -- (u3);
        \draw[->] (T) -- (u4);
        \draw[->] (u1) -- (v1);
        \draw[->] (u2) -- (v1);
        \draw[->] (u2) -- (v2);
        \draw[->] (u3) -- (v2);
        \draw[->] (u3) -- (v3);
        \draw[->] (u4) -- (v3);
        \draw[->] (v1) -- (H);
        \draw[->] (v2) -- (H);
        \draw[->] (v3) -- (H);
        \draw[->] (u1) to [out=105,in=180] (H);
        \draw[->] (u4) to [out=75,in=0] (H);
    }
    \;\;\;
    \tikz {
        \node (T) at (1.5,0) {$T$};
        \node (u1) at (-0.5,1) {$u_1$};
        \node (u2) at (0.5,1) {$u_2$};
        \node (u3) at (1.5,1) {$u_3$};
        \node (u4) at (2.5,1) {$u_4$};
        \node (u5) at (3.5,1) {$u_5$};
        \node (v1) at (0,2) {$v_1$};
        \node (v2) at (1,2) {$v_2$};
        \node (v3) at (2,2) {$v_3$};
        \node (v4) at (3,2) {$v_4$};
        \node (H) at (1.5,3) {$H$};
        \draw[->] (T) -- (u1);
        \draw[->] (T) -- (u2);
        \draw[->] (T) -- (u3);
        \draw[->] (T) -- (u4);
        \draw[->] (T) -- (u5);
        \draw[->] (u1) -- (v1);
        \draw[->] (u2) -- (v1);
        \draw[->] (u2) -- (v2);
        \draw[->] (u3) -- (v2);
        \draw[->] (u3) -- (v3);
        \draw[->] (u4) -- (v3);
        \draw[->] (u4) -- (v4);
        \draw[->] (u5) -- (v4);
        \draw[->] (v1) -- (H);
        \draw[->] (v2) -- (H);
        \draw[->] (v3) -- (H);
        \draw[->] (v4) -- (H);
        \draw[->] (u5) to [out=75,in=0] (H);
        \draw[->] (u1) to [out=105,in=180] (H);
    }
\end{center}
The core subdigraphs are obtained by deleting the two edges $u_1 \rightarrow H$ and $u_{t+1}\rightarrow H$.

Finally, we introduce one last sequence of digraphs.
\begin{definition}\label{def:LowerLinearElements}
    For each $t=1,2,\dots$ let $\mathbb{L}^l_{t}$ be the digraph with vertices
    \[
        V_{\mathbb{L}_{t}^l} =
        \{ T,u_1,\dots,u_t,v_1,\dots,v_{t+1},H \}
    \]
    and edges
    \[
        T \to u_i,\;\;\;
        T \to v_1,\;\;\;
        T \to v_{t+1},\;\;\;
        u_i \to v_i, \;\;\;
        u_i \to v_{i+1}
        \;\;\; \text{and} \;\;\;
        v_j \to H
    \]
    for $i=1,\dots,t$ and $j=1,\dots,t+1$. Define the \emph{core} subdigraph $\overline{\mathbb{L}}{\vphantom{\mathbb{L}}}_{t}^l$ of $\mathbb{L}^l_{t}$ to be that given by
    \[
        V_{\overline{\mathbb{L}}{\vphantom{\mathbb{L}}}_{t}^l} = V_{\mathbb{L}^l_{t}}
        \;\;\; \text{and} \;\;\;
        E_{\overline{\mathbb{L}}{\vphantom{\mathbb{L}}}_{t}^l} = E_{\mathbb{L}^l_{t}} \setminus \{ (T,v_1), (T,v_{t+1}) \}.
    \]
\end{definition}
The digraphs $\mathbb{L}^l_1$, $\mathbb{L}^l_2$ $\mathbb{L}^l_3$, and $\mathbb{L}^l_4$ are as follows.
\begin{center}
\tikz {
        \node (T) at (0,3) {$H$};
        \node (u1) at (-0.75,2) {$v_1$};
        \node (u2) at (0.75,2) {$v_2$};
        \node (v1) at (0,1) {$u_1$};
        \node (H) at (0,0) {$T$};
        \draw[->] (u1) -- (T);
        \draw[->] (u2) -- (T);
        \draw[->] (v1) -- (u1);
        \draw[->] (v1) -- (u2);
        \draw[->] (H) -- (v1);
        \draw[->] (H) to [out=140,in=270] (u1);
        \draw[->] (H) to [out=40,in=270] (u2);
    }
    \;\;\;
\tikz {
        \node (T) at (2.25,3) {$H$};
        \node (u1) at (0.75,2) {$v_1$};
        \node (u2) at (2.25,2) {$v_2$};
        \node (u3) at (3.75,2) {$v_3$};
        \node (v1) at (1.5,1) {$u_1$};
        \node (v2) at (3,1) {$u_2$};
        \node (H) at (2.25,0) {$T$};
        \draw[->] (u1) -- (T);
        \draw[->] (u2) -- (T);
        \draw[->] (u3) -- (T);
        \draw[->] (v1) -- (u1);
        \draw[->] (v1) -- (u2);
        \draw[->] (v2) -- (u2);
        \draw[->] (v2) -- (u3);
        \draw[->] (H) -- (v1);
        \draw[->] (H) -- (v2);
        \draw[->] (H) to [out=165,in=270] (u1);
        \draw[->] (H) to [out=15,in=270] (u3);
    }
    \;\;\;
 \tikz {
        \node (T) at (2,0) {$T$};
        \node (u1) at (1,1) {$u_1$};
        \node (u2) at (2,1) {$u_2$};
        \node (u3) at (3,1) {$u_3$};
        \node (v1) at (0.5,2) {$v_1$};
        \node (v2) at (1.5,2) {$v_2$};
        \node (v3) at (2.5,2) {$v_3$};
        \node (v4) at (3.5,2) {$v_4$};
        \node (H) at (2,3) {$H$};
        \draw[->] (T) -- (u1);
        \draw[->] (T) -- (u2);
        \draw[->] (T) -- (u3);
        \draw[->] (u1) -- (v1);
        \draw[->] (u1) -- (v2);
        \draw[->] (u2) -- (v2);
        \draw[->] (u2) -- (v3);
        \draw[->] (u3) -- (v3);
        \draw[->] (u3) -- (v4);
        \draw[->] (v1) -- (H);
        \draw[->] (v2) -- (H);
        \draw[->] (v3) -- (H);
        \draw[->] (v4) -- (H);
        \draw[->] (T) to [out=180,in=240] (v1);
        \draw[->] (T) to [out=0,in=300] (v4);
    }
    \;\;\;
    \tikz {
        \node (T) at (1.5,3) {$H$};
        \node (u1) at (-0.5,2) {$v_1$};
        \node (u2) at (0.5,2) {$v_2$};
        \node (u3) at (1.5,2) {$v_3$};
        \node (u4) at (2.5,2) {$v_4$};
        \node (u5) at (3.5,2) {$v_5$};
        \node (v1) at (0,1) {$u_1$};
        \node (v2) at (1,1) {$u_2$};
        \node (v3) at (2,1) {$u_3$};
        \node (v4) at (3,1) {$u_4$};
        \node (H) at (1.5,0) {$T$};
        \draw[->] (u1) -- (T);
        \draw[->] (u2) -- (T);
        \draw[->] (u3) -- (T);
        \draw[->] (u4) -- (T);
        \draw[->] (u5) -- (T);
        \draw[->] (v1) -- (u1);
        \draw[->] (v1) -- (u2);
        \draw[->] (v2) -- (u2);
        \draw[->] (v2) -- (u3);
        \draw[->] (v3) -- (u3);
        \draw[->] (v3) -- (u4);
        \draw[->] (v4) -- (u4);
        \draw[->] (v4) -- (u5);
        \draw[->] (H) -- (v1);
        \draw[->] (H) -- (v2);
        \draw[->] (H) -- (v3);
        \draw[->] (H) -- (v4);
        \draw[->] (H) to [out=0,in=285] (u5);
        \draw[->] (H) to [out=180,in=255] (u1);
    }
\end{center}
The core subdigraphs are obtained by deleting the two edges $T \rightarrow v_1$ and $T \rightarrow v_{t+1}$. In general, $\mathbb{L}^l_t$ is obtained from $\mathbb{L}^u_t$ by reversing the arrows, relabelling each $u_i$ with $v_i$, relabelling each $v_i$ with $u_i$, and finally swapping the $T$ and $H$ labels.

The digraphs introduced above should be compared with the trapezohedron digraphs $\mathbb{T}_t$, which we recall from Definition~\eqref{def:Trapezohedron}. The trapezohedron element $T_t$ produced in equation~\eqref{eq:TrapezohedronElement} is the canonical generator of $\Omega_3(\mathbb{T}_t;R)\cong R$, and we now produce path chains with similar properties for the new digraphs.

\begin{definition}\label{def:LinearEelmetns}
    The elements
    \begin{align*}
        L_t &= \sum_{i=1}^{t} e_{T,u_i,v_i,H} - \sum_{i=1}^{t-1} e_{T,u_i,v_{i+1},H} \in \mathcal{A}_3(\mathbb{L}_t;R),
        \\
        L_t^u &= \sum_{i=1}^{t} e_{T,u_i,v_i,H} - e_{T,u_{i+1},v_i,H} \in \mathcal{A}_3(\mathbb{L}_t^u;R),
        \\
        L_t^l &= \sum_{i=1}^{t} e_{T,u_i,v_i,H} - e_{T,u_i,v_{i+1},H} \in \mathcal{A}_3(\mathbb{L}_t^l;R)
    \end{align*}
    for $t=1,2,\dots$ are called the \emph{asymmetric}, \emph{upper symmetric}, and \emph{lower symmetric} elements, respectively. 
    Collectively, $T_{t'}$, $L_t$, $L_t^l$, and $L_t^u$
    are called \emph{$3$-path elements} for $t=1,2,3\dots$ and $t'=2,3,\dots$.
\end{definition}

We also want to include the trapezohedra digraphs in the considerations below. In the case of the trapezohedra, for $t'=2,3,\dots$ we consider the core subdigraph $\overline{\mathbb{T}}_{t'}$ to be the whole digraph $\mathbb{T}_{t'}$.
\begin{definition}
The digraphs $\mathbb{L}_t$, $\mathbb{L}_t^u$, $\mathbb{L}_t^l$ for $t=1,2,\dots$, and $\mathbb{T}_{t'}$ for $t'=2,3,\dots$ are collectively called \emph{$3$-path sequence} digraphs.
\end{definition}

Similar to Proposition~\ref{prop:Trapezohedron} for trapezohedron elements, the other $3$-path sequences and $3$-path elements have the following properties.

\begin{proposition}\label{prop:OtherSpecialDim3Generators}
    Let $G = \mathbb{L}_t$, $\mathbb{L}_t^u$, or $\mathbb{L}_t^l$ for some $t=1,2,\dots$. Then the $R$-module $\Omega_3^{T,H}(G;R)$ is free of rank $1$ and is generated by the corresponding $3$-path element $L_t$, $L_t^u$, or $L_t^l$. Moreover,
    \[
        H_n^P(G;R)
        \cong
        \begin{cases}
            R & \text{if} \; n = 0 \\
            0 & \text{otherwise.}
        \end{cases}
    \]
\end{proposition}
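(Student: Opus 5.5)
The plan is to compute the path chain complex of each of the three digraphs directly. The key tools are Lemma~\ref{lem:NoPositionSwap}, the bigrading~\eqref{eq:Bigrading} and Proposition~\ref{prop:Dim2Base}. The argument is carried out over $\mathbb{Z}$ and transferred to a general $R$ at the end, either by repeating it verbatim (it only uses integer linear relations with coefficients $\pm1$) or via Proposition~\ref{prop:coefprop1}.

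\textbf{Step 1: $\Omega_3^{T,H}$.} Each $G$ is acyclic, and every allowed $3$-path from $T$ to $H$ has the form $e_{T,a,b,H}$ with $a$ some $u_i$ and $b$ some $v_j$. Paths through $T\to v_1$ or $u_t\to H$ are too short. Write a general element as $x=\sum \alpha_{a,b}\,e_{T,a,b,H}$. By Lemma~\ref{lem:NoPositionSwap}, $x\in\Omega_3$ if and only if two families of conditions hold:
\begin{enumerate}
\item for each $b$ with $(T,b)\notin E_G$, $\sum_a\alpha_{a,b}=0$;
\item for each $a$ with $(a,H)\notin E_G$, $\sum_b \alpha_{a,b}=0$.
\end{enumerate}
In every case the bipartite graph of admissible pairs $(a,b)$ is a path, a zigzag $u_1v_1u_1v_2u_2\cdots$ or its analogue; for $\mathbb{T}_t$ it would be a cycle. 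Each interior vertex of this path carries exactly one such two-term relation. Tracing along the path forces all coefficients to be equal up to alternating sign. The two end vertices are precisely those made unconstrained by the extra edges $T\to v_1$, $u_t\to H$, and so on, so no further relation kills the free parameter. Hence $\Omega_3^{T,H}(G;\mathbb{Z})$ is free of rank $1$, generated by $L_t$, $L_t^u$ or $L_t^l$. I will check the three index patterns separately; they are routine.

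\textbf{Step 2: the whole complex.} Every allowed $3$-path starts at $T$ and ends at $H$, and there are no allowed $n$-paths for $n\geq 4$. So $\Omega_3(G)=\Omega_3^{T,H}(G)\cong\mathbb{Z}$ and $\Omega_n(G)=0$ for $n\geq4$. By~\eqref{eq:Dim0Basis},~\eqref{eq:Dim1Basis} and Proposition~\ref{prop:Dim2Base}, with no multisquares except possibly between $T$ and $H$, where $\Omega_2^{T,H}=0$ by distance, I list explicit bases of $\Omega_0,\Omega_1,\Omega_2$. These consist of vertices, edges, directed triangles (e.g.\ $T,u_1,v_1$ in $\mathbb{L}_1$, and $T,u_t,H$-type triangles) and directed squares (the quadrilaterals $T\,u_i\,v_j$ / $u_i\,v_j\,H$). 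Then $H_0\cong\mathbb{Z}$ since $G$ is connected, and $H_3=0$ because $\partial L\neq0$ and $\Omega_3$ has rank $1$.

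\textbf{Step 3: $H_1=H_2=0$.} This is the main obstacle. For $H_1$, I will fix a spanning tree (say all edges out of $T$ together with the edges $v_j\to H$ and one edge into each $v_j$). I will then show directly that each remaining edge closes a cycle that is the boundary of a square or triangle from Step 2. This gives $H_1=0$. For $H_2$, I compare ranks: over $\mathbb{Q}$ the Euler characteristic $\sum(-1)^n\operatorname{rank}\Omega_n$ is computed by counting and should equal $1$, which forces $H_2\otimes\mathbb{Q}=0$. Torsion in $H_2$ would have to come from $\partial_3$ failing to have primitive image. However, $\partial L$ has some coefficient $\pm1$ on a basis square (e.g.\ $e_{T,u_1,v_1}$ appears only once), so $\operatorname{im}\partial_3$ is a direct summand and $H_2(G;\mathbb{Z})=0$. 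Finally, since all $\Omega_n(G;\mathbb{Z})$ are free and the $\mathbb{Z}$-homology is $\mathbb{Z}$ in degree $0$ and zero otherwise, Proposition~\ref{prop:univcoeffs}-style reasoning applies. Concretely, $\mu_*$ is an isomorphism here because the generators above work verbatim over any $R$, and the universal coefficient theorem then gives $H_n^P(G;R)\cong R$ for $n=0$ and $0$ otherwise. If the rank count is awkward, the fallback is to compute $\ker\partial_2$ by hand using the same path/zigzag structure as in Step 1.
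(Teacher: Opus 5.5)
Your overall route is sound, but Step~2 contains a false claim that Step~3 relies on. You assert that $\Omega_2^{T,H}(G;R)=0$ ``by distance''. In fact $d_G(T,H)=2$ in all three families. The two non-core edges give two $2$-paths from $T$ to $H$: $e_{T,u_t,H}$ and $e_{T,v_1,H}$ in $\mathbb{L}_t$, $e_{T,u_1,H}$ and $e_{T,u_{t+1},H}$ in $\mathbb{L}_t^u$, and $e_{T,v_1,H}$ and $e_{T,v_{t+1},H}$ in $\mathbb{L}_t^l$. Since $(T,H)\notin E_G$, their difference is a directed square spanning $\Omega_2^{T,H}$; this is the square $S$ in the paper's proof. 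The ``$T,u_t,H$-type triangles'' you list do not exist, because there is no edge $T\to H$; the second triangle in $\mathbb{L}_t$ is $e_{u_t,v_t,H}$. (There are no multisquares anywhere.)

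This square is essential. It is what survives of the inner faces $e_{T,b,H}$, $e_{T,a,H}$ of $\partial^P_3 L$ at the two unconstrained ends of your zigzag from Step~1, so $\partial^P_3 L$ contains it with coefficient $\pm1$. Without it, $\partial^P_3 L$ does not lie in the span of your $\Omega_2$ basis. Your rank count would also give Euler characteristic $0$ instead of $1$, so the deduction $H_2\otimes\mathbb{Q}=0$ fails. With the square included, the ranks of $\Omega_0,\dots,\Omega_3$ are $2t+2,\,4t+1,\,2t+1,\,1$ for $\mathbb{L}_t$ and $2t+3,\,4t+3,\,2t+2,\,1$ for $\mathbb{L}_t^u$ and $\mathbb{L}_t^l$. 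This gives $\chi=1$, and your primitivity step then goes through.

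Your $H_1$ argument also needs a smaller repair. The edge set you describe is not a tree once $t\geq2$: for $\mathbb{L}_t$ it has at least $3t>2t+1$ edges on $2t+2$ vertices. Moreover, for $t\geq3$ no spanning tree has every fundamental cycle equal to the boundary of a single square or triangle, since the cells form a $2$-sphere and each cell has at most four neighbours. What you actually need is that the cell boundaries span $\ker\partial^P_1$ over $\mathbb{Z}$. One way: order all but one cell so that each has an edge lying in no later cell. The resulting unitriangular system spans a direct summand of $\Omega_1$ of rank $\operatorname{rank}\ker\partial^P_1$.

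With these fixes, your proof differs from the paper's in two places.
\begin{itemize}
\item For $\Omega_3^{T,H}$, you solve the constraints of Lemma~\ref{lem:NoPositionSwap} directly. The paper instead realises $L_t$ as an extension over a line face multigraph and invokes Corollary~\ref{cor:FieldGeneratorBasis} and uniqueness of inductive structures. Your version is more elementary and needs no restriction on $R$.
\item For $H_2$, you use a rank count, primitivity of $\partial^P_3 L$, and the universal coefficient theorem. The paper computes $\ker\partial^P_2$ directly, using that every edge lies in exactly two cells.
\end{itemize}
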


\begin{proof}
    We give a detailed proof for $G = \mathbb{L}_t$ and leave it to the reader to consider the other cases, which are proved similarly.
    
    To begin, note that $\Omega_0(\mathbb{L}_t;R)$ has a basis of inductive elements consisting of $e_T$, $e_{u_i}$, $e_{v_i}$, and $e_H$, and by part (1) of Proposition~\ref{prop:LowDimInductiveElements} $\Omega_1(\mathbb{L}_t;R)$ has a basis of inductive elements consisting of $e_{T,u_i}$, $e_{T,v_1}$, $e_{u_i, v_i}$, $e_{u_j, v_{j+1}}$, $e_{v_i,H}$, and $e_{u_t,H}$ for each $i=1,\dots,t$ and $j=1,\dots,t-1$.
    Furthermore, $\mathbb{L}_t$ contains directed triangles
    \[
        T_l = e_{T,u_1,v_1}, \;\;\; \text{and} \;\;\; T_u = e_{u_t,v_t,H},
    \]
    and directed squares
    \[
        S_i^l = e_{T,u_{i+1},v_{i+1}} - e_{T,u_i,v_{i+1}},
        \;\;\;
        S_i^u = e_{u_i,v_i,H} - e_{u_i,v_{i+1},H},
        \;\;\; \text{and} \;\;\;
        S = e_{T,u_t,H} - e_{T,v_1,H},
    \]
    for $i=1,\dots,t-1$, but no double edges. Since $\mathbb{L}_t$ contains no multisquares, by Propositions~\ref{prop:Dim2Base}~and~\ref{prop:LowDimInductiveElements}, the generators listed above form an inductive basis of $\Omega_2(\mathbb{L}_t;R)$.

    Since the digraph $\mathbb{L}_t$ is connected, we have $H_0^P(\mathbb{L}_t;R)\cong R$. Moreover, $H^P_1(\mathbb{L}_t;R) = 0$, as a basis of the cycles in $\Omega_1(\mathbb{L}_t;R)$ coincides precisely with the boundaries of directed squares and directed triangles $S_i^l$, $S_i^u$, $S$, $T_l$, and $T_u$.

    Turning to $\Omega_3(\mathbb{L}_t;R)$, we claim that it is free of rank $1$ generated by the element $L_T$. Indeed, $L_T$ is obtained as an upper extension over the face multigraph
    \begin{center}
        \tikz {
            \node (1) at (0,0) {$T_l$};
            \node (2) at (3,0) {$S_1^l$};
            \node (3) at (6,0) {$\;\;\;\cdots\;\;\;$};
            \node (4) at (9,0) {$S_{t-1}^l$.};
            \draw[-] (1) -- (2) node[midway,above] {$e_{T,u_1}$};
            \draw[-] (2) -- (3) node[midway,above] {$e_{T,u_2}$};
            \draw[-] (3) -- (4) node[midway,above] {$e_{T,u_{t-1}}$};
        }
    \end{center}
    By Corollary~\ref{cor:FieldGeneratorBasis}, up to multiplication by a unit of $R$, the element $L_t$ is a unique generator
    of $\Omega_3(\mathbb{L}_t;R)$, as no other upper inductive structures can be formed using the inductive bases of $\Omega_2(\mathbb{L}_t;R)$ and $\Omega_1(\mathbb{L}_t;R)$ described above.
    As the maximal path length in $\mathbb{L}_t$ is $3$, with all paths beginning at $T$ and ending at $H$, we have $\Omega_n(\mathbb{L}_t;R) = 0$
    for $n \geq 4$ and $\Omega_3^{T,H}(\mathbb{L}_t;R) = \Omega_3(\mathbb{L}_t;R)$.

    Since $\partial_3^P(L_t) = T_u - T_l + S + \sum_{i=1}^{t-1} S_i^u - \sum_{i=1}^{t-1} S_i^l$ is nonzero, we have $H^P_3(\mathbb{L}_t;R) = 0$. Moreover, we claim that $\partial_3^P(L_t)$ generates $\ker(\partial^P_2\colon\Omega_2(\mathbb{L}_t;R)\rightarrow\Omega_1(\mathbb{L}_t;R))$ and hence that $H^P_2(\mathbb{L}_t;R) = 0$. This is best verified by direct computation: simply note that since every edge in $\mathbb{L}_t$ is the intersection of a unique pair of directed triangles or squares, there is an essentially unique way in which the edges must cancel when evaluating the $\partial^P_2$-image of any element of $\ker(\partial^P_2)$. \qed
\end{proof}

We will use the $3$-path elements to explicitly describe inductive elements of $\Omega_3(G;R)$ for any digraph $G$. Before doing so, we need to establish some notation that will be used in the definitions and proofs below. For a $3$-path sequence digraph, introduce sets $\mathcal{I}$ indexing its $u_i$ vertices and $\mathcal{J}$ indexing its $v_j$ vertices. These sets will depend on the digraph in question, so let us agree on the following notation.
\begin{align}\label{eq:uIndexSets}
    & \text{For} \; \mathbb{L}{\vphantom{\mathbb{L}}}_{t}^u \; \text{with}\; t\geq1\; \text{let} \; \mathcal{I} = \{1,\dots,t+1\}, \; \text{and}
    \nonumber
    \\
    & \text{for} \;\mathbb{T}_t, \; \mathbb{L}_t \; or \; \mathbb{L}_t^l \; \text{with} \; t\geq1 \; \text{let} \; \mathcal{I} = \{1,\dots,t\}.
\end{align}
Similarly,
\begin{align}\label{eq:vIndexSets}
    & \text{for} \; \mathbb{L}{\vphantom{\mathbb{L}}}_{t}^l \; \text{with}\; t\geq1\; \text{let} \; \mathcal{J} = \{1,\dots,t+1\}, \; \text{and}
    \nonumber
    \\
    & \text{for} \;\mathbb{T}_t, \; \mathbb{L}_t \; or \; \mathbb{L}_t^u \; \text{with}\;t\geq1\; \text{let} \; \mathcal{J} = \{1,\dots,t\}.
\end{align}

With this notation established, we turn to describing the class of digraph maps which will be used to produce inductive elements in $\Omega_3(G;R)$.
\begin{definition}\label{def:3PathStructureMap}
    Let $\mathbb{H}$ be a $3$-path sequence digraph. A digraph map $\varphi \colon \mathbb{H} \to G$ is called a \emph{$3$-path structure map} if the following conditions are satisfied.
    \begin{enumerate}[(1)]
        \item 
        The map $\varphi$ sends any directed square of $\mathbb{H}$ contained in the core subdigraph $\overline{\mathbb{H}}$ to a directed square in $G$.
        \item 
        If $\mathbb{H}=\mathbb{L}_1$, then $\varphi$ is a strong digraph map on $\overline{\mathbb{L}}_1$.
        \item
        If $\mathbb{H}=\mathbb{L}_1^u$, then $\varphi(v_1) \neq \varphi(H)$, and if $\mathbb{H}=\mathbb{L}_1^l$ then $\varphi(T) \neq \varphi(u_1)$.
        \item
        If $\mathbb{H} = \mathbb{L}^u_t$, then $\varphi(u_{i_1})\neq\varphi(u_{i_2})$ for all $(i_1,i_2) \in (\mathcal{I} \times \mathcal{I}) \setminus \{ (1,t+1), (t+1,1) \}$ such that $i_1 \neq i_2$. 
        \item 
        If $\mathbb{H} = \mathbb{L}^l_t$, then $\varphi(v_{j_1})\neq\varphi(v_{j_2})$ for all $(j_1,j_2) \in (\mathcal{J} \times \mathcal{J}) \setminus \{ (1,t+1), (t+1,1) \}$ such that $j_1 \neq j_2$.
        \item 
        Otherwise, $\varphi(u_{i_1})\neq\varphi(u_{i_2})$ and $\varphi(v_{j_1})\neq\varphi(v_{j_2})$ for all $i_1 \neq i_2 \in \mathcal{I}$ and $j_1 \neq j_2 \in \mathcal{J}$.
    \end{enumerate}
\end{definition}

The following important statement, which follows easily from parts (1), (2), (3) of Definition~\ref{def:3PathStructureMap}, will be applied frequently in the remainder of the section.

\begin{lemma}\label{lem:3PathStructureStraongMap}
    A $3$-path structure map $\varphi \colon \mathbb{H}\to G$ becomes a strong digraph map when restricted to the core subdigraph $\overline{\mathbb{H}}$. 
    Consequently, it maps any directed triangle in $\mathbb{H}$ to either a directed triangle or double edge in $G$.
\end{lemma}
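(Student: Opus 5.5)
The plan is to check, edge by edge, that every edge $a\to b$ of the core subdigraph $\overline{\mathbb{H}}$ satisfies $\varphi(a)\neq\varphi(b)$. Since $\varphi$ is already a digraph map, this forces $(\varphi(a),\varphi(b))\in E_G$, which is exactly strongness on the core. The case $\mathbb{H}=\mathbb{L}_1$ is given outright by condition (2) of Definition~\ref{def:3PathStructureMap}, so assume $\mathbb{H}\neq\mathbb{L}_1$.

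The key observation is that every edge of the core lies in some directed square of the core. For instance, in $\overline{\mathbb{T}}_t$ and $\overline{\mathbb{L}}_t$ with $t\geq 2$:
\begin{itemize}
\item an edge $T\to u_i$ lies in a square $e_{T,u_i,v_j}-e_{T,u_{i'},v_j}$;
\item an edge $u_i\to v_j$ lies in such a square as well;
\item an edge $v_j\to H$ lies in a square $e_{u_i,v_j,H}-e_{u_i,v_{j'},H}$.
\end{itemize}
Each of these is a genuine square, since the core has no edges $T\to v_j$ or $u_i\to H$. The same holds for $\overline{\mathbb{L}}{\vphantom{\mathbb{L}}}_t^u$ and $\overline{\mathbb{L}}{\vphantom{\mathbb{L}}}_t^l$ for all $t\geq1$: each $v_j$ has two in-neighbours $u_j,u_{j+1}$ in the upper case, and dually in the lower case.

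By condition (1), $\varphi$ maps each such square to a directed square in $G$. A directed square $e_{a,b,c}-e_{a,b',c}$ in $G$ requires all four edges $a\to b$, $b\to c$, $a\to b'$, $b'\to c$ to lie in $E_G$, hence requires the images of consecutive vertices to be distinct. Therefore every edge of the square is sent to an edge, and $\varphi|_{\overline{\mathbb{H}}}$ is strong.

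The one delicate point is $\mathbb{L}_1$. Its core is just the two-path $T\to u_1\to v_1\to H$ and contains no square, which is exactly why condition (2) is imposed for it. So the main check is only to verify that no other small case, such as $\overline{\mathbb{L}}{\vphantom{\mathbb{L}}}_1^u$, lacks squares. The square $e_{T,u_1,v_1}-e_{T,u_2,v_1}$ covers $\overline{\mathbb{L}}{\vphantom{\mathbb{L}}}_1^u$ except for the edge $v_1\to H$. For that edge, condition (3) gives $\varphi(v_1)\neq\varphi(H)$ directly. Dually, the edge $T\to u_1$ in $\overline{\mathbb{L}}{\vphantom{\mathbb{L}}}_1^l$ is handled by the other half of condition (3).

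For the consequence, note that every directed triangle in $\mathbb{H}$ uses exactly one non-core edge together with two core edges forming a 2-path $a\to b\to c$. These triangles are $e_{T,u_1,v_1}$ and $e_{u_t,v_t,H}$ in $\mathbb{L}_t$, and the analogous ones in the other families. By strongness on the core, the 2-path maps to an allowed 2-path $\varphi(a)\to\varphi(b)\to\varphi(c)$ in $G$.

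It remains to place the image of the third edge $a\to c$. Either $\varphi(a)=\varphi(c)$, in which case the image is a double edge $e_{\varphi(a),\varphi(b),\varphi(a)}$, or $\varphi(a)\to\varphi(c)$ is an edge of $G$, in which case the image is a directed triangle. Both alternatives come from $\varphi$ being a digraph map.
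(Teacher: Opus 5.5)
Your proof is correct and follows the paper's argument. The paper uses the same case split: condition (2) for $\mathbb{L}_1$, conditions (1) and (3) for $\mathbb{L}_1^u$ and $\mathbb{L}_1^l$, and condition (1) in all other cases (the paper writes ``part (3)'' there, evidently meaning part (1)). You additionally supply the verification that every core edge lies in a core directed square.

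There are two harmless slips. In $\overline{\mathbb{L}}_t$ the edge $u_1\to v_1$ lies only in the square $e_{u_1,v_1,H}-e_{u_1,v_2,H}$, not in a square through $T$. Your ``for all $t\geq1$'' claim for $\overline{\mathbb{L}}{\vphantom{\mathbb{L}}}_t^u$ and $\overline{\mathbb{L}}{\vphantom{\mathbb{L}}}_t^l$ should read $t\geq2$, which is exactly why you then handle $t=1$ separately.
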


\begin{proof}
    If $\mathbb{H}=\mathbb{L}_1$, then the statements are ensured by parts (2) of Definition~\ref{def:3PathStructureMap}.
    If $\mathbb{H}=\mathbb{L}_1^u$ or $\mathbb{H}=\mathbb{L}_1^l$, then the statements are implied by parts (1) and (3) of Definition~\ref{def:3PathStructureMap}. In all other cases, the statements are a consequence of part (3) of Definition~\ref{def:3PathStructureMap}.
    \qed
\end{proof}

We show next that the $3$-path structure image of a $3$-path element is an inductive element in $\Omega_3(G;R)$. Later, we concern ourselves with the construction of $3$-path structure maps realising any given inductive element of $\Omega_3(G;R)$ in this way, and eventually with the uniqueness of these constructions.

\begin{theorem}\label{thm:SructureMapsAreInductive}
    Let $\mathbb{H}$ be a $3$-path sequence digraph and $\textfrak{H}$ the $3$-path element generating $\Omega_3(\mathbb{H};R)$. If $\varphi \colon \mathbb{H} \to G$ is a $3$-path structure map, then $\varphi_{\#}(\mathfrak{H})$ is an inductive element of $\Omega_3(G;R)$. Furthermore, the inductive element $\varphi_{\#}(\mathfrak{H})$ has a unique inductive structure.
\end{theorem}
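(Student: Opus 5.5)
The plan is to work entirely in dimension $3$ and to exhibit $\varphi_{\#}(\mathfrak{H})$ directly as an upper extension by $\varphi(H)$ over an explicit face multigraph whose vertices are images of the $2$-dimensional inductive elements of $\mathbb{H}$. By Proposition~\ref{prop:LowDimInductiveElements}, the elements of $\bar{E}^h_2$ and $\bar{E}^h_1$ in $G$ are precisely the directed squares, directed triangles and double edges, and the edges $e_{a,b}$, each up to sign. As in the proof of Proposition~\ref{prop:OtherSpecialDim3Generators} for $\mathbb{L}_t$, and as in Example~\ref{exam:2Trapezohedron} for $\mathbb{T}_t$, we write
\[
\mathfrak{H}=\Big[\sum_k y_k\Big]^{H}.
\]
Here the $y_k$ are the lower-level directed squares $e_{T,u_i,v_j}-e_{T,u_{i'},v_j}$ and directed triangles of the core $\overline{\mathbb{H}}$. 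They sit over a face multigraph that is a line (for $\mathbb{L}_t$, $\mathbb{L}^u_t$, $\mathbb{L}^l_t$) or a cycle (for $\mathbb{T}_t$), and its edges are labelled by the $1$-paths $e_{T,u_i}$.

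Next, Lemma~\ref{lem:3PathStructureStraongMap} and part~(1) of Definition~\ref{def:3PathStructureMap} show that each $\varphi_{\#}(y_k)$ is again a directed square, triangle or double edge, so it lies in $\bar{E}^h_2$ up to sign. The ridge decompositions $\delta^h_{2,\varphi(u_i)}$ are single edges $e_{\varphi(T),\varphi(u_i)}\in\bar E^h_1$. Strongness on the core ensures that all the relevant paths survive under $\varphi_{\#}$. The injectivity conditions (4)--(6) guarantee two further points:
\begin{itemize}
\item distinct $u_i$ give distinct ridges, so no accidental cancellations or identifications $x_i=-x_j$ occur;
\item $\varphi_{\#}(\mathfrak{H})=[\sum_k\varphi_{\#}(y_k)]^{\varphi(H)}$ holds, with no extra paths from non-core edges.
\end{itemize}
The non-core edges $T\to v_1$, $u_t\to H$, etc.\ only contribute triangles at the ends. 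The exceptional pairs in (4) and (5) correspond to the two end vertices of a line, whose ridges sit at vertices adjacent to $\varphi(H)$ or are unpaired anyway. The image face multigraph is then the line or cycle transported by $\varphi$, and I would check that it is $\varphi(H)$-complete. For every ridge vertex $\varphi(u_i)$ with $(\varphi(u_i),\varphi(H))\notin E_G$, the two squares sharing $e_{T,u_i}$ in $\mathbb{H}$ map to two squares sharing $\pm e_{\varphi(T),\varphi(u_i)}$. Only the end ridges, such as $u_t$ in $\mathbb{L}_t$ with $u_t\to H$, are left unpaired, and these are excused because $\varphi$ preserves that edge or collapses it. Proposition~\ref{prop:GraphExtentsion} also recovers membership in $\Omega_3$, although that already follows from functoriality.

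The main obstacle is strong connectedness together with uniqueness: we must show that every inductive structure on $x=\varphi_{\#}(\mathfrak{H})$ is the one just built. I would argue as follows. By condition~(1) of Definition~\ref{def:connectedeness}, any inductive structure has as vertex labels elements of $\bar{E}^h_2$ whose upper extensions by $h(x)$ sum to $x$. Since $x$ is a sum of paths $e_{\varphi(T),\varphi(u_i),\varphi(v_j),\varphi(H)}$, each such path can come only from a square or triangle with vertices among $\varphi(T),\varphi(u_i),\varphi(v_j)$ and middle vertex $\varphi(u_i)$. Injectivity of $\varphi$ on the $u$'s and on the $v$'s forces each vertex label to be one of the $\pm\varphi_{\#}(y_k)$. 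Coefficients are $\pm1$ with no repetition, and a multiplicity $2$ label as in Example~\ref{exam:2Trapezohedron} cannot occur because every path of $x$ has coefficient $\pm1$. Hence the vertex multiset is determined. The edges are then determined as well: each ridge $e_{\varphi(T),\varphi(u_i)}$ appears in exactly two of the labels, so completeness forces the unique pairing. This gives uniqueness, and connectivity of the line or cycle gives strong connectedness. The delicate part will be the case analysis for small $t$ and for the non-core triangles, particularly $\mathbb{L}_1$, $\mathbb{L}^u_1$ and $\mathbb{L}^l_1$, and possible double-edge images. Here I would rely on conditions (2) and (3) of Definition~\ref{def:3PathStructureMap} and check each of the finitely many configurations by hand.
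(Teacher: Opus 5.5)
Your existence half matches the paper's argument: the images of the lower faces of the $3$-path element, joined at the $\varphi(u_i)$, form a $\varphi(H)$-complete inductive structure. You should also state, as the paper does, that condition (1) applied to the upper core squares $e_{u_i,v_j,H}-e_{u_i,v_{j'},H}$ gives $\varphi(u_i)\neq\varphi(H)$ and $(\varphi(u_i),\varphi(H))\notin E_G$ at every interior ridge; this is needed for properness and for the forced pairing.

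The genuine gap is in the uniqueness step. You assume that every path of every vertex label in an arbitrary inductive structure already occurs in $x$, and then apply injectivity of $\varphi$. But Definition~\ref{def:FaceMultiGraph} only forbids $x_i=-x_j$, so paths may cancel between labels. Suppose a face $e_{a,b,c}-e_{a,b',c}$ lies in a multisquare of $G$, with $a\to w\to c$ for a third vertex $w$. Then the face can be replaced by the two directed squares $e_{a,b,c}-e_{a,w,c}$ and $e_{a,w,c}-e_{a,b',c}$. Injectivity of $\varphi$ says nothing about $w$: it may lie outside the image, or be some $\varphi(u_i)$ joined to $c$ by an edge of $G$ not coming from $\mathbb{H}$. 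The same cancellation breaks your multiplicity remark. In a multisquare with middle vertices $a,b,c$, write $s_{pq}$ for the square through $p$ minus the square through $q$. Then $s_{ac},s_{ac},s_{cb},s_{ba}$ are admissible labels, and they sum to $s_{ac}$.

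This cannot be repaired, because with the definitions as written the statement is false. Let $G$ be $\mathbb{T}_3$ with one extra edge $u_2\to v_1$, and let $\varphi$ be the inclusion. Since $(T,v_j),(u_i,H)\notin E_G$, all six squares of $\mathbb{T}_3$ remain directed squares, so $\varphi$ is an injective $3$-path structure map.

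The faces of $T_3$ are $x_1=e_{T,u_1,v_1}-e_{T,u_3,v_1}$, $x_2=e_{T,u_2,v_2}-e_{T,u_1,v_2}$ and $x_3=e_{T,u_3,v_3}-e_{T,u_2,v_3}$. In $G$ we have $x_1=A+B$, where $A=e_{T,u_1,v_1}-e_{T,u_2,v_1}$ and $B=e_{T,u_2,v_1}-e_{T,u_3,v_1}$ are both directed squares. Take the labels $A,B,x_2,x_3$. Join $A$ to $x_2$ at $u_1$ and at $u_2$, and join $B$ to $x_3$ at $u_2$ and at $u_3$. This satisfies every condition of an $H$-complete upper inductive structure on $T_3$, and it is disconnected. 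Indeed $T_3=[A+x_2]^H+[B+x_3]^H$ is a sum of two $\mathbb{T}_2$-elements, so $T_3$ is not an upper inductive element of $\Omega_3(G;R)$. Dually, splitting $e_{u_2,v_2,H}-e_{u_2,v_3,H}$ through $v_1$ shows it is not a lower inductive element either.

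The paper's proof has exactly the same gap. From ``no two $x_j$ lie in a common multisquare'' it infers that every decomposition of $\sum_j x_j$ into inductive $2$-elements is a permutation of the $x_j$. That inference overlooks splitting a single $x_j$.

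Both arguments become correct if one requires that the vertex labels of an inductive structure do not cancel, i.e.\ every path of every label appears in $x$ with the same coefficient. Then the labels must partition the paths of $x$, and your injectivity argument (which is the paper's) goes through. That hypothesis, or an equivalent change to the definitions, has to be made explicit.
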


\begin{proof}
    To show that $\varphi_\#(\textfrak{H}) \in \Omega_3(G;R)$ is an inductive element, we will construct an inductive structure $F_2^\varphi$ on $\varphi_\#(\textfrak{H})$ and prove that $\varphi_\#(\textfrak{H})$ is strongly connected.

    We start by constructing the face multigraph $F^\varphi_2$. To this end, observe that for $j\in\mathcal{J}$, the element $\delta^h_{3,v_j}(\mathfrak{H})$ is the unique directed square or directed triangle in $\mathbb{H}$ with head vertex $v_j$ and tail vertex $T$. By part (1) of Definition~\ref{def:3PathStructureMap} and Lemma~\ref{lem:3PathStructureStraongMap}, $\varphi$ sends each of these elements to a directed square, directed triangle, or double edge in $G$. Thus for $j\in\mathcal{J}$ put
    \[
        x_j = \varphi_{\#}(\delta^h_{3,v_j}(\mathfrak{H}))\in\Omega_2(G;R)
    \]
    and take $\{x_j\mid j\in\mathcal{J}\}$ as the vertex set of $F_2^\varphi$.
    
    Now, $\varphi$ is a strong digraph map on $\overline{\mathbb{H}}$ by Lemma~\ref{lem:3PathStructureStraongMap}.
    Since each edge of the form $(T,u_i)$ lies in $\overline{\mathbb{H}}$, for any $j\in \mathcal{J}\setminus \{\max(\mathcal{J})\}$ we have
    \[
        \delta^h_{2,\varphi(u_{i})}(x_j)
        =
        \pm e_{\varphi(T),\varphi(u_{i})}
        =
        - \delta^h_{2,\varphi(u_{i})}(x_{j+1}),
    \]
    where $i= j$ for $\mathbb{H} = \mathbb{T}_{t'}$, $\mathbb{L}_t$, or $\mathbb{L}_t^l$, and $i = j+1$ for $\mathbb{H} = \mathbb{L}_t^u$.
    Therefore, we can add an edge to $F_2^\varphi$ labelled $(\{ \varphi_{\#}(e_{T,u_{i}}), -\varphi_{\#}(e_{T,u_{i}}) \}, u_{i})$ between $x_j$ and $x_{j+1}$ for each $j\in \mathcal{J}\setminus \{\max(\mathcal{J}) \}$. In addition, when $\mathbb{H} = \mathbb{T}_{t'}$ we also have
    \[
        \delta^h_{2,\varphi(u_{\max(\mathcal{J})})}(x_{\max(\mathcal{J})})
        =
        \pm e_{\varphi(T),\varphi(u_{\max(\mathcal{J})})}
        =
        - \delta^h_{2,\varphi(u_{\max(\mathcal{J})})}(x_{1}).
    \]
    Hence, in this case, we add to the face multigraph $F_2^\varphi$ an additional edge labelled by $(\{ \varphi_{\#}(e_{T,u_{\max(\mathcal{J})}}), -\varphi_{\#}(e_{T,u_{\max(\mathcal{J})}}) \}, u_{\max(\mathcal{J})})$ between $x_{\max(\mathcal{J})}$ and $x_{1}$.
    Note that, by Lemma~\ref{lem:3PathStructureStraongMap}, $\varphi_{\#}(e_{v_j,H}) = e_{\varphi(v_j),\varphi(H)}\in \Omega_1(G;R)$ for each $j \in \mathcal{J}$.

    When $\mathbb{H} \neq \mathbb{L}^l_t$, the injectivity conditions on the $\varphi(v_j)$ from parts (4),(6) of Definition~\ref{def:3PathStructureMap} ensure that the vertex labels $\{ x_j \}_{j \in \mathcal{J}}$ are distinct.
    Moreover, by Lemma~\ref{lem:3PathStructureStraongMap}, $\varphi$ sends directed triangles to directed triangles or double edges, and because $\varphi$ is a $3$-path structure map it preserves the core directed squares $e_{u_{i},v_{i},H}-e_{u_{i},v_{i+1},H}$ for $i \in \mathcal{I}$ modulo $t$ when $\mathbb{H} = \mathbb{T}_t$, for $i \in \mathcal{I}\setminus \{ t \}$ when $\mathbb{H} = \mathbb{L}_t$, and for $i \in \mathcal{I}\setminus \{ 1,t+1 \}$ when $\mathbb{H} = \mathbb{L}_t^u$. Therefore,
    for $j\in\mathcal{J}$, we have $\delta^h_{2,u}(x_j) \neq 0$, $(u,h(I)) \notin E_G$, and that $u \neq h(I)$ if and only if there is an edge in $F_2^\varphi$ labelled by $u$ incident to $x_j$. It follows that in these cases, the labelled multigraph $F_2^\varphi$ is an $h(I)$-complete face multigraph.
    
    When $\mathbb{H} = \mathbb{L}^l_t$, the injectivity conditions in parts (5),(6) of Definition~\ref{def:3PathStructureMap} imply that all the $\varphi(v_j)$ are distinct except for $\varphi(v_1) = \varphi(v_{t+1})$. However, the conditions also imply that the two directed triangles $e_{T,u_1,v_1}$ and $e_{T,u_t,v_{t+1}}$ have distinct images. Thus again we have $\delta^h_{2,u}(x_a) \neq 0$, $(u,h(I)) \notin E_G$, and that $u \neq h(I)$ if and only if there is an edge in $F_2^\varphi$ labelled by $u$ incident to $x_j$. Hence in this case also, $F^\varphi_2$ is an $h(I)$-complete face multigraph.
     
    Using Lemma~\ref{lem:3PathStructureStraongMap}, in particular the existence of the edges $\varphi(e_{v_j,H}) = e_{\varphi(v_j),\varphi(H)}\in \Omega_1(G;R)$ for $j \in \mathcal{J}$, the fact that $\varphi$ sends directed triangles to directed triangles or double edges, 
    and the construction and functorality of $\varphi_{\#} \colon \Omega_n(\mathbb{H};R) \to \Omega_n(G;R)$, we have $\varphi_\#(\textfrak{H}) = \left[\sum_{a \in \mathcal{A}} x_a\right]^{\varphi(H)}$.
    Therefore, since by Proposition~\ref{prop:LowDimInductiveElements} the elements $\{ x_j \}_{j\in \mathcal{J}}$ and edges $e_{\varphi(T),\varphi(u_i)}$ for $i \in \mathcal{I}$ are inductive elements, $F_2^\varphi$ is an upper inductive structure on $\varphi_\#(\textfrak{H})$.
    
    It remains to show that $\varphi_\#(\textfrak{H})$ is strongly connected.
    Again using the injectivity conditions on the $\varphi(v_j)$ 
    from part (6) of Definition~\ref{def:3PathStructureMap}, no two directed squares in $\{ x_j \}_{j\in \mathcal{J}}$ sit within the same multisquare in $G$.
    By Proposition~\ref{prop:LowDimInductiveElements}, the only inductive elements in $\Omega_2(G;R)$ are directed squares, directed triangles, and double edges up to sign. This implies that for any inductive elements $\bar{x}_1,\dots,\bar{x}_{m} \in \Omega_2(G;R)$ such that $\bar{x}_1+\cdots+\bar{x}_{m} = \sum_{j\in{\mathcal{J}}} x_j$, we have that $m=|\mathcal{J}|$ and $\bar{x}_1,\dots,\bar{x}_{m}$ is a permutation of $x_1,\dots,x_{\max{(\mathcal{J})}}$.
    Hence, the set of vertex labels in the inductive structure for $\varphi_\#(\textfrak{H})$ is uniquely determined.
    Furthermore, the injectivity conditions on $\varphi(u_i)$ from 
    Definition~\ref{def:3PathStructureMap} 
    imply that each edge also has a uniquely determined label containing either $u_i$ or $H$. Hence, the edge labels in $F_2^\varphi$ are uniquely determined. Therefore, the inductive structure $F_2^\varphi$ on $\varphi_\#(\textfrak{H})$ is unique, which implies that $\varphi_\#(\textfrak{H})$ is strongly connected. In particular, $\varphi_\#(\textfrak{H})$ is an inductive element, as required. \qed
\end{proof}

Given an inductive element $I \in \Omega_{3}(G;R)$, the next theorem provides an explicit construction of a $3$-path structure map $\varphi_I\colon\mathbb{H}_I\rightarrow G$ realising $I$ as the induced image of the $3$-path element $\mathfrak{H}_I\in\Omega_3(\mathbb{H}_I;R)$.
The $3$-path sequence $\mathbb{H}_I$ which is the domain of $\varphi_I$ is carefully constructed in Table~\ref{table:HIdef} below using as input a choice of inductive structure $F_2$ for $I$. Both $\mathbb{H}_I$ and the map $\varphi_I$ depend on a choice of vertices $w_i^I \in V_G$, which is fixed first in Table~\ref{table:wiIconditions}. Although the constructions make explicit use of $F_2$, we show later in Theorem~\ref{thm:3PathStructureConstruction} that $\varphi_I$ is essentially independent of the particular choice of inductive structure, thereby justifying the choice of notation.

To begin the construction, fix an inductive element $I \in \Omega_{3}(G;R)$ and realise its inductive structure by means of a face multigraph $F_2 = F_2(x_1,\dots,x_m)$ whose vertex labels $x_1,\dots,x_m \in \Omega_2(G;R)$ are inductive elements and whose edges are labelled using inductive elements in $\Omega_1(G;R)$. By Proposition~\ref{prop:LowDimInductiveElements}, inductive elements in $\Omega_2(G;R)$ are either double edges, directed triangles, or directed squares, and inductive elements in $\Omega_1(G;R)$ are edges of $G$ up to sign. Moreover, by part (3) of Proposition~\ref{prop:exsumm}, the underlying multigraph of $F_2$ is a line or a cycle.

When $F_2$ is a line, we may assume that the indices $1,\dots,m$ are such that there is exactly one edge between $x_j$ and $x_{j+1}$ for each $j=1,\dots,m-1$. When $F_2$ is a cycle and $m\geq3$, we treat the indices modulo $m$ and assume that there is exactly one edge between $x_j$ and $x_{j+1}$ for each $j=1,\dots,m$. In each of these cases, denote the unique edge between $x_j$ and $x_{j+1}$ by $(\{ y_j, -y_j \}, v_j)$ for some $y_j \in \Omega_{2}(G;R)$ and $v_j \in V_G$. Finally, when $F_2$ is a cycle and $m=2$, we assume that $F_2$ consists of two vertices $x_1,x_2$ and two edges between them labelled $(\{ y_1, -y_1 \}, v_1),(\{ y_2, -y_2 \}, v_2)$ for some $y_1, y_2 \in \Omega_{2}(G;R)$ and $v_1,v_2\in V_G$ with $v_1\neq v_2$.

For each $j=1,\dots,m$ associate to $F_2$ the set of vertices
\begin{equation}\label{eq:HeadDeltaAtVertices}
    \delta_{x_j}
    =
    \{ v \in V_G \: | \: \delta^h_{2,v}(x_j) \neq 0 \}.
\end{equation}
These sets are used in Table~\ref{table:wiIconditions} below to define an indexing set $\mathcal{W}_I$ and vertices $w_i^I \in V_G$ for $i \in \mathcal{W}_I$.
\begin{center} 
\captionof{table}{Definition of $\mathcal{W}_I$ and the $w_i^I$ for $i\in\mathcal{W}_I$.}\label{table:wiIconditions}
\begin{tabular}{ | c | c | c | c | c | c | } 
  \hline
  \makecell{$F_2$} &  & \makecell{\bf $x_1$ \\ square} & \makecell{\bf $x_m$ \\ square} & {\bf $\mathcal{W}_I$} & {\bf $w_i^I$} \\ 
  \hline
  cycle & $m\geq1$ & - & - & $\{1,\dots,m\}$ & $w_i^I = v_i$ \\ 
  line & $m=1$ & \cmark & \cmark & $\{1,2\}$ & $\{ w_1^I,w_2^I \} = \delta_{x_1}$ \\
  line & $m=1$ & \xmark & \xmark & $\{1\}$ & $\{ w_1^I \} = \delta_{x_1}$ \\
  line & $m\geq2$ & \xmark & \xmark & $\{1,\dots,m-1\}$ & $w_i^I = v_i$ \\ 
  line & $m\geq2$ & \cmark & \xmark & $\{1,\dots,m\}$ & $w_i^I = v_{i-1}$ for $i\neq 1$, $w_1^I \in \delta_{x_1}$ and $w_1^I \neq v_1$  \\ 
  line & $m\geq2$ & \xmark & \cmark & $\{1,\dots,m\}$ & $w_i^I = v_{i}$ for $i\neq m$, $w_m^I \in \delta_{x_m}$ and $w_m^I \neq v_{m-1}$ \\ 
  line & $m\geq2$ & \cmark & \cmark & $\{1,\dots,m+1\}$ & \makecell{$w_i^I = v_{i-1}$ for $i\neq 1,m+1$, $w_1^I \in \delta_{x_1}$, \\ $w_{m+1}^I \in \delta_{x_m}$ and $w_1^I \neq v_1$, $w_{m+1}^I \neq v_{m-1}$} \\ 
  \hline
\end{tabular}
\end{center}
In this table, the first two column are the conditions on the structure of the face multigraph $F_2$ and the number $m$ of its vertices. The third and fourth columns are the conditions that the vertices $x_1$ or $x_m$ of $F_2$ as elements of $\Omega_2(G;R)$ are directed squares, with a tick indicating that the condition in the column is met and a cross that it is not. The final two columns define $\mathcal{W}_I$ and $w_i^I\in V_G$ for $i\in \mathcal{W}_I$ under the conditions determined by the first four columns.
For reasons of convenience, in the definition of $\mathbb{H}_I$ next, there is a single exceptional case in the second line of Table~\ref{table:wiIconditions} in the situation when $(w_1^I,H)\in E_G$ and $w_2^I = h(I)$. Under these conditions, we assume the indices $1$ and $2$ on $w_1^I$ and $w^I_2$ are switched, the original choice having been arbitrarily, and this will be required to make the definition of $\mathbb{H}_I$ in line $3$ of Table~\ref{table:HIdef} correct.

We also define a $3$-path sequence digraph $\mathbb{H}_I$, which depends on the structure of $I$ and $F_2$, using the conditions detailed in Table~\ref{table:HIdef} below. For the purpose of defining $\varphi_I$ later, it will be convenient in some cases to reverse the order of the indices on the vertex labels $x_1,\dots,x_m$ of $F_2$, and the conditions on when this is to be done are indicated in the final column of Table~\ref{table:HIdef}.

\begin{center}
\captionof{table}{Definition of $\mathbb{H}_I$.}\label{table:HIdef}
\begin{tabular}{ | c | c | c | c | c | c | c | c | c | c |} 
  \hline
  $F_2$ & $m = 1$ & $x_1$ square & $x_m$ square & $\mathbb{H}_I$ & reverse $x_1,\dots,x_m$ \\ 
  \hline
  cycle & $m\geq1$ & - & - &  $\mathbb{T}_m$ & \xmark \\ 
  line & $m=1$ & \xmark & - & $\mathbb{L}_1$ & \xmark \\
  line & $m=1$ & \cmark & - & $\mathbb{L}_1^u$ & \xmark \\
  line & $m\geq2$ & \xmark & \xmark & $\mathbb{L}_{m-1}^l$ & \xmark \\ 
  line & $m\geq2$ & \xmark & \cmark & $\mathbb{L}_m$ & \xmark \\ 
  line & $m\geq2$ & \cmark & \xmark & $\mathbb{L}_m$ & \cmark \\ 
  line & $m\geq2$ & \cmark & \cmark & $\mathbb{L}_m^u$ & \xmark \\
  \hline
\end{tabular}
\end{center}

In this table, the first four columns are conditions on $I$ and $F_2$, a tick indicating that the condition in the column is met, and a cross that it is not. The second to last column defines the digraph $\mathbb{H}_I$ under the conditions determined by the first four columns.
In the last column, a tick indicates that the additional step of reversing the order of the indices on $x_1,\dots,x_m$ in $F_2$ should be performed by replacing $i$ by $m-i+1$ for each $i = 1,\dots,m$.
It is easy to see that Table~\ref{table:HIdef} assigns to any inductive structure $F_2$ on $I$ a unique $3$-path sequence digraph $\mathbb{H}_I$. 

\begin{theorem}\label{thm:3PathStructureConstruction}
    Let $I\in \Omega_3(G;R)$ be an inductive element and $F_2$ an inductive structure on $I$ with vertices $x_1,\dots,x_m$. Let $w_j^I\in V_G$ be the vertices defined in Table~\ref{table:wiIconditions}, and let $\mathbb{H}_I$ be the $3$-path sequence digraph defined in Table~\ref{table:HIdef}, with $\textfrak{H}_I\in H_3(\mathbb{H}_I;R)$ denoting the associated $3$-path element.
    Then a digraph map $\varphi_I \colon \mathbb{H}_I \to G$ is determined by setting
    \begin{equation}\label{eq:VetexImages}
        \varphi_I(H) = h(I), \;\;\;
        \varphi_I(v_j) = h(x_j), \;\;\;
        \varphi_I(u_i) = w_i^I
        \;\;\; \text{and} \;\;\;\
        \varphi_I(T) = t(I),
    \end{equation}
    for $i \in \mathcal{I}$ and $j\in \mathcal{J}$. Furthermore: 
    \begin{enumerate}[(i)]
        \item
        The induced image of $\textfrak{H}_I$ under $\varphi_I$ is $\pm I$.
        \item 
        The map $\varphi_I$ is a well defined $3$-path structure map.
    \end{enumerate}
    In fact, $\mathbb{H}_I$ and $\varphi_I$ are assigned to $I$ independently of the choice of $F_2$.
\end{theorem}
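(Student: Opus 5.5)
The argument is a case analysis along the rows of Table~\ref{table:HIdef}, carried out uniformly wherever possible. First I extract structural information from the inductive structure $F_2=F_2(x_1,\dots,x_m)$. Each $x_j$ is a double edge, directed triangle or directed square by Proposition~\ref{prop:LowDimInductiveElements}, and $F_2$ is a line or cycle by part (3) of Proposition~\ref{prop:exsumm}. Since $I$ is connected, every $x_j$ has tail $t(I)$. Since $I=[x_1+\cdots+x_m]^{h(I)}$, every $h(x_j)$ has an edge to $h(I)$. Each edge $(\{y_j,-y_j\},v_j)$ of $F_2$ joins $x_j$ and $x_{j+1}$, so $y_j=\pm e_{t(I),v_j}$, and $v_j$ is a middle vertex shared by $x_j$ and $x_{j+1}$. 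The middle vertices of a square $x_j$ are exactly the two elements of $\delta_{x_j}$; a triangle or double edge has exactly one. By $h(I)$-completeness, an unpaired middle vertex $u$ of some $x_j$ satisfies either $u=h(I)$ or $(u,h(I))\in E_G$. In the line case this happens only at the free ends $x_1$ and $x_m$, and only when they are squares. These are precisely the vertices $w_1^I$ and $w_{m+1}^I$ of Table~\ref{table:wiIconditions}, and they supply the extra edges $u_1\to H$ and $u_{t+1}\to H$ of $\mathbb{L}^u_t$ and $u_t\to H$ of $\mathbb{L}_t$. Dually, a triangle at an end contributes the edge $t(I)\to h(x_j)$, which is the image of $T\to v_1$ in $\mathbb{L}_t$ or $\mathbb{L}^l_t$.

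Next I verify that \eqref{eq:VetexImages} defines a digraph map. I go through the edge list of each $3$-path sequence in Definitions~\ref{def:AsymmetricLinearSequences}--\ref{def:LowerLinearElements} and match each edge with one of the facts above. Edges $T\to u_i$ map to the edges in the labels $y_j$ or to the tail edges of end squares. Edges $u_i\to v_j$ map to the edges of $x_j$. Edges $v_j\to H$ come from the extension. The non-core edges come from the completeness analysis. For the double edge case, $\varphi$ may collapse $T\to v_1$ because $t(I)=h(x_1)$, which is allowed for a digraph map. The reversal of indices in Table~\ref{table:HIdef}, together with the exceptional swap in row two of Table~\ref{table:wiIconditions}, ensures that the end carrying the square lands on the side where $\mathbb{H}_I$ has its non-core edge.

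For (i), I compute $\varphi_{I\#}(\mathfrak H_I)$ term by term. By construction the $\varphi$-image of $\delta^h_{3,v_j}(\mathfrak H_I)$ is $\pm x_j$, and the signs propagate consistently along $F_2$ because adjacent labels cancel, so $y_j$ and $-y_j$ match the alternating signs in Definition~\ref{def:LinearEelmetns}. Upper extension by $h(I)$ commutes with $\varphi_\#$, hence $\varphi_{I\#}(\mathfrak H_I)=\pm[x_1+\cdots+x_m]^{h(I)}=\pm I$.

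For (ii), I check the conditions of Definition~\ref{def:3PathStructureMap}. Core squares of $\mathbb{H}_I$ map to the squares $x_j$, or to squares $e_{w,h(x_j),h(I)}-e_{w,h(x_{j+1}),h(I)}$. The latter are genuine directed squares because completeness forces $(v_j,h(I))\notin E_G$ and $v_j\neq h(I)$ for every paired label $v_j$. The injectivity conditions (4)--(6) come from strong connectedness. If two vertex labels had the same head, or two edge labels the same middle vertex, I would rewire or split $F_2$ into a disconnected inductive structure, in the same way as in Example~\ref{exam:2Trapezohedron}. This would contradict strong connectedness. Here the characteristic hypothesis matters: equal labels must not cancel, which is guaranteed because the labels satisfy $x_i\neq -x_j$. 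The small cases (2) and (3) follow directly from the single-vertex analysis. I expect this injectivity step, especially producing the disconnected rewiring in the multisquare situations, to be the main obstacle. Finally, independence of $F_2$ follows from the uniqueness of the inductive structure on $\varphi_{I\#}(\mathfrak H_I)=\pm I$ given by Theorem~\ref{thm:SructureMapsAreInductive}. The only remaining freedom is the ordering and reversal conventions, and these are fixed by the tables.
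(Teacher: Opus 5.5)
Your outline follows the paper's own route: a row-by-row check of Tables~\ref{table:wiIconditions} and~\ref{table:HIdef}, computing the image of $\mathfrak{H}_I$ from its faces, proving injectivity by rewiring $F_2$ into a disconnected inductive structure, and getting independence of $F_2$ from the uniqueness clause of Theorem~\ref{thm:SructureMapsAreInductive}. It inherits the paper's gap.

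In (ii) you say the core squares $e_{u_i,v_j,H}-e_{u_i,v_{j+1},H}$ go to directed squares because completeness gives $(v_j,h(I))\notin E_G$ and $v_j\neq h(I)$. A directed square also needs distinct middle vertices, i.e.\ $h(x_j)\neq h(x_{j+1})$, and completeness says nothing about that. Your fallback, rewiring when two labels share a head, breaks down exactly for adjacent labels. Suppose $x_j=e_{t,p,h}-e_{t,q,h}$ and $x_{j+1}=e_{t,q,h}-e_{t,r,h}$ are joined at $q$. Exchanging middle vertices gives $e_{t,p,h}-e_{t,r,h}$ and $e_{t,q,h}-e_{t,q,h}=0$, so nothing becomes disconnected. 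The paper is circular at the same point: it obtains condition (1) ``by $h(I)$-completeness'' and then uses (1) to exclude $j_2=j_1+1$.

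This cannot be patched, because the statement is false as written. Take $R=\mathbb{Z}_2$ and let $G$ have edges $t\to a,b,c$, $a,b,c\to h$, $h\to H$, $a\to H$, $c\to H$. Put $I=e_{t,a,h,H}+e_{t,c,h,H}\in\Omega_3(G;\mathbb{Z}_2)$.
\begin{itemize}
\item Only $a,c,h$ have edges to $H$, and no $2$-path ends at $a$ or $c$. So every vertex label of an inductive structure on $I$ is one of the three squares of the multisquare $\{a,b,c\}$ between $t$ and $h$.
\item Labels must be distinct in characteristic $2$. Hence the only inductive structures are the single vertex $e_{t,a,h}+e_{t,c,h}$, and the line $x_1=e_{t,a,h}+e_{t,b,h}$, $x_2=e_{t,b,h}+e_{t,c,h}$ with its forced edge at $b$. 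The edge is forced since $(b,H)\notin E_G$, and edges at $a,c$ are forbidden since $(a,H),(c,H)\in E_G$.
\item Both structures are connected, so $I$ is an inductive element.
\item For the line, the tables give $\mathbb{H}_I=\mathbb{L}^u_2$ with $\varphi_I(u_1)=a$, $\varphi_I(u_2)=b$, $\varphi_I(u_3)=c$ and $\varphi_I(v_1)=\varphi_I(v_2)=h$. Part (i) holds.
\item However, the core square $e_{u_2,v_1,H}-e_{u_2,v_2,H}$ is sent to $0$, which violates condition (1) of Definition~\ref{def:3PathStructureMap}.
\item The one-vertex structure gives $\mathbb{L}^u_1$ instead, so independence of $F_2$ also fails.
\item The map $\mathbb{L}^u_1\to G$ is a genuine $3$-path structure map whose image has two inductive structures. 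So the uniqueness you borrow from Theorem~\ref{thm:SructureMapsAreInductive} is false too: a square inside a multisquare is a sum of two other squares of it.
\end{itemize}
Over $\mathbb{Z}$ the analogous $I=e_{t,a,h,H}-e_{t,c,h,H}$ is not inductive. With $x_1=e_{t,a,h}-e_{t,b,h}$ and $x_2=e_{t,b,h}-e_{t,c,h}$, the labels $x_1,x_1,x_2,x_2,e_{t,c,h}-e_{t,a,h}$ form a disconnected inductive structure. This is why the failure is specific to characteristic $2$.

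What you can hope to salvage is existence. Choose $F_2$ with the fewest vertices. Then an adjacent equal-head pair can be merged into the single square $x_j+x_{j+1}$, or deleted together with a third label equal to $\pm(x_j+x_{j+1})$, contradicting minimality. Only after that should you run the rewiring arguments. The independence clause has to be dropped.
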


Before passing to the proof of Theorem~\ref{thm:3PathStructureConstruction}, we pause to formalise a simple argument which will be used repeatedly.

\begin{lemma}\label{lem:LocalStructureHitsInductive}
    In the notation of Theorem~\ref{thm:3PathStructureConstruction}, assume that $\varphi_I$ is a well defined digraph map and suppose that each $x_1,\dots,x_m$ lies in the image of $\varphi_{I\#} \colon \Omega_2(\mathbb{H}_I;R) \to \Omega_2(G;R)$. Then $\varphi_{I\#} \colon \Omega_3(\mathbb{H}_I;R) \to \Omega_3(G;R)$ satisfies  $\varphi_{I\#}(\textfrak{H}_I) = \pm I$.
\end{lemma}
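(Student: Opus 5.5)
The plan is to compute $\varphi_{I\#}(\textfrak{H}_I)$ directly as an upper extension and compare it with $I=[x_1+\cdots+x_m]^{h(I)}$. First I will write the $3$-path element in the form $\textfrak{H}_I=[\sum_{j\in\mathcal{J}} z_j]^H$, where $z_j=\delta^h_{3,v_j}(\textfrak{H}_I)$ is the unique directed square or triangle in $\mathbb{H}_I$ with tail $T$ and head $v_j$. This can be read off from Definition~\ref{def:LinearEelmetns} and equation~\eqref{eq:TrapezohedronElement}, since every summand of $\textfrak{H}_I$ has the form $e_{T,u_i,v_j,H}$. Applying $\varphi_I$ and using $\varphi_I(H)=h(I)$, every summand of $\varphi_{I\#}(\textfrak{H}_I)$ becomes $\pm e_{\varphi_I(T),\varphi_I(u_i),\varphi_I(v_j),h(I)}$. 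Hence $\varphi_{I\#}(\textfrak{H}_I)$ is the upper extension by $h(I)$ of $\sum_j \varphi_{I\#}(z_j)$, where degenerate paths are discarded on both sides, because the extension only retains allowed paths $e_{a,b,c,h(I)}$ with $(c,h(I))\in E_G$.

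Next I will use the hypothesis that each $x_k$ lies in the image of $\varphi_{I\#}$ on $\Omega_2(\mathbb{H}_I;R)$. The goal is to show that
\[
\sum_{j}\varphi_{I\#}(z_j)=\pm(x_1+\cdots+x_m)+w,
\]
where $w$ is a sum of terms killed by upper extension to $h(I)$. The key observation is that $\varphi_I(v_j)=h(x_j)$ by~\eqref{eq:VetexImages}. Together with the bigrading~\eqref{eq:Bigrading}, this lets me decompose $\Omega_2(\mathbb{H}_I;R)$ by head vertex, so that the preimage of $x_k$ can be taken with head among the $v_j$ mapping to $h(x_k)$ and tail $T$. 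In $\mathbb{H}_I$ the module $\Omega_2^{T,v_j}(\mathbb{H}_I;R)$ is free of rank one, generated by $z_j$: the digraph has no multisquares, and between $T$ and $v_j$ there is exactly one square or triangle. Consequently each $x_k=\lambda_k\varphi_{I\#}(z_{j(k)})$ plus contributions from other $z_{j'}$ with the same head image. Since the $x_k$ are inductive elements, and hence squares, triangles or double edges with coefficients $\pm1$ by Proposition~\ref{prop:LowDimInductiveElements}, comparing coefficients of a single path forces $\lambda_k=\pm1$.

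The remaining task is to fix the signs consistently. I will use the edges of $F_2$: an edge labelled $(\{y_j,-y_j\},v_j)$ between $x_j$ and $x_{j+1}$ forces the relative sign of $x_j$ and $x_{j+1}$ to agree with that of $\varphi_{I\#}(z_j)$ and $\varphi_{I\#}(z_{j+1})$. The reason is that in $\textfrak{H}_I$ the consecutive $z_j,z_{j+1}$ share the edge $e_{T,u_i}$ with opposite signs. Since $F_2$ is a line or a cycle by Proposition~\ref{prop:exsumm}, connectedness propagates a single global sign, giving $\varphi_{I\#}(\textfrak{H}_I)=\pm[x_1+\cdots+x_m]^{h(I)}=\pm I$.

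The main obstacle will be that $\varphi_I$ need not be injective, so several $z_j$ could map to chains with the same endpoints, or to overlapping paths, and this would spoil the coefficient comparison. I will first try to handle this by appealing to the freeness of $\Omega_2(G;R)$ together with the requirement $x_i\neq -x_j$ and the absence of cancellation in $I$. If that fails, I would instead compare $\delta^h_{3,u}$ of both sides at each vertex $u$, which are in $\Omega_2$ by Lemma~\ref{lem:SumFace}, and conclude that two connected elements with equal faces at every vertex and the same head agree.
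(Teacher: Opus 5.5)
Your framework is sound, and in one respect more careful than the paper's proof. Writing $\mathfrak{H}_I=[\sum_{j\in\mathcal{J}}z_j]^H$ gives $\varphi_{I\#}(\mathfrak{H}_I)=[\sum_j\varphi_{I\#}(z_j)]^{h(I)}$, with degenerate paths dropping out on both sides. So it suffices to prove the exact identity $\sum_j\varphi_{I\#}(z_j)=\pm(x_1+\cdots+x_m)$. Your propagation of a single sign along the edges of the line or cycle $F_2$ treats signs more explicitly than the paper does. The paper simply lists the elementary paths $\pm e_{t(I),w_i^I,h(x_j),h(I)}$ of $I$ and matches them with the images of the summands of $\mathfrak{H}_I$.

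The gap is the step that matches $\varphi_{I\#}(z_j)$ with the $x_k$. The bigrading together with the rank-one property of $\Omega_2^{T,v_j}(\mathbb{H}_I;R)$ only shows that $x_k$ lies in the span of those $\varphi_{I\#}(z_j)$ with $\varphi_I(v_j)=h(x_k)$.

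Nothing in the hypotheses prevents two heads $h(x_j)=\varphi_I(v_j)$ from coinciding. The injectivity conditions (4)--(6) of Definition~\ref{def:3PathStructureMap} are derived only later in the proof of Theorem~\ref{thm:3PathStructureConstruction}, after this lemma has been applied. Suppose, say, $h(x_1)=h(x_3)$ and $\varphi_{I\#}(z_1),\varphi_{I\#}(z_3)$ are squares in one multisquare. Then that span contains further squares of the multisquare, for instance $\varphi_{I\#}(z_1)+\varphi_{I\#}(z_3)$ when the two share a middle vertex with opposite signs. So comparing coefficients cannot force $x_k=\pm\varphi_{I\#}(z_{j(k)})$. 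Even granting such identities, you would still need $k\mapsto j(k)$ to be a bijection before summing.

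Neither fallback repairs this. The first does not address the span problem. The second is circular: $\delta^h_{3,u}$ of the two sides are $\sum_{\varphi_I(v_j)=u}\varphi_{I\#}(z_j)$ and $\sum_{h(x_k)=u}x_k$, and their equality is exactly the claim.

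The missing idea is that no preimages are needed, because the matching is built into $\varphi_I$ index by index:
\begin{enumerate}
\item By~\eqref{eq:VetexImages}, $\varphi_I(v_j)=h(x_j)$.
\item $\varphi_I$ sends the in-neighbours $u_i$ of $v_j$ in $\mathbb{H}_I$ to vertices $w_i^I$ of Table~\ref{table:wiIconditions}. Using the hypothesis exactly as the paper does, these make up $\delta_{x_j}$.
\item Table~\ref{table:HIdef} chooses $\mathbb{H}_I$ so that $z_j$ is a square precisely when $x_j$ is one.
\item A square, triangle or double edge with tail $t(I)$ and head $h(x_j)$ is determined up to sign by $\delta_{x_j}$.
\end{enumerate}
Together these give $\varphi_{I\#}(z_j)=\pm x_j$ with the same index $j$. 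This is an identity of chains for each $j$ separately, so coincidences between different indices do no harm. Your sign propagation along $F_2$ then makes the sign global, and applying $[-]^{h(I)}$ finishes the proof.
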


\begin{proof}
    By Proposition~\ref{prop:LowDimInductiveElements}, the labels 
    $x_j \in \Omega_2(G;R)$ are either double edges, directed triangles, or directed squares, implying that the sets $\delta_{x_j}$ from equation~\eqref{eq:HeadDeltaAtVertices} satisfy $1 \leq |\delta_{x_j}| \leq 2$ for each $1 \leq j \leq m$. 
    Since $\varphi_{I \#}\colon \Omega_2(\mathbb{H}_I;R) \to \Omega_2(G;R)$ contains each $x_j$ in its image, by the definition of the vertices $w_i^I$, it must be the case that
    \[
        \delta_{x_j} = \{w^I_{j}\}, \:
        \delta_{x_j} = \{w^I_{j-1}\}, \:
        \delta_{x_j} = \{w^I_{j+1}\}, \:
        \delta_{x_j} = \{w^I_{j-1},w^I_{j}\}, \: \text{or} \;
        \delta_{x_j} = \{w^I_{j},w^I_{j+1}\}.
    \] 
    Therefore, as $I = [x_1+\cdots+x_m]^{h}$ by definition, $I$ can be written as a sum of elementary $3$-paths 
    which either have the form $\pm e_{t(I),w_{j}^I,h(x_j),h(I)}$ and $\pm e_{t(I),w_{j-1}^I,h(x_{j}),h(I)}$, or $\pm e_{t(I),w_{j}^I,h(x_j),h(I)}$ and $\pm e_{t(I),w_{j+1}^I,h(x_{j}),h(I)}$, 
    where we treat indices modulo $m$ when $F_2$ is a cycle.
    By the construction of $\varphi_I$, any of these elementary $3$-paths appearing in $I$ is contained in the image of $\varphi_I$, while no other $3$-path of $G$ lies in the image of $\varphi_I$.
    Thus the image of the element $\textfrak{H}_I$ under $\varphi_{I\#} \colon \Omega_3(\mathbb{T}_m;R) \to \Omega_3(G;R)$ is $\pm I$.
    \qed
\end{proof}

\begin{proof}[Proof of Theorem~\ref{thm:3PathStructureConstruction}]
    We begin by showing that $\varphi_I$ is a well defined digraph map
    satisfying conditions (1), (2), and (3) of Definition~\ref{def:3PathStructureMap} and
    $\varphi_{I\#}(\textfrak{H})= \pm I$.
    Achieving this proves part $(i)$ of the theorem and begins the proof of part $(ii)$. The proof of part $(ii)$ is then completed by verifying that $\varphi_I$ satisfies the injectivity conditions (4), (5), and (6) from Definition~\ref{def:3PathStructureMap}. Finally, with $(i),(ii)$ in hand, the last statement follows by applying the second part of Theorem~\ref{thm:SructureMapsAreInductive}, which implies that $F_2$ is uniquely determined by $I$.
    
    We deal first with the case that  $F_2$ is a cycle and $\mathbb{H}_I = \mathbb{T}_m$. Then each $x_i$ is a directed square, so the $\varphi_I$-images of the edges $T \to u_i$, $u_i \to v_i$, and $u_i \to v_{i+1}$ exist in $G$, where the indices $i \in \mathcal{I} = \{1,\dots,m\}$ are treated modulo $m$.
    The $\varphi_I$-images of the remaining edges $v_i \to H$ also exist in $G$ by the $h(I)$-completeness of $F_2$. Hence, $\varphi_I$ is a strong digraph map.
    In addition, $\varphi_I$ preserves directed squares of the form $e_{u_i,v_{i},H}-e_{u_{i},v_{i+1},H}$ by $h(I)$-completeness of $F_2$, and directed squares of the form $e_{T,u_{i},v_{i}}-e_{T,u_{i+1},v_{{i}}}$, as each $x_1,\dots,x_m$ is a directed square, where we treat indices $i \in \mathcal{I} = \{1,\dots,m\}$ modulo $m$. Since $\overline{\mathbb{T}}_m = \mathbb{T}_m$, it therefore follows that $\varphi_I$ preserves directed squares in $\overline{\mathbb{T}}_m$. In particular, condition (1) of Definition~\ref{def:3PathStructureMap} is satisfied.
    Lastly, each $x_1,\dots,x_m$ is up to sign the $\varphi_{I}$-image of some $e_{T,u_{i},v_{i}}-e_{T,u_{i+1},v_{{i}}}$. Hence, the fact that $\varphi_{I\#}(\textfrak{H}_I)=\pm I$ follows from Lemma~\ref{lem:LocalStructureHitsInductive}.
    
    We now consider the various cases when $F_2$ is a line. As the arguments are similar to those when $F_2$ is a cycle above, some details will be omitted.
    We deal first with the case $m=1$, so that $F_2$ consists of a single vertex $x_1$, which may be either a directed square, directed triangle, or double edge. 
    Assuming that $x_1$ is a directed square, we have $x_1 = \pm (e_{h(x_1),w_1^I,t(I)} - e_{h(x_1),w_2^I,t(I)})$.
    Evidently, $w^I_1\neq w_2^I$, so by the conventions on $w_1^I,w_2^I$ given under Table~\ref{table:wiIconditions}, there are only two cases to deal with: either $h(I)\not\in\{w_1^I,w_2^I\}$ and $(w_1^I,h(I)),(w_2^I,h(I))\in E_G$ by the $h(I)$-completeness of $F_2$, or $h(I)=w_1^I$ and $(w_2^I,h(I))\in E_G$. 
    Table~\ref{table:HIdef} assigns to both these situations the digraph $\mathbb{L}_1^u$, so we must check clause (3) of Definition~\ref{def:3PathStructureMap}. But this is clear, as we must have $\varphi_I(v_1) \neq \varphi_I(H)$, or else $F_2$ would not be a well defined inductive structure. It's now easy to check that $\varphi_I$ is well-defined and preserves directed squares in $\overline{\mathbb{L}}{\vphantom{\mathbb{L}}}_1^u$, and finally to use Lemma~\ref{lem:LocalStructureHitsInductive} to show that $\varphi_{I\#}(\textfrak{H}_I)=\pm I$.

    The other possibilities for $x_1$ as a directed triangle or double edge are contained in row $2$ of Table~\ref{table:HIdef}. In particular, $I$ must consist of a single non-trivial $3$-path summand $e_{t(I),v_1,v_2,h(I)}$, where each edge $t(I) \to v_1$, $v_1 \to v_2$, and $v_2 \to h(I)$ is the image of a unique edge in $\mathbb{H}_I$ under $\varphi_I$. This makes $\varphi_I$ a strong digraph map on $\overline{\mathbb{L}}_1$, verifying part (2) of Definition~\ref{def:3PathStructureMap}. The remaining details are straightforward and left to the reader.
    
    Now assume that $m \geq 2$. As explained in the paragraph above Table~\ref{table:wiIconditions}, we may assume that the vertices $x_1,x_m$ have valence $1$ and the vertices $x_2,\dots,x_{m-1}$ have valence $2$. In this situation, $x_1$ and $x_m$ may be either directed squares, directed triangles, or double edges, while $x_2,\dots,x_{m-1}$ must all be directed squares.

    When both $x_1$ and $x_m$ are directed squares, $x_1 = \pm (e_{h(x_1),w_1^I,t(I)} - e_{h(x_1),w_2^I,t(I)})$ and $x_m = \pm (e_{h(x_m),w_{\max\mathcal{W}_I}^I,t(I)} - e_{h(x_m),w_{\max\mathcal{W}_I-1}^I,t(I)})$.
    Since $x_1$ has valence $1$ and $F_2$ is $h(I)$-complete, either $w_1^I=h(I)$ or $(w_1^I,h(I))\in E_G$.
    Similarly, $x_m$ having valence $1$ implies that either $w_{\max\mathcal{W}_I}^I=h(I)$ or $(w_{\max\mathcal{W}_I}^I,h(I))\in E_G$.
    There are four possibilities in total, and these are all covered by the last row of Table~\ref{table:HIdef}. The details in each case are straightforward and similar to those given above, so will be omitted.
    
    The remainder of this part of the proof includes only the cases in which one or both of $x_1$ and $x_m$ is a directed triangle or double edge. The relevant $3$-path sequence digraphs are found in rows $4$, $5$, and $6$ of Table~\ref{table:HIdef}, and the details to be checked are completely analogous to those discussed above. However, there is one exceptional case, which is that it is not possible for both $x_1$ and $x_m$ to be double edges when $m=2$, as there is no way to construct a connected face multigraph on these two vertices.

    At this stage, we have shown that $\varphi_I$ is a well defined digraph map satisfying conditions (1), (2), and (3) of Definition~\ref{def:3PathStructureMap} as well as $\varphi_{I\#}(\mathfrak{H}_I)=\pm I$. To complete the proof, it remains only to show that $\varphi_I$ is a $3$-path structure map. In particular, we must show that $\varphi_I$ satisfies the injectivity conditions in parts (4), (5), and (6) of Definition~\ref{def:3PathStructureMap}. We drop the assumptions on $F_2$ introduced above, so below $F_2$ can be either a line or a cycle. In the latter case, we always treat its indices modulo $m$.

    Firstly, if $\varphi_I$ is not injective on the vertices $u_i$, then there are $i_1,i_2 \in \mathcal{I}$ such that $i_1 < i_2$ and $\varphi_I(u_{i_1}) = \varphi_I(u_{i_2}) = w \in V_G$. We will use this to construct a disconnected inductive structure $\bar{F}_2$ on $I$, so contradicting its strong connectedness.
    Before starting, note that $|\mathcal{I}| \geq 2$ by assumption, so we may so we may discount $\mathbb{H}_I$ from being $\mathbb{L}_1$ or $\mathbb{L}_1^l$ in the following considerations.
    
    To proceed, initially assume that at least one of $u_{i_1} \to v_{1} \to H$ or $u_{i_2} \to v_{\max(\mathcal{J})} \to H$ is a directed triangle. Note that this implies that $i_1=1$ in the first case and that $i_2=\max(\mathcal{I})$ in the second. In particular, when both are directed triangles, $\mathbb{H}_I=\mathbb{L}^u_t$ and we are in the exceptional case allowed for in part (4) of Definition~\ref{def:3PathStructureMap}. Therefore, assume that just one of them is a directed triangle, in which case $\mathbb{H}_I=\mathbb{L}_t$, or $\mathbb{H}_I=\mathbb{L}_t^u$ for $t\geq 2$.
    
    We treat the case that $u_{i_1} \to v_{1} \to H$ is a directed triangle, the other cases being essentially the same. In this situation, $\mathbb{H}_I=\mathbb{L}^u_t$ for some $t\geq 2$, so 
    $(u_{i_1}, H) \in E_{\mathbb{H}_I}$ and there is a directed square in $\mathbb{H}_I$ of the form $x=\pm (e_{u_{i_2},v_j, H}-e_{u_{i_2},v_{j+1}, H})$ for some $j \in \mathcal{J}$.
    However, this implies that there is an edge $(\varphi_I(u_{i_2}), \varphi_I(H)) \in E_G$, which cannot be the case, as $\varphi_I$ preserves the directed squares contained in $\overline{\mathbb{H}}_I$, meaning that it sends $x$ to a directed square satisfying $t(\varphi_I(x)) = w = \varphi_I(u_{i_2})$ and $h(\varphi_I(x)) = \varphi_I(H)$. By definition, directed squares cannot have an edge between their head and tail vertices, so there is a contradiction.
    
    Thus we reduce to the case that neither $u_{i_1} \to v_{1} \to H$ nor $u_{i_2} \to v_{\max(\mathcal{J})} \to H$ is a directed triangle. In particular, $\mathbb{H}_I \neq \mathbb{L}^u_1$. Since the digraphs $\mathbb{L}_1$ and $\mathbb{L}^l_1$ were eliminated previously, from now on we may assume that $m\geq 2$.

    Now, if $i_1>1$, then $u_{i-1}$ exists, and by the constructions of $\varphi_I$ and $F_2$, the edge $T \to w$ is contained in at least two directed squares, whose images under $\varphi_I$ are vertices of $F_2$ with the form
    \[
        x_{k} = \pm (e_{t(I),\varphi_I(u_{i_1-1}),h(x_{k})} - e_{t(I),w,h(x_{k})})
        \;\;\; \text{and} \;\;\;
        x_{k+1} = \pm (e_{t(I),\varphi_I(u_{i_1 +1}),h(x_{k+1})} - e_{t(I),w,h(x_{k+1})})
    \]
    for some $k = 1,\dots,m-1$.
    On the other hand, if $i_1 = 1$, then $u_{i_{1}-1}$ does not exist. 
    Given the previous reductions, this is the case if and only if 
    $T \to u_{i_1} \to v_{1}$ is a directed triangle, which implies that either $\mathbb{H}_I = \mathbb{L}_t$ or $\mathbb{H}_I = \mathbb{L}_t^l$ for some $t \geq2$.
    In this case, we may take the directed triangle or double edge
    \[
         x_{k} = \pm e_{t(I),w,\varphi_I(v_{1})}
    \]
    and let $x_{k+1}$ be the directed square defined above, this being certain to exist
    as $i_1 \leq m-1$.
        
    In any case, by the construction of $\varphi_I$ there is an edge between $x_{k}$ and $x_{k+1}$ in $F_2$ labelled $(\{e_{t(I), w},-e_{t(I),w }\}, w)$.
    By an argument symmetric to that above, we obtain using $u_{i_2}$ instead of $u_{i_1}$, vertices $x_{k'},x_{k'+1}$ of $F_2$ which are joined by an edge labelled $(\{e_{t(I), w},-e_{t(I),w }\}, w)$.
    As $i_1 < i_2$, it follows that $k < k' $. Hence, the two edges in $\bar{F}_2$ labelled $(\{e_{t(I), w},-e_{t(I),w }\}, w)$ between $x_{k}$, $x_{k+1}$ and $x_{k'}$, $x_{k'+1}$ determined above are distinct. Therefore, a new inductive structure $\bar{F}_2$ on $I$ can be formed by replacing the edges between $x_{k}$, $x_{k+1}$ and $x_{k'}$, $x_{k'+1}$ by edges between $x_{k}$, $x_{k'}$ and $x_{k+1}$, $x_{k'+1}$ or $x_{k}$, $x_{k'+1}$ and $x_{k+1}$, $x_{k'}$ with labels $(\{e_{t(I), w},-e_{t(I),w }\}, w)$.
    However, as $F_2$ was a cycle or a line, $\bar{F}_2$ must be disconnected, which contradicts the strong connectedness of the inductive element $I$. This completes the proof of the injectivity condition on the $\varphi(u_i)$ for $i \in \mathcal{I}$.

    Turning now to the remaining injectivity conditions, if $\varphi_I$ is not injective on the vertices $v_j$, then there are $j_1,j_2 \in \mathcal{J}$ such that $j_1 < j_2$ and $\varphi_I(v_{j_1}) = \varphi_I(v_{j_2}) = w \in V_G$.
    As before, we will use the assumption to construct a disconnected inductive structure $\bar{F}_2$ on $I$, contradicting the strong connectedness of $I$. 
    Noting that in the present situation $m\geq2$ is clearly implied by the fact that $|\mathcal{J}|\geq2$, the face multigraph $\bar{F}_2$ will be obtained from $F_2$ by replacing two of its vertices and defining a new edge structure. The new vertices will depend on elements $y_1,y_2\in\Omega_2(\mathbb{H}_I;R)$, to be defined as follows.

    If $j_1 =1$ and $\mathbb{H}_I = \mathbb{L}_t$ or $\mathbb{L}_t^l$, then take $y_1$ to be whichever of the directed triangle $\pm e_{T, u_{1}, v_{j_1}}$ is required for $\varphi_I(y_1)$ to be a vertex of $F_2$. Otherwise, there is $i_1 \in\mathcal{I}$ such that
    $\Omega_2(\mathbb{H}_I;R)$ contains a directed square
    \begin{equation}\label{eq:squares1}
        y_1 = \pm (e_{T,u_{i_1},v_{j_1}}-e_{T,u_{i_1+1},v_{j_1}})
    \end{equation}
    and $\varphi_I(y_1)$ is a vertex of $F_2$. Similarly, if $j_2 =\max(\mathcal{J})$ and $\mathbb{H}_I = \mathbb{L}_t^l$, then take $y_2$ to be whichever of the directed triangles $\pm (e_{T,u_{\max(\mathcal{I})},v_{j_2}})$ is required for $\varphi_I(y_2)$ to be a vertex of $F_2$.
    Otherwise, there is an $i_2 \in\mathcal{I}$ such that
    $\Omega_2(\mathbb{H}_I;R)$ contains a directed square
    \begin{equation}\label{eq:squares2}
        y_2 = \pm (e_{T,u_{i_2},v_{j_2}}-e_{T,u_{i_2+1},v_{j_2}})
    \end{equation}
    and $\varphi_I(y_2)$ is a vertex of $F_2$. 
    Crucially, in each situation $j_1 < j_2$ implies that $y_1 \neq y_2$.

    If either $y_1$ or $y_2$ is a directed triangle, then either $(\varphi_I(T) = t(I),w) \in E_G$, or $\varphi_I(T) = t(I) = w$. Hence, as $\varphi_I$ preserves directed squares on $\overline{\mathbb{H}}_I$,
    both $y_1$ and $y_2$ must be directed triangles and we are in the case concerning $\mathbb{L}^l_t$ allowed for in part (5) of Definition~\ref{def:3PathStructureMap}.
    
    Otherwise, both $y_1$ and $y_2$ are directed squares. Since $i_1 < i_2$ and $\varphi_I(y_1)$, $\varphi_I(y_2)$ are joined by a path in $F_2$, the two directed squares from equations~\eqref{eq:squares1}~and~\ref{eq:squares2} must have the same sign.
    We have $j_2 \neq j_1 +1$, as $\varphi_I$ must preserve the directed square $e_{u_{i_1+1}=u_{i_2},v_{j_1},H}-e_{u_{i_1+1}=u_{i_2},v_{j_2},H}$, and together with the proof above of parts (4) and (6) of Definition~\ref{def:3PathStructureMap} for $\varphi_I(u_i)$ and $i \in \mathcal{I}$, this implies that $\varphi(u_{i_1}), \varphi(u_{i_1+1}), \varphi(u_{i_2}), \varphi(u_{i_2+1}) \in V_G$ are all distinct. It follows that
    $\varphi_I(y_1)$ and $\varphi_I(y_2)$ are contained within a multisquare in $G$. Therefore, we may take directed squares $y'_1,y'_2 \in \Omega_2(G;R)$ such that $\varphi_I(y_1)+\varphi_I(y_2)=y'_1+y'_2$ and
    \begin{align}\label{eq:SquareChoices}
        y'_1 = \pm (e_{\varphi_I(T),\varphi_I(u_{i_1}),w}-e_{\varphi_I(T),\varphi_I(u_{i_2+1}),w})
        \;\; \text{and} \;\;
        y'_2 = \pm (e_{\varphi_I(T),\varphi_I(u_{i_2}),w}-e_{\varphi_I(T),\varphi_I(u_{i_1+1}),w}).
    \end{align}

    We now form the inductive structure $\bar{F}_2$ on $I$ by replacing the vertices labelled $\varphi_I(y_1)$, $\varphi_I(y_2)$ by vertices labelled $y'_1$ and $y'_2$,
    retaining the edges and their labels incident to $\varphi_I(y_1)$ and $\varphi_I(y_2)$ and attaching them to an appropriate $y'_1$ and $y'_2$ labelled vertex instead.
    By assumption, $F_2$ is either a cycle or a line, and due to the construction of $y'_1$ and $y'_2$ in equation~\eqref{eq:SquareChoices}, $\bar{F}_2$ is disconnected. However, this contradicts the strong connectedness of the inductive element $I$. \qed
\end{proof}

Finally, we determine the extent to which the $3$-path structure maps constructed in the previous theorem are uniquely determined by the inductive element $I$. This programme is complicated, however, by a degree of redundancy which arises from the presence of symmetries of the underlying digraphs. In particular, for each $t'=2,3,\dots$, there is a natural action of the cyclic group $C_{t'}$ of order $t'$
on the trapezohedron digraph $\mathbb{T}_{t'}$ from Definition~\ref{def:Trapezohedron}, which is determined by the action of a generator $\sigma \in C_{t'}$ on vertices as
\[
    \sigma(T) = T,\;
    \sigma(u_i) = u_{i+1},\;
    \sigma(v_i) = v_{i+1},\;
    \text{and} \;
    \sigma(H) = H
\]
for $i=1,\dots,t'$, with indices read modulo $t'$. Moreover, it is straightforward to show that any group acting on $\mathbb{T}_{t'}$ by digraph maps must be a subgroup of $C_{t'}$. Therefore, we write $\mathrm{Sym}(\mathbb{T}_{t'}) = C_{t'}$. The $C_{t'}$-action on $\mathbb{T}_{t'}$ extends linearly to an action on $\mathcal{A}_3(\mathbb{T}_{t'};R)$, and subsequently descends to an action on $\Omega_3(\mathbb{T}_{t'};R)$.

Similarly, there are natural actions of $C_2=\{1_{c_2}, \sigma \}$ on the $3$-path sequence digraphs $\mathbb{L}_t^u$ and $\mathbb{L}_t^l$, for $t=1,2,\dots$. In each case, the $C_2$ action is determined by $\sigma(T)=T$, $\sigma(H)=H$ and
\begin{align*}
    & \text{for $\mathbb{L}_t^u$ by} 
    & \sigma(u_i) &= u_{t-i+2} 
    & \text{and} 
    && \sigma(v_i) &= v_{t-i+1} 
    &&\text{for $i=1,\dots,t+1$ and $j=1,\dots,t$;}
    \\
    & \text{for $\mathbb{L}_t^l$ by}
    & \sigma(u_i) &= u_{t-i+1}
    & \text{and}
    && \sigma(v_i) &= v_{t-i+2} 
    && \text{for $i=1,\dots,t$ and $j=1,\dots,t+1$; \;\:}
\end{align*}
Since the $C_2$-actions are effected through digraph maps, they define also linear actions on the module of path chains. In each case, this action switches the sign of the $3$-path element. Extending the notation introduced above, write $\mathrm{Sym}(\mathbb{L}_t^u) = \mathrm{Sym}(\mathbb{L}_t^l) = C_2$. 
Also, as there are no non-trivial group actions on $\mathbb{L}_t$ by digraph maps, write $\mathrm{Sym}(\mathbb{L}_t) = 1$.

The symmetry groups constructed above appear in the statement of Theorem~\ref{thm:Omega3Explicitly} below, which asserts that the 3-path structure maps constructed in Theorem~\ref{thm:SructureMapsAreInductive} are unique up to their actions. Before passing to the theorem, we pause to establish some technical facts that will be used in its proof.

\begin{lemma}\label{lem:3PathStructHitsDirTriAndDblEdg}
    Let $\varphi \colon \mathbb{H} \to G$ be a $3$-path structure map and denote by $\textfrak{H}\in\Omega_3(\mathbb{H};R)$ the $3$-path element associated to $\mathbb{H}$. The following statements hold: 
    \begin{enumerate}
        \item 
        If either
        \begin{enumerate}[(a)]
            \item 
            $\mathbb{H}=\mathbb{L}_2$ and $\varphi(u_2)=\varphi(v_1)$, or
            \item 
            $\mathbb{H}=\mathbb{L}_t$, $\mathbb{L}_t^u$, or $\mathbb{L}_t^l$ for some $t\geq 1$ and $\varphi(T)=\varphi(H)$,
        \end{enumerate}
        then there is a directed triangle or double edge $x\in\Omega_2(\varphi(\mathbb{H});R)$ that is not the $\varphi_\#$-image of any directed triangle in $\mathbb{H}$.
        Moreover, this $x$ cannot be written as $\pm \delta^h_{3,u}(\varphi_{\#}(\textfrak{H}))$ or $\pm \delta^t_{3,u}(\varphi_{\#}(\textfrak{H}))$ for any $u \in V_G$.
        \item
        Otherwise, any directed triangle or double edge in $\varphi(\mathbb{H})$ is the $\varphi_\#$-image of some directed triangle in $\mathbb{H}$.
    \end{enumerate}
\end{lemma}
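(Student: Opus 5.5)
Our approach is a direct combinatorial analysis of the image digraph $\varphi(\mathbb{H})$. Its vertices are $\varphi(T),\varphi(u_i),\varphi(v_j),\varphi(H)$ and its edges are images of edges of $\mathbb{H}$ that are not collapsed. A directed triangle or double edge in $\varphi(\mathbb{H})$ is a $2$-path $e_{a,b,c}$ with $(a,b),(b,c)\in E_{\varphi(\mathbb{H})}$ and either $(a,c)\in E_{\varphi(\mathbb{H})}$ or $a=c$. The first step is to use Lemma~\ref{lem:3PathStructureStraongMap}: $\varphi$ is strong on the core $\overline{\mathbb{H}}$, and part (1) of Definition~\ref{def:3PathStructureMap} says core squares go to squares. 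Hence the only collapses possible are along the non-core edges ($T\to v_1$, $u_t\to H$, $u_1\to H$, $u_{t+1}\to H$, $T\to v_{t+1}$). Vertex identifications are constrained by the injectivity clauses (4)--(6), by the layering $T,u,v,H$ of $\overline{\mathbb{H}}$, and by the fact that a square's tail and head are distinct and non-adjacent.

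For part (2), we enumerate the length-$2$ paths $e_{a,b,c}$ in $\varphi(\mathbb{H})$ by the layers of their preimage edges. The source layer pairs are $(T,u)\to(u,v)$, $(u,v)\to(v,H)$, and those involving the extra edges. For each such pair, if the image is a triangle or double edge, we show the closing edge $(a,c)$ must already be the image of an edge of $\mathbb{H}$ closing a triangle there, or that $a=c$ forces $\varphi(T)=\varphi(H)$ or $\varphi(u_2)=\varphi(v_1)$. The argument runs as follows.

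First, a triangle $e_{\varphi(T),\varphi(u_i),\varphi(v_j)}$ in $G$ with $\varphi(T)\to\varphi(v_j)$: if $v_j$ lies in a core square $e_{T,u_i,v_j}-e_{T,u_{i'},v_j}$, this contradicts that the image is a square (tail and head non-adjacent). So $v_j$ is $v_1$ or $v_{t+1}$ at a triangle end, which is the $\varphi_\#$-image of an existing triangle of $\mathbb{H}$. The symmetric argument handles the $(u,v,H)$ layer via the upper squares.

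Second, we handle paths built from non-core edges such as $T\to v_1\to H$ or $T\to u_t\to H$. If these close to a triangle, then $(\varphi(T),\varphi(H))$ is an edge or $\varphi(T)=\varphi(H)$. In the edge case we use that some core square tail/head pair, or the composite square $e_{T,u_t,H}-e_{T,v_1,H}$, sits over it. We must check that this contradicts square-preservation except in the small cases.

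Third, we treat paths whose vertices come from mixed identifications permitted by (4)--(6). The only low-$t$ coincidence not excluded by the layering is $\varphi(u_2)=\varphi(v_1)$ in $\mathbb{L}_2$. There $u_2$ and $v_1$ are not joined by a core square, since $\mathbb{L}_2$ has the square $e_{T,u_2,v_2}-e_{T,u_1,v_2}$ and the non-core edge $u_2\to H$.

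For part (1), we exhibit $x$ explicitly. In case (b), take $x=e_{\varphi(T),\varphi(u_1),\varphi(T)}$ or $e_{\varphi(T),\varphi(v_j),\varphi(H)}$, a double edge arising from $T\to u\to\cdots$ collapsing back. In case (a), take $x=e_{\varphi(T),\varphi(u_1),\varphi(v_1)=\varphi(u_2)}$ together with the edge $\varphi(T)\to\varphi(u_2)$; concretely this is the triangle $e_{\varphi(T),\varphi(u_1),\varphi(u_2)}$. Neither is the image of a triangle of $\mathbb{H}$, since its vertex pattern is not of a triangle type in $\mathbb{H}$. To show $x\neq\pm\delta^h_{3,u}(\varphi_\#\mathfrak{H})$ or $\pm\delta^t_{3,u}(\varphi_\#\mathfrak{H})$, we compute these faces using the proof of Theorem~\ref{thm:SructureMapsAreInductive}. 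The $\delta^h_{3,u}$ are the vertex labels $x_j=\varphi_\#(\delta^h_{3,v_j}\mathfrak{H})$, and dually the $\delta^t_{3,u}$ are images of the upper squares and triangles. We then compare endpoints: for instance, in case (a) $x$ has head $\varphi(u_2)$, while every $x_j$ has head in $\varphi(\{v_j\})$. This distinguishes $x$ either by endpoints or by its path support.

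The main obstacle we expect is the exhaustiveness of the case enumeration in part (2), in particular for $t=1,2$. There the injectivity clauses are weakest, and coincidences between $u$- and $v$-layer vertices, for example $\varphi(u_2)=\varphi(v_1)$, are not ruled out by the square-preservation argument. Our plan is to organise this by first listing all vertex identifications compatible with strongness on the core and square preservation (finitely many patterns, uniform in $t\ge3$), and only then checking triangles pattern by pattern.
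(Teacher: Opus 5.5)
Your witness in case (a) is wrong, and the step in your plan for (2) that handles the non-core edges cannot be carried out.

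\textbf{Case (a).} Write $w:=\varphi(u_2)=\varphi(v_1)$. Then your $x=e_{\varphi(T),\varphi(u_1),w}$ is exactly $\varphi_\#(e_{T,u_1,v_1})$, the image of the lower triangle of $\mathbb{L}_2$. It is also a face: since $\varphi_\#(L_2)=e_{\varphi(T),\varphi(u_1),w,\varphi(H)}+e_{\varphi(T),w,\varphi(v_2),\varphi(H)}-e_{\varphi(T),\varphi(u_1),\varphi(v_2),\varphi(H)}$, it equals $\delta^h_{3,w}(\varphi_\#\mathfrak{H})$. So $x$ violates both assertions of (1). Your endpoint comparison (head $\varphi(u_2)$ versus heads in $\varphi(\{v_j\})$) fails precisely because of the hypothesis $\varphi(u_2)=\varphi(v_1)$.

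The genuinely new triangle is $e_{\varphi(u_1),w,\varphi(v_2)}$, built from the edges $u_1\to v_1$, $u_2\to v_2$ and $u_1\to v_2$. This also explains why only $t=2$ appears: for $t\geq 3$ the identification $\varphi(u_t)=\varphi(v_1)$ is still allowed, but $(u_1,v_t)\notin E_{\mathbb{L}_t}$. This triangle is not a face. Every $\delta^h_{3,u}$-face has tail $\varphi(T)\neq\varphi(u_1)$, and every $\delta^t_{3,u}$-face has head $\varphi(H)\neq\varphi(v_2)$, both by strongness on the core.

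Your case (b) witnesses are fine with the same endpoint argument: $e_{\varphi(T),\varphi(v_1),\varphi(T)}$ for $\mathbb{L}_t$ and $\mathbb{L}^l_t$, and $e_{\varphi(T),\varphi(u_1),\varphi(T)}$ for $\mathbb{L}^u_t$.

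\textbf{Part (2).} You propose to rule out an edge $(\varphi(T),\varphi(H))$ by square preservation. But $e_{T,u_t,H}-e_{T,v_1,H}$ is not a core square, so no hypothesis constrains it. Moreover, the identifications $\varphi(u_t)=\varphi(H)$ in $\mathbb{L}_t$, $\varphi(u_1)=\varphi(H)$ in $\mathbb{L}^u_t$ and $\varphi(v_1)=\varphi(T)$ in $\mathbb{L}^l_t$ are all permitted (Proposition~\ref{prop:3PathStructHitsDirTriAndDblEdg}(b)--(d)).

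If $\varphi(u_t)=\varphi(H)$, then $T\to u_t$ maps to an edge $(\varphi(T),\varphi(H))$. Now $e_{\varphi(T),\varphi(v_1),\varphi(H)}$ is a directed triangle of $\varphi(\mathbb{L}_t)$ that is the image of neither $e_{T,u_1,v_1}$ nor $e_{u_t,v_t,H}$. The reversed double edge $e_{\varphi(v_t),\varphi(H),\varphi(v_t)}$ is likewise not an image. Concretely, $T,u_1,u_2,v_1,v_2,H\mapsto a,b,h,c,d,h$ is a $3$-path structure map from $\mathbb{L}_2$ onto its image. It satisfies neither (a) nor (b), and its image contains the triangle $e_{a,c,h}$.

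So (2) cannot be proved as literally stated. The paper's proof has the same gap: it dismisses case (iii) as ``straightforward''. Its own picture of $\mathbb{L}_3$ with $\varphi(v_1)=\varphi(T)$ in Example~\ref{ex:3PathInjectivityFailures} already shows the triangle $e_{\varphi(T),\varphi(u_3),\varphi(H)}$.

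What can be proved is (2) restricted to directed triangles and double edges that equal $\pm\delta^h_{3,u}(\varphi_\#\mathfrak{H})$ or $\pm\delta^t_{3,u}(\varphi_\#\mathfrak{H})$, together with the full (2) for trapezohedra. This is essentially the form in which the lemma is invoked in Theorem~\ref{thm:Omega3Explicitly}. To prove it, use your endpoint bookkeeping, together with an enumeration of the admissible vertex identifications; that enumeration is what your final plan amounts to, and it coincides with the paper's case split (i)--(v).
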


When reading the proof of the lemma, the reader may find it useful to refer to Proposition~\ref{prop:3PathStructHitsDirTriAndDblEdg} below for a summary of the non-injectivity conditions for $3$-path structure maps, and to refer to Example~\ref{ex:3PathInjectivityFailures} for a visual component.

\begin{proof}
    Without loss of generality, we assume that $\varphi$ is surjective. Hence, $\varphi(\mathbb{H}) = G$. Then applying Lemma~\ref{lem:3PathStructureStraongMap}, as $V_{\mathbb{H}}=V_{\overline{\mathbb{H}}}$, a directed triangle or double edge in $G$ that is not the $\varphi$-image of a directed triangle in $\mathbb{H}$, can only exist if $\varphi$ is not injective on vertices.
    We will consider each possibility from the following exhaustive list in turn.
    \begin{enumerate}[(i)]
        \item 
        We have $\varphi(T)=\varphi(H)$.
        \item 
        We have $\varphi(u_i) = \varphi(T)$ for some $i\in \mathcal{I}$, or $\varphi(v_j) = \varphi(H)$ for some $j \in \mathcal{J}$.
        \item 
        We have $\varphi(u_i) = \varphi(H)$ for some $i\in \mathcal{I}$, or $\varphi(v_j) = \varphi(T)$ for some $j \in \mathcal{J}$.
        \item 
        We have $\varphi(u_{i_1}) = \varphi(u_{i_2})$ for some $i_1\neq i_2 \in \mathcal{I}$, or $\varphi(v_{j_1}) = \varphi(v_{j_2})$ for some $j_1 \neq j_2 \in \mathcal{J}$.
        \item 
        We have $\varphi(u_i) = \varphi(v_j)$ for some $i\in \mathcal{I}$ and $j \in \mathcal{J}$.
    \end{enumerate}
    We show that any combination of possible cases does not conflict with the statement of the theorem.

    We first note that condition (i) may hold for any $\mathbb{H}$, and if $\varphi$ is injective on $V_{\mathbb{H}} \setminus \{ T,H \}$, then it is straightforward to see that the statement of the lemma holds (refer to Example~\ref{ex:3PathInjectivityFailures} for illustrations). In particular, regarding the second part of (1), for $x\in\Omega_2(\varphi(\mathbb{H});R)$ that is not in the $\varphi_\#$-image of any directed triangle in $\mathbb{H}$, there are no paths $e_{w_0,w_1,w_2,w_3}\in\Omega_2(\mathbb{H};R)$ such that $e_{\varphi(w_0),\varphi(w_1),\varphi(w_2)}$ or $e_{\varphi(w_1),\varphi(w_2),\varphi(w_3)}$ is equal to $x$. Hence, $x \neq  \pm \delta^h_{3,u}(\varphi_{\#}(\textfrak{H}))$ and $x \neq \pm \delta^t_{3,u}(\varphi_{\#}(\textfrak{H}))$ for any $u \in V_G$.
    
    Case (ii) cannot occur, as $(T,u_i), (v_j,H)  \in E_{\overline{\mathbb{H}}}$ and for all $i \in \mathcal{I}$ and $j \in \mathcal{J}$, and $\varphi$ is a strong digraph map on $\overline{\mathbb{H}}$ by Lemma~\ref{lem:3PathStructureStraongMap}.

    Consider now case (iii). If $\varphi(u_i)=\varphi(H)$, then $\varphi$ would not preserve any directed square of the form $\pm (e_{u_i,v,H}-e_{u_i,v',H})$ for $v \neq v' \in E_{\mathbb{H}}$. Since this square is contained in $\overline{\mathbb{H}}$, this would contradict the assumption that $\varphi$ is a $3$-path structure map. Similarly, if $\varphi(v_j)=\varphi(H)$, then $\varphi$ would not preserve any directed square of the form $\pm (e_{T,u,v_j}-e_{T,u',v_j})$ for $u \neq u' \in E_{\mathbb{H}}$, again contradicting the assumption that $\varphi$ is a $3$-path structure map.
    Thus the only possibilities are that $\mathbb{H}=\mathbb{L}_t$ with $i=t$, $\mathbb{H}=\mathbb{L}_t^u$ with $i=1$, $\mathbb{H}=\mathbb{L}_t^u$ with $i=t+1$, $\mathbb{H}=\mathbb{L}_t$ with $j=1$, $\mathbb{H}=\mathbb{L}_t^l$ with $j=1$, or $\mathbb{H}=\mathbb{L}_t^l$ with $j=t+1$ for some $t = 1,2,\dots$.
    However, note that if $\mathbb{H}=\mathbb{L}_1^u$, then it's not possible that $\varphi(u_1)=\varphi(H)=\varphi(u_2)$, as in this case $\varphi$ would not preserve the directed square $e_{T,u_1,v_1}-e_{T,u_2,v_1}$. Similarly, if $\mathbb{H}=\mathbb{L}_1^l$, then it's not possible that $\varphi(v_1)=\varphi(H)=\varphi(v_2)$, as in this case $\varphi$ would not preserve the directed square $e_{u_1,v_1,H}-e_{u_1,v_2,H}$.

    Now, conditions (i) and (iii) cannot hold simultaneously as this would result in $\varphi$ failing to be a strong digraph map on $\overline{\mathbb{H}}$. Conditions (iii) and (iv) can hold simultaneously, but only when $\mathbb{H} = \mathbb{L}_t^u$ or $\mathbb{L}_t^l$ and condition (iii) is met twice, in which case condition (iv) is implied by condition (iii).
    When condition (v) is not met, it is straightforward to see that the statement of the lemma holds when condition (iii) does (refer to Example~\ref{ex:3PathInjectivityFailures} for illustrations). 

    For case (iv), the injectivity conditions from parts (4), (5), (6) of Definition~\ref{def:3PathStructureMap} imply that, up to symmetry, the only possibilities are either $\mathbb{H}=\mathbb{L}_t^u$ with $i_1 = 1$ and $i_2 = t+1$, or $\mathbb{H}=\mathbb{L}_t^u$ with $j_1 = 1$ and $j_2 = t+1$ for some $t\geq1$. However, note that the case $t=1$ cannot occur, as the directed squares $\pm(e_{T,u_1,H}-e_{T,u_{2},H})$ and $\pm(e_{T,v_1,H}-e_{T,v_{2},H})$ are contained in $\overline{\mathbb{H}}$ and would not be preserved by $\varphi$.
    In any case, it is straightforward to see that the statement of the lemma holds for either of the possibilities above under conditions (i) or (iii) (refer to Example~\ref{ex:3PathInjectivityFailures} for illustrations).

    In the case of part (v), no directed square of the form $\pm (e_{u_i,v,H}-e_{u_i,v',H})$ for $v \neq v' \in E_{G}$ can be preserved by $\varphi$, as $(\varphi(v_j)=\varphi(u_i), \varphi(H))\in E_{G}$.
    Similarly, no directed square of the form $\pm (e_{T,u,v_j}-e_{T,u',v_j})$ for $u \neq u' \in E_{G}$ can be preserved by $\varphi$, as $(\varphi(T), \varphi(v_j)=\varphi(u_i))\in E_{G}$. 
    Therefore, combining the restrictions derived above, the only possibility is that $\mathbb{H}=\mathbb{L}_t$ with $i=t$ and $j=1$.
    In particular, conditions (v) and (iv) cannot hold simultaneous, as they can only occur for different domains $\mathbb{H}$.
    Furthermore, it cannot be the case that condition (v) holds when $t=1$, or that condition (iii) holds alongside condition (v), as either would result in $\varphi$ failing to be a strong digraph map on $\overline{\mathbb{H}}$ contradicting Lemma~\ref{lem:3PathStructureStraongMap}.
    Finally, in the settings when condition (v) holds, even when combined with condition (i), the statement of the lemma is easily verified (refer to Example~\ref{ex:3PathInjectivityFailures} for illustrations).
    \qed
\end{proof}

It is worth noting that during the last proof, a full description of the conditions under which a $3$-path structure map may fail to be injective on vertices was given. Although they will not be used in the remainder of the section, it seems worthwhile to record these results, as they eventually provide a complete geometric understanding of the path chains in $\Omega_3(G;R)$.

\begin{proposition}\label{prop:3PathStructHitsDirTriAndDblEdg}
    Let $\varphi \colon \mathbb{H} \to G$ be a $3$-path structure map with $3$-path element $\textfrak{H}\in\Omega_3(\mathbb{H};R)$. If $\varphi$ fails to be injective on vertices, then this may occur only as one or more of the following possibilities.
    \begin{enumerate}[(a)]
        \item 
            $\mathbb{H}$ is any $3$-path sequence digraph and $\varphi(T)=\varphi(H)$.
        \item
            $\mathbb{H}=\mathbb{L}_t$ for some $t\geq1$ and at least one of $\varphi(u_t) = H$ or $\varphi(v_1) = T$.
        \item
            $\mathbb{H}=\mathbb{L}_t^u$ for some $t\geq1$ and at least one of $\varphi(u_1) = \varphi(H)$ or $\varphi(u_{t+1}) = \varphi(H)$, but not both if $t=1$.
        \item
            $\mathbb{H}=\mathbb{L}_t^l$ for some $t\geq1$ and at least one of $\varphi(v_1) = \varphi(T)$ or $\varphi(v_{t+1}) = \varphi(T)$, but not both if $t=1$.
        \item 
            $\mathbb{H}=\mathbb{L}_{t'}^u$ for some $t'\geq2$ and $\varphi(u_1) = \varphi(u_{t'+1})$.
        \item 
            $\mathbb{H}=\mathbb{L}_{t'}^l$ for some $t'\geq2$ and $\varphi(v_1) = \varphi(v_{t'+1})$.
        \item
            $\mathbb{H}=\mathbb{L}_{t'}$ for some $t'\geq2$ and $\varphi(u_{t'}) = \varphi(v_1)$.
    \end{enumerate}
    The possible simultaneous combinations of these conditions that $\varphi$ may realise are summarised in the following tables.
    \begin{center}
    \begin{tabular}{c|ccc}
        $\mathbb{H}=\mathbb{L}_t$ & (b) & (g)\\
         \hline
         (a) & \xmark & \cmark \\
         (b) & \cellcolor{shade} & \xmark \\
    \end{tabular}
    \;\;\;\;\;\;\;\;\;\;\;\;
    \begin{tabular}{c|ccc}
        $\mathbb{H}=\mathbb{L}_t^u$ & (c) & (e) \\
         \hline
         (a) & \cmark & \cmark \\
         (c) & \cellcolor{shade} & \cmark \\
    \end{tabular}
    \;\;\;\;\;\;\;\;\;\;\;\;
    \begin{tabular}{c|ccc}
        $\mathbb{H}=\mathbb{L}_t^l$ & (d) & (f) \\
         \hline
         (a) & \cmark & \cmark \\
         (d) & \cellcolor{shade} & \cmark \\
    \end{tabular}
    \end{center}
    In particular, neither the combination
    (a),(c),(e), nor the combination (a),(d),(f) can hold simultaneously.
    \qed
\end{proposition}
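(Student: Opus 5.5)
The proposition only collects what the proof of Lemma~\ref{lem:3PathStructHitsDirTriAndDblEdg} already shows, so I would prove it by re-running that case analysis and recording its outcomes. Let $\varphi\colon\mathbb{H}\to G$ be a $3$-path structure map. I would use two facts throughout. First, $\varphi$ is strong on $\overline{\mathbb{H}}$ (Lemma~\ref{lem:3PathStructureStraongMap}), so no edge of $\overline{\mathbb{H}}$ can collapse. Second, $\varphi$ sends every directed square of $\overline{\mathbb{H}}$ to a directed square, so the head and tail of each such square cannot be identified or joined by an edge. Any failure of injectivity is one of the five types (i)--(v) listed in that proof; I would treat each type in turn.

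\textbf{Types (i) and (ii).} Type (i) is exactly (a). Type (ii) cannot occur, because $T\to u_i$ and $v_j\to H$ are edges of $\overline{\mathbb{H}}$.

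\textbf{Type (iii).} A vertex $u_i$ that is the tail of a core square $e_{u_i,v,H}-e_{u_i,v',H}$ cannot map to $\varphi(H)$. Likewise a vertex $v_j$ that heads a core square $e_{T,u,v_j}-e_{T,u',v_j}$ cannot map to $\varphi(T)$. This leaves only the boundary vertices $u_t$ or $v_1$ in $\mathbb{L}_t$, which gives (b). It leaves $u_1,u_{t+1}$ in $\mathbb{L}^u_t$, giving (c), and $v_1,v_{t+1}$ in $\mathbb{L}^l_t$, giving (d). The restriction ``not both if $t=1$'' holds because identifying both would destroy the core square $e_{T,u_1,v_1}-e_{T,u_2,v_1}$, or its dual square in $\mathbb{L}^l_1$.

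\textbf{Type (iv).} I would use the injectivity clauses (4)--(6) of Definition~\ref{def:3PathStructureMap}. They leave only $u_1=u_{t+1}$ in $\mathbb{L}^u_t$ and $v_1=v_{t+1}$ in $\mathbb{L}^l_t$. For $t=1$ these are again excluded by the core squares, giving (e) and (f) with $t'\ge2$.

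\textbf{Type (v).} Suppose $\varphi(u_i)=\varphi(v_j)$. Then $\varphi(u_i)\to\varphi(H)$ and $\varphi(T)\to\varphi(v_j)$ are edges of $G$. Hence neither $u_i$ can be the tail of a core square, nor $v_j$ the head of one. Only $\mathbb{L}_t$ with $i=t$, $j=1$ survives. For $t=1$ the edge $u_1\to v_1$ is core and would collapse, so $t\ge2$, which gives (g).

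\textbf{Combination tables.} I would check each cell directly.
\begin{itemize}
\item (a) and (b) are incompatible: if $\varphi(T)=\varphi(H)=\varphi(u_t)$, the core edge $T\to u_t$ collapses; dually for $\varphi(v_1)=\varphi(T)=\varphi(H)$ and the core edge $v_1\to H$.
\item (b) and (g) are incompatible: together they force $\varphi(u_t)=\varphi(v_1)$ to equal $\varphi(T)$ or $\varphi(H)$, so a core edge collapses.
\item The combinations marked \cmark\ are realised by explicit small quotients of the $3$-path digraphs, which satisfy conditions (1)--(6), in the manner of Example~\ref{ex:3PathInjectivityFailures}.
\item (a), (c), (e) cannot all hold. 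Then $\varphi(T)=\varphi(H)=\varphi(u_1)$, so the core edge $T\to u_1$ collapses. The same argument excludes (a), (d), (f).
\end{itemize}

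The main obstacle I expect is confirming that every \cmark\ entry is genuinely realisable. For each one I must exhibit an explicit identification satisfying all of conditions (1)--(6) of Definition~\ref{def:3PathStructureMap}. The delicate case is (c) combined with (e) when $t\ge2$: collapsing $u_1$, $u_{t+1}$ and $H$ together must still leave every core square a genuine directed square.
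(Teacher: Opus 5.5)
\textbf{Gap: two \cmark{} entries in the combination tables are impossible.}

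Your reduction to the collision types (i)--(v) is exactly the case analysis in the paper's proof of Lemma~\ref{lem:3PathStructHitsDirTriAndDblEdg}. Your derivation of the list (a)--(g), and of the \xmark{} entries for (a)\&(b) and (b)\&(g), is correct. The problem is that you propose to realise every \cmark{} cell by an explicit quotient, yet your own argument against (a),(c),(e) never uses (e). From $\varphi(T)=\varphi(H)$ together with $\varphi(u_1)=\varphi(H)$ (or $\varphi(u_{t+1})=\varphi(H)$), the core edge $T\to u_1$ (or $T\to u_{t+1}$) of $\mathbb{L}^u_t$ collapses. This contradicts Lemma~\ref{lem:3PathStructureStraongMap}. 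Dually, (a) with (d) collapses the core edge $v_1\to H$ or $v_{t+1}\to H$ of $\mathbb{L}^l_t$.

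This is exactly your argument for (a)\&(b). It is also what the paper's proof of the lemma states: ``conditions (i) and (iii) cannot hold simultaneously''. Hence no $3$-path structure map realises the \cmark{} in the (a)-row, (c)-column of the $\mathbb{L}^u_t$ table, or in the (a)-row, (d)-column of the $\mathbb{L}^l_t$ table. Your step ``the \cmark{} combinations are realised by explicit small quotients'' fails there, and the proposal contradicts itself. What can actually be proved is that these two cells are \xmark{}, so the printed tables are in error there. Once that is recorded, the closing ``in particular'' sentence is immediate.

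\textbf{The case you flagged is fine, and a reading of Definition~\ref{def:3PathStructureMap} should be made explicit.}

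The case you flag as delicate is harmless. (c) with (e) forces $\varphi(u_1)=\varphi(u_{t+1})=\varphi(H)$. This identification only adds the edge $\varphi(T)\to\varphi(H)$ and the reverse edges $\varphi(H)\to\varphi(v_1)$ and $\varphi(H)\to\varphi(v_t)$. None of these joins the tail and head of a core square, since those pairs are $T,v_j$ and $u_j,H$ with $2\le j\le t$. So the $\mathbb{L}^u_2$ and $\mathbb{L}^u_3$ pictures in Example~\ref{ex:3PathInjectivityFailures} are valid. The cells that genuinely need an exhibited map are (a)\&(g), (a)\&(e), (c)\&(e), (a)\&(f) and (d)\&(f). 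For each you should write the quotient down and check clauses (1)--(6), rather than gesture at it.

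Finally, your type (iv) step, like the paper's, needs clause (6) of Definition~\ref{def:3PathStructureMap} to still impose injectivity on the $v_j$ of $\mathbb{L}^u_t$ and the $u_i$ of $\mathbb{L}^l_t$. Clauses (4) and (5) alone do not give this: identifying $v_1$ with $v_3$ in $\mathbb{L}^u_3$ preserves every core square. State this reading explicitly.
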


We include some illustrations of the non-injectivity conditions given in Proposition~\ref{prop:3PathStructHitsDirTriAndDblEdg} in the following example. Although tangential to the main developments of this section, the diagrams below may serve as a useful reference while reading the proof of Theorem~\ref{thm:Omega3Explicitly}.

\begin{exmp}\label{ex:3PathInjectivityFailures}
    The following diagrams give examples of the non-injectivity conditions from part (a) of Proposition~\ref{prop:3PathStructHitsDirTriAndDblEdg} when $\mathbb{H}=\mathbb{T}_2$, $\mathbb{T}_3$, $\mathbb{L}_1$, $\mathbb{L}_2$, $\mathbb{L}_1^u$, and $\mathbb{L}_2^u$:
    \begin{center}
        \tikz {
            \node (T) at (0,0) {$\varphi(T)=\varphi(H)$};
            \node (u1) at (-0.75,1) {$\varphi(u_1)$};
            \node (u2) at (0.75,1) {$\varphi(u_2)$};
            \node (v1) at (-0.75,2.5) {$\varphi(v_1)$};
            \node (v2) at (0.75,2.5) {$\varphi(v_2)$};
            \draw[->] (T) -- (u1);
            \draw[->] (T) -- (u2);
            \draw[->] (u1) -- (v1);
            \draw[->] (u1) -- (v2);
            \draw[->] (u2) -- (v2);
            \draw[->] (u2) -- (v1);
            \draw[->] (v1) to [out=225,in=162.5] (T);
            \draw[->] (v2) to [out=-45,in=17.5] (T);
        }
        \;\;\;\;\;\;
        \tikz {
            \node (T) at (0.75,0) {$\varphi(T)=\varphi(H)$};
            \node (u1) at (-0.75,1) {$\varphi(u_1)$};
            \node (u2) at (0.75,1) {$\varphi(u_2)$};
            \node (u3) at (2.25,1) {$\varphi(u_3)$};
            \node (v1) at (-0.75,2.5) {$\varphi(v_1)$};
            \node (v2) at (0.75,2.5) {$\varphi(v_2)$};
            \node (v3) at (2.25,2.5) {$\varphi(v_3)$};
            \draw[->] (T) -- (u1);
            \draw[->] (T) -- (u2);
            \draw[->] (T) -- (u1);
            \draw[->] (T) -- (u3);
            \draw[->] (u1) -- (v1);
            \draw[->] (u1) -- (v2);
            \draw[->] (u2) -- (v2);
            \draw[->] (u2) -- (v3);
            \draw[->] (u3) -- (v3);
            \draw[->] (u3) -- (v1);
            \draw[->] (v1) to [out=225,in=180] (T);
            \draw[->] (v3) to [out=-45,in=0] (T);
            \draw[->] (v2) to [out=-112.5,in=137.5] (T);
        }
        \;\;\;\;\;\;
        \tikz {
            \node (T) at (0,0) {$\varphi(T)=\varphi(H)$};
            \node (u1) at (0.75,1.5) {$\varphi(u_1)$};
            \node (v1) at (-0.75,2.5) {$\varphi(v_1)$};
            \draw[->] (T) to[bend left=12.5] (u1);
            \draw[->] (u1) to[bend left=12.5] (T);
            \draw[->] (T) to[bend left=10] (v1);
            \draw[->] (v1) to[bend left=10] (T);
            \draw[->] (u1) -- (v1);
        }
        \;\;\;\;\;\;
        \tikz {
            \node (T) at (2,0) {$\varphi(T)=\varphi(H)$};
            \node (u1) at (1,1.25) {$\varphi(u_1)$};
            \node (u2) at (3,1.25) {$\varphi(u_2)$};
            \node (v1) at (0,2.5) {$\varphi(v_1)$};
            \node (v2) at (2,2.5) {$\varphi(v_2)$};
            \draw[->] (T) -- (u1);
            \draw[->] (T) to [out=163,in=-110] (v1);
            \draw[->] (v1) to[bend right=30] (T);
            \draw[->] (u1) -- (v1);
            \draw[->] (u1) -- (v2);
            \draw[->] (u2) -- (v2);
            \draw[->] (v2) -- (T);
            \draw[->] (T) to[bend left=12.5] (u2);
            \draw[->] (u2) to[bend left=12.5] (T);
        }
        \;\;\;\;\;\;
        \tikz{
            \node (T) at (0,0) {$\varphi(T)=\varphi(H)$};
            \node (u1) at (-1,1.25) {$\varphi(u_1)$};
            \node (u2) at (1,1.25) {$\varphi(u_2)$};
            \node (v1) at (0,2.5) {$\varphi(v_1)$};
            \draw[->] (u1) -- (v1);
            \draw[->] (u2) -- (v1);
            \draw[->] (u1) to[bend left=12.5] (T);
            \draw[->] (T) to[bend left=12.5] (u1);
            \draw[->] (u2) to[bend left=12.5] (T);
            \draw[->] (T) to[bend left=12.5] (u2);
            \draw[->] (v1) -- (T);
        }
        \;\;\;\;\;\;
        \tikz{
            \node (T) at (2.25,0) {$\varphi(T)=\varphi(H)$};
            \node (u1) at (0.75,1.25) {$\varphi(u_1)$};
            \node (u2) at (2.25,1.25) {$\varphi(u_2)$};
            \node (u3) at (3.75,1.25) {$\varphi(u_3)$};
            \node (v1) at (1.5,2.5) {$\varphi(v_1)$};
            \node (v2) at (3,2.5) {$\varphi(v_2)$};
            \draw[->] (T) -- (u2);
            \draw[->] (u1) -- (v1);
            \draw[->] (u2) -- (v1);
            \draw[->] (u2) -- (v2);
            \draw[->] (u3) -- (v2);
            \draw[->] (u1) to[bend left=5] (T);
            \draw[->] (T) to[bend left=12.5] (u1);
            \draw[->] (u3) to[bend left=12.5] (T);
            \draw[->] (T) to[bend left=5] (u3);
            \draw[->] (v1) to[bend right=15] (T);
            \draw[->] (v2) to[bend left=15] (T);
        }
    \end{center}

    The following are examples of the conditions in parts (b), (c), and (e) of Proposition~\ref{prop:3PathStructHitsDirTriAndDblEdg} when $(i)$ $\mathbb{H} = \mathbb{L}_3$ and $\varphi(v_1) = \varphi(T)$, $(ii)$ $\mathbb{H} = \mathbb{L}_3$, $\varphi(u_1) = \varphi(H)$, and $\varphi(v_3) = \varphi(T)$, and $(iii)$ $\mathbb{H} = \mathbb{L}_3^u$, and $\varphi(u_1) = \varphi(u_3) = H$.
    \begin{center}
        \tikz{
            \node (T) at (1.25,0) {$\varphi(T)=\varphi(v_1)$};
            \node (u1) at (0,1.1) {$\varphi(u_1)$};
            \node (u2) at (1.25,1.1) {$\varphi(u_2)$};
            \node (u3) at (2.5,1.1) {$\varphi(u_3)$};
            \node (v2) at (0.625,2.2) {$\varphi(v_2)$};
            \node (v3) at (1.875,2.2) {$\varphi(v_3)$};
            \node (H) at (1.25,3.3) {$\varphi(H)$};
            \draw[->] (u1) to[bend left=15] (T);
            \draw[->] (T) to[bend left=15] (u1);
            \draw[->] (T) -- (u2);
            \draw[->] (T) -- (u3);
            \draw[->] (u1) -- (v2);
            \draw[->] (u2) -- (v2);
            \draw[->] (u2) -- (v3);
            \draw[->] (u3) -- (v3);
            \draw[->] (v2) -- (H);
            \draw[->] (v3) -- (H);
            \draw[->] (u3) to [out=70,in=0] (H);
            \draw[->] (T) to[bend left=90] (H);
        }
        \;\;\;
        \tikz{
            \node (T) at (2.25,0) {$\varphi(T)=\varphi(v_3)$};
            \node (u1) at (0.75,1.1) {$\varphi(u_1)$};
            \node (u2) at (2.25,1.1) {$\varphi(u_2)$};
            \node (v2) at (1.5,2.2) {$\varphi(v_2)$};
            \node (v3) at (3,2.2) {$\varphi(v_3)$};
            \node (H) at (1.5,3.3) {$\varphi(H)=\varphi(u_1)$};
            \draw[->] (u1) to[bend left=10] (T);
            \draw[->] (T) to[bend left=15] (u1);
            \draw[->] (T) -- (u2);
            \draw[->] (u1) -- (v2);
            \draw[->] (u2) -- (v2);
            \draw[->] (u2) -- (v3);
            \draw[->] (v3) to[bend left=10] (H);
            \draw[->] (H) to[bend left=15] (v3);
            \draw[->] (T) to[bend left=80] (H);
            \draw[->] (v2) -- (H);
        }
        \;\;\;
        \tikz {
            \node (T) at (1.25,0) {$\varphi(T)$};
            \node (u2) at (0.625,1.1) {$\varphi(u_2)$};
            \node (u3) at (1.875,1.1) {$\varphi(u_3)$};
            \node (v1) at (0,2.2) {$\varphi(v_1)$};
            \node (v2) at (1.25,2.2) {$\varphi(v_2)$};
            \node (v3) at (2.5,2.2) {$\varphi(v_3)$};
            \node (H) at (1.25,3.3) {$\varphi(H)=\varphi(u_1)=\varphi(u_3)$};
            \draw[->] (T) -- (u2);
            \draw[->] (T) -- (u3);
            \draw[->] (u2) -- (v1);
            \draw[->] (u2) -- (v2);
            \draw[->] (u3) -- (v2);
            \draw[->] (u3) -- (v3);
            \draw[->] (v2) -- (H);
            \draw[->] (v1) to[bend left=15] (H);
            \draw[->] (H) to[bend left=15] (v1);
            \draw[->] (v3) to[bend left=15] (H);
            \draw[->] (H) to[bend left=15] (v3);
            \draw[->] (T) to[bend left=77.5] (H);
        }
    \end{center}
    
    The following diagrams show the combination of the conditions in part (e) and part (a) or (c) of Proposition~\ref{prop:3PathStructHitsDirTriAndDblEdg}.
    In the first two images, $\mathbb{H} = \mathbb{L}_2^u$ and $\mathbb{H} = \mathbb{L}_3^u$, with (e) holding. In the third and fourth images, $\mathbb{H} = \mathbb{L}_2^u$ and $\mathbb{H} = \mathbb{L}_3^u$, with (e) and (a) holding. In the last two images, $\mathbb{H} = \mathbb{L}_2^u$ and $\mathbb{H} = \mathbb{L}_3^u$, with (e) and (c) holding.
    \begin{center}
        \tikz {
            \node (T) at (1.5,0) {$\varphi(T)$};
            \node (u1) at (-0.5,1.25) {$\varphi(u_1)=\varphi(u_3)$};
            \node (u2) at (2,1.25) {$\varphi(u_2)$};
            \node (v1) at (1,2.5) {$\varphi(v_1)$};
            \node (v2) at (2.5,2.5) {$\varphi(v_2)$};
            \node (H) at (1.5,3.75) {$\varphi(H)$};
            \draw[->] (T) -- (u1);
            \draw[->] (T) -- (u2);
            \draw[->] (u1) -- (v1);
            \draw[->] (u2) -- (v1);
            \draw[->] (u2) -- (v2);
            \draw[->] (v1) -- (H);
            \draw[->] (v2) -- (H);
            \draw[->] (u1) to[bend left=20] (H);
            \draw[->] (u1) to[bend right=7.5] (v2);
        }
        \;\;\;\;\;\; \;\;\;\;\;\;
        \tikz {
            \node (T) at (1,0) {$\varphi(T)$};
            \node (u1) at (-1,1.25) {$\varphi(u_1)=\varphi(u_4)$};
            \node (u2) at (1,1.125) {$\varphi(u_2)$};
            \node (u3) at (2.25,1.25) {$\varphi(u_3)$};
            \node (v1) at (0,2.5) {$\varphi(v_1)$};
            \node (v2) at (1.4,2.5) {$\varphi(v_2)$};
            \node (v3) at (2.845,2.5) {$\varphi(v_3)$};
            \node (H) at (1,3.75) {$\varphi(H)$};
            \draw[->] (T) -- (u1);
            \draw[->] (T) -- (u2);
            \draw[->] (T) -- (u3);
            \draw[->] (u1) -- (v1);
            \draw[->] (u2) -- (v1);
            \draw[->] (u2) -- (v2);
            \draw[->] (u3) -- (v2);
            \draw[->] (u3) -- (v3);
            \draw[->] (v1) -- (H);
            \draw[->] (v2) -- (H);
            \draw[->] (v3) -- (H);
            \draw[->] (u1) to[bend left=35] (H);
            \draw[->] (u1) to[bend right=6.25] (v3);
        }
    \end{center}
    \vspace{-1.6cm}
    \begin{center}
        \tikz {
            \node (T) at (1.5,0) {$\varphi(T)=\varphi(H)$};
            \node (u1) at (-0.5,1.25) {$\varphi(u_1)=\varphi(u_3)$};
            \node (u2) at (2.25,1.25) {$\varphi(u_2)$};
            \node (v1) at (1,2.5) {$\varphi(v_1)$};
            \node (v2) at (2.75,2.5) {$\varphi(v_2)$};
            \draw[->] (T) -- (u2);
            \draw[->] (u1) -- (v1);
            \draw[->] (u2) -- (v1);
            \draw[->] (u2) -- (v2);
            \draw[->] (v1) -- (T);
            \draw[->] (u1) to[bend left=11] (T);
            \draw[->] (T) to[bend left=11] (u1);
            \draw[->] (u1) to[bend left=65] (v2);
            \draw[->] (v2) to[bend left=42.5] (T);
        }
        \;\;\;\;\;\; \;\;\;\;\;\;
        \tikz {
            \node (T) at (1,0) {$\varphi(T)=\varphi(H)$};
            \node (u1) at (-1,1.25) {$\varphi(u_1)=\varphi(u_4)$};
            \node (u2) at (1,1.125) {$\varphi(u_2)$};
            \node (u3) at (2.25,1.25) {$\varphi(u_3)$};
            \node (v1) at (0,2.5) {$\varphi(v_1)$};
            \node (v2) at (1.4,2.5) {$\varphi(v_2)$};
            \node (v3) at (2.845,2.5) {$\varphi(v_3)$};
            \draw[->] (T) -- (u2);
            \draw[->] (T) -- (u3);
            \draw[->] (u1) -- (v1);
            \draw[->] (u2) -- (v1);
            \draw[->] (u2) -- (v2);
            \draw[->] (u3) -- (v2);
            \draw[->] (u3) -- (v3);
            \draw[->] (v1) to[bend right=10] (T);
            \draw[->] (v2) to[bend left=27.5] (T);
            \draw[->] (v3) to[bend left=42.5] (T);
            \draw[->] (u1) to[bend left=10] (T);
            \draw[->] (T) to[bend left=10] (u1);
            \draw[->] (u1) to[bend left=90] (v3);
        }
    \end{center}
    \vspace{-1.6cm}
    \begin{center}
        \tikz {
            \node (T) at (1.5,0) {$\varphi(T)$};
            \node (u1) at (-1,1.25) {$\varphi(u_1)=\varphi(u_3) = \varphi(H)$};
            \node (u2) at (2,1.25) {$\varphi(u_2)$};
            \node (v1) at (1,2.5) {$\varphi(v_1)$};
            \node (v2) at (2.5,2.5) {$\varphi(v_2)$};
            \draw[->] (T) -- (u1);
            \draw[->] (T) -- (u2);
            \draw[->] (u1) to[bend left=10] (v1);
            \draw[->] (v1) to[bend left=10] (u1);
            \draw[->] (u2) -- (v1);
            \draw[->] (u2) -- (v2);
            \draw[->] (u1) to[bend left=75] (v2);
            \draw[->] (v2) to[bend right=50] (u1);
        }
        \;\;
        \tikz {
            \node (T) at (1,0) {$\varphi(T)$};
            \node (u1) at (-1.75,1.25) {$\varphi(u_1)=\varphi(u_4)=\varphi(H)$};
            \node (u2) at (1,1.25) {$\varphi(u_2)$};
            \node (u3) at (2.25,1.25) {$\varphi(u_3)$};
            \node (v1) at (0,2.5) {$\varphi(v_1)$};
            \node (v2) at (1.4,2.5) {$\varphi(v_2)$};
            \node (v3) at (2.845,2.5) {$\varphi(v_3)$};
            \draw[->] (T) -- (u1);
            \draw[->] (T) -- (u2);
            \draw[->] (T) -- (u3);
            \draw[->] (u1) to[bend left=10] (v1);
            \draw[->] (v1) to[bend left=10] (u1);
            \draw[->] (u2) -- (v1);
            \draw[->] (u2) -- (v2);
            \draw[->] (u3) -- (v2);
            \draw[->] (u3) -- (v3);
            \draw[->] (v2) to[bend right=50] (u1);
            \draw[->] (u1) to[bend left=90] (v3);
            \draw[->] (v3) to[bend right=70] (u1);
        }
        \;\;\;\;\;\;
    \end{center}
    
    The following examples show the combination of the conditions in parts (g) and (a) of Proposition~\ref{prop:3PathStructHitsDirTriAndDblEdg}. Here the diagrams show $(i)$ $\mathbb{H} = \mathbb{L}_2$ with (g) holding, $(ii)$ $\mathbb{H} = \mathbb{L}_3$ with (g) holding, $(iii)$ $\mathbb{H} = \mathbb{L}_2$ with (a) and (g) holding, and $(iv)$ $\mathbb{H} = \mathbb{L}_3$ with (a) and (g) holding.
    \begin{center}
        \tikz {
            \node (T) at (1,0.25) {$\varphi(T)$};
            \node (u1) at (1,1.25) {$\varphi(u_1)$};
            \node (v1) at (-0.25,2.5) {$\varphi(v_1)=\varphi(u_2)$};
            \node (v2) at (2.25,2.5) {$\varphi(v_2)$};
            \node (H) at (1,3.5) {$\varphi(H)$};
            \draw[->] (T) -- (u1);
            \draw[->] (T) to[bend left=30] (v1);
            \draw[->] (u1) -- (v1);
            \draw[->] (u1) -- (v2);
            \draw[->] (v1) -- (H);
            \draw[->] (v2) -- (H);
            \draw[->] (v1) -- (v2);
        }
        \;\;\;\;\;\;\;\;\;\;\;\;
        \tikz {
            \node (T) at (1.5,0.25) {$\varphi(T)$};
            \node (u1) at (0.75,1.25) {$\varphi(u_1)$};
            \node (u2) at (2.25,1.25) {$\varphi(u_2)$};
            \node (v1) at (-0.5,2.5) {$\varphi(v_1)=\varphi(u_3)$};
            \node (v2) at (1.5,2.5) {$\varphi(v_2)$};
            \node (v3) at (3,2.5) {$\varphi(v_3)$};
            \node (H) at (1.25,3.5) {$\varphi(H)$};
            \draw[->] (T) -- (u1);
            \draw[->] (T) -- (u2);
            \draw[->] (T) to [out=170,in=270] (v1);
            \draw[->] (u1) -- (v1);
            \draw[->] (u1) -- (v2);
            \draw[->] (u2) -- (v2);
            \draw[->] (u2) -- (v3);
            \draw[->] (v1) -- (H);
            \draw[->] (v2) -- (H);
            \draw[->] (v3) -- (H);
            \draw[->] (v1) to[bend left=90] (v3);
        }
        \\
        \tikz {
            \node (u1) at (1,1.25) {$\varphi(u_1)$};
            \node (v1) at (-0.3,2.5) {$\varphi(v_1)=\varphi(u_2)$};
            \node (v2) at (2.25,2.5) {$\varphi(v_2)$};
            \node (H) at (1,3.75) {$\varphi(H)=\varphi(T)$};
            \draw[->] (u1) -- (v1);
            \draw[->] (u1) -- (v2);
            \draw[->] (v1) to[bend left=12.5] (H);
            \draw[->] (H) to[bend left=12.5] (v1);
            \draw[->] (v2) -- (H);
            \draw[->] (v1) -- (v2);
            \draw[->] (H) -- (u1);
        }
        \;\;\;\;\;\;\;\;\;\;\;\;
        \tikz {
            \node (u1) at (0.75,1.25) {$\varphi(u_1)$};
            \node (u2) at (2.25,1.25) {$\varphi(u_2)$};
            \node (v1) at (-0.5,2.5) {$\varphi(v_1)=\varphi(u_3)$};
            \node (v2) at (1.5,2.5) {$\varphi(v_2)$};
            \node (v3) at (3,2.5) {$\varphi(v_3)$};
            \node (H) at (1.25,3.5) {$\varphi(H)=\varphi(T)$};
            \draw[->] (u1) -- (v1);
            \draw[->] (u1) -- (v2);
            \draw[->] (u2) -- (v2);
            \draw[->] (u2) -- (v3);
            \draw[->] (v1) to[bend left=12.5] (H);
            \draw[->] (H) to[bend left=12.5] (v1);
            \draw[->] (v2) -- (H);
            \draw[->] (v3) -- (H);
            \draw[->] (v1) to[bend left=115] (v3);
            \draw[->] (H) to[bend left=28] (u2);
            \draw[->] (H) to[bend right=11] (u1);
        }
    \end{center}
\end{exmp}

We now state and prove the uniqueness theorem for $3$-path structure maps.

\begin{theorem}\label{thm:Omega3Explicitly}
    For any inductive element $I \in \Omega_3(G;R)$ there is a $3$-path structure map $\varphi\colon\mathbb{H}\rightarrow G$ such that $\varphi_{\#}(\mathfrak{H})=\pm I$. Moreover, this $3$-path structure map is unique up to the $\mathrm{Sym}(\mathbb{H})$ action on its domain.
\end{theorem}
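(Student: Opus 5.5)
Existence is already supplied by Theorem~\ref{thm:3PathStructureConstruction}: given an inductive element $I$ and any inductive structure $F_2$ on it, the map $\varphi_I\colon\mathbb{H}_I\to G$ of equation~\eqref{eq:VetexImages} is a $3$-path structure map with $\varphi_{I\#}(\mathfrak{H}_I)=\pm I$. So the work is in uniqueness. The plan is to start from an arbitrary $3$-path structure map $\varphi\colon\mathbb{H}\to G$ with $\varphi_\#(\mathfrak{H})=\pm I$, and to show that, after composing with some element of $\mathrm{Sym}(\mathbb{H})$, both $\mathbb{H}=\mathbb{H}_I$ and $\varphi=\varphi_I$ hold.

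The first step is to recover the inductive structure from $\varphi$. The proof of Theorem~\ref{thm:SructureMapsAreInductive} builds the face multigraph $F_2^\varphi$ explicitly. Its vertices are $x_j=\varphi_\#(\delta^h_{3,v_j}(\mathfrak{H}))$ and its edges are labelled by $\varphi_\#(e_{T,u_i})$. That theorem also shows that $F_2^\varphi$ is the unique inductive structure on $I=\pm\varphi_\#(\mathfrak{H})$, so $F_2^\varphi=F_2$. Consequently the following data are read off from $F_2$ alone:
\begin{itemize}
\item whether $F_2$ is a line or a cycle;
\item the number $m=|\mathcal{J}|$ of its vertices;
\item which of the end vertices $x_1,x_m$ are directed squares rather than triangles or double edges.
\end{itemize}

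The second step is to compare these data with the rows of Table~\ref{table:HIdef}. A trapezohedron gives a cycle. The digraph $\mathbb{L}_t$ gives a line whose two ends are one triangle and one square. The digraph $\mathbb{L}^u_t$ gives a line with both ends squares, and $\mathbb{L}^l_t$ a line with both ends triangles. The case $\mathbb{L}_1$ versus $\mathbb{L}_1^u$ is separated by whether $x_1$ is a square. This forces $\mathbb{H}=\mathbb{H}_I$.

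One subtlety needs Lemma~\ref{lem:3PathStructHitsDirTriAndDblEdg}. When $\varphi$ is non-injective, extra triangles or double edges may appear in $\varphi(\mathbb{H})$. The lemma says that any such triangle is not of the form $\pm\delta^h_{3,u}(I)$, so it can never occur as a vertex of $F_2$. This guarantees that the types of the $x_j$ genuinely reflect the domain.

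The third step is to pin down the vertex images. We have $\varphi(T)=t(I)$, $\varphi(H)=h(I)$ and $\varphi(v_j)=h(x_j)$. The image $\varphi(u_i)$ is the vertex labelling the edge between consecutive $x_j$, or an element of $\delta_{x_1}$ or $\delta_{x_m}$ at a square end. Hence $\varphi$ agrees with $\varphi_I$ once the linear or cyclic ordering of the vertices of $F_2$ is matched with the indexing of $\mathcal{J}$. For a cycle of length $m$, these orderings differ by a rotation, which is the $C_m$ action on $\mathbb{T}_m$. Reflections are excluded because they are not digraph maps of $\mathbb{T}_m$: $u_i$ must map to the vertex shared by $x_i,x_{i+1}$ in the orientation dictated by the edges $u_i\to v_i,v_{i+1}$. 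For $\mathbb{L}^u_t$ and $\mathbb{L}^l_t$, reversing the line is exactly the $C_2$ action. For $\mathbb{L}_t$, the asymmetry of the ends (triangle versus square) fixes the orientation, matching $\mathrm{Sym}(\mathbb{L}_t)=1$. The free choices at square ends in Table~\ref{table:wiIconditions} are forced, since $\delta_{x_1}$ has exactly two elements and one of them is already used as the edge label $v_1$. The exceptional swap for $\mathbb{L}_1^u$ is absorbed by its $C_2$ symmetry.

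I expect the main obstacle to be this orientation and uniqueness bookkeeping in the degenerate non-injective cases. There, coincidences such as $\varphi(u_1)=\varphi(u_{t+1})$, $\varphi(T)=\varphi(H)$ or $\varphi(u_t)=\varphi(v_1)$ might a priori allow two different labellings of $\mathbb{H}$ to give the same image. I would handle these by running through the case list of Proposition~\ref{prop:3PathStructHitsDirTriAndDblEdg}. In each case the injectivity conditions (4)--(6) of Definition~\ref{def:3PathStructureMap}, together with the uniqueness of edge labels in $F_2$, determine each $\varphi(u_i)$ and $\varphi(v_j)$ up to the symmetry group.
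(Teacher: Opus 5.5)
\textbf{Main gap: reflections of the trapezohedron.} The step that fails is your exclusion of reflections for $\mathbb{T}_m$. Reflections \emph{are} digraph automorphisms of $\mathbb{T}_m$. Let $\rho$ fix $T$ and $H$, and set $\rho(u_i)=u_{m-i}$ and $\rho(v_j)=v_{m+1-j}$, with indices mod $m$.
\begin{itemize}
\item $\rho$ carries the edges $u_i\to v_i$ and $u_i\to v_{i+1}$ to $u_{m-i}\to v_{m-i+1}$ and $u_{m-i}\to v_{m-i}$, so it is an automorphism.
\item It permutes the directed squares of $\mathbb{T}_m$.
\item It reverses the sign of the trapezohedron element: $\rho_\#(T_m)=\sum_k (e_{T,u_k,v_{k+1},H}-e_{T,u_k,v_k,H})=-T_m$.
\end{itemize}
Hence, whenever $\varphi\colon\mathbb{T}_m\to G$ is a $3$-path structure map with $\varphi_\#(T_m)=\pm I$, so is $\varphi\circ\rho$. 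Condition (6) of Definition~\ref{def:3PathStructureMap} makes $\varphi$ injective on the $u_i$ and on the $v_j$, so $\varphi\circ\rho\neq\varphi\circ\sigma^k$ for every $k$. Already for $m=2$, $\rho$ swaps $v_1,v_2$ while fixing $u_1,u_2$. For a minimal instance, take $G=\mathbb{T}_3$ and compare $\mathrm{id}$ with $\rho$.

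Reading the cycle $F_2$ backwards produces exactly $\varphi\circ\rho$, so your orientation argument cannot be repaired. For $\mathbb{T}_m$, uniqueness holds at best up to the dihedral group of order $2m$ ($C_2\times C_2$ when $m=2$). In characteristic $2$, fixing the sign does not help, since $-I=I$. The paper's claim that every group acting on $\mathbb{T}_{t'}$ is a subgroup of $C_{t'}$ has the same defect. So this part of the statement must be corrected, by replacing $\mathrm{Sym}(\mathbb{T}_m)$ with the full automorphism group, rather than proved. Your treatment of $\mathbb{L}_t$, $\mathbb{L}^u_t$ and $\mathbb{L}^l_t$ in the last step is fine.

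\textbf{Second gap: uniqueness of the inductive structure.} Your identification of $\mathbb{H}$ takes a different route from the paper. The paper compares two structure maps through the images of their directed triangles (Lemma~\ref{lem:3PathStructHitsDirTriAndDblEdg}), then fixes the index by counting path summands, and never uses uniqueness of inductive structures. Your shortcut rests entirely on the assertion in Theorem~\ref{thm:SructureMapsAreInductive} that $F_2^\varphi$ is the only inductive structure on $I$. That assertion fails once multisquares are present.

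Here is a counterexample. Over $R=\mathbb{Z}_2$, add to $\mathbb{T}_2$ a vertex $c$ with edges $T\to c\to v_1$. The inclusion $\varphi$ is a $3$-path structure map. Besides $F_2^\varphi$, the element $I=\varphi_\#(T_2)$ has a second connected $H$-complete inductive structure. Its vertices are $e_{T,u_1,v_1}+e_{T,c,v_1}$, $e_{T,c,v_1}+e_{T,u_2,v_1}$ and $e_{T,u_1,v_2}+e_{T,u_2,v_2}$, forming a $3$-cycle with edges at $c$, $u_2$ and $u_1$. The edge at $c$ is required because $c\not\to H$. For this $F_2$, Table~\ref{table:HIdef} gives $\mathbb{H}_I=\mathbb{T}_3$, and $\varphi_I$ sends two of the $v_j$ to $v_1$. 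So both $F_2^\varphi=F_2$ and $\mathbb{H}=\mathbb{H}_I$ fail here. The ``any $F_2$'' form of existence you quote fails as well.

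\textbf{Repair.} Drop the auxiliary $F_2$ and compare two structure maps $\varphi,\varphi'$ with $\varphi_\#(\mathfrak{H})=\pm\varphi'_\#(\mathfrak{H}')$ directly.
\begin{itemize}
\item By the injectivity conditions, the vertex labels of $F_2^\varphi$ are exactly the nonzero $\delta^h_{3,w}(I)$. The one exception is the coincidence $\varphi(v_1)=\varphi(v_{t+1})$ allowed for $\mathbb{L}^l_t$, where that element splits into its two triangle or double-edge summands.
\item Every edge is forced. Each ridge at a vertex $w'$ with $w'\neq h(I)$ and $(w',h(I))\notin E_G$ occurs in exactly two vertex labels, with opposite signs.
\end{itemize}
Hence $F_2^\varphi=\pm F_2^{\varphi'}$ is an invariant of $I$. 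Your table comparison and vertex bookkeeping then go through, with the dihedral correction above. This argument is shorter than the paper's triangle case analysis, and it makes Lemma~\ref{lem:3PathStructHitsDirTriAndDblEdg} unnecessary, including in your ``subtlety'' paragraph.
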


\begin{proof}
    We want to show that in all cases the $3$-path structure maps $\varphi_I$ constructed in Theorem~\ref{thm:3PathStructureConstruction} are unique up to sign and the specified group actions.
    We begin by showing that the type of $3$-path sequence digraph $\mathbb{H}_I$ in the domain of $\varphi_I$
    is uniquely determined up to the specified conditions and then verify that there is a unique value of $t$ or $t'$ for which a $3$-path structure map can exist in the case of each type of $3$-path sequence digraph. Justifying that $\varphi_I$ is unique up to the $\mathrm{Sym}(\mathbb{H}_I)$ action on its domain will then complete the proof. Throughout, we assume that $I$ has an inductive structure $F_2$ with $m$ vertices. 
    
    An observation we use below is that the $3$-path sequence digraphs can be characterised by their directed triangles. The trapezohedra contain no directed triangles, while any other $3$-path sequence digraph
    contains $(i)$ either a unique directed triangle $e_{T,u_1,v_1}$ or $e_{u_1,v_1,H}$, and $(ii)$ either a unique directed triangle $e_{T,u_{\max(\mathcal{I})},v_{\max(\mathcal{J})}}$ or $e_{u_{\max(\mathcal{I})},v_{\max(\mathcal{J})},H}$.
    
    To put this observation into effect, suppose that $\mathbb{H}_I = \mathbb{T}_{t'}$ for some $t'\geq2$. Then as $\varphi_I$ is a $3$-path structure map, by Lemma~\ref{lem:3PathStructHitsDirTriAndDblEdg}
    the image of $\varphi_I$ cannot contain any directed triangles or double edges. Since each $3$-path sequence digraph which is not a trapezohedron contains a directed triangle, no $3$-path structure map can be constructed from any of these digraphs onto the image of $\varphi_I$. 
    
    We claim that the same conclusion is true also when $\mathbb{H}_I$ is not a trapezohedron. Thus recall that $\varphi_I$ was constructed in Theorem~\ref{thm:3PathStructureConstruction} so as to satisfy $\varphi_{I\#}(\textfrak{H}_I) = \pm I$, where $\textfrak{H}_I\in\Omega_3(\mathbb{H}_I;R)$ is the $3$-path element of $\mathbb{H}_I$ and assume that a second $3$-path sequence map $\varphi\colon\mathbb{H}\rightarrow G$ satisfying $\varphi_\#(\mathfrak{H}) = \pm I$ is given, where $\mathfrak{H}\in\Omega_3(\mathbb{H};R)$ is the $3$-path element of $\mathbb{H}$.
    By switching the sign of one of the 3-path elements, it will be no loss of generality to assume that $\varphi_\#(\mathfrak{H}) = \varphi_{I\#}(\mathfrak{H}_I)$. We will show that assuming that the $3$-path sequence digraph $\mathbb{H}$ is not of the same type as $\mathbb{H}_I$ leads to a contradiction.
    
    To begin, note that since $\varphi_I,\varphi$ have the same image and $\mathbb{H}_I$ is not a trapezohedron, neither is $\mathbb{H}$. 
    
    Next, note that any directed triangle $x \in \Omega_2(\mathbb{H}_I;R)$ satisfies $x = \pm\delta^h_{3,w}(\textfrak{H}_I)$ or $x = \pm\delta^t_{3,w}(\textfrak{H}_I)$ for some $w \in V_{\mathbb{H}_I}$, so applying Lemma~\ref{lem:3PathStructureStraongMap}, any directed triangle or double edge in $G$ that is the $\varphi_I$-image of a directed triangle in $\mathbb{H}_I$ must be equal to $\pm\delta^h_{3,u}(\varphi_{I\#}(\textfrak{H}_I))$ or $\pm\delta^t_{3,u}(\varphi_{I\#}(\textfrak{H}_I))$ for some $u\in V_G$. But the same argument can be made for $\varphi\colon\mathbb{H}\rightarrow G$, so any directed triangle or double edge in $G$ that is the $\varphi$-image of a directed triangle in $\mathbb{H}$ must be equal to $\pm\delta^h_{3,u}(\varphi_{\#}(\textfrak{H}))$ or $\pm\delta^t_{3,u}(\varphi_{\#}(\textfrak{H}))$ for some $u \in V_G$.
    Therefore, since $\varphi_{I\#}(\textfrak{H}_I)=\varphi_{\#}(\textfrak{H})$ and any $3$-path sequence digraph that is not a trapezohedron contains precisely two directed triangles, Lemma~\ref{lem:3PathStructHitsDirTriAndDblEdg} implies that 
    the $\varphi_{I\#}$-image of each directed triangle in $\mathbb{H}_I$ must coincide with the $\varphi_\#$-image of a directed triangle in $\mathbb{H}$, and vice versa. 
    Moreover, it implies that the images of these directed triangles are either:
    \begin{enumerate}
        \item 
        one double edge, or
        \item
        two directed triangles, two double edges, or one directed triangle and one double edge.
    \end{enumerate}
    
    Now, as per the discussion above, the digraphs $\mathbb{H}_I$ and $\mathbb{H}$ each contain two directed triangles from the following list: 
    \begin{align}\label{eq:directtrianglelist}
    &  e_{T,u_1,v_1}, &&  e_{u_1,v_1,H}, &&  e_{T,u_{\max(\mathcal{I})},v_{\max(\mathcal{J})}}, && e_{u_{\max(\mathcal{I})},v_{\max(\mathcal{J})},H},
    \end{align}
    although we are abusing notation by using the same indexing of the vertices in both digraphs.
    These directed triangles are obtainable as $\delta^h_{3,w}(\textfrak{H}_I)$ for some $w \in V_{\mathbb{H}_I}$ or $\delta^h_{3,w}(\textfrak{H})$ for some $w \in V_{\mathbb{H}}$, 
    and $\mathbb{H}_I$ and $\mathbb{H}$ are distinguished by the unique combination which they contain.
     
    Our task is to show that for some directed triangle $x\in \Omega_2(\mathbb{H}_I;R)$, there is no directed triangle in $y \in \Omega_2(\mathbb{H};R)$ such that ${\varphi_I}_{\#}(x) = \varphi_{\#}(y)$.
    Thus we assume that for each directed triangle $x\in\Omega_2(\mathbb{H}_I;R)$, there is a directed triangle $y\in\Omega_2(\mathbb{H};R)$ such that $\varphi_{I\#}(x)=\varphi_\#(y)$.
    
    We choose directed triangles $x\in \Omega_2(\mathbb{H}_I;R)$ and $y\in \Omega_2(\mathbb{H};R)$ using the following steps. Initially, let $x\in \Omega_2(\mathbb{H}_I;R)$ be either one of the two directed triangles. In the case that $x =  \pm e_{T,u_1,v_1}$, there is a directed triangle $y \in \Omega_2(\mathbb{H};R)$ such that ${\varphi_I}_{\#}(x) = \varphi_{\#}(y)$. If $y =  \pm e_{T,u_1,v_1}$ also, then select $x$ instead as the other directed triangle in $\mathbb{H}_I$ and begin again.
    It is now not possible for $x$ and $y$ to be the same directed triangle from the list~\eqref{eq:directtrianglelist}, since $\mathbb{H}_I$ and $\mathbb{H}$ are assumed to be $3$-path sequences of different types.
    
    Otherwise, continuing to assume that $x =  \pm  e_{T,u_1,v_1}$, if $y = \pm  e_{T, u_{\max(\mathcal{I})}, v_{\max(\mathcal{J})}}$, then again select $x$ instead as the other directed triangle in $\mathbb{H}_I$. Checking all possible cases, we find that $x$ and $y$ are now distinct members of the list~\eqref{eq:directtrianglelist}, and that they are neither of the pairs
    \begin{align*}
        e_{T,u_1,v_1} 
        \; &\text{and} \;
         e_{T,u_{\max(\mathcal{I})},v_{\max(\mathcal{J})}},&
        \;\text{or}&&
        e_{u_1,v_1,H} 
        \; &\text{and} \;
        e_{u_{\max(\mathcal{I})},v_{\max(\mathcal{J})},H},
    \end{align*}
    in any order, with any signs.

    Using the above, we may conclude that the remaining possibility left to rule out is that $\varphi_I$ or $\varphi$ sends a directed triangle of the form $\pm e_{T,u_1,v_1}$ or $\pm e_{T,u_{\max(\mathcal{I})},v_{\max(\mathcal{J})}}$ and a directed triangle of the form $\pm e_{u_1,v_1,H}$ or $\pm e_{u_{\max(\mathcal{I})},v_{\max(\mathcal{J})},H}$ to the same element $z \in \Omega_2(G;R)$.

    Firstly, consider the case when $x = \pm e_{T,u_1,v_1}$, $y=\pm e_{u_1,v_1,H}$, and $\varphi_{I\#}(x) = \varphi_{\#}(y)$. Then $\varphi_I(T)=\varphi(u_1)$ and, since the inductive element $I$ is connected, also $\varphi_I(T) = t(I) = \varphi(T)$. Together, these equations imply that $\varphi(T) = \varphi(u_1)$, and it follows that $\varphi$ sends the edge $T \to u_1$ in $\mathbb{H}$ to a vertex. 
    However, $\mathbb{H}$ is a $3$-path sequence and by Lemma~\ref{lem:3PathStructureStraongMap} $\varphi$ is a strong digraph map on $\overline{\mathbb{H}}$, so there is a contradiction.
    The other possibility above is ruled out similarly, and we leave this task to reader.

    In summary, at this stage we have shown that $\mathbb{H}_I$ and $\mathbb{H}$ must be $3$-path sequences of the same type. It remains to show that $\mathbb{H}_I$ and $\mathbb{H}$ must also have the same number of vertices.
    
    For reference, $\mathbb{H}_I$ and $\mathbb{H}$ belong to one of the following families of digraphs
    \[
    \mathbb{T}_{t'}, \: \mathbb{L}_t, \: \mathbb{L}_t^u, \: \text{or} \: \mathbb{L}_{t}^l
    \]
    where the indices run $t\geq1$ and $t'\geq2$. Let $\mathbb{H}_I$ have index $m_I$ and $\mathbb{H}$ have index $m$. We will assume that $m\neq m_I$.

    Recall that $\textfrak{H}_I\in\Omega_3(\mathbb{H}_I;R)$ and $\mathfrak{H}\in\Omega_3(\mathbb{H};R)$ denote the $3$-path elements described in Definition~\ref{def:LinearEelmetns}. It follows from the injectivity conditions (4), (5), (6) of Definition~\ref{def:3PathStructureMap} that the number of nonzero path summands in $\textfrak{H}_I$ is precisely the same as in $I$. If $m< m_I$, then the number of nonzero path summands in $\varphi_{\#}(\textfrak{H})$ is strictly less than the number $m_I$ of nonzero path summands in $\textfrak{H}_I$ and $I$. Therefore, $\varphi_{\#}(\textfrak{H}) \neq \pm I$.

    On the other hand, suppose that $m> m_I$.
    As there are now strictly more nonzero path summands in $\textfrak{H}$ than in $I$, there are at least two distinct nonzero path summands in $\textfrak{H}$ which $\varphi$ sends to identical paths with opposite signs. However, all nonzero path summands in $\textfrak{H}$ are unique and determined by the vertices $u_i$ and $v_j$ which they contain. Hence it must be the case that $\varphi$ is not injective on $\{ u_i \}_{i \in \mathcal{I}}$ or $\{ v_j \}_{j \in \mathcal{J}}$. This contradicts the assumptions on vertex injectivity from Definition~\ref{def:3PathStructureMap}, even when $\mathbb{H}$ is one of $\mathbb{L}^u_t$ or $\mathbb{L}^l_t$, as here $\varphi$ would be required to be non-injective on an additional vertex to be non-injective on nonzero path summands.

    Finally, we turn to address the uniqueness of $\mathbb{H}_I$ and the $\mathrm{Sym}(\mathbb{H}_I)$ action. Referring to the definition of $\varphi_I$ in equation~\eqref{eq:VetexImages}, when $\mathbb{H}_I = \mathbb{T}_m$, the vertices 
    $\varphi_I(T),\varphi_I(u_i),\varphi_I(v_i)$ and $\varphi_I(H)$ appear in the $3$-paths 
    $e_{\varphi_I(T),\varphi_I(u_i),\varphi_I(v_i),\varphi_I(H)}$ and $e_{\varphi(T),\varphi_I(u_i),\varphi_I(v_{i+1}),\varphi_I(H)}$
    for $i=1,\dots,t$ modulo $t$.
    However, these paths are precisely the nonzero summands of $\pm I$ as the image of $\varphi_{I\#}\colon \Omega_3(\mathbb{T}_m;R) \to \Omega_3(G;R)$.
    Hence, due to the cyclic structure of $F_2$ and the vertex injectivity conditions from Definition~\ref{def:3PathStructureMap}, up to pre-composition with the cyclic group action on the indices, $\varphi_I$ is the unique digraph map whose induced map $\Omega_3(\mathbb{T}_m;R) \to \Omega_3(G;R)$ satisfies ${\varphi_I}_{\#}(T_m)= \pm I$. A similar argument can be made for the uniqueness of $\varphi_I$ up the $C_2$ action when $\mathbb{H}_I \neq \mathbb{T}_m$, which completes the proof.
    \qed
\end{proof}

In Corollaries~\ref{cor:FieldGeneratorBasis}~and~\ref{cor:IductiveBasisIntegral} it was shown that inductive elements generate $\Omega_3(G;R)$ when $R$ has characteristic $0$ or $2$. Thus Theorem~\ref{thm:SructureMapsAreInductive} has the following consequence.

\begin{corollary}\label{cor:3PathStructureGeneration}
    Let $R$ be a ring of characteristic $0$ or $2$. Then the induced images of $3$-path elements under $3$-path structure maps generate $\Omega_3(G;R)$. \qed
\end{corollary}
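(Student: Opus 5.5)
The plan is to combine the generation results for inductive elements with the realisation theorem for $3$-path structure maps; almost no new argument is needed. First, fix a ring $R$ of characteristic $0$ or $2$. By Corollary~\ref{cor:FieldGeneratorBasis} (characteristic $2$) and part (2) of Corollary~\ref{cor:IductiveBasisIntegral} (characteristic $0$), the $3$-dimensional inductive elements generate $\Omega_3(G;R)$. So every $x\in\Omega_3(G;R)$ can be written as a finite $R$-linear combination $x=\sum_k r_k I_k$ with each $I_k$ an inductive element of $\Omega_3(G;R)$.

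Second, for each inductive element $I_k$ I will invoke Theorem~\ref{thm:Omega3Explicitly}, or more directly part (i) together with part (ii) of Theorem~\ref{thm:3PathStructureConstruction}. This produces a $3$-path sequence digraph $\mathbb{H}_{I_k}$ and a $3$-path structure map $\varphi_{I_k}\colon\mathbb{H}_{I_k}\to G$ with $\varphi_{I_k\#}(\mathfrak{H}_{I_k})=\pm I_k$. The sign is absorbed into the coefficient by replacing $r_k$ with $\pm r_k$. Hence $x=\sum_k (\pm r_k)\,\varphi_{I_k\#}(\mathfrak{H}_{I_k})$, which shows that the induced images of $3$-path elements lie in a set that spans $\Omega_3(G;R)$.

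Third, I need the converse containment: each induced image $\varphi_\#(\mathfrak{H})$ actually lies in $\Omega_3(G;R)$. This holds because $\varphi_\#$ is a chain map $\Omega_*(\mathbb{H};R)\to\Omega_*(G;R)$ and $\mathfrak{H}\in\Omega_3(\mathbb{H};R)$ by Propositions~\ref{prop:Trapezohedron} and~\ref{prop:OtherSpecialDim3Generators}. Theorem~\ref{thm:SructureMapsAreInductive} gives the stronger statement that such images are inductive elements. Together, these show the images generate $\Omega_3(G;R)$.

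The only point needing care is that Theorem~\ref{thm:3PathStructureConstruction} is stated for an arbitrary ring $R$, while the inductive elements in the first step are those defined over the given $R$. Since both results are phrased over the same $R$, no change of coefficients is required. I expect no real obstacle here, as the substantive work is already done in Theorem~\ref{thm:3PathStructureConstruction}.
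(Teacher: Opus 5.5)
Your proposal is correct and follows the paper's argument: in characteristic $0$ or $2$ the $3$-dimensional inductive elements generate $\Omega_3(G;R)$ (Corollaries~\ref{cor:FieldGeneratorBasis} and~\ref{cor:IductiveBasisIntegral}), and every inductive element is $\pm\varphi_\#(\mathfrak{H})$ for some $3$-path structure map $\varphi$. For that second step you cite the right result, Theorem~\ref{thm:3PathStructureConstruction} (equivalently Theorem~\ref{thm:Omega3Explicitly}); the paper's one-line justification names Theorem~\ref{thm:SructureMapsAreInductive}, which only gives the converse containment you use in your third step.
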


Using Corollary~\ref{cor:3PathStructureGeneration} we can derive the following less explicit description of $\Omega_3(G;R)$ that directly generalises Grigor'yan's Theorem~\ref{thm:Dim3BasisNoDoubleNoMulti}.

\begin{corollary}\label{cor:Omega3Explicitly}
    Let $R$ be a ring of characteristic $0$ or $2$ and $G$ a digraph. Then there is a generating set for $\Omega_3(G;R)$, the elements of which are obtained as the induced images of a trapezohedron element $T_t$ under a digraph map $\mathbb{T}_t \to G$ for some integer $t \geq 2$.
\end{corollary}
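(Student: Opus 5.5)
By Corollary~\ref{cor:3PathStructureGeneration}, the images $\varphi_\#(\mathfrak{H})$ of $3$-path elements under $3$-path structure maps $\varphi\colon\mathbb{H}\to G$ already generate $\Omega_3(G;R)$. It therefore suffices to show that each such image is the induced image of a trapezohedron element under some digraph map $\mathbb{T}_t\to G$. When $\mathbb{H}=\mathbb{T}_{t'}$ there is nothing to do. For $\mathbb{H}=\mathbb{L}_t$, $\mathbb{L}^u_t$ or $\mathbb{L}^l_t$, the plan is to construct, for each family, an explicit digraph map $\psi\colon\mathbb{T}_{s}\to\mathbb{H}$ with $\psi_\#(T_s)=\pm\mathfrak{H}$. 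Then $\varphi\circ\psi\colon\mathbb{T}_s\to G$ is a digraph map, and functoriality of $\#$ gives $(\varphi\circ\psi)_\#(T_s)=\pm\varphi_\#(\mathfrak{H})$.

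The key idea is that each non-trapezohedron $3$-path sequence is a ``degenerate trapezohedron'': collapsing one edge $u\to v$ of $\mathbb{T}_s$ kills the two $3$-paths through that collapsed edge, since they become irregular. The remaining summands of $T_s$ survive. I would take the following choices.
\begin{enumerate}
\item For $\mathbb{L}^u_t$, use $s=t+1$. Send $T,H,u_i,v_j$ ($j\le t$) identically and send $v_{t+1}\mapsto H$. The edges $u_t\to v_{t+1}$ and $u_{t+1}\to v_{t+1}$ (indices as in $\mathbb{T}_{t+1}$, relabelled to match) map to $u_{t+1}\to H$ and $u_1\to H$, which exist in $\mathbb{L}^u_t$. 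The edge $v_{t+1}\to H$ collapses. The summands $e_{T,u_i,v_{t+1},H}$ become irregular and vanish, leaving exactly $L^u_t$ up to sign.
\item For $\mathbb{L}^l_t$, use the dual collapse $u_{t+1}\mapsto T$, so that the edge $T\to u_{t+1}$ collapses and the edges $u_{t+1}\to v_{t+1},v_1$ become $T\to v_{t+1},v_1$.
\item For $\mathbb{L}_t$, use $s=t+1$ with $u_{t+1}\mapsto T$ and $v_{t+1}\mapsto H$. The edge $u_{t+1}\to v_{t+1}$ is sent to $T\to H$, which is not an edge, so I expect to need a different identification instead, such as $v_{t+1}\mapsto u_t$ together with $u_{t+1}\mapsto T$. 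The map must send $u_t\to v_{t+1}$ to a vertex, $u_{t+1}\to v_{t+1}$ to $T\to u_t$, and $u_{t+1}\to v_1$ to $T\to v_1$. The edge $v_{t+1}\to H$ then goes to $u_t\to H$, which exists. The surviving summands should reproduce $L_t$.
\end{enumerate}

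The main obstacle is the sign and indexing bookkeeping in each family, particularly $\mathbb{L}_t$. There I must check that exactly the unwanted summands of $T_s$ become irregular (collapsed edges), that no two surviving summands coincide or cancel, and that all remaining edges land on edges of $\mathbb{H}$. This is a direct check on the at most $2s$ summands of $T_s$, using the definition of $\psi_n$ on $\mathcal{R}_*$, where irregular paths are zero.

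Small cases such as $t=1$ should also be verified separately. For example, $\mathbb{L}_1$ with $s=2$ gives $e_{T,u_1,v_1,H}$ from $\mathbb{T}_2$ after collapsing. Since digraph maps may collapse edges, no strongness is required.
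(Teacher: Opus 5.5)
Your proposal is correct and is essentially the paper's proof. Both reduce to Corollary~\ref{cor:3PathStructureGeneration} and precompose each $3$-path structure map with an edge-collapsing digraph map $\mathbb{T}_{t+1}\to\mathbb{L}_t,\mathbb{L}^u_t,\mathbb{L}^l_t$.

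Your maps for $\mathbb{L}_t$ and $\mathbb{L}^u_t$ are the paper's $m_{as}$ and $m^u_s$. For $\mathbb{L}^u_t$, the $u_i$ cannot be sent identically: they must be cyclically shifted, $u_i\mapsto u_{i+1}$ and $u_{t+1}\mapsto u_1$, which is exactly what your stated edge images force.

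Your $\mathbb{L}^l_t$ map ($u_{t+1}\mapsto T$, everything else fixed) is also correct. It is in fact cleaner than the formula printed for $m^l_s$: there the shift $v_i\mapsto v_{i+1}$ already sends the edge $u_1\to v_2$ of $\mathbb{T}_3$ to the non-edge $u_1\to v_3$ of $\mathbb{L}^l_2$.
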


\begin{proof}
    There are digraph maps $m_{as}\colon \mathbb{T}_t \to \mathbb{L}_{t-1}$, $m_s^u\colon \mathbb{T}_t \to \mathbb{L}_{t-1}^u$, $m_s^l\colon \mathbb{T}_t \to \mathbb{L}_{t-1}^l$ determined by 
    \begin{align*}
        m_{as}(T) &= T,&m_{as}(H) &= H, &m_{as}(u_i) &= u_i, &m_{as}(v_i) &= v_i, &m_{as}(u_t) &= T, & m_{as}(v_t) &= u_{t-1},
        \\
        m_s^u(T) &= T, &m_s^u(H) &= H, & m_s^u(u_i) &= u_{i+1}, &m_s^u(v_i) &= v_i, &m_s^u(u_t) &= u_1, & m_s^u(v_t) &= H,
        \\
        m_s^l(T) &= T, &m_s^l(H) &= H, &m_s^l(u_i) &= u_i, &m_s^l(v_i)& = v_{i+1}, &m_s^l(v_t)& = v_1, &m_s^l(u_t)&= T
    \end{align*}
    for $i=1,\dots,t-1$ and one checks that 
    \[
    {m_{as}}_{\#}(T_t) = L_{t-1},\qquad {m_s^u}_{\#}(T_t) = L_{t-1}^u,\qquad {m_s^l}_{\#}(T_t) = L_{t-1}^l.
    \]
    The corollary follows by composing the above maps with the $3$-path structure maps supplied by Corollary~\ref{cor:3PathStructureGeneration}. \qed
\end{proof}

Corollary~\ref{cor:Omega3Explicitly} does not generally produce a basis for $\Omega_3(G;R)$, even up to the cyclic group actions and signs, as $G$ may contain multisquares (see Example~\ref{ex:ExtendedMultisquare}). However, once a representative from each $\mathrm{Sym}(\mathbb{H}_I)$ orbit is selected and a sign chosen on each of these elements, both the generating sets from Corollaries~\ref{cor:3PathStructureGeneration}~and~\ref{cor:Omega3Explicitly} coincide, and agree with Grigor'yan's basis when the digraph $G$ contains neither double edges or multisquares.

To end the section, we use the results above and the theory developed in Section~\ref{sec:Coefficients} to prove a universal coefficient theorem for path homology in dimensions less than $3$. As it turns out, dimension $3$ is exactly where one can no longer expect a classical universal coefficient theorem to be valid for path homology. This failure is quite explicit in dimension $3$, and we illustrate this in Example~\ref{ex:UCTfails} below by studying the digraph $\mathcal{G}$ originally defined in~\cite{Fu2024}.

Recall that a chain homomorphism $\mu_*\colon\Omega_*(G;\mathbb{Z})\otimes R\rightarrow \Omega_*(G;R)$ was constructed in Section~\ref{sec:Coefficients} for any digraph $G$ and ring $R$. Letting $Z_n(G;R)\subseteq \Omega_n(G;R)$ be the submodule of cycles and $B_n(G;R)\subseteq Z_n(G;R)$ the submodule of boundaries, there are induced maps 
\[
\mu_n^Z\colon Z_n(G;\mathbb{Z})\otimes R\rightarrow Z_n(G;R),\qquad \mu_n^B\colon B_n(G;\mathbb{Z})\otimes R\rightarrow B_n(G;R). 
\]
Similarly, there is an induced map in homology $\bar{\mu}_n\colon H_n^P(G;\mathbb{Z})\otimes R\rightarrow H_n^P(G;R)$.
\begin{theorem}\label{th:Dim3Char0Char2invareince}
Let $G$ be a digraph and $R$ is a ring of characteristic $2$. Then $\mu_*\colon\Omega_*(G;\mathbb{Z})\otimes_{\mathbb{Z}} R \rightarrow\Omega_*(G;R)$ is a chain isomorphism in degrees $\leq 3$ and for each $0\leq n\leq 2$ there is an exact sequence
\begin{equation}\label{eq:unicofextseqdim3}
0\rightarrow H_n^P(G;\mathbb{Z})\otimes R \xrightarrow{\bar{\mu}_n} H_n^P(G;R)\rightarrow \mathrm{Tor}(H_{n-1}^P(G;\mathbb{Z}),R)\rightarrow 0.
\end{equation}
This sequence is natural and splits, but the splitting is not natural. 
In degree $3$ there is a natural exact sequence
\begin{equation}\label{eq:unicofdim3}
0\rightarrow \ker(\bar{\mu}_3)\rightarrow \mathcal{B}
_3(G;R)\rightarrow\mathrm{Tor}(H_2^P(G;\mathbb{Z}),R)\rightarrow\mathrm{coker}(\bar{\mu}_3)\rightarrow0
\end{equation}
where $\mathcal{B}_3(G;R)=\mathrm{coker}(\mu_3^B\colon B_3(G;\mathbb{Z})\otimes R\rightarrow B_3(G;R))$.
\end{theorem}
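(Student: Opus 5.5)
We first show that $\mu_n\colon\Omega_n(G;\mathbb{Z})\otimes R\to\Omega_n(G;R)$ is an isomorphism for $n\leq 3$. Since $\mathbb{Z}$ is a PID, Corollary~\ref{cor:CoeffChange3} gives injectivity, so only surjectivity needs work. By Lemma~\ref{muisoonac} and the naturality of $\mu$ it is enough to treat $R=\mathbb{Z}_2$; the general characteristic $2$ case then follows by base change from $\mathbb{Z}_2$ using Corollary~\ref{cor:CoeffChange1}, since $\mathbb{Z}\otimes R=\mathbb{Z}_2\otimes_{\mathbb{Z}_2}R$. For $n\leq2$, Proposition~\ref{prop:Dim2Base} gives generating sets over $\mathbb{Z}$ and over $\mathbb{Z}_2$ that are literally the same elementary chains: vertices, edges, double edges, triangles and squares. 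Their images under $\mu$ therefore generate. For $n=3$ we use Corollary~\ref{cor:3PathStructureGeneration}. Over $\mathbb{Z}_2$, $\Omega_3$ is generated by the elements $\varphi_\#(\mathfrak{H})$, where $\varphi$ runs over $3$-path structure maps. Each such element is the mod $2$ reduction of the integral element $\varphi_\#(\mathfrak{H})\in\Omega_3(G;\mathbb{Z})$, because $\mathfrak{H}$ is defined with integer coefficients and $\varphi_\#$ commutes with change of coefficients. Hence $\mu_3$ is surjective. The degree-$4$ map is not claimed to be surjective; all we need is that $\mu_*$ is a chain map.

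Next we derive the universal coefficient statements. The chain complex $C_*=\Omega_*(G;\mathbb{Z})$ consists of free abelian groups, as submodules of free modules over a PID. The classical universal coefficient theorem gives, for every $n$, a natural split exact sequence
\[
0\to H_n(C)\otimes R\to H_n(C\otimes R)\to\mathrm{Tor}(H_{n-1}(C),R)\to0.
\]
The homology $H_n(C\otimes R)$ only depends on $C\otimes R$ in degrees $n+1,n,n-1$. For $n\leq2$, $\mu_*$ identifies $C\otimes R$ with $\Omega_*(G;R)$ in degrees $\leq 3$, so $H_n(C\otimes R)\cong H^P_n(G;R)$ via $\bar\mu_n$. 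This yields \eqref{eq:unicofextseqdim3}, together with its naturality and the non-natural splitting.

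For $n=3$ the isomorphism holds in degrees $3$ and $2$ but fails in degree $4$. We compare $H_3(C\otimes R)=Z_3(C\otimes R)/\mathrm{im}(\partial\otimes1)$ with $H^P_3(G;R)=Z_3(G;R)/B_3(G;R)$. Because $\mu_3$ and $\mu_2$ are isomorphisms, $\mu_3$ identifies the cycles: $Z_3(C\otimes R)\cong Z_3(G;R)$. The boundaries $\mathrm{im}(\partial_4\otimes1)$ are the image of $B_3(G;\mathbb{Z})\otimes R$, and this image maps into $B_3(G;R)$ with cokernel $\mathcal{B}_3(G;R)$; here we use that $\Omega_4(G;\mathbb{Z})\otimes R\to B_3(G;\mathbb{Z})\otimes R$ is onto. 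So there is a natural short exact sequence
\[
0\to\mathcal{B}_3(G;R)\to H_3(C\otimes R)\xrightarrow{q}H^P_3(G;R)\to0.
\]
We then combine it with $0\to H_3(C)\otimes R\to H_3(C\otimes R)\to\mathrm{Tor}(H_2(C),R)\to0$, noting that $\bar\mu_3$ is the composite of $q$ with the universal coefficient inclusion. Applying the snake lemma to the map between these two sequences (equivalently, the kernel–cokernel exact sequence of the composite $\bar\mu_3=q\circ\iota$) produces
\[
0\to\ker\bar\mu_3\to\ker q=\mathcal{B}_3\to\mathrm{coker}\,\iota=\mathrm{Tor}(H_2,R)\to\mathrm{coker}\,\bar\mu_3\to\mathrm{coker}\,q=0,
\]
where $\ker\iota=0$ since $\iota$ is injective. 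This is \eqref{eq:unicofdim3}, and naturality follows because every map involved is natural.

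The main obstacle is the surjectivity of $\mu_3$, that is, making sure the generators from Corollary~\ref{cor:3PathStructureGeneration} lift to integral path chains. This rests on $\varphi_\#$ being defined over $\mathbb{Z}$ compatibly with Lemma~\ref{lem:CoeffChange}. We also need care with the identification of $\mathbb{Z}\otimes R$ coefficients when $R$ is not a $\mathbb{Z}_2$-algebra of the form required by Corollary~\ref{cor:CoeffChange1}, but since $2=0$ in $R$, $R$ is a $\mathbb{Z}_2$-algebra.
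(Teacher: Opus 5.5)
Your proof is correct. The first two parts follow the paper: the isomorphism of $\mu_*$ in degrees $\le 3$ comes from Corollary~\ref{cor:CoeffChange3}, Proposition~\ref{prop:Dim2Base} and Corollary~\ref{cor:3PathStructureGeneration}, and the sequences for $n\le 2$ come from the classical universal coefficient theorem. Your reduction to $R=\mathbb{Z}_2$ is not needed, since Corollary~\ref{cor:3PathStructureGeneration} already holds for any ring of characteristic $2$. However, your explicit observation that the generators $\varphi_\#(\mathfrak{H})$ are images of integral path chains fills in a step the paper leaves implicit.

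For the degree-$3$ sequence you take a genuinely different route. The paper applies the snake lemma to the cycle/boundary/homology ladder with vertical maps $\mu_3^B,\mu_3^Z,\bar\mu_3$. This gives $0\to\ker\bar\mu_3\to\mathcal{B}_3(G;R)\to\mathrm{coker}\,\mu_3^Z\to\mathrm{coker}\,\bar\mu_3\to 0$. It then identifies $\mathrm{coker}\,\mu_3^Z\cong\ker\mu_2^B\cong\mathrm{Tor}(H_2^P(G;\mathbb{Z}),R)$ by two further diagram chases.

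You instead factor $\bar\mu_3=q\circ\iota$ through $H_3(\Omega_*(G;\mathbb{Z})\otimes R)$. The universal coefficient theorem gives $\mathrm{coker}\,\iota\cong\mathrm{Tor}(H_2^P(G;\mathbb{Z}),R)$, and you show that $q$ is onto with kernel exactly $\mathcal{B}_3(G;R)$. Then \eqref{eq:unicofdim3} is just the kernel--cokernel sequence of a composite.

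Your version is shorter and more conceptual. It uses precisely the comparison map $H_3(\Omega_*(G;\mathbb{Z})\otimes R)\to H_3^P(G;R)$ that the paper says can no longer serve as an identification, and shows that $\mathcal{B}_3(G;R)$ is exactly its kernel. This makes the role of $\mathcal{B}_3(G;R)$ as the sole obstruction transparent. The connecting map $\mathcal{B}_3(G;R)\to\mathrm{Tor}(H_2^P(G;\mathbb{Z}),R)$ you obtain agrees with the one described in the paper's remark after the theorem.

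The paper's argument avoids invoking the universal coefficient theorem in degree $3$ and works directly with $Z_3$, $B_3$ and $B_2$. The price is more diagram chasing.
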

\begin{proof}
It follows from Corollary~\ref{cor:CoeffChange3} that the chain map $\mu_{*}\colon\Omega_*(G;\mathbb{Z})\otimes R \rightarrow\Omega_*(G;R)$ is injective in all degrees. It is surjective in degrees $<2$ by the discussion in Section~\ref{sec:LowDimBasis} surrounding equations~\eqref{eq:Dim0Basis}~and~\eqref{eq:Dim1Basis}, and in degree $2$ by Proposition~\ref{prop:Dim2Base}. Finally, Corollary~\ref{cor:3PathStructureGeneration} shows that $\mu_*$ is surjective also in degree $3$ if $R$ has characteristic $2$. In particular, $\mu_*$ is a chain isomorphism in degrees $\leq3$.

Since $\mu_*$ is a chain isomorphism in degrees $\leq3$, the proof of the universal coefficient theorem in \cite[Corollary 7.56, pg. 450]{Rotman2009} provides the exact sequences in degrees $\leq2$. 
In degree $3$ we do not get an exact sequence because $\mu_4$ need not necessarily be an isomorphism, so the map $H_3(\Omega_*(G;\mathbb{Z})\otimes R;\partial^P\otimes1)\rightarrow H^P_3(G;R)$ induced by $\mu_*$ may no longer be used to identify the two groups.
To proceed, consider the diagram
\begin{equation}\label{eq:KernelsBoundariesDim3}
\begin{gathered}
\xymatrix{\cdots\ar[r]&B_3(G;\mathbb{Z})\otimes R\ar[r]\ar[d]^-{\mu_3^B}&Z_3(G;\mathbb{Z})\otimes R\ar[r]\ar[d]^-{\mu_3^Z}&H_3^P(G;\mathbb{Z})\otimes R\ar[d]^-{\bar{\mu}_3}\ar[r]&0\\
0\ar[r]&B_3(G;R)\ar[r]&Z_3(G;R)\ar[r]&H_3^P(G;R)\ar[r]& 0.}
\end{gathered}
\end{equation}
The diagram commutes, the bottom row is exact by construction, and the top row is exact by the right exactness of the tensor product.
Since $\mu_3\colon\Omega_3(G;\mathbb{Z})\otimes R\rightarrow\Omega_3(G;R)$ is bijective, $\mu_3^Z$ is injective, so the Snake Lemma~\cite[Corollary 6.12 and Exercise 6.5]{Rotman2009} gives an exact sequence
\[
0\rightarrow \ker(\bar{\mu}_3)\rightarrow\mathrm{coker}(\mu_3^B)\rightarrow\mathrm{coker}(\mu_3^Z)\rightarrow\mathrm{coker}(\bar{\mu}_3)\rightarrow0.
\]

The module $\mathrm{coker}(\mu_3^B)=\mathcal{B}_3(G;R)$ has already been defined, so to obtain the sequence~\eqref{eq:unicofdim3} it remains only to study $\mathrm{coker}(\mu_3^Z)$.
To this end, consider the commutative diagram
\[
\xymatrix{0\ar[r]&Z_3(G;\mathbb{Z})\otimes R\ar[r]\ar[d]^-{\mu_3^Z}&\Omega_3(G;\mathbb{Z})\otimes R\ar[d]^-{\mu_3}\ar[r]^-{\partial^P\otimes 1}&B_2(G;\mathbb{Z})\otimes R\ar[d]^-{\mu_2^B}\ar[r]&0\\
0\ar[r]&Z_3(G;R)\ar[r]&\Omega_3(G;R)\ar[r]^-{\partial^P}& B_2(G;R)\ar[r]&0.}
\]
The bottom row is exact by construction, and the top row is exact because $B_2(G;\mathbb{Z})$ is free abelian. Since $\mu_3$ is bijective, the Snake Lemma gives an isomorphism 
\[
\mathrm{coker}(\mu_3^Z)\cong \ker(\mu_2^B).
\]

Finally, similar to diagram~\eqref{eq:KernelsBoundariesDim3}, consider the diagram
\[
\xymatrix{\cdots\ar[r]&B_2(G;\mathbb{Z})\otimes R\ar[r]^-{i\otimes1}\ar[d]^-{\mu_2^B}&Z_2(G;\mathbb{Z})\otimes R\ar[r]\ar[d]^-{\mu_2^Z}&H_2^P(G;\mathbb{Z})\otimes R\ar[d]^-{\bar{\mu}_2}\ar[r]&0\\
0\ar[r]&B_2(G;R)\ar[r]^-{\bar{i}}&Z_2(G;R)\ar[r]&H_2^P(G;R)\ar[r]& 0}
\]
where $i,\bar{i}$ are the inclusions. The injectivity of $\bar{i}$ and the commutativity of the first square yield
\[
\ker(\mu_2^B)=\ker(\bar i\circ\mu_2^B)=\ker(\mu_2^Z\circ(i\otimes1)).
\]
Since $\mu_2\colon\Omega_2(G;\mathbb{Z})\otimes R\rightarrow\Omega_2(G;R)$ is injective, so is $\mu_2^Z$. Consequently,
\[
\ker(\mu_2^Z\circ(i\otimes1))=\ker(i\otimes1)\cong\mathrm{Tor}(H_2^P(G;\mathbb{Z}),R). 
\]
This completes the construction of the sequence. 
Since its construction at each stage is natural, the sequence is natural. \qed
\end{proof}
\begin{remark}
Inspection of the proof shows that the map $\mathcal{B}_3(G;R)\rightarrow\mathrm{Tor}(H_2^P(G;\mathbb{Z}),R)$ appearing in the sequence~\eqref{eq:unicofdim3} is induced by the composite
\[
B_3(G;R)\subseteq \Omega_3(G;R)\xrightarrow{\mu_3^{-1}}
\Omega_3(G;\mathbb{Z})\otimes R\xrightarrow{\partial^P\otimes1}B_2(G;\mathbb{Z})\otimes R.
\]
This can be useful to know, e.g. when its injectivity or surjectivity can be verified directly.
\end{remark}
The module $\mathcal{B}_3(G;R)=\mathrm{coker}(\mu_3^B\colon B_3(G;\mathbb{Z})\otimes R\rightarrow B_3(G;R))$ is the sole obstruction to $G$ having a universal coefficient exact sequence of the form~\eqref{eq:unicofextseqdim3} also for $n=3$. While the condition $\mathcal{B}_3(G;R)=0$ is generally difficult to check, it is guaranteed to hold in many cases, for instance if $\mu_4\colon\Omega_4(G;\mathbb{Z})\otimes R\to\Omega_4(G;R)$ is an isomorphism, and in particular when $G$ has no paths of length greater than $3$.
\begin{corollary}\label{cor:Dim3Char0Char2UCT}
Let $G$ be a digraph and $R$ a ring of characteristic $2$.
\begin{enumerate}
\item
If $\mathcal{B}_3(G;R)=0$, then there is a splitting exact sequence
\[
0\rightarrow H_3^P(G;\mathbb{Z})\otimes R \xrightarrow{\bar{\mu}_3} H_3^P(G;R)\rightarrow \mathrm{Tor}(H_{2}^P(G;\mathbb{Z}), R)\rightarrow 0.
\]
A sufficient condition for this to occur is that $\Omega_4(G;R)=0$.
\item
If $\mathrm{Tor}(H_2^P(G;\mathbb{Z}),R)=0$, then there is an exact sequence
\[
0\rightarrow \mathcal{B}_3(G;R)\rightarrow H_3^P(G;\mathbb{Z})\otimes R\xrightarrow{\bar{\mu}_3} H_3^P(G;R)\rightarrow 0. \qed
\]
\end{enumerate}
\end{corollary}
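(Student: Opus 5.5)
The plan is to read both parts off the exact sequence~\eqref{eq:unicofdim3} of Theorem~\ref{th:Dim3Char0Char2invareince}, together with the splitting argument already used in degrees $\leq 2$.

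\emph{Part (1).} Suppose $\mathcal{B}_3(G;R)=0$. Then exactness of~\eqref{eq:unicofdim3} gives $\ker(\bar\mu_3)=0$, so $\bar\mu_3$ is injective. It also gives that $\mathrm{Tor}(H_2^P(G;\mathbb{Z}),R)\to\mathrm{coker}(\bar\mu_3)$ is an isomorphism, since its kernel is the image of $\mathcal{B}_3(G;R)=0$. Splicing this isomorphism with $H_3^P(G;R)\to\mathrm{coker}(\bar\mu_3)$ yields the short exact sequence in the statement.

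For the splitting, I would argue as in the classical case. In the proof of the theorem, $\mathcal{B}_3=0$ means $\mu_3^B$ is surjective; it is also injective, because $\mu_3^Z$ is injective (Corollary~\ref{cor:CoeffChange3}). Hence $\mu_3^B$ is bijective. Since $\mu_*$ is an isomorphism in degrees $\leq 3$, I can identify $H_3^P(G;R)$ with $Z_3(G;\mathbb{Z})\otimes R/B_3(G;\mathbb{Z})\otimes R$, where both $Z_3\otimes R$ and $B_3\otimes R$ sit inside $\Omega_3(G;\mathbb{Z})\otimes R$. That quotient is precisely $H_3$ of the complex $\Omega_*(G;\mathbb{Z})\otimes R$, where the subtlety concerning $\mu_4$ has disappeared. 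The splitting in \cite[Corollary 7.56]{Rotman2009} uses only that $B_2(G;\mathbb{Z})$ and $Z_3(G;\mathbb{Z})$ are free (submodules of free $\mathbb{Z}$-modules), so $Z_3\to\Omega_3$ splits. That splitting transfers to the present setting, and I expect this identification to be the only delicate point.

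For the sufficient condition: if $\Omega_4(G;R)=0$, then $B_3(G;R)=\partial(\Omega_4(G;R))=0$, so its cokernel $\mathcal{B}_3(G;R)$ vanishes.

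\emph{Part (2).} If $\mathrm{Tor}(H_2^P(G;\mathbb{Z}),R)=0$, then exactness of~\eqref{eq:unicofdim3} immediately gives $\mathrm{coker}(\bar\mu_3)=0$ and an isomorphism $\ker(\bar\mu_3)\cong\mathcal{B}_3(G;R)$. Hence $0\to\mathcal{B}_3(G;R)\to H_3^P(G;\mathbb{Z})\otimes R\xrightarrow{\bar\mu_3}H_3^P(G;R)\to0$ is exact.
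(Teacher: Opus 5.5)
Your exactness argument in part (1) matches the paper, which reads the corollary straight off the four-term sequence~\eqref{eq:unicofdim3}. So do the sufficient condition ($\Omega_4(G;R)=0$ forces $B_3(G;R)=0$, hence $\mathcal{B}_3(G;R)=0$) and part (2). Those steps are correct.

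The splitting paragraph, however, rests on two false claims.

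First, $\mu_3^B$ need not be injective. Since $\mu_3$ is injective and $Z_3(G;\mathbb{Z})\otimes R\to\Omega_3(G;\mathbb{Z})\otimes R$ is injective (the cokernel $B_2(G;\mathbb{Z})$ is free), $\ker(\mu_3^B)$ equals the kernel of $B_3(G;\mathbb{Z})\otimes R\to Z_3(G;\mathbb{Z})\otimes R$. That kernel is $\mathrm{Tor}(H_3^P(G;\mathbb{Z}),R)$. This is why the top row of~\eqref{eq:KernelsBoundariesDim3} begins with dots, and injectivity of $\mu_3^Z$ says nothing about it.

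Second, $H_3^P(G;R)$ cannot be identified with $Z_3(G;\mathbb{Z})\otimes R$ modulo the image of $B_3(G;\mathbb{Z})\otimes R$. By right exactness that quotient is $H_3^P(G;\mathbb{Z})\otimes R$, so your identification would make $\bar\mu_3$ an isomorphism and kill the Tor term you have just produced. Nor is that quotient $H_3(\Omega_*(G;\mathbb{Z})\otimes R)$. The degree-$3$ cycles of $\Omega_*(G;\mathbb{Z})\otimes R$ form $\ker(\partial^P_3\otimes 1)$, which contains $Z_3(G;\mathbb{Z})\otimes R$ with quotient $\ker(\mu_2^B)\cong\mathrm{Tor}(H_2^P(G;\mathbb{Z}),R)$. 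This is the identification $\mathrm{coker}(\mu_3^Z)\cong\ker(\mu_2^B)$ from the proof of Theorem~\ref{th:Dim3Char0Char2invareince}.

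The repair uses only surjectivity of $\mu_3^B$; injectivity is never needed.
\begin{enumerate}
\item Because $\mu_*$ is a chain map, $\mu_2$ is injective and $\mu_3$ is bijective, $\mu_3$ maps $\ker(\partial^P_3\otimes 1)$ onto $Z_3(G;R)$.
\item Since $\mathrm{im}(\partial^P_4\otimes 1)$ is the image of $B_3(G;\mathbb{Z})\otimes R$ in $\Omega_3(G;\mathbb{Z})\otimes R$, $\mu_3$ maps it onto $\mathrm{im}(\mu_3^B)$. This is all of $B_3(G;R)$ exactly when $\mathcal{B}_3(G;R)=0$.
\end{enumerate}
Hence $\mu_3$ induces an isomorphism $H_3(\Omega_*(G;\mathbb{Z})\otimes R)\cong H_3^P(G;R)$ compatible with $\bar\mu_3$. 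The classical universal coefficient sequence for the free complex $\Omega_*(G;\mathbb{Z})$~\cite[Corollary 7.56]{Rotman2009}, including its non-natural splitting, then transfers to $H_3^P(G;R)$. With this correction your argument is complete.
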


The situation for a universal coefficient theorem involving rings of odd prime characteristic is more complicated. Proposition~\ref{prop:univcoeffs} gives a universal coefficient theorem in this case for digraphs containing no multisquares, but unpublished examples due to the present authors~\cite[Example 6.2]{BurfittCutler2024} show that in general there is no universal coefficient exact sequence of the form~\eqref{eq:unicofextseqdim3} when $n=3$ and $R$ has odd prime characteristic. Still, this leaves open the possibility that the sequence~\eqref{eq:unicofextseqdim3} may be exact when $n=2$ and $R$ is a ring of odd prime characteristic. A positive answer to question $(2)$ from the introduction would solve also this problem in the affirmative.
\begin{exmp}\normalfont\label{ex:UCTfails}
We compute the integral path homology of the digraph $\mathcal{G}$ defined in~\cite[\S 5.4, equation (5.9)]{Fu2024} and drawn below, and show that the obstruction module $\mathcal{B}_3(\mathcal{G};\mathbb{Z}/2)$ is nontrivial.
\begin{center}
        \tikz {
            \node (T) at (0.5,0) {$T$};
            \node (u1) at (1.5,-1.25) {$u_1$};
            \node (u2) at (1.5,-0.25) {$u_2$};
            \node (u3) at (1.5,0.75) {$u_3$};
            \node (v1) at (3,-0.6) {$v_1$};
            \node (v2) at (3,0.4) {$v_2$};
            \node (v3) at (3,1.4) {$v_3$};
            \node (w1) at (4.5,-1.25) {$w_1$};
            \node (w2) at (4.5,-0.25) {$w_2$};
            \node (w3) at (4.5,0.75) {$w_3$};
            \node (H) at (5.5,0) {$H$};
            \draw[->] (T) -- (u1);
            \draw[->] (T) -- (u2);
            \draw[->] (T) -- (u3);
            \draw[->] (u1) -- (v1);
            \draw[->] (u1) -- (v2);
            \draw[->] (u2) -- (v2);
            \draw[->] (u2) -- (v3);
            \draw[->] (u3) -- (v3);
            \draw[->] (u3) -- (v1);
            \draw[->] (v1) -- (w1);
            \draw[->] (v1) -- (w2);
            \draw[->] (v2) -- (w2);
            \draw[->] (v2) -- (w3);
            \draw[->] (v3) -- (w3);
            \draw[->] (v3) -- (w1);
            \draw[->] (u1) -- (w1);
            \draw[->] (u1) -- (w3);
            \draw[->] (u2) -- (w2);
            \draw[->] (u2) -- (w1);
            \draw[->] (u3) -- (w3);
            \draw[->] (u3) -- (w2);
            \draw[->] (w1) -- (H);
            \draw[->] (w2) -- (H);
            \draw[->] (w3) -- (H);
        }
    \end{center}
The content of~\cite[Proposition 5.3]{Fu2024} is that the universal coefficient theorem must fail somewhere for $\mathcal{G}$, but the methods of \cite{Fu2024} do not permit a deeper analysis. Knowing that $\mathcal{B}_3(\mathcal{G};\mathbb{Z}/2)\neq0$ provides a precise explanation for this failure.

The path homology of $\mathcal{G}$ with field coefficients is easily calculated directly by computer. We have used code written by the first author and made available at \cite{Burfitt2024} to compute
\[
H^P_*(\mathcal{G};\mathbb{Z}/2)\cong\begin{cases}\mathbb{Z}/2&*=0\\
\mathbb{Z}/2&*=2\\
0&\text{otherwise}\end{cases}
\qquad
H^P_*(\mathcal{G};\mathbb{Z}/p)\cong\begin{cases}\mathbb{Z}/p&*=0\\
0&\text{otherwise}\end{cases}
\]
for $p$ any odd prime.
Using Theorem~\ref{th:Dim3Char0Char2invareince}, we will explain that the universal coefficient theorem fails for the mod $2$ path homology of $\mathcal{G}$ in degree $3$. 

To proceed, note that $G$ has no multisquares, so Proposition~\ref{prop:univcoeffs} gives for each odd prime $p$ and each $n\geq0$ a universal coefficient exact sequence
\[
0\rightarrow H_n^P(\mathcal{G};\mathbb{Z})\otimes\mathbb{Z}/p \rightarrow H_n^P(\mathcal{G};\mathbb{Z}/p)\rightarrow \mathrm{Tor}(H_{n-1}^P(\mathcal{G};\mathbb{Z}),\mathbb{Z}/p)\rightarrow 0.
\]
Since $H_n^P(\mathcal{G};\mathbb{Z}/p)=0$ for $n\geq1$, we have $H_n^P(\mathcal{G};\mathbb{Z})\otimes\mathbb{Z}/p=0$ for all $n\geq1$, and hence that $H_n^P(\mathcal{G};\mathbb{Z})$ is $2$-torsion for all $n\geq1$. On the other hand, for $n\leq 2$ Theorem~\ref{th:Dim3Char0Char2invareince} gives the universal coefficient exact sequence
\[
0\rightarrow H_n^P(\mathcal{G};\mathbb{Z})\otimes\mathbb{Z}/2 \rightarrow H_n^P(\mathcal{G};\mathbb{Z}/2)\rightarrow \mathrm{Tor}(H_{n-1}^P(\mathcal{G};\mathbb{Z}),\mathbb{Z}/2)\rightarrow 0.
\]
Since $H_1^P(\mathcal{G};\mathbb{Z}/2)=0$, we have $H_1^P(\mathcal{G};\mathbb{Z})\otimes\mathbb{Z}/2=0$, and hence $H_1^P(\mathcal{G};\mathbb{Z})=0$ follows from the remarks above. 
This leaves $H_2^P(\mathcal{G};\mathbb{Z})\otimes\mathbb{Z}/2\cong H_2^P(\mathcal{G};\mathbb{Z}/2)\cong\mathbb{Z}/2$. Consequently, $H_2^P(\mathcal{G};\mathbb{Z})\cong\mathbb{Z}/2^r$ for some $r\geq1$.

Now, it is explained in~\cite[pg.17]{Fu2024} that $\Omega_4(\mathcal{G};\mathbb{Q})=0$. Since $\Omega_4(\mathcal{G};\mathbb{Z})$ is free abelian and $\Omega_4(\mathcal{G};\mathbb{Z})\otimes\mathbb{Q}\cong \Omega_4(\mathcal{G};\mathbb{Q})=0$, we have $\Omega_4(\mathcal{G};\mathbb{Z})=0$. Consequently, $H_4^P(\mathcal{G};\mathbb{Z})=0$ and $H_3^P(\mathcal{G};\mathbb{Z})=Z_3(\mathcal{G};\mathbb{Z})$, which is free abelian. But it was shown above that $H_3^P(\mathcal{G};\mathbb{Z})$ is $2$-torsion, so it must be that $H_3^P(\mathcal{G};\mathbb{Z})=0$. Finally, $H_0^P(\mathcal{G};\mathbb{Z})\cong\mathbb{Z}$ follows from the connectedness of $\mathcal{G}$. Thus we have
\[
H^P_*(\mathcal{G};\mathbb{Z})\cong\begin{cases}\mathbb{Z}&*=0\\
\mathbb{Z}/2^r&*=2\\
0&\text{otherwise}\end{cases}
\]
for some $r\geq1$. Moreover, applying Proposition~\ref{Pr:coefisopid} shows that $H^P_*(\mathcal{G};\mathbb{Q})\cong H^P_*(\mathcal{G};\mathbb{Z})\otimes\mathbb{Q}$ is trivial in all positive degrees.

We do not know of a quick way to determine the value of $r$, but direct computation yields $r=1$. In fact, a lengthy calculation yields $H^P_2(\mathcal{G};\mathbb{Z})\cong\mathbb{Z}/2$ generated by the inductive element
\[
C = \sum_{j=1}^3 S_j - S_j'
\]
where $S_j$ and $S_j'$ for $j=1,2,3$ are the directed squares
\[
S_j = e_{v_j,w_j,H} - e_{v_j,w_{j+1},H},\qquad
S'_j = e_{T,u_j,v_{j+1}} - e_{T,u_{j+1},v_{j+1}}
\]
with all indices treated modulo $3$.

To conclude the discussion, consider the second exact sequence from Theorem~\ref{th:Dim3Char0Char2invareince}
\[
0\rightarrow \ker(\bar{\mu}_3)\rightarrow\mathcal{B}_3(\mathcal{G};\mathbb{Z}/2)\rightarrow\mathrm{Tor}(H_2^P(\mathcal{G};\mathbb{Z}),\mathbb{Z}/2)\rightarrow\mathrm{coker}(\bar{\mu}_3)\rightarrow0.
\]
From the computations above, $\ker(\bar{\mu}_3)=0=\mathrm{coker}(\bar{\mu}_3)$. Since $\Omega_4(\mathcal{G};\mathbb{Z})=0$, also $B_3(\mathcal{G};\mathbb{Z})=0$, so the sequence above degenerates to a chain of isomorphisms
\[
B_3(\mathcal{G};\mathbb{Z}/2)\cong\mathcal{B}_3(\mathcal{G};\mathbb{Z}/2)\cong\mathrm{Tor}(H_2^P(\mathcal{G};\mathbb{Z}),\mathbb{Z}/2)\cong\mathbb{Z}/2.
\]
\end{exmp}

\section{Important examples}\label{sec:ImportantExamples}

We now construct a number of explicit examples demonstrating that whilst defining face multigraphs in Definition~\ref{def:FaceMultiGraph}, it is too restrictive to assume that the ridge decomposition in equation~\eqref{def:FaceMultiGraph} consists of a single nonzero term with coefficient $\pm 1$.
Throughout the section, $G$ denotes a digraph and $R$ a commutative, unital ring, unless otherwise stated.

\subsection{Decomposition of a \texorpdfstring{$\delta_{n,v}^h$}{head vertex face maps} image as distinct inductive elements}

The following example demonstrates that for $n \geq 4$ the image of an upper inductive element under $\delta_{n,v}^h$ for $v \in G$ need not be an upper inductive element in $\Omega_{n-1}(G;R)$.

\begin{exmp}\label{exam:SingleFaceDecomposition}\normalfont
    Consider the following digraph $G$.
    \begin{center}
        \tikz {
            \node (H) at (0,7) {$H$};
            \node (T) at (0,1) {$T$};
            \node (a1) at (-6,2) {$a_1$};
            \node (a2) at (-2,2) {$a_2$};
            \node (a3) at (2,2) {$a_3$};
            \node (a4) at (6,2) {$a_4$};
            \node (b1) at (-7,4) {$b_1$};
            \node (b2) at (-3,4) {$b_2$};
            \node (b3) at (1,4) {$b_3$};
            \node (b4) at (5,4) {$b_4$};
            \node (b'1) at (-5,4) {$b'_1$};
            \node (b'2) at (-1,4) {$b'_2$};
            \node (b'3) at (3,4) {$b'_3$};
            \node (b'4) at (7,4) {$b'_4$};
            \node (c1) at (-5,6) {$c_1$};
            \node (c2) at (0,6) {$c_2$};
            \node (c4) at (5,6) {$c_4$};
            \draw[->] (T) -- (a1);
            \draw[->] (T) -- (a2);
            \draw[->] (T) -- (a3);
            \draw[->] (T) -- (a4);
            \draw[->] (T) -- (b1);
            \draw[->] (T) -- (b2);
            \draw[->] (T) -- (b3);
            \draw[->] (T) -- (b4);
            \draw[->] (a1) -- (b1);
            \draw[->] (a1) -- (b'1);
            \draw[->] (a1) -- (b'4);
            \draw[->] (a2) -- (b2);
            \draw[->] (a2) -- (b'2);
            \draw[->] (a2) -- (b'1);
            \draw[->] (a3) -- (b3);
            \draw[->] (a3) -- (b'3);
            \draw[->] (a3) -- (b'2);
            \draw[->] (a4) -- (b4);
            \draw[->] (a4) -- (b'4);
            \draw[->] (a4) -- (b'3);
            \draw[->] (b1) -- (c1);
            \draw[->] (b'1) -- (c1);
            \draw[->] (b2) -- (c1);
            \draw[->] (b2) -- (c2);
            \draw[->] (b'2) -- (c2);
            \draw[->] (b3) -- (c2);
            \draw[->] (b3) -- (c1);
            \draw[->] (b'3) -- (c1);
            \draw[->] (b4) -- (c1);
            \draw[->] (b4) -- (c4);
            \draw[->] (b'4) -- (c4);
            \draw[->] (b1) -- (c4);
            \draw[->] (c1) -- (H);
            \draw[->] (c2) -- (H);
            \draw[->] (c4) -- (H);
            \draw[->] (b'1) -- (H);
            \draw[->] (b'2) -- (H);
            \draw[->] (b'3) -- (H);
            \draw[->] (b'4) -- (H);
        }
    \end{center}
    We demonstrate that $\Omega_{4}(G;R)$ contains a single inductive element $I_4$ up to sign and that $\delta^h_{4,c_1}(I_4)$ is not an upper inductive element.
    Throughout the example, all index values on $a_i$, $b_i$, and $b'_i$ are assumed to be integers modulo $4$, and we set $c_3 = c_1$ and $c_0 = c_4$.
    
    We first show that there are four $3$-dimensional upper inductive elements up to sign whose head vertex is not $H$, that is
    \[
        L_i = e_{T,a_i,b_i,c_i} - e_{T,a_i,b'_i,c_i} + e_{T,a_{i+1},b'_i,c_i} - e_{T,a_{i+1},b_{i+1},c_i}
    \]
    for $i=1,2,3,4$. 
    
    To this end, notice that each of the elements $L_i$ can be described as an element $L_{i}\in\Omega_3(\mathbb{L}_i;R)$ where $\mathbb{L}_i$ are the $G$ full subdigraphs below.
    \begin{center}
        \tikz {
            \node (T) at (0,2) {$T$};
            \node (ai) at (-1,3) {$a_i$};
            \node (ai+1) at (1,3) {$a_{i+1}$};
            \node (bi) at (-2,4) {$b_i$};
            \node (bi+1) at (2,4) {$b_{i+1}$};
            \node (b'i) at (0,4) {$b'_i$};
            \node (ci) at (0,5) {$c_i$};
            \draw[->] (T) -- (ai);
            \draw[->] (T) -- (ai+1);
            \draw[->] (T) to [out=180,in=270] (bi);
            \draw[->] (T) to [out=0,in=270] (bi+1);
            \draw[->] (ai) -- (bi);
            \draw[->] (ai) -- (b'i);
            \draw[->] (ai+1) -- (bi+1);
            \draw[->] (ai+1) -- (b'i);
            \draw[->] (bi) -- (ci);
            \draw[->] (b'i) -- (ci);
            \draw[->] (bi+1) -- (ci);
        }
    \end{center}
    The above subdigraphs are isomorphic to $\mathbb{L}^l_2$ from Definition~\ref{def:LowerLinearElements} with $L_i$ the corresponding $3$-path element. Hence, the $L_i$ are inductive elements as demonstrated in the proof of Theorem~\ref{thm:Omega3Explicitly}.
    As $G$ contains no directed cycles and the maximal path length is $4$, the $L_i$ are the only upper inductive elements in $\Omega_3(G;R)$ without head vertex $H$.

    Recall that by Proposition~\ref{prop:LowDimInductiveElements}, directed triangles are inductive elements in $\Omega_2(G;R)$ up to sign. Denote by $T_i$, the directed triangle $e_{T,a_i,b_i}$.
    Inductive element $I_4\in\Omega_4(G;R)$ can now be obtained as the upper extension $[L_1,L_2,L_3,L_4]^H$ using the following inductive structure.
    \begin{center}
        \tikz {
            \node (T1) at (0,0) {$L_1$};
            \node (T2) at (-1.5,1.5) {$L_2$};
            \node (T3) at (0,3) {$L_3$};
            \node (T4) at (1.5,1.5) {$L_4$};
            \draw[-] (T1) -- (T2) node[pos=0.25,left] {$T_1$};
            \draw[-] (T2) -- (T3) node[pos=0.8,left] {$T_2\:$};
            \draw[-] (T3) -- (T4) node[pos=0.25,right] {$\:T_3$};
            \draw[-] (T4) -- (T1) node[pos=0.725,right] {$\:T_4$};
        }
    \end{center}
    This connected complete face multigraph is unique, as it includes all inductive elements with head vertex not $H$ and all edges are uniquely determined by their labels.
    Therefore, $I_4$ is up to sign the unique upper inductive generator of $\Omega_4(G;R)$. Since $h(L_1)=h(L_2)=c_1$ we have
    \[
        \delta_{4,c_1}^h(I_4)=L_1+L_2
    \]
    demonstrating that in general the image of $\delta^h_{n,v}$ when applied to upper inductive elements need not yield an upper inductive element.
    \end{exmp}

\subsection{Boundary matrix multiplicities}\label{sec:DiffMultiplicityCounter}

The next example demonstrates that the path homology differential with respect to an inductive basis can contain arbitrary multiplicities in its boundary matrix.

\begin{exmp}\label{exam:ArbitraryMultiplicities}\normalfont
    Let $t\geq 2$ be an integer.
    Throughout this example, all index values are assumed to be integers modulo $2t$.
    We construct a digraph $\mathbb{M}_t$ with vertices
    \[
        V_{\mathbb{M}_t} = 
        \{
        T,
        u^A_1,u^A_2,u^{B},
        v_1^A,v_2^A,v_1^{B},\dots,v_{2t}^{B},
        w^A,w^{B}_1,\dots,w^{B}_{2t},
        H
        \}
    \]
    and edges
    \begin{align*}
        &
        T \to u_1^A, \:
        T \to u_2^A, \:
        T \to u^{B}, 
        \\ &
        u_1^A \to v_1^A, \:
        u_1^A \to v_2^A, \:
        u_2^A \to v_1^A, \:
        u_2^A \to v_2^A, \:
        u^{B} \to v_i^{B}, \:
        u_1^A \to v_{2i+1}^{B}, \:
        u_2^A \to v_{2i}^{B},
        \\ &
        v_1^A \to w^A, \:
        v_2^A \to w^A, \:
        v_i^{B} \to w_i^{B}, \:
        v_i^{B} \to w_{i+1}^{B}, \:
        v_1^A \to w_{2i}^{B}, \:
        v_2^A \to w_{2i+1}^{B},
        \\ &
        w^A \to H, \:
        w^{B}_i \to H
    \end{align*}
    for $i=1,\dots,2t$.
    In the case $t=2$, we obtain the following digraph $\mathbb{M}_2$.
    \begin{center}
        \tikz {
            \node (H) at (2,7) {$H$};
            \node (T) at (3,1) {$T$};
            \node (xA) at (5,2) {$u_1^A$};
            \node (x'A) at (7,2) {$u_2^A$};
            \node (x) at (0,2) {$u^{B}$};
            \node (yA) at (5,4) {$v_1^A$};
            \node (y'A) at (7,4) {$v_2^A$};
            \node (yBC) at (-3,4) {$v^{B}_1$};
            \node (yCD) at (-1,4) {$v^{B}_2$};
            \node (yDE) at (1,4) {$v^{B}_3$};
            \node (yBE) at (3,4) {$v^{B}_4$};
            \node (hA) at (6,6) {$w^A$};
            \node (hB) at (-3,6) {$w^{B}_1$};
            \node (hC) at (-1,6) {$w^{B}_2$};
            \node (hD) at (1,6) {$w^{B}_3$};
            \node (hE) at (3,6) {$w^{B}_4$};
            \draw[->] (T) -- (xA);
            \draw[->] (T) -- (x'A);
            \draw[->] (T) -- (x);
            \draw[->] (x) -- (yBC);
            \draw[->] (x) -- (yCD);
            \draw[->] (x) -- (yDE);
            \draw[->] (x) -- (yBE);
            \draw[->] (xA) -- (yA);
            \draw[->] (xA) -- (y'A);
            \draw[->] (xA) -- (yCD);
            \draw[->] (xA) -- (yBE);
            \draw[->] (x'A) -- (yA);
            \draw[->] (x'A) -- (y'A);
            \draw[->] (x'A) -- (yBC);
            \draw[->] (x'A) -- (yDE);
            \draw[->] (yA) -- (hA);
            \draw[->] (yA) -- (hB);
            \draw[->] (yA) -- (hD);
            \draw[->] (y'A) -- (hA);
            \draw[->] (y'A) -- (hC);
            \draw[->] (y'A) -- (hE);
            \draw[->] (yBC) -- (hB);
            \draw[->] (yBC) -- (hC);
            \draw[->] (yCD) -- (hC);
            \draw[->] (yCD) -- (hD);
            \draw[->] (yDE) -- (hD);
            \draw[->] (yDE) -- (hE);
            \draw[->] (yBE) -- (hE);
            \draw[->] (yBE) -- (hB);
            \draw[->] (hA) -- (H);
            \draw[->] (hB) -- (H);
            \draw[->] (hC) -- (H);
            \draw[->] (hD) -- (H);
            \draw[->] (hE) -- (H);
        }
    \end{center}
    All maximal length paths in $\mathbb{M}_{t}$ have length $4$ and $\mathbb{M}_{t}$ contains no directed triangles. It follows that $\Omega_n(\mathbb{M}_{t};\mathbb{Z})=0$ for $n\geq5$.
    
    We now identify an element $I^t_4\in\Omega_4(\mathbb{M}_{t};\mathbb{Z})$ as an upper inductive element by describing all upper inductive structures on it.
    Using this construction, we show that $\partial_4^P(I^t_4) = t\cdot A + \cdots$ for a certain inductive element $A\in\Omega_3(\mathbb{M}_{t};\mathbb{Z})$. Although the example will be presented with integral coefficients, the conclusions are equally valid for coefficients in a field of characteristic $0$ or $p$, for a prime $p>t$.
    
    First we note that, if the vertex $H$ and all its incoming edges are removed from $\mathbb{M}_{t}$, the remaining subdigraph is the union of the full subdigraphs
    \begin{center}
        \tikz {
            \node (T) at (0,0) {$T$};
            \node (xA) at (-0.75,1) {$u_1^A$};
            \node (x-A) at (0.75,1) {$u_2^A$};
            \node (yA) at (-0.75,2.5) {$v_1^A$};
            \node (y-A) at (0.75,2.5) {$v_2^A$};
            \node (zA) at (0,3.5) {$w^A$};
            \draw[->] (T) -- (xA);
            \draw[->] (T) -- (x-A);
            \draw[->] (xA) -- (yA);
            \draw[->] (xA) -- (y-A);
            \draw[->] (x-A) -- (yA);
            \draw[->] (x-A) -- (y-A);
            \draw[->] (yA) -- (zA);
            \draw[->] (y-A) -- (zA);
        }
        \;\;\;\;\;\;\;\;\;\;\;\;
        \tikz {
            \node (T) at (0,0) {$T$};
            \node (xA) at (-1.5,1) {$u_1^A$};
            \node (x) at (0,1) {$u^{B}$};
            \node (x-A) at (1.5,1) {$u_2^A$};
            \node (yA) at (-1.5,2.5) {$v_1^A$};
            \node (yBE) at (0,2.5) {$v^{B}_{i-1}$};
            \node (yBC) at (1.5,2.5) {$v^{B}_i$};
            \node (zB) at (0,3.5) {$w^{B}_i$};
            \draw[->] (T) -- (xA);
            \draw[->] (T) -- (x);
            \draw[->] (T) -- (x-A);
            \draw[->] (xA) -- (yA);
            \draw[->] (xA) -- (yBE);
            \draw[->] (x) -- (yBC);
            \draw[->] (x) -- (yBE);
            \draw[->] (x-A) -- (yA);
            \draw[->] (x-A) -- (yBC);
            \draw[->] (yA) -- (zB);
            \draw[->] (yBC) -- (zB);
            \draw[->] (yBE) -- (zB);
        }
        \;\;\;\;\;\;\;\;\;\;\;\;
        \tikz {
            \node (T) at (0,0) {$T$};
            \node (x-A) at (-1.5,1) {$u_2^A$};
            \node (x) at (0,1) {$u^{B}$};
            \node (xA) at (1.5,1) {$u_1^A$};
            \node (y-A) at (-1.5,2.5) {$v_2^A$};
            \node (yBC) at (0,2.5) {$v_i^{B}$};
            \node (yCD) at (1.5,2.5) {$v_{i+1}^{B}$};
            \node (zC) at (0,3.5) {$w^{B}_{i+1}$};
            \draw[->] (T) -- (xA);
            \draw[->] (T) -- (x);
            \draw[->] (T) -- (x-A);
            \draw[->] (x-A) -- (y-A);
            \draw[->] (x-A) -- (yBC);
            \draw[->] (x) -- (yCD);
            \draw[->] (x) -- (yBC);
            \draw[->] (xA) -- (yA);
            \draw[->] (xA) -- (yCD);
            \draw[->] (y-A) -- (zC);
            \draw[->] (yCD) -- (zC);
            \draw[->] (yBC) -- (zC);
        }
    \end{center}
    where $i=1,3\dots,2t-1$ is an odd integer.
    In particular, by Definition~\ref{def:Trapezohedron}, the digraphs above are isomorphic to trapezohedra of orders $2$, $3$, and $3$, respectively. By Proposition~\ref{prop:Trapezohedron}, each of the trapezohedron digraphs contains a unique trapezohedron element generating the dimension $3$ path chains given in equation~\eqref{eq:TrapezohedronElement}.
    
    Denote these trapezohedron elements by $A \in \Omega_3^{T,w^A}(\mathbb{M}_{t};\mathbb{Z})$, $B_i \in \Omega_3^{T,w^B_i}(\mathbb{M}_{t};\mathbb{Z})$, and $B_{i+1}\in \Omega_3^{T,w^B_{i+1}}(\mathbb{M}_{t};\mathbb{Z})$ for $i=1,3\dots,2t-1$, respectively. Furthermore, write the directed squares $S_1^A = e_{T,u^A_2,v^A_1} - e_{T,u^A_1,v^A_1}$, $S_2^A = e_{T,u^A_1,v^A_2} - e_{T,u^A_2,v^A_2}$, $S^{B}_i = e_{T,u^B,v^B_i} - e_{T,u^A_2,v^B_i}$, and $S^{B}_{i+1} = e_{T,u^B,v^B_{i+1}} - e_{T,u^A_1,v^B_{i+1}}$. These directed squares form a basis of the submodule of $\Omega_{2}(\mathbb{M}_t;\mathbb{Z})$ whose elements have tail vertex $T$.
    
    We obtain an inductive element of $I_4^t \in \Omega_4(\mathbb{M}_t;\mathbb{Z})$ as the upper extension
    \[
        I_4^t = [\underbrace{A+\cdots+A}_{t}+B_1+\dots+B_{2t}]^H
    \]
    which is an extension over the following connected $H$-complete upper face multigraph.
    \begin{center}
        \tikz {
            \node (B1) at (0,4) {$B_1$};
            \node (B2) at (2,6) {$B_2$};
            \node (B3) at (4,6) {$B_3$};
            \node (B4) at (6,4) {$B_4$};
            \node (Bt-1) at (2,0) {$B_{2t-1}$};
            \node (Bt) at (0,2) {$B_{2t}$};
            \node (A2) at (2,4) {$A$};
            \node (A4) at (4,4) {$A$};
            \node (A2t) at (2,2) {$A$};
            \node (D1) at (4,2) {$\udots$};
            \node (D2) at (5,1) {$\udots$};
            \node (E1) at (6,3) {};
            \node (E2) at (3.25,0) {};
            \draw[-] (B1) -- (B2) node[midway,above left] {$S^B_{1}$};
            \draw[-] (B2) -- (B3) node[midway,above] {$S^B_{2}$};
            \draw[-] (B3) -- (B4) node[midway,above right] {$S^B_{3}$};
            \draw[-] (Bt-1) -- (Bt) node[midway,below left] {$S^B_{2t-1}$};
            \draw[-] (Bt) -- (B1) node[midway,left] {$S^B_{2t}$};
            \draw[-] (A2) -- (B1) node[midway,below] {$S^A_{1}$};
            \draw[-] (A2) -- (B2) node[midway,right] {$S^A_{2}$};
            \draw[-] (A4) -- (B3) node[midway,left] {$S^A_{1}$};
            \draw[-] (A4) -- (B4) node[midway,below] {$S^A_{2}$};
            \draw[-] (A2t) -- (Bt-1) node[midway,right] {$S^A_{1}$};
            \draw[-] (A2t) -- (Bt) node[midway,above] {$S^A_{2}$};
            \draw[-] (B4) -- (E1);
            \draw[-] (E2) -- (Bt-1);
        }
    \end{center}
    However, the upper face multigraph above is not unique.
    Nevertheless, all face multigraphs labelled by inductive elements over which $I_4^t$ upper extends are connected, containing differing placements of the edges labelled $S_1^A$ and $S_2^A$.
    Therefore, $I_4$ is indeed an inductive element.
    
    For example, when $t=2$ either of the following two face multigraphs
    would suffice as an upper inductive structure.
    \begin{center}
        \tikz {
            \node (A1) at (-0.25,3) {$A$};
            \node (A2) at (0.25,2) {$A$};
            \node (B) at (-2.5,2.5) {$B_1$};
            \node (C) at (0,5) {$B_2$};
            \node (D) at (2.5,2.5) {$B_3$};
            \node (E) at (0,0) {$B_4$};
            \draw[-] (B) -- (C) node[midway,above left] {$S^B_{1}$};
            \draw[-] (C) -- (D) node[midway,above right] {$S^B_{2}$};
            \draw[-] (D) -- (E) node[midway,below right] {$S^B_{3}$};
            \draw[-] (E) -- (B) node[midway,below left] {$S^B_{4}$};
            \draw[-] (B) -- (A1) node[midway,below] {$S^A_1$};
            \draw[-] (C) -- (A1) node[midway,right] {$S^A_2$};
            \draw[-] (D) -- (A2) node[midway,above] {$S^A_1$};
            \draw[-] (E) -- (A2) node[midway,left] {$S^A_2$};
        }
        \;\;\;\;\;\;\;\;\;
        \tikz {
            \node (A1) at (-0.25,2) {$A$};
            \node (A2) at (0.25,3) {$A$};
            \node (B) at (-2.5,2.5) {$B_1$};
            \node (C) at (0,5) {$B_2$};
            \node (D) at (2.5,2.5) {$B_3$};
            \node (E) at (0,0) {$B_4$};
            \draw[-] (B) -- (C) node[midway,above left] {$S^B_{1}$};
            \draw[-] (C) -- (D) node[midway,above right] {$S^B_{2}$};
            \draw[-] (D) -- (E) node[midway,below right] {$S^B_{3}$};
            \draw[-] (E) -- (B) node[midway,below left] {$S^B_{4}$};
            \draw[-] (B) -- (A1) node[midway,above] {$S^A_1$};
            \draw[-] (E) -- (A1) node[midway,right] {$S^A_2$};
            \draw[-] (C) -- (A2) node[midway,left] {$S^A_2$};
            \draw[-] (D) -- (A2) node[midway,below] {$S^A_1$};
        }
    \end{center}
    
    Finally, $I_4^t \in \Omega_4(G;\mathbb{Z})$ is the unique element up to sign 
    extending only over connected $H$-complete face multigraph constructable using the inductive elements of $\Omega_3(\mathbb{M}_t;\mathbb{Z})$ with tail $T$ up to sign.
    Therefore, by Theorem~\ref{thm:BasisExtension}, $I_4^t$ is the unique generator of $\Omega_4(\mathbb{M}_t;\mathbb{Z})$ up to sign.
    Furthermore, the element $I_4^t$ is constructed using an inductive structure containing $t$ copies of the element $A\in \Omega_3(\mathbb{M}_t;\mathbb{Z})$. Hence, with respect to a $\Omega_*(\mathbb{M}_t;\mathbb{Z})$ basis extending the elements used to construct $I_4^t$, the image of $\delta^h_4$ consists of a vector containing an element of degree $t$. 
    Using the $\Omega_{*}^{*,*}(G;R)$ bigrading detailed in equation~\eqref{eq:Bigrading}, we see that the image of $\delta^h_4$ lies in a separate summand of $\Omega_3(\mathbb{M}_t;\mathbb{Z})$ from $\partial^P_{4}-\delta^h_4$. Therefore, the matrix representing $\partial^P_4$ with respect to any inductive bases of $\Omega_3(\mathbb{M}_t;\mathbb{Z})$ and $\Omega_4(\mathbb{M}_t;\mathbb{Z})$ containing the elements detailed above, has at least one entry of multiplicity $t$.
\end{exmp}

\bibliographystyle{amsplain}
\bibliography{References}

\end{document}